\let\mathcal\mathscr
\numberwithin{equation}{section}
\newtheorem{theorem}{Theorem}[section] 
\newtheorem{lemma}[theorem]{Lemma}
\newtheorem{proposition}[theorem]{Proposition}
\newtheorem{conjecture}[theorem]{Conjecture}
\newtheorem{heuristic}[theorem]{Heuristic assumption}
\newtheorem{prediction}[theorem]{Prediction}
\theoremstyle{definition}
\newtheorem*{acknowledgements}{Acknowledgements}
\newtheorem{definition}[theorem]{Definition}
\newtheorem*{notation}{Notation}
\newtheorem{nnotation}[theorem]{Notation}
\renewcommand{\phi}{\varphi}
\newcommand{\0}{\mathbf{0}}
\newcommand{\bk}{|\b{k}|_1}
\newcommand{\bu}{\mathbf{u}}
\newcommand{\bv}{\mathbf{v}}
\newcommand{\mcl}{\mu_{\text{CL}}}
\newcommand{\mseq}{\mu_{\mathrm{seq}}}
\renewcommand{\leq}{\leqslant}
\renewcommand{\geq}{\geqslant}
\renewcommand{\c}{\mathbf{c}}
\renewcommand{\b}{\mathbf{b}}
\renewcommand{\r}{\mathbf{r}}
\renewcommand{\Im}{\mathrm{Im}}
\DeclareMathOperator{\Id}{Id} 
\DeclareMathOperator{\rk}{rk} 
\DeclareMathOperator{\isom}{Isom}
\DeclareMathOperator{\cl}{Cl}
\DeclareMathOperator{\disc}{D}
\DeclareMathOperator{\rank}{rk} 
\DeclareMathOperator{\Hom}{Hom} 
\DeclareMathOperator{\ext}{Ext} 
\DeclareMathOperator{\epi}{Epi} 
\DeclareMathOperator{\aut}{Aut}
\DeclareMathOperator{\Gal}{Gal}
\DeclareMathOperator{\n}{N}
\let\emptyset\varnothing
\DeclareSymbolFont{bbold}{U}{bbold}{m}{n}
\DeclareSymbolFontAlphabet{\mathbbold}{bbold}
\newcommand{\md}[1]{  \left(\textnormal{mod}\ #1\right)}
\newcommand{\lab}{\label} 
\newcommand{\Q}{\mathbb{Q}}
\newcommand{\bw}{\mathbf{w}}
\newcommand{\F}{\mathbb{F}}
\newcommand{\N}{\mathbb{N}}
\newcommand{\R}{\mathbb{R}}
\newcommand{\Z}{\mathbb{Z}}
\renewcommand{\l}{\left}
\newcommand{\m}{\mathfrak{m}}
\renewcommand{\r}{\right}
\renewcommand{\b}{\mathbf}
\renewcommand{\c}{\mathcal}
\renewcommand{\epsilon}{\varepsilon}
\renewcommand{\leq}{\leqslant}
\renewcommand{\geq}{\geqslant}
\DeclareMathOperator*{\Osum}{\sum{}^*}
\title[$4$-ranks and the general model of ray class groups] 
{$4$-ranks and the general model for statistics of ray class groups of imaginary quadratic number fields} 
\author{C. Pagano}
\address{Mathematisch Instituut\\ 
Universiteit Leiden \\ 
Leiden \\ 
2333 CA \\ 
Netherlands} 
\email{c.pagano@math.leidenuniv.nl}
\author{E.Sofos}
\address{
Max Planck Institute for Mathematics\\
Vivatsgasse 7, Bonn, 53111, Germany}
\email{sofos@mpim-bonn.mpg.de}
\subjclass[2010]{11R65, 11R29, 11R11, 11R45}
\date{\today}
\newcommand{\beq}[2]
{
\begin{equation}
\label{#1}
{#2}
\end{equation}
}
\begin{document}

\begin{abstract}
We extend the Cohen--Lenstra heuristics to the 
setting
of ray class groups of imaginary quadratic number fields, viewed as exact sequences of Galois modules. By asymptotically estimating 
the mixed moments governing the distribution  of a cohomology map,
we prove
these conjectures in the case of $4$-ranks. 
\end{abstract}

\maketitle

\setcounter{tocdepth}{1}
\tableofcontents
 
\section{Introduction}

Let $c$ be a positive odd square-free integer. 
Partition the set of its prime divisors, $S$, into $S_1 \cup S_3$, where if $l \in S_i$ then $l \equiv  i \md{4}$. For an imaginary quadratic number field $K$, denote by $\cl(K,c)$ the ray class group of $K$ of conductor $c$, and by $\disc(K)$ the discriminant of $K$. 
Let $j_1$ and $j_2$ be two non-negative integers. The following theorem will be shown to be a special case of the present work.
\begin{theorem}\label{thm 4rk:special case}
Consider all
imaginary quadratic number fields $K$
such that $\disc(K) \equiv  1  \md{4}$
and
$\mathcal{O}_K/c \cong_{\emph{\text{ring}}} \prod_{l \in S} \mathbb{F}_{l^2}$. 
When such $K$ are 
ordered by the size of their discriminants
the
fraction 
of them
that satisfy $$\rank_4(\cl(K))=j_1, \ \rank_4(\cl(K,c))=j_2
$$
approaches 
$$ 
\frac{\eta_{\infty}(2)}{\eta_{j_1}(2)^2   2^{j_1^2}}
\frac{\#\{\phi \in \Hom_{\mathbb{F}_2}(\mathbb{F}_2^{j_1},\mathbb{F}_2^{\#S_3}): \rank(\phi)=\#S-(j_2-j_1)\}}{\#\Hom_{\mathbb{F}_2}(\mathbb{F}_2^{j_1},\mathbb{F}_2^{\#S_3})}
.$$
\end{theorem}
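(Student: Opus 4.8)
The plan is to convert Theorem~\ref{thm 4rk:special case} into an equidistribution statement for a pair of $\mathbb{F}_2$-linear invariants of $K$ and then to establish that equidistribution by the method of (mixed) moments, in the spirit of Fouvry and Kl\"uners' analysis of $4$-ranks of ordinary class groups, enriched by the Galois-module bookkeeping forced by the ray class datum; throughout, the two imaginary quadratic fields with extra units are discarded.

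\textbf{Algebraic reduction.} Write $D=\disc(K)$; since $D\equiv 1 \md{4}$, $D$ is an odd fundamental discriminant with $t:=\omega(D)$ prime factors, so $\rank_2(\cl(K))=t-1$, and by R\'edei's theorem $\rank_4(\cl(K))=t-1-\rank(R_D)$ where $R_D$ is the R\'edei matrix whose entries are additive Legendre symbols among the primes dividing $D$. The hypothesis $\mathcal{O}_K/c\cong\prod_{l\in S}\mathbb{F}_{l^2}$ says exactly that every $l\in S$ is inert in $K$, so $(\mathcal{O}_K/c)^\times=\prod_{l\in S}\mathbb{F}_{l^2}^\times$ with $\Gal(K/\QQ)=\langle\sigma\rangle$ acting through the $l$-power Frobenius, and $\mathcal{O}_K^\times=\{\pm1\}$. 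Feeding the tautological short exact sequence of $\langle\sigma\rangle$-modules
\[
0\longrightarrow B\longrightarrow \cl(K,c)\longrightarrow \cl(K)\longrightarrow 0, \qquad B:=(\mathcal{O}_K/c)^\times/\{\pm1\},
\]
into the snake lemma for multiplication by $2$, and using that $\sigma$ acts as $-1$ on $\cl(K)$, that a direct computation gives $\rank_4(B)=\#S$, and that the relevant piece of the $\sigma$-cohomology of $B$ is $\#S_3$-dimensional, one obtains, after combining with R\'edei's description of $\rank_4(\cl(K,c))$,
\[
\rank_4(\cl(K,c))=\#S+\rank_4(\cl(K))-\rank(\phi_K),
\]
where $\phi_K\colon\cl(K)[4]/\cl(K)[2]\to\mathbb{F}_2^{\#S_3}$ is the connecting cohomology map of the sequence, expressible through R\'edei symbols of the primes dividing $D$ against those of $S_3$. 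The asymmetry between $S_1$ and $S_3$ enters here and only here: $l\equiv-1 \md{4}$ precisely when $l\in S_3$, so on the $S_3$-part of $(\mathcal{O}_K/c)^\times$ complex conjugation agrees modulo $4$ with inversion, matching its action on $\cl(K)$. Thus the theorem becomes the assertion that, over the stated family, $(\rank_4(\cl(K)),\phi_K)$ converges in law to a pair $(j_1,\phi)$ in which $j_1$ carries the Cohen--Lenstra density $\eta_\infty(2)/(\eta_{j_1}(2)^2\,2^{j_1^2})$ and, conditionally on $j_1$, $\phi$ is uniform in $\Hom_{\mathbb{F}_2}(\mathbb{F}_2^{j_1},\mathbb{F}_2^{\#S_3})$.

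\textbf{Moments and their arithmetic form.} This joint law is pinned down by its mixed moments --- suitable averages over the family of quantities of the shape $\#\Hom_{\mathbb{F}_2}(V,\cl(K)[4]/\cl(K)[2])$ weighted by a count of homomorphisms out of $\mathbb{F}_2^{\#S_3}$ compatible with $\phi_K$, for fixed finite-dimensional $\mathbb{F}_2$-spaces --- and on the model side these averages admit a clean closed form (the $4$-rank moments of the Cohen--Lenstra law times the moments of a uniform random homomorphism), which by a standard moment-determinacy argument characterises the limit. On the arithmetic side, writing $2^{\rank_4(\cl(K))}=\#(\cl(K)[2]\cap 2\cl(K))$ and expanding each membership relation, via R\'edei's reciprocity, as a product of Legendre symbols among the relevant prime factors --- and encoding the values of $\phi_K$ through the splitting of the primes of $S_3$ in suitable auxiliary quadratic fields --- turns every mixed moment into a finite combination of averages over $D$ of products of Jacobi symbols in the primes dividing $D$ and in the fixed primes of $S$.

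\textbf{Analytic estimation and the main obstacle.} For each such average I would flip all symbols by quadratic reciprocity into Dirichlet characters of $D$, isolate the diagonal contribution (prime tuples coinciding in pairs, for which the character is principal) from the off-diagonal one, and bound the latter by a double-oscillation / large-sieve estimate in the manner of Fouvry--Kl\"uners and Heath-Brown, the finitely many extra fixed moduli from $S$ being absorbed at no cost. The surviving diagonal term is a product of local densities at $2$ and at the primes of $S$ times a combinatorial sum over pairings, and the final task is to recognise that sum --- by a purely linear-algebra computation over $\mathbb{F}_2$, counting homomorphisms of prescribed rank as in the evaluation of Cohen--Lenstra moments --- as $\eta_\infty(2)/(\eta_{j_1}(2)^2\,2^{j_1^2})$ times $\#\{\phi\in\Hom_{\mathbb{F}_2}(\mathbb{F}_2^{j_1},\mathbb{F}_2^{\#S_3}):\rank(\phi)=\#S-(j_2-j_1)\}/\#\Hom_{\mathbb{F}_2}(\mathbb{F}_2^{j_1},\mathbb{F}_2^{\#S_3})$, whence the theorem. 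I expect the hard part to be exactly this off-diagonal estimate carried out \emph{uniformly} in the presence of the fixed moduli of $S$: the congruence conditions cutting out the family $\mathcal{O}_K/c\cong\prod_{l\in S}\mathbb{F}_{l^2}$, together with the many Legendre symbols of the varying primes against these fixed moduli, must be disentangled without destroying the square-root cancellation --- and, in tandem, the diagonal main term must be matched exactly with the $\Hom$-count, which is the combinatorial core of the formula.
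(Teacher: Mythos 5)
Your plan matches the paper's proof in all its structural steps, up to a change of encoding. The paper also (i) identifies $(2\cl(K))[2]$ with the $\mathbb F_2$-space of ``special divisors'' $S(D)/\{1,D\}$ (equivalently, with the kernel of the R\'edei matrix plus the trivial line, which is your R\'edei formulation), (ii) realizes the connecting map $\delta_2(K)$ (your $\phi_K$) as reduction of a special divisor modulo the $S_3$-primes, so that its image lands in an $\#S_3$-dimensional $\mathbb F_2$-space, (iii) proves the exact relation $\rank_4(\cl(K,c))=\rk_2(W_R)+\rank_4(\cl(K))-\rank(\delta_2(K))$ with $\rk_2(W_R)=\#S$ in the all-inert case, agreeing with your snake-lemma count, and (iv) deduces the joint law of $(\rank_4(\cl(K)),\Im\delta_2(K))$ from mixed moments, estimated via the Fouvry--Kl\"uners double-oscillation machinery with the extra fixed moduli from $S$ carried through, and reconstructed into a distribution by a Heath-Brown-style argument. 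So this is the same route, not merely a similar one.

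The one place where you diverge in more than notation, and where you are vaguest, is exactly where the paper invests its new idea. You describe the mixed moments as ``averages of $\#\Hom$-type quantities weighted by a compatibility count,'' which is a Wood-style parametrization of the moments; the paper instead chooses, for each character $\chi$ of the $\#S_3$-dimensional target, the \emph{numerical} statistic $m_\chi(D)=\#\ker(\chi\circ\delta_2(K))=\tfrac12\bigl(\#S(D)+\sum_{d\in S(D)}\chi(d)\bigr)$, and takes mixed moments $\prod_\chi m_\chi(D)^{k_\chi}$. Expanding this product (the ``pre-indexing trick'') is what converts the moment into a sum of Jacobi-symbol averages that the Fouvry--Kl\"uners argument handles term-by-term, and, crucially, the resulting decomposition into subsums indexed by subspaces of $\mathbb F_2^{\#S_3}$ matches exactly the shape of the heuristic prediction, so that the combinatorial main-term identification in \S\ref{s:combina} becomes a tractable linear-algebra count of ``good'' subspaces annihilated by the linear forms $\Psi_\chi$. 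Your proposal treats this as ``the hard part'' but leaves its resolution implicit; without a concrete choice like $m_\chi$ it is not clear how to organize the expansion so that both the diagonal term matches the $\Hom$-count with prescribed rank and the off-diagonal term is amenable to the Heath-Brown large-sieve bound. That said, the choice of $m_\chi$ is the only genuinely new ingredient beyond \cite{MR2276261}, and your proposal correctly locates it as the crux. Everything else you say --- the identification of $\rank_4$ data with R\'edei/special-divisor data, the role of $S_3$ vs $S_1$, the reduction to moments and back --- is carried out in the paper essentially as you describe.
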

For 
$M \in \mathbb{Z}_{\geq 1} $
and 
$s \in \mathbb{Z}_{\geq 1} \cup \{\infty\}$,
$\eta_s(M)$
denotes
$\prod_{i=1}^{s}(1-M^{-i})$.
For the statement in full generality see Theorem ~\ref{theorem: joint distribution of 4-ranks, explicit version}.

The special
case $c=1$ 
of
Theorem~\ref{thm 4rk:special case}
recovers a result
of Fouvry and Kl\"uners ~\cite[Cor. 1]{MR2282914} (in the subfamily of imaginary quadratic number fields above). 
The theorem of Fouvry and Kl\"uners on $4$-ranks is one of the strongest pieces of evidence for the heuristic of 
Cohen--Lenstra and Gerth about the distribution of the $p$-Sylow subgroup of the class group of an imaginary quadratic number field.

Indeed, for odd primes $p$, 
Cohen and Lenstra~\cite{MR756082} constructed a heuristic model to predict the outcome of any statistic on the $p$-Sylow of the class group of imaginary quadratic number fields. For every prime $p$
they equipped the set of isomorphism classes of abelian $p$-groups, $\mathcal{G}_p$, with the only probability measure that gives to each abelian $p$-group $G$ a weight inversely proportional to $\#\aut(G)$. This measure is now often called the Cohen--Lenstra measure on $\mathcal{G}_p$, and denoted by $\mcl$. Their heuristic model, for odd primes $p$, consisted in
predicting the equidistribution of $\cl(K)[p^{\infty}]$ in $\mathcal{G}_p$, as $K$ 
ranges
through natural families of imaginary quadratic number fields. Later, Gerth~\cite{MR887792}
adapted this heuristic model for $p=2$. 
His idea was that the only obstruction for $\cl(K)[2^{\infty}]$ to behave like a random abelian $2$-group in the sense of Cohen--Lenstra 
comes from $\cl(K)[2]$; therefore his heuristic model is that $2\cl(K)[2^{\infty}]$ behaves like a random abelian $2$-group. The result of Fouvry and Kl\"uners can then be formulated by saying that, consistently with Gerth's conjecture, the $2$-torsion of $2\cl(K)$ behaves like the $2$-torsion of a random abelian $2$-group in the sense of Cohen--Lenstra.

Before the present paper, no analogue 
of any of these heuristics 
has been proposed
for ray class groups. 
Our second main achievement, aside from the proof of Theorem~\ref{thm 4rk:special case},
is to provide an extension of the Cohen--Lenstra and Gerth heuristics for
ray class groups.  
Theorem~\ref{thm 4rk:special case} will then be the simplest evidence supporting our new heuristic for ray class groups. 
In particular, we provide the conjectural analogue of Theorem~\ref{thm 4rk:special case} for all odd primes $p$. 
Partition $S$ into $S_1 \cup \ldots \cup S_{p-1}$, where $l \in S_i$ if $l  \equiv  i \ \md{p} $. 
\begin{conjecture}
Let $p$ be an odd prime.
Consider all
imaginary quadratic number fields $K$
having
the property
$\mathcal{O}_K/c \cong_{\emph{\text{ring}}} \prod_{l \in S} \mathbb{F}_{l^2}$. 
When such $K$ are 
ordered by the size of their discriminants
the
fraction 
of them
that satisfy $$\rank_p(\cl(K))=j_1, \ \rank_p(\cl(K,c))=j_2
$$
approaches  
$$ 
\frac{\eta_{\infty}(p)}{\eta_{j_1}(p)^2   p^{j_1^p}}
 \frac{\#\{\phi \in \Hom_{\mathbb{F}_p}(\mathbb{F}_p^{j_1},\mathbb{F}_p^{\#S_{p-1}}): \rank(\phi)=\#S_1+\#S_{p-1}-(j_2-j_1)\}}
{\#\Hom_{\mathbb{F}_p}(\mathbb{F}_p^{j_1},\mathbb{F}_p^{\#S_{p-1}})}
.$$
\end{conjecture}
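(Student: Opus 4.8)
We outline how one would establish this prediction, paralleling the proof of Theorem~\ref{thm 4rk:special case}. The displayed density is, by construction, the output of the general model $\mseq$ for statistics of ray class groups, so a proof splits into two parts: a purely algebraic computation showing that the model outputs exactly this number, and the arithmetic input --- asymptotics for certain mixed moments --- needed to show that the family genuinely obeys the model. I would start with the algebraic part. For $K$ in the family, class field theory gives the exact sequence of $\Gal(K/\QQ)$-modules
\[
0\longrightarrow A\longrightarrow\cl(K,c)[p^{\infty}]\longrightarrow\cl(K)[p^{\infty}]\longrightarrow 0,\qquad A:=(\mathcal{O}_K/c)^{*}[p^{\infty}],
\]
coming from $\mathcal{O}_K^{*}\to(\mathcal{O}_K/c)^{*}\to\cl(K,c)\to\cl(K)\to 0$ by passing to $p$-parts, using that $\#\mathcal{O}_K^{*}$ is prime to $p$ (away from the single field $\QQ(\sqrt{-3})$ when $p=3$, which is irrelevant for densities). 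The hypothesis $\mathcal{O}_K/c\cong\prod_{l\in S}\FF_{l^{2}}$ gives $A\cong\prod_{l\in S_1\cup S_{p-1}}\FF_{l^{2}}^{*}[p^{\infty}]$, of $p$-rank $\#S_1+\#S_{p-1}$; complex conjugation, which is the Frobenius at each inert $l$ and so acts on $\mathcal{O}_K/l\cong\FF_{l^{2}}$ by $x\mapsto x^{l}$, therefore acts on the $\FF_p$-vector space $A/pA$ on the $l$-th coordinate by multiplication by $l\bmod p$, splitting it as $V_{+}\oplus V_{-}$ with $V_{+}$ the $S_1$-part (dimension $\#S_1$) and $V_{-}$ the $S_{p-1}$-part (dimension $\#S_{p-1}$).

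The model $\mseq$ makes $\cl(K)[p^{\infty}]$ Cohen--Lenstra distributed and, conditionally on it, makes the above extension ``as random as possible'' --- equivalently, the connecting map $\delta\colon\cl(K)[p]\to A/pA$ produced by the snake lemma for multiplication by $p$ is uniform among $\Gal(K/\QQ)$-equivariant homomorphisms. I would then push this forward to the pair $(\rank_p\cl(K),\rank_p\cl(K,c))$. The event $\rank_p\cl(K)=j_1$ has probability $\eta_{\infty}(p)\,\eta_{j_1}(p)^{-2}\,p^{-j_1^{2}}$, the Cohen--Lenstra probability that a random abelian $p$-group has $p$-rank $j_1$ (so that the exponent in the first displayed factor is $j_1^{2}$). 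Since complex conjugation inverts ideal classes in an imaginary quadratic field it acts as $-1$ on $\cl(K)[p]$, so equivariance forces $\delta$ into the $(-1)$-eigenspace $V_{-}\cong\FF_p^{\#S_{p-1}}$; hence $\delta$ is a uniformly random $\phi\in\Hom_{\FF_p}(\FF_p^{j_1},\FF_p^{\#S_{p-1}})$. The snake lemma gives $\rank_p\cl(K,c)=\rank_p A+\rank_p\cl(K)-\rank\delta=(\#S_1+\#S_{p-1})+j_1-\rank\phi$, so $\rank_p\cl(K,c)=j_2$ is the condition $\rank\phi=\#S_1+\#S_{p-1}-(j_2-j_1)$, with conditional probability the displayed ratio of cardinalities; multiplying the two independent factors reproduces the asserted density.

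For the arithmetic part I would follow the route behind Theorem~\ref{thm 4rk:special case}. First, render $\phi$ explicit: by class field theory the $\FF_p$-dual of $\cl(K)/p\cl(K)$ is the Galois group of the maximal unramified abelian exponent-$p$ extension $L/K$, and the coordinates of $\phi$ record the Artin symbols $\big(\tfrac{L/K}{\mathfrak{l}}\big)$ of the inert primes $l\in S_{p-1}$ --- the odd-$p$ counterpart of the R\'edei matrix that controls $4$-ranks. Next, expanding the indicators of $\{\rank_p\cl(K)=j_1\}$ and $\{\rank\phi=r\}$ by inclusion-exclusion, the counting function $\#\{K:\ |\disc(K)|\le X,\ \rank_p\cl(K)=j_1,\ \rank_p\cl(K,c)=j_2\}$ becomes a finite linear combination of mixed moments of shape $\sum_{|\disc(K)|\le X}\#\Hom(\cl(K)[p^{\infty}],G)\cdot\chi_{G}(K)$, with $G$ running over finite abelian $p$-groups and $\chi_G(K)$ a character sum that detects the splitting of the $l\in S$ in the $G$-extension of $K$; each such moment is a count of abelian (for $p=3$, cubic) extensions of $K$ of bounded discriminant subject to local conditions at the finitely many primes dividing $c$. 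Finally, with the asymptotics of all these moments in hand, and knowing --- as in Wood's moment-uniqueness results --- that the limiting measure (a product of the Cohen--Lenstra measure with a uniform measure on a finite set) is determined by its moments, one recovers the joint distribution.

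The crux is the moment estimation for odd $p$. When $p=2$ these are character sums in Jacobi and R\'edei symbols, precisely what the methods of Fouvry and Kl\"uners~\cite{MR2282914} supply; for $p=3$ the marginal moments reduce to the Davenport--Heilbronn count of cubic fields and its higher-moment refinements, but the joint moments carrying the ray-class splitting data, uniformly over $c$, would demand genuinely new work; and for $p\ge 5$ even the first moment $\sum_{|\disc(K)|\le X}\#\cl(K)[p]$ lies beyond current technology. It is exactly this gap that keeps the statement at the level of a conjecture.
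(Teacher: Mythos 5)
Your derivation matches the paper's: the stated density is exactly what Conjecture~\ref{c:wc} gives on specializing to $R=\prod_{l\in S}\mathbb{F}_{l^2}$, deduced from Heuristic assumption~\ref{h:lebkov} together with Proposition~\ref{induces surj map} (the connecting map $\delta_p$ is uniform among $C_2$-equivariant homomorphisms, forced into $(R^*/R^{*p})^-\cong\mathbb{F}_p^{\#S_{p-1}}$ since complex conjugation acts by $-1$ on $\cl(K)[p]$), with $\rk_p(R^*)=\#S_1+\#S_{p-1}$ and the snake-lemma rank identity exactly as you write. You also correctly identify that the Cohen--Lenstra factor should carry $p^{j_1^2}$, not $p^{j_1^p}$, consistent with the $p=2$ case in Theorem~\ref{thm 4rk:special case}.
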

For the statement in the
general case 
see Conjecture~\ref{c:wc},
in particular,
in the main body of the paper,
we shall allow
any admissible
ring structure for
$\mathcal{O}_K/c$. 
From our model in
its full generality
we shall
derive conjectural formulas for the average size of the $p$-torsion of ray class groups of imaginary quadratic number fields.
\begin{conjecture}
\label{conj:7}
Let $p$ be an odd prime.
The average value of $\#\cl(K,c)[p]$ as $K$ ranges over imaginary quadratic
number fields
with $\gcd(\disc(K),c)=1$ 
and
ordered by their discriminant
is: \\
(1) 
$$p^{\#\{l \emph{ prime}: \ l|c, l \equiv  1 \md{p}\}} \Big(1+\Big(\frac{p+1}{2}
\Big)^{\!\#\{l \emph{ prime}: \ l|c, l  \equiv  1  \text{ or } -1 \md{p}\}}\Big)$$ 
if $p^2$ does not divide $c$, \\
(2)  $$p^{\#\{l \emph{ prime}: \ l|c, l \equiv  1 \md{p}\}+1} \Big(1+p\Big(\frac{p+1}{2}
\Big)^{\!\#\{l \emph{ prime}: \ l|c, l \equiv  1  \text{ or } -1 \md{p}\}}\Big)
$$  if $p^2$ divides $c$.
\end{conjecture}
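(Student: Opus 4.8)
The plan is to deduce Conjecture~\ref{conj:7} as a formal consequence of the general model, Conjecture~\ref{c:wc}: one extracts from the model the distribution of $\rank_p(\cl(K,c))$ and sums the first moment of $\#\cl(K,c)[p]=p^{\rank_p(\cl(K,c))}$. The model governs the exact sequence of $\Gal(K/\mathbb{Q})$-modules
$$1\longrightarrow G_K\longrightarrow \cl(K,c)[p^\infty]\longrightarrow \cl(K)[p^\infty]\longrightarrow 1,\qquad G_K:=\bigl((\mathcal{O}_K/c)^*/\mathrm{image}(\mathcal{O}_K^*)\bigr)[p^\infty],$$
and since $p$ is odd while $\mathcal{O}_K^*=\{\pm1\}$ for all but finitely many $K$, one may take $G_K=((\mathcal{O}_K/c)^*)[p^\infty]$. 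The snake lemma gives $\rank_p(\cl(K,c))=\rank_p(G_K)+\rank_p(\cl(K))-\rank(\delta_K)$, where $\delta_K\colon\cl(K)[p]\to G_K/pG_K$ is the connecting homomorphism; the model asserts that, conditionally on the splitting type $\tau$ of $c$ in $K$ (only split and inert occur, as $\gcd(\disc(K),c)=1$), the group $\cl(K)[p^\infty]$ is Cohen--Lenstra distributed independently of $\delta_K$, while $\delta_K$ is uniform among $\Gal(K/\mathbb{Q})$-equivariant homomorphisms. Since $\cl(K)[p]$ is of minus type under complex conjugation (its plus part injecting, via the norm, into $\cl(\mathbb{Q})[p^\infty]=0$), the map $\delta_K$ factors through the minus eigenspace $(G_K/pG_K)^-$, whose $\mathbb{F}_p$-dimension I write $m(\tau)$.

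The first step is a local computation of $\rank_p(G_K)$ and $m(\tau)$ from $(\mathcal{O}_K/c)^*=\prod_{l\mid c}(\mathcal{O}_K/l^{v_l(c)})^*$. A prime $l\neq p$ contributes to $(\rank_p(G_K),m(\tau))$ the pair $(2,1)$ if $l\equiv1\pmod p$ and splits in $K$, $(1,0)$ if $l\equiv1\pmod p$ and is inert, $(0,0)$ if $l\equiv-1\pmod p$ and splits, $(1,1)$ if $l\equiv-1\pmod p$ and is inert, and $(0,0)$ otherwise. The decisive input is the Galois action on the $p$-part of the local unit group: for inert $l$ it is the $p$-Sylow of $\mathbb{F}_{l^2}^*$, which lies in $\mathbb{F}_l^*$ and is thus Frobenius-fixed (plus type) when $l\equiv1\pmod p$, whereas it is the norm-one subgroup, on which Frobenius acts by $-1$ (minus type), when $l\equiv-1\pmod p$; for split $l$ the local factor is a sum of two copies of $(\mathbb{Z}/l^{v_l(c)})^*$ interchanged by complex conjugation, so its $p$-part, when nontrivial (i.e.\ $l\equiv1\pmod p$), splits evenly into plus and minus parts. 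If $p\mid c$, then $p$ is unramified in $K$ and contributes $(2,1)$ when $p^2\mid c$ and $(0,0)$ when $p\,\|\,c$, regardless of how $p$ splits, the minus rank arising from the trace-zero part of the principal units of $\mathcal{O}_K\otimes\mathbb{Z}_p$.

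The second step is the probabilistic computation in a fixed splitting type $\tau$. Writing $r=\rank_p(\cl(K))$, one has $\#\cl(K,c)[p]=p^{\rank_p(G_K)}\cdot\#\ker(\delta_K)$. For $\delta_K$ uniform in $\Hom_{\mathbb{F}_p}(\mathbb{F}_p^{r},\mathbb{F}_p^{m(\tau)})$ a direct count gives $\mathbb{E}_{\delta_K}[\#\ker(\delta_K)]=\sum_{v\in\mathbb{F}_p^{r}}\mathrm{Pr}_{\delta_K}[\delta_K v=0]=1+(p^{r}-1)p^{-m(\tau)}$; averaging further over a Cohen--Lenstra distributed $\cl(K)[p^\infty]$ and using $\mathbb{E}_{\mathrm{CL}}[p^{r}]=\mathbb{E}_{\mathrm{CL}}[\#\cl(K)[p]]=1+\mathbb{E}_{\mathrm{CL}}[\#\epi(\cl(K),\mathbb{Z}/p)]=2$ (the first Cohen--Lenstra moment) yields $\mathbb{E}[\#\cl(K,c)[p]\mid\tau]=p^{\rank_p(G_K)}\bigl(1+p^{-m(\tau)}\bigr)$. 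Finally, the splitting type of $c$ is equidistributed and independent across the primes $l\mid c$ — the standard equidistribution of fundamental discriminants in arithmetic progressions — so
$$\mathbb{E}[\#\cl(K,c)[p]]=\mathbb{E}_\tau\bigl[p^{\rank_p(G_K)}\bigr]+\mathbb{E}_\tau\bigl[p^{\rank_p(G_K)-m(\tau)}\bigr],$$
and, both exponents being additive over $l\mid c$, each term factors into Euler factors. Inserting the per-prime averages from the first step — $\tfrac12(p^2+p)$ and $p$ for $l\equiv1\pmod p$, $\tfrac12(1+p)$ and $1$ for $l\equiv-1\pmod p$, $1$ and $1$ otherwise, together with $p^2$ and $p$ (resp.\ $1$ and $1$) at $l=p$ according as $p^2\mid c$ or not — and simplifying reproduces the two stated formulas.

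The one genuinely delicate step is the $\Gal(K/\mathbb{Q})$-module analysis of the local unit groups in the first step: it is the minus rank $m(\tau)$ that produces the asymmetric roles of the residues $1$ and $-1$ modulo $p$, and the extra factor $p$ in case (2), so a miscount there — in particular an incorrect separation of the wild part when $p\mid c$ — would corrupt the exponents in the final answer. Everything downstream is formal manipulation of the model together with elementary kernel-counting and $q$-series identities.
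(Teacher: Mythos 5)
Your derivation is correct and follows essentially the same path as the paper: you compute the conditional expectation $\#(R^*/R^{*p})^+\bigl(1+\#(R^*/R^{*p})^-\bigr)$ for each fixed ring type $R$ (the paper's Proposition~\ref{prop:just} does this via a character sum over $\widehat{(R^*/R^{*p})^-}$, you do it by direct kernel-counting, but these are dual formulations of the same calculation, both resting on Proposition~\ref{induces surj map} and the first Cohen--Lenstra moment $\sum_G\#G[p]\mcl(G)=2$), and then you average over the $2^{\omega(c)}$ equidistributed splitting types of $c$ exactly as in \S\ref{s:unrami}. Your framing of this as ``a formal consequence of Conjecture~\ref{c:wc}'' is slightly off—what you actually use is the full model (Conjecture~\ref{conj:generil} / Heuristic assumption~\ref{h:lebkov}), not the coarser joint rank distribution—but the calculation itself is sound.
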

 
For $p=3$ this conjecture was 
recently
proved by Varma~\cite{arXiv:1609.02292}
using geometry of numbers.
In~\cite[\S 1]{arXiv:1609.02292}
she 
asked whether one can 
formulate an extension of the Cohen--Lenstra heuristic that explains her result. 
Our model for ray class groups 
settles this
for imaginary quadratic number fields 
(for the full comparison with Varma's result see \S \ref{s:var}).

Our main theorems and conjectures are not merely about the group $\cl(K,c)$
but also about the entire exact sequence naturally attached to it:
$$1 \to \frac{(\mathcal{O}_K/c)^{*}}{\mathcal{O}_K^{*}} \to \cl(K,c) \to \cl(K) \to 1.$$
For simplicity, in this section we will continue to assume that all the primes in $S$ are inert in $K$. Then one can show that there is a long exact sequence whose first terms are
$$ 1 \to \Big(\frac{(\mathcal{O}_K/c)^{*}}{\langle -1 \rangle}\Big)^2[2] \to (2\cl(K,c))[2] \to (2\cl(K))[2] \overset{\delta_2(K)}\to \prod_{l \in S_3}\frac{\mathbb{F}_{l^2}^{*2}}{\mathbb{F}_{l^2}^{*4}}
.$$
To obtain the last map one chooses any identification between  
$\frac{\big(\frac{(\mathcal{O}_K/c)^{*}}{\langle -1\rangle}\big)^2}
{\big(\frac{(\mathcal{O}_K/c)^{*}}{\langle -1\rangle}\big)^4}$ 
and $\prod_{l \in S} \frac{\mathbb{F}_{l^2}^{*2}}{\mathbb{F}_{l^2}^{*4}}$ via an identification of the rings $\mathcal{O}_K/c$ and  $\prod_{l\in S}\mathbb{F}_{l^2}$. The resulting set of maps is an orbit under  $\aut_{\text{ring}}(\prod_{l\in S}\mathbb{F}_{l^2})$, acting by post-composition. But  $\aut_{\text{ring}}(\prod_{l\in S}\mathbb{F}_{l^2})$ acts trivially on $\prod_{l \in S_3}\frac{\mathbb{F}_{l^2}^{*2}}{\mathbb{F}_{l^2}^{*4}}$, so one has a canonical identification.

Let $Y$ be a subspace of $\prod_{l \in S_3}\frac{\mathbb{F}_{l^2}^{*2}}{\mathbb{F}_{l^2}^{*4}}$ and $j$ a non-negative integer.  
In this setting we manage to control
the statistical distribution of 
$(\#2\cl(K))[2],\Im(\delta_2(K))$,
thus providing a considerable 
refinement of Theorem~\ref{thm 4rk:special case}.
Our result is as follows.
\begin{theorem}\label{main thm intro}
Consider all
imaginary quadratic number fields $K$
such that $\disc(K) \equiv  1  \md{4}$
and
$\mathcal{O}_K/c \cong_{\emph{\text{ring}}} \prod_{l \in S} \mathbb{F}_{l^2}$. 
When such $K$ are 
ordered by the size of their discriminants
the
fraction 
of them
that satisfy 
$$(2\cl(K))[2] \cong \mathbb{F}_2^{j}, \ \Im(\delta_2(K))=Y$$
approaches $$\frac{\eta_{\infty}(2)}{\eta_{j_1}(2)^2  2^{{j_1}^2}}
\frac{\#\epi_{\mathbb{F}_2}(\mathbb{F}_2^j,Y)}{\#\Hom_{\mathbb{F}_2}
\Big(
\mathbb{F}_2^j,
\prod_{l \in S_3}
\frac{\mathbb{F}_{l^2}^{*2}}{\mathbb{F}_{l^2}^{*4}}
\Big)
}.$$
\end{theorem}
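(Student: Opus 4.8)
The plan is to pin down the joint law of the pair $\big((2\cl(K))[2],\delta_2(K)\big)$ through its surjection-moments (the mixed moments of the abstract), in the Cohen--Lenstra/Fouvry--Kl\"uners spirit~\cite{MR2282914}. Here the pair is regarded as an object of the category of pairs $(H,\delta)$ with $H$ a finite $\mathbb{F}_2$-vector space and $\delta\colon H\to V$ linear, where $V:=\prod_{l\in S_3}\mathbb{F}_{l^2}^{*2}/\mathbb{F}_{l^2}^{*4}$ is the fixed target of $\delta_2$. For a finite $\mathbb{F}_2$-space $T$ and a linear $\mu\colon T\to V$ I would study
$$
M(T,\mu):=\lim_{X\to\infty}\frac{\sum_{0<|\disc(K)|\leq X}\#\{f\colon(2\cl(K))[2]\twoheadrightarrow T\ :\ \mu\circ f=\delta_2(K)\}}{\#\{K\ :\ 0<|\disc(K)|\leq X\}},
$$
both sums being over the fields of the statement. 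Forcing $f$ to be \emph{surjective} is essential: the numerator then vanishes unless $\Im(\delta_2(K))=\Im\mu$, and this is what lets the $M(T,\mu)$ resolve the subspace $\Im(\delta_2(K))$ and not merely the rank $\rk_4\cl(K)$. Concretely, for $T=Y\oplus\mathbb{F}_2^m$ and $\mu=\id_Y\oplus 0$ an elementary count shows that, in the limit, $2^{-m\dim Y}M(T,\mu)$ equals the non-degenerate linear combination $\sum_{j\geq\dim Y}\mathbb{P}\big(\rk_4\cl(K)=j,\ \Im(\delta_2(K))=Y\big)\,\#\epi_{\mathbb{F}_2}(\mathbb{F}_2^{j-\dim Y},\mathbb{F}_2^m)$ of the joint probabilities; this is the same triangular system that Fouvry and Kl\"uners invert for a single $4$-rank, so letting $m$ vary recovers every such probability. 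It therefore suffices to (i) evaluate the $M(T,\mu)$ asymptotically, and (ii) check that the limits coincide with those of the conjectural model --- Gerth's measure on $(2\cl(K))[2]$ together with an independent uniform $\delta_2\in\Hom(\,\cdot\,,V)$ --- for which the identical count replaces $\mathbb{P}(\rk_4\cl(K)=j,\,\Im(\delta_2(K))=Y)$ by $\tfrac{\eta_\infty(2)}{\eta_j(2)^2 2^{j^2}}\cdot\tfrac{\#\epi_{\mathbb{F}_2}(\mathbb{F}_2^j,Y)}{\#\Hom_{\mathbb{F}_2}(\mathbb{F}_2^j,V)}$, so that inversion lands precisely on the asserted density (here $j$ is the $j_1$ of Theorem~\ref{thm 4rk:special case}).

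\textbf{Arithmetic translation.} To make the $M(T,\mu)$ explicit I would combine genus theory with R\'edei-type reciprocity. Under the running hypotheses --- every $l\in S$ inert in $K$ and $\disc(K)\equiv1\pmod{4}$ --- the group $(2\cl(K))[2]$ is identified with the kernel of a R\'edei matrix over $\mathbb{F}_2$ whose entries are Legendre symbols among the prime divisors of $\disc(K)$; its nonzero elements correspond to essentially coprime factorisations $\disc(K)=D_1D_2$ in which each $D_i$ is a norm from $\mathbb{Q}(\sqrt{D_{3-i}})$. The new input is a description of $\delta_2(K)$ under this identification: writing $\mathfrak{a}^2=(\alpha)$ for an ideal $\mathfrak{a}$ representing such an element, its $l$-component (for $l\in S_3$) is the class of $\alpha$ in $\mathbb{F}_{l^2}^{*2}/\mathbb{F}_{l^2}^{*4}$, a biquadratic residue symbol of $\alpha$ in the inert completion at $l$. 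Feeding this in and expanding the surjectivity constraint by inclusion--exclusion, each $M(T,\mu)$ becomes an average over $K$ of a weighted count of tuples of genus elements of $(2\cl(K))[2]$ carrying an indicator for the event $\Im(\delta_2(K))=Y$; expanding the symbols in terms of the rational prime factors then turns this into a finite combination of multiple character sums --- over tuples of pairwise coprime squarefree integers ranging through dyadic boxes, of products of Jacobi symbols $\big(\tfrac{D_i}{D_j}\big)$ and of quartic symbols attached to the primes of $S_3$, subject to congruence conditions modulo $8c$ that encode both $\disc(K)\equiv1\pmod{4}$ and the prescribed ring structure $\mathcal{O}_K/c\cong\prod_{l\in S}\mathbb{F}_{l^2}$ (equivalently, $\big(\tfrac{\disc(K)}{l}\big)=-1$ for every $l\mid c$).

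\textbf{Analytic evaluation --- the main obstacle.} The asymptotic evaluation of these multiple character sums is the crux, and the step I expect to be hardest, since one must now track a \emph{map} and not merely a rank. The mechanism follows Fouvry--Kl\"uners but demands considerably more bookkeeping: one opens the quartic symbols (via Jacobi/Gauss sums, or the decomposition of $\mathbb{F}_{l^2}^*$), reduces to sums $\sum_{D}\big(\tfrac{a}{D}\big)$ of real characters over squarefree $D$ in a box twisted by moduli supported on $S_3$, and flips the modulus by quadratic reciprocity. The sum then splits into a \emph{diagonal} part --- the tuples for which all the twisting parameters become squares, so the inner character is principal; these encode the genus-theoretic relations and produce the main term --- plus an off-diagonal remainder. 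The diagonal must be reassembled into the conjectural moment, and the subtle point, absent from the pure $4$-rank problem, is that it is now stratified by the target subspace: one must verify that the diagonal configurations compatible with ``$\Im(\delta_2(K))=Y$'' contribute, after normalisation, exactly the factor $\#\epi_{\mathbb{F}_2}(\mathbb{F}_2^j,Y)$ and no coarser invariant of $Y$. The off-diagonal is bounded, uniformly in the $S_3$-data and in the residues modulo $c$, by Heath-Brown's large sieve for real characters (as in~\cite{MR2282914}) once the finitely many degenerate moduli are removed; the congruences modulo $8c$ cause no trouble because $c$ is fixed, and a qualitative $o(1)$ saving suffices, since one only wants convergence of densities.

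\textbf{Conclusion.} Matching the main terms with the conjectural moments and inverting the triangular relations in $m$ yields the asserted density. Two consistency checks then close the argument: summing the density over all subspaces $Y$ of a prescribed codimension, together with the long exact sequence expressing $\rk_4\cl(K,c)$ in terms of $j$, $\#S_1$ and $\dim\Im(\delta_2(K))$, recovers Theorem~\ref{thm 4rk:special case}; and specialising to $c=1$, where $V=0$ and $\delta_2=0$, recovers the Fouvry--Kl\"uners formula for the $4$-rank of class groups of imaginary quadratic fields.
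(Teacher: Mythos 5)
Your plan is in the same moment-method family as the paper's, but the choice of moments is genuinely different. You track, for each linear $\mu\colon T\to V$, the surjection counts $M(T,\mu)=\mathbb{E}\,\#\{f\colon(2\cl(K))[2]\twoheadrightarrow T:\mu f=\delta_2(K)\}$ and invert a triangular system in $m$ with $T=Y\oplus\mathbb{F}_2^m$. The paper instead introduces, for each character $\chi$ of the target $\Im(g_R)$, the scalar random variables $m_\chi(\delta_2(K))=\#\ker(\chi\circ\delta_2(K))$ and computes their mixed moments $\mathbb{E}\prod_\chi m_\chi^{k_\chi}$ (Theorem~\ref{m.t. on multi-moments}); it then inverts by a Heath--Brown argument (\S\ref{from m.m. to distr.}). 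After M\"obius inclusion--exclusion on the subspaces $T'\subseteq T$ your $M(T,\mu)$ becomes a sum of powers $(\#H)^{\dim H}$-type terms weighted by indicators $\mathbf{1}[\Im\delta_2\subseteq\mu(T')]$, and Fourier-expanding those indicators produces exactly the same products of characters $\chi(\delta_2(\cdot))$ that the paper's $m_\chi$'s give after its ``pre-indexing trick'' (\S\ref{pre-indexing}); so the two approaches converge on essentially the same Fouvry--Kl\"uners character sums. Your version buys a more conceptual (categorical) normalisation of the moments, at the cost of carrying the M\"obius sum over subspaces throughout the analytic part; the paper's version isolates a finite set of \emph{scalar} random variables up front, which is what lets it insert the image data into the Fouvry--Kl\"uners machinery with only local modifications.

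The one step where your sketch is thinner than it should be is the arithmetic translation of $\delta_2$. You describe the $l$-component of $\delta_2(K)(\mathfrak{a})$, for $\mathfrak{a}^2=(\alpha)$, as ``a biquadratic residue symbol of $\alpha$ in the inert completion at $l$'' and propose to ``open the quartic symbols.'' That is literally correct, but by itself leaves you with genuine quartic characters, for which the Heath-Brown real-character large sieve used by Fouvry--Kl\"uners is not directly applicable. The paper's key observation (Lemma~\ref{recognizing img-r} together with the computation in \S\ref{realizing delta maps}) is that the isomorphism $\phi_R\colon\Im(g_R)\to G_{n_1(R)}\times G_{n_2(R)}$ factors $\delta_2$ through the norm, $\phi_R(g_R(\alpha))=N(\alpha)=bw^2$, so that the quartic symbol of $\alpha$ collapses to the Legendre symbol $\bigl(\tfrac{b}{l}\bigr)$ of the \emph{special divisor} $b$; this is what identifies $\delta_2(\mathbb{Q}(\sqrt{-D}))$ with the elementary map $\phi_{n_1(R),n_2(R),D}$ on $S(D)/\{1,D\}$ and reduces everything to real characters. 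Without making this reduction explicit, your ``analytic evaluation'' section does not yet have the correct input; with it, your diagonal/off-diagonal analysis should go through along the lines of \S\ref{section: special divisors thm}. You should also note that, exactly as in \S\ref{section:heuristic at 2}, the identification $\delta_2\cong\phi_{n_1,n_2,D}$ only holds for fields \emph{strongly} of type $R$; the exceptional set has density zero, but one must control it (the paper does this with H\"older and \eqref{eq:acacac}), since otherwise the moment comparison is not rigorous.
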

This means that 
$(\#(2\cl(K))[2],\Im(\delta_2(K)))$ behaves like
$(\#G[2],\Im(\delta))$, where $G$ is a random abelian $2$-group 
in the
Cohen--Lenstra sense, and $\delta:G[2] \to \mathbb{F}_2^{\#S_3}$ is a random map.
For the statement in full generality
see Theorem~\ref{theorem:distribution of delta maps}. 
We show in~\S\ref{section:heuristic at 2} that this result is also predicted by our heuristic model. Our model enables us to provide a conjectural analogue of Theorem~\ref{main thm intro} for all odd $p$. Its formulation is in Conjecture ~\ref{conj:3}.

Theorem~\ref{main thm intro} determines the joint distribution of the pair $(\#(2\cl(K))[2],\Im(\delta_2(K)))$. 
Theorem~\cite[Cor.1]{MR2282914} of Fouvry and Kl\"uners determines the distribution of the first component, $\#(2\cl(K))[2]$
via the use of
another result of the two authors,~\cite[Theorem 3]{MR2276261}, 
where they obtained asymptotics for all moments of
$\#(2\cl(K))[2]$.
A surprising feature of our work is that we establish the joint distribution of the pair $(\#(2\cl(K))[2],\Im(\delta_2(K)))$ by means of the moment-method, despite the fact that $\Im(\delta_2(K))$ is  
not a number. 
\emph{
Although the general philosophy of using 
moments to study 
distributions is standard in the literature related to the Cohen--Lenstra heuristics}(\emph{see, for example,~\cite{arxiv.org/abs/1504.04391}}),
\emph{we stress that 
no 
object like the image of the $\delta$-map has
been treated
in the subject.}
It is instructive to see how we
incorporate
the image-data
into the
Fouvry--Kl\"{u}ners method.
We do this by introducing for every real character $\chi: \prod_{l \in S_3}\mathbb{F}_{l^2}^{*2} \to \mathbb{R}^{*}$, the random variable 
$$m_{\chi}(\delta_2(K)):=\#\text{ker}(\chi(\delta_2(K))).$$
To know the pair $(\#(2\cl(K))[2],\Im(\delta_2(K)))$ is equivalent to 
knowing $(m_{\chi}(\delta_2(K)))_{\chi}$. 
However,
the advantage is that the latter is a \emph{numerical} vector
and therefore one can hope to apply the
method of moments
to control its distribution. This is precisely what we achieve in Theorem~\ref{m.t. on multi-moments}. The expressions that 
appear during the proof of Theorem~\ref{m.t. on multi-moments}
are of the shape
$$ \sum_{D<X} 
\prod_{\chi}m_{\chi}(\delta_2(\Q(\sqrt{-D})))^{k_{\chi}}
,$$ 
where $D$ ranges over all positive
square-free integers with $D  \equiv  3  \md{4} $
and
$\chi$
ranges
over
all
real characters $\chi: \prod_{l \in S_3}\mathbb{F}_{l^2}^{*} \to \mathbb{R}^{*}$.
As explained in \S\ref{pre-indexing}, the additional complexity of these expressions
compared to the classical case settled by Fouvry and Kl\"uners, is tempered by the fact that, with our heuristic model for ray class groups, we already have a candidate main term. In particular, the shape of its expression suggests a way to sub-divide the sum, with the benefit of hindsight, in many smaller sub-sums. For each of these sub-sums it turns out that the techniques of Fouvry and Kl\"uners are applicable
with only minor modifications.
After proving
Theorem~\ref{m.t. on multi-moments}
we turn our attention to the 
distribution 
of
$(\#(2\cl(K))[2],\Im(\delta_2(K)))$,
which we reconstruct
from the mixed moments by
following an argument of
Heath-Brown~\cite{MR1292115}.  

We stress that Theorem~\ref{main thm intro} is stronger than Theorem~\ref{thm 4rk:special case}. 
Here the finer information (which is the \emph{image} of the $\delta$-map), is obtained precisely owing to the fact that we use ring identifications rather than merely group identifications\footnote{We thank Hendrik Lenstra for having suggested this.}.
Using the latter we could have studied only the \emph{size} of $\Im(\delta_2(K))$, which is precisely what occurs
in Theorem~\ref{thm 4rk:special case}. 
On the other hand, it is important to note 
that the techniques employed in the proof of Theorem~\ref{main thm intro} are not applicable 
in studying directly the moments of the isolated quantity $\#(2\cl(K,c))[2]$: we can access the 
distribution of the quantity $\#(2\cl(K,c))[2]$
only by the moments of a finer object, the $\delta$-map. 
This contrast reflects the fact that the natural algebraic structure attached to the ray class group is the entire
exact sequence naturally attached to it, rather than just the isolated group $\cl(K,c)$. 
It is precisely this phenomenon that leads us to formulate a general heuristic for ray class \emph{sequences} of conductor $c$. 
In this framework, Theorem ~\ref{main thm intro} gives compelling evidence that our heuristic model predicts correct answers also when it is challenged to produce the outcome of statistics about the \emph{ray class sequence}, and not only when, less directly, one isolates the group $\cl(K,c)$.

Encouraged by this corroboration, we formulate our heuristic to predict the outcome of \emph{any} statistical question about the $p$-part of the ray class sequence, viewed as an exact sequence of Galois modules. A positive side effect of this enhanced generality is the consequent logical simplification of our conjectural framework: our heuristic is based on a simple unifying principle, which, if true, implies at once all our conjectures. This heuristic principle is stated in \S\ref{s:2} for an odd prime $p$, and in \S\ref{section:heuristic at 2} for $p=2$.

Let $p$ be a prime and $G$ a finite abelian $p$-group. The following is an attractive and easy example of the conjectural conclusions that are available in this new model:
\begin{conjecture}
Consider all
imaginary quadratic number fields $K$
having the property that
$\mathcal{O}_K/c \cong_{\emph{\text{ring}}} \prod_{l \in S} \mathbb{F}_{l^2}$. 
When such $K$ are 
ordered by the size of their discriminants,
the
fraction 
of them
having the properties that 
the $p$-part of the ray class sequence of modulus $c$ splits
and
$$\cl(K)[p^{\infty}] \cong_{\emph{ab.gr.}} G,$$
approaches 
$$ \frac{\eta_{\infty}(p)}{\#\aut_{\emph{ab.gr.}}(G)}
 \frac{1}{\#\Hom_{\emph{ab.gr.}}(G,\prod_{l \in S_{p-1}}\mathbb{F}_{l^2}^{*})}.$$ 
\end{conjecture}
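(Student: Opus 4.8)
The plan is to derive this statement formally from the heuristic principle of \S\ref{s:2} (the case $p=2$ being entirely parallel, via the Gerth-modified model of \S\ref{section:heuristic at 2} with $\cl(K)$ replaced by $2\cl(K)$); no estimates enter, only an identification of the relevant modules and a count of automorphisms. \textbf{First} I would pin down the fixed left term. Discarding $\QQ(\sqrt{-1})$ and $\QQ(\sqrt{-3})$, every $K$ in the family has $\mathcal{O}_K^{*}=\langle-1\rangle$, and since each $l\in S$ is inert, $(\mathcal{O}_K/c)^{*}=\prod_{l\in S}\mathbb{F}_{l^2}^{*}$; hence for odd $p$ the $p$-part of the kernel of the ray class sequence is the $K$-independent module $L:=\prod_{l\in S}(\mathbb{F}_{l^2}^{*})[p^{\infty}]$, the nonzero factors being those with $l\equiv\pm1\md{p}$. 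The generator $\sigma$ of $\Gal(K/\QQ)$ acts on $\mathbb{F}_{l^2}$ by $x\mapsto x^{l}$, hence by multiplication by $l$ on $(\mathbb{F}_{l^2}^{*})[p^{\infty}]$; since $l\equiv 1\md{p^{v_p(l-1)}}$ when $l\in S_1$ and $l\equiv -1\md{p^{v_p(l+1)}}$ when $l\in S_{p-1}$, one gets $L=L^{+}\oplus L^{-}$ with $\sigma$ trivial on $L^{+}=\prod_{l\in S_1}(\mathbb{F}_{l^2}^{*})[p^{\infty}]$ and $\sigma=-1$ on $L^{-}=\prod_{l\in S_{p-1}}(\mathbb{F}_{l^2}^{*})[p^{\infty}]$.

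\textbf{Next} I would reduce to the minus block. For odd $p$ the element $\sigma$ acts as $-1$ on $\cl(K)[p^{\infty}]$ (its $\sigma$-fixed part is annihilated by the norm map $\cl(K)\to\cl(\QQ)=1$, which is squaring there, and it is a $p$-group with $p$ odd), so $\cl(K)[p^{\infty}]\cong_{\mathrm{ab.gr.}}G$ already determines it as a $\Gal(K/\QQ)$-module. As $\mathbb{Z}_p[\Gal(K/\QQ)]\cong\mathbb{Z}_p\times\mathbb{Z}_p$ splits the category of $p$-adic $\Gal(K/\QQ)$-modules into a plus and a minus block, the ray class sequence splits as a sequence of Galois modules iff its minus part splits, and the extension class of the minus part lies in $\ext^{1}_{\mathbb{Z}_p}(G,L^{-})$.

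Then I would \textbf{evaluate the model}: by the principle of \S\ref{s:2}, isomorphism classes of such module exact sequences are weighted inversely proportionally to the order of their automorphism group. Forgetting the sequence structure recovers the Cohen--Lenstra distribution, giving $\Pr[\cl(K)[p^{\infty}]\cong G]=\eta_{\infty}(p)/\#\Aut(G)$; conditionally on $\cl(K)[p^{\infty}]\cong G$, an orbit--stabilizer (groupoid-cardinality) computation for the action of $\Aut(G)\times\Aut(L^{-})$ on $\ext^{1}_{\mathbb{Z}_p}(G,L^{-})$ — using that every extension carries the same block $\Hom_{\mathbb{Z}_p}(G,L^{-})$ of automorphisms — shows the extension class is equidistributed on $\ext^{1}_{\mathbb{Z}_p}(G,L^{-})$, whence the conditional splitting probability is $1/\#\ext^{1}_{\mathbb{Z}_p}(G,L^{-})$. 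Finally, $\#\ext^{1}_{\mathbb{Z}_p}(G,L^{-})=\#\Hom_{\mathbb{Z}_p}(G,L^{-})$ for finite abelian $p$-groups (both are additive in $G$ and agree for cyclic $G$, since $\#(H/p^{a}H)=\#H[p^{a}]$ for finite $H$), and $\Hom_{\mathbb{Z}_p}(G,L^{-})=\Hom_{\mathrm{ab.gr.}}(G,\prod_{l\in S_{p-1}}\mathbb{F}_{l^2}^{*})$ because $G$ is a $p$-group; multiplying the two probabilities produces exactly the asserted value.

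The \textbf{main obstacle} is not analytic. Its only subtlety is matching the heuristic model's measure on exact sequences with the computation above: one must verify that the automorphism-weighted mass of the family of all extensions of $G$ by $L^{-}$ collapses to the plain Cohen--Lenstra weight $1/\#\Aut(G)$ up to the $G$-independent factor $\#\Aut(L^{-})$ — which is precisely the point at which the identity $\#\ext^{1}_{\mathbb{Z}_p}(G,L^{-})=\#\Hom_{\mathbb{Z}_p}(G,L^{-})$ is used. The other ingredients (the Frobenius action on $(\mathbb{F}_{l^2}^{*})[p^{\infty}]$, the $-1$ action on $\cl(K)[p^{\infty}]$, and the plus/minus block decomposition) are standard.
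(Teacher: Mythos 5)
Your derivation is correct and follows essentially the same route the paper takes (its conjecture on the fraction of split ray class sequences, Conjecture~\ref{con:eibaar}, specialised to the inert ring $R=\prod_{l\in S}\mathbb{F}_{l^2}$): reduce to the minus block $L^{-}=(R^{*}[p^{\infty}])^{-}=\prod_{l\in S_{p-1}}(\mathbb{F}_{l^2}^{*})[p^{\infty}]$, read the conditional splitting probability $1/\#\ext_{\mathbb{Z}_p}(G,L^{-})$ off the uniform fibre of $\widetilde{\mu}_{\mathrm{seq}}$, and invoke the non-canonical isomorphism $\ext_{\mathbb{Z}_p}(G,L^{-})\cong\Hom_{\mathbb{Z}_p}(G,L^{-})$. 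Your automorphism-weighting gloss on the measure is a valid equivalent reformulation (the split class being a fixed point, both descriptions assign it the same mass), but the paper simply defines $\widetilde{\mu}_{\mathrm{seq}}$ directly with uniform fibres rather than deriving it from a $1/\#\Aut$-principle.
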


\subsection{Comparison with the literature}
The present work sits in an active area of research focused on extending the classical Cohen--Lenstra heuristics to other interesting arithmetical objects and on establishing the correctness of these statistical models
in cases where an `analytically-friendly'
description of the problem is available. 
Developments along this line of research can be found in the very recent work by Wood~\cite{arXiv:1702.04644}, which provides a heuristic for the average number of unramified $G$-extensions of a quadratic number field for any finite group $G$: the Cohen--Lenstra heuristics are recovered by taking $G$ to be an abelian group. It would be interesting to reach the generality of both the present paper and \cite{arXiv:1702.04644}, by considering $G$-extensions with prescribed ramification data. The evidence provided in ~\cite{arXiv:1702.04644} is over function fields, by means of the approach
of Ellenberg, Venkatesh and Westerland~\cite{MR3488737}. 
In a recent preprint, Alberts and Klys~\cite{arXiv:1611.05595}
offered evidence for the heuristics in Wood's work~\cite{arXiv:1702.04644} over number fields
using the approach of Fouvry and Kl\"uners. It is interesting to note that in a previous work
Klys~\cite{arXiv:1610.00226}
extended the work of Fouvry and Kl\"uners to the $p$-torsion of cyclic degree $p$ extensions. 
These last two examples, together with the present work, show the remarkable versatility of the method used in~\cite{MR2276261}
and pioneered (in the context of Selmer groups)
by Heath-Brown~\cite{MR1292115}.

The case of narrow class groups was investigated by Bhargava and Varma~\cite{MR3369305} and by 
Dummit and Voight~\cite{arXiv:1702.00092}. 
The latter work provides, among other things, a conjectural formula for the average size of the $2$-torsion of narrow class groups among the family of $S_n$-number fields, for odd $n$. For $n=3$, this was a theorem of Bhargava and Varma~\cite{MR3369305}.

Very recently,
Jordan, Klagsbrun, Poonen, Skinner and Zaytman~\cite{arXiv:1703.00108}
made a conjecture for the distribution of the $p$-torsion of $K$-groups of real and imaginary quadratic number fields.  
Building on the recent improvement of the work of Bhargava, Shankar and Tsimerman~\cite{MR3090184}, 
they established their conjecture for the average size of the $3$-torsion. Incidentally, the work~\cite{MR3090184} is also
employed by Varma~\cite{arXiv:1609.02292} on the average $3$-torsion of ray class groups, 
which is placed in a general conjectural framework by the present paper.

Despite this rich context of developments, the present paper is, to the best of our knowledge, the first one to propose a heuristic model for the ray class sequence of imaginary quadratic number fields  and to prove its correctness for the pair $(\#(2\cl(K))[2],\Im(\delta_2(K)))$, establishing, as a corollary, the joint distribution of the $4$-ranks of $\cl(K)$ and $\cl(K,c)$. 
\subsection{Organization of the material}
The remainder of this paper is organized as follows: 
In \S\ref{s:2} we explain our heuristic model for the distribution of the $p$-part of ray class sequences of imaginary quadratic number fields, for odd primes $p$.  We draw several conjectures from this heuristic principle and verify its consistency with the theorems of
Varma~\cite{arXiv:1609.02292} in the imaginary quadratic case.

In \S\ref{section:heuristic at 2} we examine the case $p=2$. This case requires some additional work to isolate the `random' 
part of the $2$-Sylow of the ray class sequences of imaginary quadratic number fields. 
This additional difficulty arises already for the ordinary class group as can be seen in the work of
Gerth~\cite{MR887792}. However,
for ray class sequences 
overcoming such difficulties
is much
more intricate 
due to the more articulate underlying 
algebraic structures.
This will allow us to 
formulate a number of predictions that will be
proved
in \S\S\ref{section:theorems on 2-parts of ray sequences}-\ref{from m.m. to distr.}.
A key step 
in these proofs
is the reformulation of the problem about $4$-ranks into a purely analytic problem about mixed moments.
For this we introduce the notion of special divisors in \S\ref{section:special divisors}
and certain related statistical questions that will be subsequently
answered. This statistic is a special case of a ray class group statistic, as subsequently 
established in \S\ref{section:theorems on 2-parts of ray sequences}. 
Therefore the material of \S\ref{section:heuristic at 2} would implicitly provide a heuristic for it. Nevertheless, 
in \S\ref{section:special divisors}
we present the problem and the heuristic in 
a direct way using the language of special divisors.
This has the advantage that \S\ref{section:special divisors},
Theorems~\ref{m.t. on multi-moments}-\ref{m.t. on distribution},
 \S\ref{section: special divisors thm} and \S\ref{from m.m. to distr.}
are mostly analytic in nature and
can be read independently of the algebraic considerations
in \S{\ref{s:2} and \S\ref{section:heuristic at 2}.

In \S\ref{section:theorems on 2-parts of ray sequences} 
we
state the main theorems about the $2$-part of the ray class sequences and reduce
their proof so as to establish the predictions in 
\S\ref{section:special divisors}. The section ends with the statement of the corresponding main theorems on special divisors. 
In \S\ref{section: special divisors thm} we prove the main theorem on mixed moments attached to the maps on special divisors introduced in 
\S\ref{section:special divisors}.
Finally, in \S\ref{from m.m. to distr.} we reconstruct the distribution from the mixed moments, concluding the proof of all 
theorems 
stated in \S\ref{section:theorems on 2-parts of ray sequences}.  
\begin{notation}
The symbol $\disc(K)$ will always 
refer to the discriminant of a number field $K$.
Let us furthermore denote 
\begin{equation}
\label{eq:imagin}
\c{F}
:=\{K \text{ imaginary quadratic number field}\}.
\end{equation} 
\end{notation}

\begin{acknowledgements}
We are very grateful to Hendrik Lenstra for several insightful discussions
and for useful feedback during the course of this project.
In particular, we thank him for 
suggesting to consider
the first terms of the ray class sequences only up to \emph{ring} automorphisms, 
which turned out to be a natural level of greater generality where we could prove our main theorems on $4$-ranks. 
We thank Alex Bartel for many stimulating discussions about our work,
as well as 
organizing an inspiring
conference on the Cohen--Lenstra heuristics in Warwick in July $2016$, where this project started. 
We also wish to thank
Djordjo Milovic and Peter Koymans for useful discussions
and
Ila Varma
and 
Peter Stevenhagen for
profitable feedback.
Furthermore,
we thank
Alex Bartel,
Joseph Gunther and Peter Koymans
for helpful remarks on earlier versions of this
paper.
\end{acknowledgements}

\section{Heuristics and conjectures for $p$ odd} 
\label{s:2}
Let $p$ be an odd prime number and $c$ a positive integer. Denote by $C_2$ a group with $2$ elements and denote by $\tau$ its generator.
In this section we provide
a heuristic model that predicts the statistical behavior of the exact sequence 
of $\mathbb{Z}_p[C_2]$-modules attached to the ray class group of conductor $c$ of an imaginary quadratic number field $K$.
Denote it by
\begin{equation}
\label{eq:exact}
S_p(K):=
\Big(
1 \to \frac{(\mathcal{O}_K/c)^{*}}{\mathcal{O}_K^{*}}[p^{\infty}] \to \cl(K,c)[p^{\infty}] \to \cl(K)[p^{\infty}] \to 1
\Big)
,\end{equation}
where
the 
$C_2$-action
comes from the natural action of
$\Gal(K/\mathbb{Q})$
on each term of the sequence.
The reader is referred to~\cite[\S IV]{neuki}
for related background material.
We  shall call $S_p(K)$ the $p$-part of the \textit{ray class sequence} of conductor $c$. 
We shall henceforth ignore  
the fields 
$K=\mathbb{Q}(\text{i})$ and $K=\mathbb{Q}(\sqrt{-3})$,
to ensure that 
$\mathcal{O}_K^{*}=\langle -1 \rangle$.
Owing to $p\neq 2$ we 
furthermore have
$
((\mathcal{O}_K/c)^{*}/\langle -1 \rangle)[p^{\infty}]
=
(\mathcal{O}_K/c)^{*}[p^{\infty}] 
$,
thus allowing us to write
$$S_p(K):=(1 \to (\mathcal{O}_K/c)^{*}[p^{\infty}] \to \cl(K,c)[p^{\infty}] \to \cl(K)[p^{\infty}] \to 1)
.$$
Denote by $\mathcal{G}_p$ a set of representatives of isomorphism classes of finite abelian $p$-groups, viewed as $C_2$-modules 
under the action of $-\Id$
and 
call $G_p(K)$ the unique representative of $\cl(K)[p^{\infty}]$ in $\mathcal{G}_p$.
Any family of imaginary quadratic fields 
can be partitioned 
in finitely many subfamilies where the isomorphism class of the ring 
$\mathcal{O}_K/c$ is fixed, 
by imposing finitely many congruence conditions on the discriminants.
Therefore
we can always assume that 
$(\mathcal{O}_K/c)^{*}$ has been fixed as the unit group of some ring that is independent of $K$.

\begin{definition}
\label{def:oftype}
Let $K,c$
be as above and $R$ a finite commutative ring.
We shall say that $K$ is of type $R$ if 
$\mathcal{O}_K/\text{char}(R) \cong R$
as rings.
With this definition in mind let us denote 
\begin{equation}
\label{eq:fff}
\c{F}(R)
:=\{K \text{ imaginary quadratic number field of type } R \}.
\end{equation}
\end{definition}
From now on we will assume that $R$ is of the form $R:=\mathcal{O}_{\mathcal{A}}/c$, where $\mathcal{O}_{\mathcal{A}}$ is the integral closure of $\prod_{l|c}\mathbb{Z}_l$ in $\mathcal{A}:=\prod_{l|c}E_l$, with $E_l$ being an etale $\mathbb{Q}_l$-algebra of degree $2$. Under this assumption, a positive fraction of all discriminants lies in $\c{F}(R)$.

Suppose $K$ is of type $R$. Then  
$(\mathcal{O}_K/c)^{*}$ can be identified with $R^{*}$ via any restriction of a ring isomorphism, that is via any element of 
$\isom_{\text{ring}}(\mathcal{O}_K/c,R)$. 
Furthermore, 
we can identify $\cl(K)[p^{\infty}]$ and $G_p(K)$  via any element of 
$\isom_{\text{ab.gr.}}(\cl(K)[p^{\infty}],G_p(K))$.
Therefore applying $\isom_{\text{ring}}(\mathcal{O}_K/c,R) \times \isom_{\text{ab.gr.}}(\cl(K)[p^{\infty}],G_p(K))$ to $S_p(K)$, we obtain a unique orbit 
$$ O_{c,p}(K) \in 
\ext_{\mathbb{Z}_p[C_2]}(G_p(K),R^{*}[p^{\infty}])/(\aut_{\text{ring}}(R) \times \aut_{\text{ab.gr.}}(G_p(K))).$$
 We refer the reader to~\cite[\S 3]{weibel} 
for definition and properties of $\ext_{S}(A,B)$, where $S$ is a ring and $A,B$ are $S$-modules. For the remainder of the paper, given $S$-modules $A,B,C,A',B'$ and $C'$, we call a \emph{commutative diagram} of $S$-modules, a diagram of maps of $S$-modules
\[ 
\begin{array}{ccc}  0 &\overset{}\to& B_1  \underset{f_1}\to  \\  && 
\downarrow \psi_1 
\\ 0 &\underset{}\to& B_2 \underset{f_2}\to  \end{array} \begin{array}{ccc}  C_1 &\underset{g_1}\to& A_1 \to 0 
\\ \downarrow \psi_2 &&
 \downarrow \psi_3\\ 
C_2 &\underset{g_2}\to& A_2 \to 0, \end{array}
\]
with $\psi_2 \circ f_1=f_2 \circ \psi_1$ and $\psi_3 \circ g_1=g_2 \circ \psi_2$.
Note  that $\cl(K_1)[p^{\infty}] \cong_{\text{ab.gr}} \cl(K_2)[p^{\infty}]$ and 
$O_{c,p}(K_1)=O_{c,p}(K_2)$ if and only if there is a commutative diagram of $\mathbb{Z}_p[C_2]$ modules
\[ 
\begin{array}{ccc}  0 &\overset{}\to& (\mathcal{O}_{K_1}/c)^{*}[p^{\infty}]  \to  \\  && 
\downarrow \phi_1 
\\ 0 &\underset{}\to& (\mathcal{O}_{K_2}/c)^{*}[p^{\infty}] \to  \end{array} \begin{array}{ccc}  \cl(K_1,c)[p^{\infty}] &\overset{}\to& \cl(K_1)[p^{\infty}] \to 0 
\\ \downarrow \phi_2 &&
 \downarrow \phi_3\\ 
\cl(K_2,c)[p^{\infty}] &\underset{}\to& \cl(K_2)[p^{\infty}] \to 0, \end{array}
\]
with $\phi_1$ being the restriction of a ring isomorphism and $\phi_3$ being an isomorphism of abelian groups.  
\begin{definition}
\label{def:spaceofexactseq}
Define 
$\mathcal{S}_p(R)$ as
the set of equivalence classes of pairs $(G,\theta)$, where 
$$G \in \mathcal{G}_p, \
\theta \in \ext_{\mathbb{Z}_p[C_2]}(G,R^{*}[p^{\infty}])$$
under the following equivalence relation:
two pairs $(G_1,\theta_1), (G_2,\theta_2)$ are identified if $G_1=G_2$ and $\theta_1$ and $\theta_2$ are in the same $\aut_{\text{ring}}(R) \times \aut_{\text{ab.gr.}}(G_1)$-orbit. 
\end{definition}
Let us  
denote by
$\widetilde{\mathcal{S}}_p(R)$ the set of pairs $(G,\theta)$ where $G \in \mathcal{G}_p$ and $\theta \in \ext_{\mathbb{Z}_p[C_2]}(G,R^{*}[p^{\infty}])$, 
thus bringing into play
the quotient map
$ \pi:\widetilde{\mathcal{S}}_p(R) \to \mathcal{S}_p(R)$.
We are interested in studying the distribution of   $S'_p(K)$
given by the pair
$$ K \mapsto S'_p(K):=(G_p(K),O_{c,p}(K)) \in \mathcal{S}_p(R).$$ 
\begin{definition}
\label{def:mcl}
Let $\mcl$ be the unique probability  measure on $\mathcal{G}_{p}$ which gives to each abelian $p$-group $G$ 
a weight inversely proportional to the size of the automorphism group of $G$.
\end{definition}
This measure was introduced by Cohen and Lenstra in ~\cite{MR756082} to predict the distribution of 
$G_p(K)$,
the first component 
of $S'_p(K)$. We shall introduce a measure on $\mathcal{S}_p(R)$
that enables us to predict the joint 
distribution of the vector $S'_p(K)$.
Consider the discrete $\sigma$-algebra
on both
$\widetilde{\mathcal{S}}_p(R), \mathcal{S}_p(R)$  
and 
equip $\widetilde{\mathcal{S}}_p(R)$ with 
the following
measure,
\begin{equation}
\label{def:sec}
\widetilde{\mu}_{\text{seq}}((G,\theta)):=
\frac{\mcl(G)}{\#\ext_{\mathbb{Z}_p[C_2]}(G,R^{*}[p^{\infty}])}.
\end{equation}
Let $\mseq:=\pi_*(\widetilde{\mu}_{\text{seq}})$ 
be the  pushforward measure of  $\widetilde{\mu}_{\text{seq}}$ 
on $\mathcal{S}_p(R)$ via $\pi$.
It is evident that  $\widetilde{\mu}_{\text{seq}}$ and $\mseq$ are probability measures.
We now 
formulate a
heuristic which roughly states 
that 
ray class sequences 
equidistribute
within the set of isomorphism classes of exact sequences
with respect to the measure $\mseq$. 
\begin{heuristic}
\label{h:lebkov}
For any `reasonable' function $f:\mathcal{S}_p(R) \to \mathbb{R}$ we have  
$$\lim_{X \to \infty}
\#\{K \in \c{F}(R) : |\disc(K)|\leq X\}^{-1}
\sum\limits_{\substack{K \in \c{F}(R) \\|\disc(K)|\leq X}}f(S'_p(K))
=\sum_{S \in \mathcal{S}_p(R)}f(S)\mseq(S).$$
\end{heuristic}
Letting $f$ be the indicator function of a
singleton
yields the following statement.
\begin{conjecture}
\label{conj:1}
For any   $S \in \mathcal{S}_p(R)$
we have 
$$ 
\lim_{X \to \infty}
\frac{\#\{K \in \c{F}(R) : |\disc(K)|\leq X,S'_p(K)=S\}}
{\#\{K \in \c{F}(R) : |\disc(K)|\leq X\}}
=\mseq(S)
.$$
\end{conjecture}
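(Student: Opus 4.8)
The plan is to derive Conjecture~\ref{conj:1} as the immediate specialization of Heuristic~\ref{h:lebkov}, so there is in fact no independent analytic content to produce: the statement is precisely what that heuristic yields when $f$ is taken to be the indicator function of a point. Concretely, I would fix $S \in \mathcal{S}_p(R)$ and apply Heuristic~\ref{h:lebkov} to the test function $f = \mathbf{1}_{\{S\}} \colon \mathcal{S}_p(R) \to \mathbb{R}$ sending $S$ to $1$ and every other class to $0$. The only point requiring comment is that $\mathbf{1}_{\{S\}}$ should count among the \emph{reasonable} functions $f$ allowed in the heuristic; since it is $\{0,1\}$-valued and supported on a single point this is unproblematic, and it is the sole (deliberately informal) place where the notion of reasonableness enters — no quantitative bound is needed.

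Next I would unwind the two sides of Heuristic~\ref{h:lebkov} for this $f$. On the left, $\sum_{K} f(S'_p(K))$, taken over $K \in \c{F}(R)$ with $|\disc(K)| \le X$, collapses to $\#\{K \in \c{F}(R) : |\disc(K)| \le X,\ S'_p(K) = S\}$, and dividing by $\#\{K \in \c{F}(R) : |\disc(K)| \le X\}$ — which is nonzero for $X$ large and tends to $\infty$, since by the standing hypothesis $R = \mathcal{O}_{\mathcal{A}}/c$ a positive fraction of all discriminants lies in $\c{F}(R)$ — produces precisely the ratio appearing in the statement. On the right, $\sum_{S' \in \mathcal{S}_p(R)} f(S')\,\mseq(S') = \mseq(S)$ by the definition of $\mathbf{1}_{\{S\}}$, with absolute convergence since $\mseq = \pi_*(\widetilde{\mu}_{\text{seq}})$ is a probability measure on the countable set $\mathcal{S}_p(R)$. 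Equating the two sides and letting $X \to \infty$ gives the asserted identity.

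The point I want to stress is that there is no obstacle internal to this deduction: the entire weight of the statement is absorbed into Heuristic~\ref{h:lebkov}, i.e. into the construction of the measure $\mseq$ on $\mathcal{S}_p(R)$ through $\widetilde{\mu}_{\text{seq}}$ and into the assertion that ray class sequences equidistribute for it. If one instead sought an unconditional theorem in the spirit of Conjecture~\ref{conj:1}, the hard part would shift entirely to proving that equidistribution — and that is exactly what the later sections carry out for $p = 2$ and for the invariant $(\#(2\cl(K))[2], \Im(\delta_2(K)))$ of Theorem~\ref{main thm intro}, via the asymptotic evaluation of the mixed moments in Theorem~\ref{m.t. on multi-moments} and the reconstruction of the distribution from them.
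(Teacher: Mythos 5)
Your proposal is correct and is exactly the paper's route: the paper obtains Conjecture~\ref{conj:1} from Heuristic assumption~\ref{h:lebkov} precisely by taking $f$ to be the indicator function of the singleton $\{S\}$, with the unwinding of both sides left implicit. Your additional remarks (that $\mathbf{1}_{\{S\}}$ is a ``reasonable'' test function, that $\c{F}(R)$ has positive density so the denominator is nontrivial, and that all genuine content resides in the heuristic itself) are consistent with the paper and add nothing that conflicts with it.
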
 
A special concrete example
is the case of split sequences.
\begin{conjecture}
\label{con:eibaar}
The fraction of $K \in \c{F}(R)$, ordered by the size of their discriminant,
for which $\cl(K)[p^{\infty}] \cong_{\mathrm{ab.gr.}} G$ and the $p$-part of the ray class sequence of modulus $c$ splits, approaches
$$ 
\frac{\mu_{\emph{CL}}(G)}{\#\Hom_{\mathrm{ab.gr.}}(G,R^{*}[p^{\infty}]^{-})},$$
where 
$(R^{*}[p^{\infty}])^{-}$ 
denotes
the minus part 
of
$R^{*}[p^{\infty}]$ 
under the action of $C_2$.
\end{conjecture}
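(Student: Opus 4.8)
The plan is to derive this conjecture directly from Heuristic~\ref{h:lebkov} (equivalently Conjecture~\ref{conj:1}) by computing the $\mseq$-measure of the set of split sequences with prescribed $G$. First I would fix $G \in \mathcal{G}_p$ and identify the relevant event inside $\mathcal{S}_p(R)$: it is the collection of equivalence classes $(G,\theta)$ for which $\theta$ is the trivial class in $\ext_{\mathbb{Z}_p[C_2]}(G,R^{*}[p^{\infty}])$. Since the split extension class is fixed by every automorphism in $\aut_{\text{ring}}(R) \times \aut_{\text{ab.gr.}}(G)$, its $\pi$-fiber in $\widetilde{\mathcal{S}}_p(R)$ is the single point $(G,0)$. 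Hence $\mseq(\{(G,0)\}) = \widetilde{\mu}_{\text{seq}}((G,0)) = \mcl(G)/\#\ext_{\mathbb{Z}_p[C_2]}(G,R^{*}[p^{\infty}])$, and the whole task reduces to evaluating that $\ext$-group.

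The key computational step is therefore the identification
$$\#\ext_{\mathbb{Z}_p[C_2]}(G,R^{*}[p^{\infty}]) = \#\Hom_{\text{ab.gr.}}(G,(R^{*}[p^{\infty}])^{-}).$$
To see this I would use that $G$ carries the $C_2$-action by $-\Id$, so that over $\mathbb{Z}_p[C_2]$ it is a module on which $\tau$ acts as $-1$, i.e.\ it lies in the ``minus'' isotypic part; since $p$ is odd, $\mathbb{Z}_p[C_2] \cong \mathbb{Z}_p \times \mathbb{Z}_p$ splits idempotently into its plus and minus parts, and every $\mathbb{Z}_p[C_2]$-module decomposes accordingly. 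Writing $R^{*}[p^{\infty}] = (R^{*}[p^{\infty}])^{+} \oplus (R^{*}[p^{\infty}])^{-}$, the functor $\ext_{\mathbb{Z}_p[C_2]}(G,-)$ kills the plus part (because $\Hom$ and $\ext$ between modules supported on orthogonal idempotents vanish), leaving $\ext_{\mathbb{Z}_p}(G,(R^{*}[p^{\infty}])^{-})$. Finally, for finite abelian $p$-groups $A,B$ one has the standard isomorphism $\ext^1_{\mathbb{Z}_p}(A,B) \cong \Hom(A,B)$ (both are computed from a free resolution $0 \to \mathbb{Z}_p^n \to \mathbb{Z}_p^n \to A \to 0$ and, $B$ being finite and divisible-free, $\ext^1$ equals the cokernel, which has the same cardinality as $\Hom(A,B)$). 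Combining, $\#\ext_{\mathbb{Z}_p[C_2]}(G,R^{*}[p^{\infty}]) = \#\Hom_{\text{ab.gr.}}(G,(R^{*}[p^{\infty}])^{-})$, which plugged into the displayed value of $\mseq$ gives exactly the asserted limit, via Conjecture~\ref{conj:1}.

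The main obstacle, I expect, is not the analytic input — that is entirely outsourced to Heuristic~\ref{h:lebkov} — but rather making the $\ext$-group computation fully rigorous in the $C_2$-equivariant setting: one must be careful that the action on $G$ is genuinely $-\Id$ (so $G$ is purely ``minus''), that the splitting of $\mathbb{Z}_p[C_2]$ is compatible with the $\ext$ long exact sequence, and that passing from ``trivial extension class'' to ``split sequence'' respects the equivalence relation defining $\mathcal{S}_p(R)$ (i.e.\ that no nontrivial class becomes equivalent to $0$ under the $\aut_{\text{ring}}(R) \times \aut_{\text{ab.gr.}}(G)$-action, which is clear since $0$ is a fixed point but worth stating). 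A secondary point is bookkeeping: confirming that the event ``the $p$-part of the ray class sequence splits and $\cl(K)[p^{\infty}] \cong G$'' corresponds on the nose to $S'_p(K) = (G,0)$ under the dictionary set up before Definition~\ref{def:spaceofexactseq}, which follows from the commutative-diagram characterization of $O_{c,p}(K)$ given there.
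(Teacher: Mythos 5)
Your proposal is correct and matches the paper's own derivation: the paper likewise observes that $\ext_{\mathbb{Z}_p[C_2]}(G,R^{*}[p^{\infty}])=\ext_{\mathbb{Z}_p}(G,(R^{*}[p^{\infty}])^{-})$ and then invokes the non-canonical isomorphism $\ext_{\mathbb{Z}_p}(A,B)\cong_{\mathrm{ab.gr.}}\Hom_{\mathbb{Z}_p}(A,B)$ for finite abelian $p$-groups to deduce the conjecture from Conjecture~\ref{conj:1}. Your additional care about the singleton $\pi$-fiber of $(G,0)$ (since the zero class is a fixed point of the $\aut_{\text{ring}}(R)\times\aut_{\text{ab.gr.}}(G)$-action) fills in a step the paper leaves implicit, but the route is the same.
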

Indeed, 
$\ext_{\mathbb{Z}_p[C_2]}(G,R^{*}[p^{\infty}])=\ext_{\mathbb{Z}_p}(G,(R^{*}[p^{\infty}])^{-})$
holds,
hence
Conjecture~\ref{con:eibaar} is 
derived from Conjecture~\ref{conj:1}
by recalling that for two finite abelian $p$-groups $A,B$, there is a non-canonical isomorphism 
$\ext_{\mathbb{Z}_p}(A,B) \cong_{\mathrm{ab.gr.}}\Hom_{\mathbb{Z}_p}(A,B)$.

\subsection{Conjectures on the $p$-torsion}
\label{s:ptors}
We next state certain consequences of Heuristic assumption~\ref{h:lebkov}
regarding 
the $p$-torsion of the ray class sequences.
Taking $p$-torsion in~\eqref{eq:exact}
provides us 
with 
a long exact sequence whose first four terms are given by 
$$S(K)[p]:=
\Bigg(
1 
\to (\mathcal{O}_K/c)^{*}[p] \to \cl(K,c)[p] \to \cl(K)[p] 
\xrightarrow[]{\delta_p(K)}
 \frac{(\mathcal{O}_K/c)^{*}}{((\mathcal{O}_K/c)^{*})^{p}}
\Bigg)
,$$
where the map
$ \delta_p(K)$
is defined 
as follows:
given a class $x \in \cl(K)[p]$
pick a representative ideal 
$\mathcal{I}$ of $x$
which is coprime to $c$,
take a generator of $\mathcal{I}^{p}$ and reduce it
modulo $c$. The choice of another representative does not change it modulo $p$-th powers. More generally, taking $p$-torsion in any short exact sequence of $\mathbb{Z}_p[C_2]$-modules
$$S:=(0 \to A \to B \to C \to 0)
$$
provides us with a long exact sequence whose first terms are 
$$S[p]:=
\Bigg(
1 
\to A[p] \to B[p] \to C[p] 
\xrightarrow[]{\delta_p(S)}
 \frac{A}{pA}
\Bigg),
$$
where $\delta_p(S)$ is defined in the same way as explained above (in particular we have $\delta_p(S_p(K))=\delta_p(K)$). Thus this provides a map sending an element $\theta$ of $\ext_{\mathbb{Z}_p[C_2]}(C,A)$ to a map $\delta_p(\theta):C[p] \to A/pA$. We will make repeatedly use of the following fact.
\begin{proposition} \label{induces surj map}
The map sending $\theta$ to $\delta_p(\theta)$, from $\ext_{\mathbb{Z}_p[C_2]}(C,A)$ to $\Hom_{\mathbb{Z}_p[C_2]}(C[p],A/pA)$, is a surjective group homomorphism.
\end{proposition}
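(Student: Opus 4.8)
The plan is to work with the standard long exact sequences obtained by applying $\Hom_{\mathbb{Z}_p[C_2]}(-, A)$ and $\Hom_{\mathbb{Z}_p[C_2]}(C, -)$, and to identify $\delta_p(\theta)$ with a genuine connecting (co)homology map, which is automatically additive in $\theta$. Concretely, fix the exact sequence $0 \to pC \to C \to C[p]^\vee \to 0$ — more usefully, multiplication by $p$ gives the short exact sequence $0 \to C[p] \to C \xrightarrow{p} pC \to 0$. Applying $\Hom_{\mathbb{Z}_p[C_2]}(-,A)$ yields a long exact sequence in $\ext$, and the relevant connecting map $\ext^0_{\mathbb{Z}_p[C_2]}(C[p],A) \leftarrow \ext^1_{\mathbb{Z}_p[C_2]}(pC,A)$ etc.; composing with the map $\ext_{\mathbb{Z}_p[C_2]}(C,A) \to \ext_{\mathbb{Z}_p[C_2]}(pC,A)$ induced by the inclusion $pC \hookrightarrow C$ produces a homomorphism landing in $\Hom_{\mathbb{Z}_p[C_2]}(C[p], A/pA)$ once one also uses the sequence $0 \to A[p] \to A \xrightarrow{p} pA \to 0$ (or rather $0 \to pA \to A \to A/pA \to 0$) to reinterpret the target. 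The first step, then, is to check that this homological connecting map agrees with the hands-on definition of $\delta_p(\theta)$ given just above the statement: chase an element $x \in C[p]$ through a representative extension $0 \to A \to B \to C \to 0$, lift $x$ to $b \in B$, observe $pb \in A$, and note its class mod $pA$ is independent of the lift and functorial in $\theta$. This matching is essentially a diagram chase and is where one pins down additivity, since the Baer sum of extensions corresponds to addition in $\ext$ and the connecting maps of long exact sequences are additive by construction.

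Next I would establish surjectivity. Given an arbitrary $\mathbb{Z}_p[C_2]$-homomorphism $f: C[p] \to A/pA$, I want an extension $\theta$ realizing it. The cleanest route is to build the extension directly: take the pushout/pullback construction. Start from the tautological extension $0 \to C[p] \to C \xrightarrow{p} pC \to 0$ (so here $C$ plays the role of the "universal" middle term and $C[p]$ the sub), then lift $f$ along $A \twoheadrightarrow A/pA$ — this is possible because $C[p]$ is $p$-torsion, but one must be careful that $f$ need not lift to a map $C[p] \to A$ as $\mathbb{Z}_p[C_2]$-modules; instead one should push out the sequence $0 \to C[p] \to C \to pC \to 0$ along a chosen $\mathbb{Z}_p$-splitting-compatible map and then compare. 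A more robust approach: use the fact quoted in the paper that $\ext_{\mathbb{Z}_p[C_2]}(C,A) = \ext_{\mathbb{Z}_p}(C, A^-)$ together with the non-canonical iso $\ext_{\mathbb{Z}_p}(C,A^-) \cong \Hom_{\mathbb{Z}_p}(C, A^-)$; under these identifications the $\delta_p$ map becomes (up to the same kind of identification) a restriction-type map $\Hom_{\mathbb{Z}_p}(C, A^-/pA^-) \leftarrow \Hom_{\mathbb{Z}_p}(C, A^-)$ composed with restriction to $C[p]$, and surjectivity reduces to the surjectivity of $\Hom_{\mathbb{Z}_p}(C, A^-) \to \Hom_{\mathbb{Z}_p}(C[p], A^-/pA^-)$, which for finite abelian $p$-groups follows by writing everything in terms of invariant factors and the structure theorem (every homomorphism out of $C[p]$ into $A^-/pA^-$ extends, since $C[p]$ embeds in $C$ as a direct-summand-free subgroup but the quotient obstruction is killed after reducing $A^-$ mod $p$). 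Of course $C_2$-equivariance must be tracked throughout, but since $p$ is odd the group ring $\mathbb{Z}_p[C_2]$ splits as $\mathbb{Z}_p \times \mathbb{Z}_p$ (eigenspace decomposition into plus and minus parts), so every $\mathbb{Z}_p[C_2]$-module is canonically a direct sum of its $\pm$ parts and one works on each eigenspace separately — this is exactly why the reduction to $\ext_{\mathbb{Z}_p}$ on the minus part is legitimate.

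The main obstacle I anticipate is not the surjectivity per se but the bookkeeping needed to make the identification $\delta_p(\theta)$ a \emph{well-defined homomorphism} on the nose, rather than just a well-defined map of sets: one must verify that the two constructions of the connecting map (via $\Hom(-,A)$ applied to $0\to C[p]\to C\to pC\to 0$, versus via $\Hom(C,-)$ applied to $0\to pA\to A\to A/pA\to 0$) are compatible and yield the \emph{same} pairing, so that additivity in $\theta$ is manifest. This is a standard but slightly fiddly compatibility of connecting maps (a "naturality of the snake lemma" argument), and keeping the $C_2$-action visible while doing it is the only real subtlety. Once that compatibility is in hand, both "group homomorphism" and "surjective" drop out of the long exact sequences together with the elementary structure theory of finite abelian $p$-groups on the minus eigenspace.
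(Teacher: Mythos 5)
Your homological framing of additivity (naturality of connecting maps / bilinearity of the Yoneda pairing) is sound, and so is the eigenspace reduction since $\mathbb{Z}_p[C_2]\cong\mathbb{Z}_p\times\mathbb{Z}_p$ for odd $p$. The surjectivity argument, however, breaks at the ``more robust approach.'' The map $\delta_p$ is \emph{not}, under any choice of non-canonical identification $\ext_{\mathbb{Z}_p}(C,A^-)\cong\Hom_{\mathbb{Z}_p}(C,A^-)$, the restriction-to-$C[p]$-then-reduce-mod-$p$ map, and that restriction--reduction map is not surjective. Take $C=\mathbb{Z}/p^2$ and $A^-=\mathbb{Z}/p$: every $\phi\in\Hom(\mathbb{Z}/p^2,\mathbb{Z}/p)$ satisfies $\phi(p)=p\phi(1)=0$, so $\phi|_{C[p]}\equiv 0$ and your candidate map is identically zero, while $\Hom(C[p],A^-/pA^-)\cong\mathbb{Z}/p$ is nontrivial; by contrast $\delta_p\colon\ext^1_{\mathbb{Z}_p}(\mathbb{Z}/p^2,\mathbb{Z}/p)\cong\mathbb{Z}/p\to\mathbb{Z}/p$ is an isomorphism (the extension with lift $e$ and $p^2e=a\in A^-$ sends $p\in C[p]$ to $a\bmod pA^-$). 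One map is zero and the other onto, so no identification can reconcile them. Your first idea --- lifting $f$ along $A\twoheadrightarrow A/pA$ before pushing out --- is obstructed for the same reason: for $A=\mathbb{Z}/p^2$ a lift of a nonzero $f(x)$ must land in $A[p]=pA$, whose image in $A/pA$ is zero.

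What actually works, and is what the paper points to in Steps 2--3 of the proof of Proposition~\ref{delta maps for ext tilde}, is to decompose $C$ into cyclic summands and stay in $\ext$ throughout. On each eigenspace, writing $C\cong\bigoplus_i\mathbb{Z}/p^{m_i}$, both $\ext^1_{\mathbb{Z}_p}(-,A)$ and $\Hom((-)[p],A/pA)$ carry direct sums to products compatibly with $\delta_p$, so one reduces to $C=\mathbb{Z}/p^m$; there $\ext^1_{\mathbb{Z}_p}(\mathbb{Z}/p^m,A)\cong A/p^mA$, $\Hom(C[p],A/pA)\cong A/pA$, and $\delta_p$ is the canonical projection $A/p^mA\twoheadrightarrow A/pA$ --- visibly surjective (the paper realizes this by writing down $B$ via generators and relations rather than invoking the $\ext$-isomorphism). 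Alternatively, your long-exact-sequence setup can be repaired by pulling back along $C[p]\hookrightarrow C$ rather than $pC\hookrightarrow C$: applying $\Hom_{\mathbb{Z}_p[C_2]}(-,A)$ to $0\to C[p]\to C\xrightarrow{p}pC\to 0$ yields $\ext^1(C,A)\to\ext^1(C[p],A)\to\ext^2(pC,A)=0$, the last term vanishing because $\mathbb{Z}_p[C_2]$ has global dimension one for odd $p$; since $C[p]$ is elementary abelian, $\ext^1(C[p],A)$ is canonically $\Hom(C[p],A/pA)$ and the composite is $\delta_p$. Either route closes the gap; the non-canonical $\ext\cong\Hom$ detour does not.
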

The reader interested in a proof of Proposition \ref{induces surj map}, can look at the proof of the analogous, but more complicated, Proposition \ref{delta maps for ext tilde}: all the ingredients for the proof of Proposition \ref{induces surj map} are contained in the proof of Proposition \ref{delta maps for ext tilde}.

Next we shall define $j:=\dim_{\mathbb{F}_p}(\cl(K)[p])$
and apply any pair of identifications from 
$\isom_{\mathbb{F}_p}(\cl(K)[p],\mathbb{F}_p^j) \times \isom_{\text{ring}}(\mathcal{O}_K/c,R)$. 
Therefore, 
we obtain a unique orbit of maps $\phi \in \Hom_{\mathbb{F}_p}(\mathbb{F}_p^j,(\frac{R^{*}}{R^{*p}})^{-})$ under the action of $\text{GL}_{j}(\mathbb{F}_p) \times \aut_{\text{ring}}(R)$. This is tantamount to having a $\aut_{\text{ring}}(R)$-orbit of images in $(\frac{R^{*}}{R^{*p}})^{-}$ of $\delta_p(K)$ via any of the previous identifications. We denote this orbit by $[\Im(\delta_p(K))]$. The assignment $K \mapsto [\Im(\delta_p(K))]$ attaches to each imaginary quadratic field $K \in \mathcal{F}_c(R)$ 
a well-defined $\aut_{\text{ring}}(R)$-orbit of vector sub-spaces of $(\frac{R^{*}}{R^{*p}})^{-}$.

By Proposition \ref{induces surj map}, the map 
$$\ext_{\mathbb{Z}_p}(G,R^{*}[p^{\infty}]^{-}) \to \Hom_{\mathbb{Z}_p}(G[p],(R^{*}/R^{*p})^-)
$$ induces, by pushforward, the 
counting probability 
measure from $\ext_{\mathbb{Z}_p}(G,(R^{*}[p^{\infty}])^{-}) $ to $\Hom_{\mathbb{Z}_p}(G[p],(R^{*}/R^{*p})^-)$. 
Therefore, fixing
 a sub-$\mathbb{F}_p$-space $Y$
of $(\frac{R^{*}}{R^{*p}})^{-}$
and a non-negative integer $j$,
Heuristic assumption~\ref{h:lebkov}
supplies us with the following. 
\begin{conjecture}
\label{conj:3}
The proportion of $K \in \c{F}(R)$ ordered by the size of their discriminant, 
for which $\dim_{\mathbb{F}_p}(\cl(K)[p])=j$ and $[\Im(\delta_p(K))]$ is $O(Y)$, the $\aut_{\emph{ring}}(R)$-orbit of $Y$, approaches 
$$ \mu_{\emph{CL}}(G \in \mathcal{G}_p:\dim_{\mathbb{F}_p}(G[p])=j) 
\
 \frac{\#\epi_{\mathbb{F}_p}(\mathbb{F}_p^j,Y)\cdot \#O(Y)}{\#\Hom_{\mathbb{F}_p}
\big(\mathbb{F}_p^j,
(R^{*}/R^{*p})^{-}\big)
}
.$$
\end{conjecture}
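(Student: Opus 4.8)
The plan is to derive Conjecture~\ref{conj:3} as a formal consequence of Heuristic assumption~\ref{h:lebkov} and Proposition~\ref{induces surj map}, as the paragraph preceding the statement anticipates. First I would check that the pair being counted depends on $K$ only through the invariant $S'_p(K)=(G_p(K),O_{c,p}(K))\in\mathcal{S}_p(R)$ whose law is predicted by Heuristic~\ref{h:lebkov}. The $\mathbb{F}_p$-dimension of $\cl(K)[p]$ equals that of $G_p(K)[p]$, so the first datum is a function of $G_p(K)$ alone. For the second, recall that $\delta_p(S_p(K))=\delta_p(K)$ and that the connecting map $\delta_p(\theta)\colon C[p]\to A/pA$ attached to a class $\theta\in\ext_{\mathbb{Z}_p[C_2]}(C,A)$ is natural in the short exact sequence; consequently, replacing $\theta$ by a representative of its $\aut_{\text{ring}}(R)\times\aut_{\text{ab.gr.}}(G)$-orbit alters $\delta_p(\theta)$ only by pre-composition with an element of $\text{GL}_j(\mathbb{F}_p)$ (which leaves the image unchanged) and by post-composition with the $\mathbb{F}_p$-linear action of an element of $\aut_{\text{ring}}(R)$ on $(R^{*}/R^{*p})^{-}$ (which moves the image within its $\aut_{\text{ring}}(R)$-orbit). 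Hence $(G,\theta)\mapsto(\dim_{\mathbb{F}_p}G[p],[\Im(\delta_p(\theta))])$ is well defined on $\mathcal{S}_p(R)$ and sends $S'_p(K)$ to $(\dim_{\mathbb{F}_p}\cl(K)[p],[\Im(\delta_p(K))])$. Taking $f$ to be the indicator function of $A:=\{(G,\theta)\in\mathcal{S}_p(R)\colon \dim_{\mathbb{F}_p}G[p]=j,\ [\Im(\delta_p(\theta))]=O(Y)\}$, Heuristic~\ref{h:lebkov} identifies the limiting proportion in the statement with $\mseq(A)$, and it remains to prove the counting identity
\[
\mseq(A)=\mcl\big(G\in\mathcal{G}_p\colon\dim_{\mathbb{F}_p}(G[p])=j\big)\cdot\frac{\#\epi_{\mathbb{F}_p}(\mathbb{F}_p^{j},Y)\cdot\#O(Y)}{\#\Hom_{\mathbb{F}_p}\big(\mathbb{F}_p^{j},(R^{*}/R^{*p})^{-}\big)}.
\]

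To establish this, I would unwind $\mseq(A)=\widetilde{\mu}_{\text{seq}}(\pi^{-1}(A))$ via~\eqref{def:sec} together with the identity $\ext_{\mathbb{Z}_p[C_2]}(G,R^{*}[p^{\infty}])=\ext_{\mathbb{Z}_p}(G,(R^{*}[p^{\infty}])^{-})$, writing $\mseq(A)$ as a sum over $G\in\mathcal{G}_p$ with $\dim_{\mathbb{F}_p}G[p]=j$ of $\mcl(G)$ times the fraction of $\theta\in\ext_{\mathbb{Z}_p}(G,(R^{*}[p^{\infty}])^{-})$ with $[\Im(\delta_p(\theta))]=O(Y)$. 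By Proposition~\ref{induces surj map}, $\theta\mapsto\delta_p(\theta)$ is a surjective homomorphism of finite abelian groups onto $\Hom_{\mathbb{Z}_p}(G[p],(R^{*}/R^{*p})^{-})\cong\Hom_{\mathbb{F}_p}(\mathbb{F}_p^{j},(R^{*}/R^{*p})^{-})$, so all of its fibres have the same cardinality; thus that fraction equals the proportion of homomorphisms $\phi\colon\mathbb{F}_p^{j}\to(R^{*}/R^{*p})^{-}$ whose image lies in the orbit $O(Y)$. The subspaces in $O(Y)$ are all $\mathbb{F}_p$-isomorphic, being the images of $Y$ under $\mathbb{F}_p$-linear automorphisms, so each receives exactly $\#\epi_{\mathbb{F}_p}(\mathbb{F}_p^{j},Y)$ surjections from $\mathbb{F}_p^{j}$, whence $\#\epi_{\mathbb{F}_p}(\mathbb{F}_p^{j},Y)\cdot\#O(Y)$ homomorphisms have image in $O(Y)$. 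This fraction is independent of $G$; factoring it out of the sum and collecting the Cohen--Lenstra weights yields the displayed identity and hence the conjecture.

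I do not anticipate any genuine analytic difficulty here: granting the heuristic, the statement is a bookkeeping identity for finite groups. The step needing the most attention is the compatibility verification in the first paragraph --- that the construction of the orbit $O_{c,p}(K)$ and the passage to $\delta_p$ are equivariant for the $\aut_{\text{ring}}(R)$- and $\text{GL}_j(\mathbb{F}_p)$-actions, so that $[\Im(\delta_p(\cdot))]$ genuinely descends to $\mathcal{S}_p(R)$ and the fibre counts transfer correctly between $\ext$-classes and $\Hom$-maps. This is precisely the information encoded in Proposition~\ref{induces surj map}, and, for the finer $p$-power bookkeeping, in Proposition~\ref{delta maps for ext tilde}, both of which may be used as black boxes; modulo these inputs the remainder is the routine measure-theoretic manipulation sketched above.
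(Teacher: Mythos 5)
Your proposal is correct and follows exactly the derivation the paper sketches in the paragraph preceding the conjecture: apply Heuristic assumption~\ref{h:lebkov} with $f$ the indicator of the relevant subset of $\mathcal{S}_p(R)$, then use Proposition~\ref{induces surj map} to push the uniform measure from $\ext_{\mathbb{Z}_p}(G,(R^*[p^\infty])^-)$ forward to $\Hom_{\mathbb{F}_p}(G[p],(R^*/R^{*p})^-)$ and count homomorphisms whose image lies in $O(Y)$. The equivariance checks you spell out (that $[\Im(\delta_p(\cdot))]$ descends to $\mathcal{S}_p(R)$, and that $\#\epi_{\mathbb{F}_p}(\mathbb{F}_p^{j},Y')$ is constant over $Y'\in O(Y)$) are precisely the points the paper leaves implicit, and your final bookkeeping identity is correct.
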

We will prove the analogous statement of this
Conjecture~\ref{conj:3} for $p=2$ in Theorem  ~\ref{theorem:distribution of delta maps}.
A concrete special case is given by the following
\begin{conjecture} 
\label{conj:icecream}
The proportion of $K \in \c{F}(R)$ ordered by the size of their discriminant,
for which 
$\dim_{\mathbb{F}_p}(\cl(K)[p])=j$ and 
$\cl(K,c)[p]$
splits as the direct sum of $\cl(K)[p]$ and $(\mathcal{O}_K/c)^{*}[p]$, approaches 
$$  
\frac{\mu_{\emph{CL}}(G \in \mathcal{G}_p:\dim_{\mathbb{F}_p}(G[p])=j)}{\#\Hom_{\mathbb{F}_p}
\big(\mathbb{F}_p^j,
(R^{*}/R^{*p})^{-}\big)
}
.$$
\end{conjecture}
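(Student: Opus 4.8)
The plan is to obtain Conjecture~\ref{conj:icecream} as the special case $Y=\{0\}$ of Conjecture~\ref{conj:3} --- equivalently, to feed Heuristic~\ref{h:lebkov} the indicator function of a single explicit event in $\mathcal{S}_p(R)$. The first step is an algebraic translation: for $K\in\c{F}(R)$ with $K\neq\Q(\mathrm{i}),\Q(\sqrt{-3})$ (so that $\mathcal{O}_K^{*}=\langle-1\rangle$), I claim that $\cl(K,c)[p]$ splits as the direct sum of $\cl(K)[p]$ and $(\mathcal{O}_K/c)^{*}[p]$ precisely when $\delta_p(K)=0$, equivalently when $[\Im(\delta_p(K))]=O(\{0\})$, the orbit of the zero subspace. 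To see this, recall the four-term exact sequence $S(K)[p]$: exactness at $\cl(K)[p]$ says that the image of the natural map $\cl(K,c)[p]\to\cl(K)[p]$ is $\ker(\delta_p(K))$, while exactness at $\cl(K,c)[p]$ identifies the kernel of that map with (the image of) $(\mathcal{O}_K/c)^{*}[p]$. Hence $\delta_p(K)=0$ forces a short exact sequence $1\to(\mathcal{O}_K/c)^{*}[p]\to\cl(K,c)[p]\to\cl(K)[p]\to1$, and since all three terms are $\mathbb{F}_p$-vector spaces this sequence automatically splits; conversely any abstract isomorphism $\cl(K,c)[p]\cong\cl(K)[p]\oplus(\mathcal{O}_K/c)^{*}[p]$ forces $\#\ker(\delta_p(K))=\#\cl(K)[p]$, hence $\delta_p(K)=0$.

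With this dictionary in hand, the event of Conjecture~\ref{conj:icecream} is $\{K:\dim_{\mathbb{F}_p}\cl(K)[p]=j,\ [\Im(\delta_p(K))]=O(\{0\})\}$. Substituting $Y=\{0\}$ into Conjecture~\ref{conj:3} and using $\#\epi_{\mathbb{F}_p}(\mathbb{F}_p^j,\{0\})=\#O(\{0\})=1$ produces exactly the asserted density. To keep the argument self-contained one can instead apply Heuristic~\ref{h:lebkov} directly to the indicator function $f$ of the subset $E:=\{(G,\theta)\in\mathcal{S}_p(R):\dim_{\mathbb{F}_p}G[p]=j,\ \delta_p(\theta)=0\}$; this is well defined because $\delta_p$ is equivariant for the $\aut_{\text{ring}}(R)\times\aut_{\text{ab.gr.}}(G)$-action. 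Then $\mseq(E)=\widetilde{\mu}_{\text{seq}}(\pi^{-1}E)=\sum_{G\,:\,\dim_{\mathbb{F}_p}G[p]=j}\ \sum_{\theta\,:\,\delta_p(\theta)=0}\widetilde{\mu}_{\text{seq}}((G,\theta))$.

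By Proposition~\ref{induces surj map}, for each $G$ the map $\delta_p$ is a surjective homomorphism from $\ext_{\mathbb{Z}_p[C_2]}(G,R^{*}[p^{\infty}])$ onto $\Hom_{\mathbb{Z}_p[C_2]}(G[p],R^{*}/R^{*p})$, so the inner sum over $\theta$ equals $\mcl(G)/\#\Hom_{\mathbb{Z}_p[C_2]}(G[p],R^{*}/R^{*p})$. Since $C_2$ acts on $G[p]$ by $-\Id$, a $\mathbb{Z}_p[C_2]$-linear map out of $G[p]$ is exactly an $\mathbb{F}_p$-linear map into the minus part, so this count is $\#\Hom_{\mathbb{F}_p}(\mathbb{F}_p^{j},(R^{*}/R^{*p})^{-})$, depending only on $j$. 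Pulling it out of the sum over $G$ leaves $\mcl(\{G\in\mathcal{G}_p:\dim_{\mathbb{F}_p}G[p]=j\})$, which recovers precisely the density in Conjecture~\ref{conj:icecream}.

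The only genuine content beyond formal manipulation is the algebraic identification in the first paragraph, which hinges on the elementary-abelian nature of $\cl(K,c)[p]$ together with Proposition~\ref{induces surj map}; the measure-theoretic part is immediate from the definition of $\mseq$. Consequently this conjectural statement carries no analytic difficulty of its own --- it is a clean corollary of Heuristic~\ref{h:lebkov}. For $p=2$ the very same specialization $Y=\{0\}$ turns Theorem~\ref{theorem:distribution of delta maps} into an unconditional proof of the corresponding statement, and it is there, inside Theorem~\ref{m.t. on multi-moments}, that the real work resides; here it is inherited rather than confronted.
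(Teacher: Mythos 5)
Your proposal is correct and follows the same route the paper intends: Conjecture~\ref{conj:icecream} is exactly the $Y=\{0\}$ specialization of Conjecture~\ref{conj:3}, and your algebraic dictionary (splitting of $\cl(K,c)[p]$ is equivalent to $\delta_p(K)=0$, via the four-term exact sequence and a dimension count) together with the direct computation from Heuristic~\ref{h:lebkov} and Proposition~\ref{induces surj map} correctly reproduces the stated density.
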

More generally, 
as a cruder result, one derives a conjectural formula for the joint distribution of the $p$-rank of $\cl(K)$ and of $\cl(K,c)$, as follows. Fix $j_1,j_2$ two non-negative integers.
\begin{conjecture}
\label{c:wc}
As $K$ varies among imaginary quadratic number fields of type $R$, the proportion of them for which $\dim_{\mathbb{F}_p}(\cl(K)[p])=j_1$ and $\dim_{\mathbb{F}_p}(\cl(K,c)[p])=j_2$ approaches
$$ \mu_{\emph{CL}}(G \in \mathcal{G}_p: \dim_{\mathbb{F}_p}(G[p])=j_1) 
\frac
{\#\{\phi:\mathbb{F}_p^{j_1} \to (
R^{*}/R^{*p}
)^{-}
: \rk(\phi)=\rk_p(R^{*})-(j_2-j_1))\}}
{\#\Hom_{\mathbb{F}_p}(\mathbb{F}_p^{j_1},(
R^{*}/R^{*p}
)^{-})}.$$
\end{conjecture}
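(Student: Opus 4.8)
The plan is to \emph{derive} Conjecture~\ref{c:wc} from Conjecture~\ref{conj:3} by summing over all admissible values of $[\Im(\delta_p(K))]$, i.e. by forgetting the image-data down to the single numerical invariant $j_2$. First I would record the exact-sequence bookkeeping: taking $p$-torsion in $S_p(K)$ gives the four-term exact sequence $1 \to (\mathcal{O}_K/c)^{*}[p] \to \cl(K,c)[p] \to \cl(K)[p] \xrightarrow{\delta_p(K)} (R^{*}/R^{*p})^{-}$, so that $\dim_{\mathbb{F}_p}\cl(K,c)[p] = \dim_{\mathbb{F}_p}(R^{*}/R^{*p})^{-} + j_1 - \rk(\delta_p(K))$. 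Since $\dim_{\mathbb{F}_p}(R^{*}/R^{*p})^{-} = \rk_p(R^{*})$ (this uses that, for $p$ odd, the plus part of $R^{*}$ contributes nothing to the $p$-rank after killing $\langle -1\rangle$, which is absorbed into the definition of the minus part), the condition $\dim_{\mathbb{F}_p}\cl(K,c)[p]=j_2$ is \emph{equivalent} to $\rk(\delta_p(K)) = \rk_p(R^{*}) - (j_2-j_1)$, given $\dim_{\mathbb{F}_p}\cl(K)[p]=j_1$. Call this common rank value $r$.

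Next I would partition the event ``$\dim_{\mathbb{F}_p}\cl(K)[p]=j_1$ and $\rk(\delta_p(K))=r$'' into the disjoint events ``$\dim_{\mathbb{F}_p}\cl(K)[p]=j_1$ and $[\Im(\delta_p(K))] = O(Y)$'' as $Y$ runs over a set of representatives of $\aut_{\text{ring}}(R)$-orbits of $\mathbb{F}_p$-subspaces of $(R^{*}/R^{*p})^{-}$ of dimension exactly $r$ (note $\rk(\delta_p(K)) = \dim_{\mathbb{F}_p}\Im(\delta_p(K))$, which is an orbit invariant). Applying Conjecture~\ref{conj:3} to each such $Y$ and adding, the claimed proportion equals
$$
\mu_{\text{CL}}(G\in\mathcal{G}_p:\dim_{\mathbb{F}_p}(G[p])=j_1)\cdot
\frac{1}{\#\Hom_{\mathbb{F}_p}(\mathbb{F}_p^{j_1},(R^{*}/R^{*p})^{-})}
\sum_{Y:\ \dim Y = r}\#\epi_{\mathbb{F}_p}(\mathbb{F}_p^{j_1},Y)\cdot\#O(Y).
$$
It remains to identify the inner sum with $\#\{\phi:\mathbb{F}_p^{j_1}\to(R^{*}/R^{*p})^{-}:\rk(\phi)=r\}$. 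This is a purely linear-algebra identity: every $\phi$ of rank $r$ has a well-defined image, which is a subspace of dimension $r$, lying in a unique $\aut_{\text{ring}}(R)$-orbit; conversely the number of $\phi$ with a \emph{prescribed} image $Y'$ is $\#\epi_{\mathbb{F}_p}(\mathbb{F}_p^{j_1},Y')$, and this count depends only on $\dim Y'$, hence is constant on the orbit $O(Y)$. So grouping the $\phi$'s first by their image and then by the orbit of that image yields exactly $\sum_{Y:\dim Y = r}\#O(Y)\cdot\#\epi_{\mathbb{F}_p}(\mathbb{F}_p^{j_1},Y)$, as required. Substituting back gives the formula in the statement.

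The genuinely substantive input here is Conjecture~\ref{conj:3} (equivalently, the measure-theoretic fact recorded just before it: that the $\delta$-map pushes the counting measure on $\ext_{\mathbb{Z}_p}(G,R^{*}[p^{\infty}]^{-})$ forward to the counting measure on $\Hom_{\mathbb{Z}_p}(G[p],(R^{*}/R^{*p})^{-})$, which in turn rests on Proposition~\ref{induces surj map}); granting that, the present deduction is routine. The only place demanding a little care is the equivalence ``$\dim\cl(K,c)[p]=j_2 \iff \rk\delta_p(K)=r$'': one must check the four-term sequence is exact at $\cl(K)[p]$ with the displayed cokernel and that no stray contribution to the $p$-rank of $(\mathcal{O}_K/c)^{*}$ is lost when passing from $(\mathcal{O}_K/c)^{*}$ to $R^{*}$ and then to the minus part — both of which follow from $p$ being odd and from the identification of $K$ with a field of type $R$. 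Since this is offered as a conjecture ``derived from Heuristic assumption~\ref{h:lebkov}'' rather than a theorem, the write-up can legitimately be brief, flagging $p=2$ (handled separately, where the plus/minus splitting genuinely fails) as the case requiring the extra work carried out in \S\ref{section:heuristic at 2}.
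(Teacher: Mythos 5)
Your route is the one the paper intends: Conjecture~\ref{c:wc} is stated there as the ``cruder'' consequence of Conjecture~\ref{conj:3}, obtained exactly as you propose --- fix $j_1$, partition according to the $\aut_{\text{ring}}(R)$-orbit of $\Im(\delta_p(K))$ among subspaces of a fixed dimension $r$, sum the conjectural proportions, and use the identity $\#\{\phi:\mathbb{F}_p^{j_1}\to (R^{*}/R^{*p})^{-}:\rk(\phi)=r\}=\sum_{\dim Y=r}\#O(Y)\,\#\epi_{\mathbb{F}_p}(\mathbb{F}_p^{j_1},Y)$. That part of your argument is correct and matches the paper's (unwritten) derivation.

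However, your justification of the rank equivalence contains two false statements that happen to cancel. The four-term sequence gives $\dim_{\mathbb{F}_p}\cl(K,c)[p]=\dim_{\mathbb{F}_p}\big((\mathcal{O}_K/c)^{*}[p]\big)+j_1-\rk(\delta_p(K))$, and $\dim_{\mathbb{F}_p}\big((\mathcal{O}_K/c)^{*}[p]\big)=\rk_p(R^{*})$ is the \emph{full} $p$-rank, not $\dim_{\mathbb{F}_p}(R^{*}/R^{*p})^{-}$ as you wrote; and your claim $\dim_{\mathbb{F}_p}(R^{*}/R^{*p})^{-}=\rk_p(R^{*})$ is false in general, because the plus part of $R^{*}/R^{*p}$ is nontrivial whenever some prime $l\equiv 1\md{p}$ divides $c$ (e.g.\ $c=l$ inert: then $(R^{*}/R^{*p})^{-}=0$ while $\rk_p(R^{*})=1$; compare the factor $p^{\#\{l \text{ prime}:\, l\mid c,\ l\equiv 1 \md{p}\}}$ in Conjecture~\ref{conj:7}, which comes precisely from this plus part). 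Killing $\langle -1\rangle$ is irrelevant for odd $p$ and does not ``absorb'' anything. The two misstatements cancel, so the equivalence you actually use, namely that given $\dim_{\mathbb{F}_p}\cl(K)[p]=j_1$ one has $\dim_{\mathbb{F}_p}\cl(K,c)[p]=j_2$ if and only if $\rk(\delta_p(K))=\rk_p(R^{*})-(j_2-j_1)$, is correct. The reason the $\Hom$-space in the formula is into the minus part is not that the plus part vanishes, but that $\delta_p(K)$ is $C_2$-equivariant and $\cl(K)[p]$ carries the $-\Id$ action with $p$ odd, so its image lies in $(R^{*}/R^{*p})^{-}$, while the rank offset $\rk_p(R^{*})$ is contributed by all of $R^{*}[p]$. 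With that correction, your deduction from Conjecture~\ref{conj:3} stands.
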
 
The statements analogous to Conjectures~\ref{conj:3} and \ref{c:wc} for $p=2$ will be proved in Theorem ~\ref{theorem: joint distribution of 4-ranks}, with a more explicit version provided by Theorem ~\ref{theorem: joint distribution of 4-ranks, explicit version}.
\subsection{Agreement with Varma's results}
\label{s:var}
In this section we 
make a certain choice for $f$ in Heuristic assumption~\ref{h:lebkov}
with the aim of stating conjectures for the average of $p$-torsion of ray class groups.
These statements were  previously
proved for $p=3$ 
by Varma~\cite{arXiv:1609.02292}.
In fact, the present paper partly began as an effort to fit her results into a general heuristic framework.

For an element $S \in \mathcal{S}_p(R)$, denote by $M(S)$ the isomorphism class of the middle term of the sequence corresponding to $S$. 
Similarly, for $\theta \in \ext_{\mathbb{Z}_p[C_2]}$
we denote by $M(\theta)$ the isomorphism class of the middle term of the 
equivalence class of
sequences corresponding to $\theta$. 
We will adopt the standard notation $\widehat{A}$
for the dual of a finite abelian group $A$.
\begin{proposition}
\label{prop:just}
We have
$$ \sum_{S \in \mathcal{S}_p(R)}
\#M(S)[p]
\mseq(S)
=
\#{\bigg(\frac{R^{*}}{R^{*p}}\bigg)}^{\!+}
\bigg(1+\#{\Big(\frac{R^{*}}{R^{*p}}}\Big)^{\!-} \bigg).$$
\end{proposition}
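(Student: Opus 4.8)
The plan is to compute the left-hand side by unwinding the definition of $\mseq$ and using the surjectivity result of Proposition~\ref{induces surj map}. First I would rewrite the sum over $\mathcal{S}_p(R)$ as a sum over the unquotiented set $\widetilde{\mathcal{S}}_p(R)$: since $\mseq = \pi_*(\widetilde\mu_{\mathrm{seq}})$ and both $\#M(\cdot)[p]$ and $\widetilde\mu_{\mathrm{seq}}$ are constant on $\pi$-fibres (the middle term only depends on the isomorphism class of the extension, hence on its $\aut_{\mathrm{ring}}(R)\times\aut_{\mathrm{ab.gr.}}(G)$-orbit), we get
$$
\sum_{S\in\mathcal{S}_p(R)}\#M(S)[p]\,\mseq(S)
=\sum_{G\in\mathcal{G}_p}\mcl(G)\,\frac{1}{\#\ext_{\mathbb{Z}_p[C_2]}(G,R^*[p^\infty])}\sum_{\theta\in\ext_{\mathbb{Z}_p[C_2]}(G,R^*[p^\infty])}\#M(\theta)[p].
$$
Here I use $\sum_G\mcl(G)=1$ implicitly at the end; the inner average over $\theta$ is the expected value of $\#M(\theta)[p]$ under the counting measure on $\ext_{\mathbb{Z}_p[C_2]}(G,R^*[p^\infty])=\ext_{\mathbb{Z}_p}(G,(R^*[p^\infty])^-)$.

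The key step is to evaluate $\#M(\theta)[p]$ for an extension $0\to A\to M\to C\to 0$ of $\mathbb{Z}_p$-modules, where $A=(R^*[p^\infty])^-$ and $C=G$. Taking $p$-torsion gives the exact sequence $0\to A[p]\to M[p]\to C[p]\xrightarrow{\delta_p(\theta)} A/pA$, so
$$
\#M(\theta)[p]=\#A[p]\cdot\#\ker\big(\delta_p(\theta)\big)=\#A[p]\cdot\frac{\#C[p]}{\#\Im(\delta_p(\theta))}.
$$
Now $\#A[p]=\#(A/pA)=\#(R^*/R^{*p})^-$ because $A$ is a finite abelian $p$-group. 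By Proposition~\ref{induces surj map}, as $\theta$ ranges uniformly over $\ext_{\mathbb{Z}_p}(G,A)$ the image $\delta_p(\theta)$ ranges over $\Hom_{\mathbb{Z}_p}(C[p],A/pA)$, and the pushforward of the counting measure is the counting measure; hence averaging $\#M(\theta)[p]$ over $\theta$ equals $\#A[p]\cdot\#C[p]$ times the average of $1/\#\Im(\phi)$ over $\phi\in\Hom_{\mathbb{F}_p}(C[p],A/pA)$. Writing $V=(R^*/R^{*p})^-$ and $\dim_{\mathbb{F}_p}C[p]=j$, a standard linear-algebra count gives
$$
\frac{1}{\#\Hom(\mathbb{F}_p^j,V)}\sum_{\phi\in\Hom(\mathbb{F}_p^j,V)}\frac{1}{\#\Im(\phi)}
=\frac{1}{\#V^j}\sum_{W\subseteq V}\frac{\#\{\phi:\mathbb{F}_p^j\twoheadrightarrow W\}}{\#W}
=\frac{1}{\#V^j}\sum_{W\subseteq V}\#\mathrm{Sur}(\mathbb{F}_p^{j},W)\cdot\frac{1}{\#W},
$$
and one checks (e.g.\ by the identity $\#V^j=\sum_{W\subseteq V}\#\mathrm{Sur}(\mathbb{F}_p^j,W)$ and a matching of surjections onto $W$ with injections of $\widehat W\hookrightarrow\mathbb{F}_p^j$, i.e.\ with subspaces of $\mathbb{F}_p^j$ of dimension $\dim W$ times $|\mathrm{GL}|$ factors) that this simplifies so that $\#C[p]$ times this average equals $1+\#V$, \emph{independently of $j$}. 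Thus the inner $\theta$-average is $\#V\cdot(1+\#V)$, with $\#V=\#(R^*/R^{*p})^-$.

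Finally I would assemble: the $\theta$-average being independent of $G$, the outer sum collapses via $\sum_{G}\mcl(G)=1$, yielding
$$
\sum_{S\in\mathcal{S}_p(R)}\#M(S)[p]\,\mseq(S)=\#\Big(\tfrac{R^*}{R^{*p}}\Big)^{-}\Big(1+\#\Big(\tfrac{R^*}{R^{*p}}\Big)^{-}\Big).
$$
This is not yet the claimed formula, which has a $(+)$-factor out front: the discrepancy must be reconciled by the observation that $\#(R^*/R^{*p})^- = \#(R^*/R^{*p})^+\cdot(\text{correction})$ or, more likely, that I have mis-identified which summand of $R^*[p^\infty]$ carries the $-\mathrm{Id}$ action — in the ray class sequence the relevant coefficient module is $(\mathcal{O}_K/c)^*[p^\infty]$ with its full $C_2$-action, and the $\ext$ over $\mathbb{Z}_p[C_2]$ with $G$ carrying $-\mathrm{Id}$ picks out the minus part of the \emph{coefficients}; so $\#A[p]$ should be split as a product of a $(+)$- and a $(-)$-contribution, the former coming from $A[p]$ itself sitting inside $M[p]$ and the latter from $\ker\delta_p$. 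The \textbf{main obstacle} is precisely this bookkeeping of plus/minus parts: getting the $C_2$-equivariance right so that $\#A[p]$ contributes the factor $\#(R^*/R^{*p})^+$ while the cokernel-of-$\delta_p$ average contributes $1+\#(R^*/R^{*p})^-$. Once the equivariant version of Proposition~\ref{induces surj map} is invoked correctly (the map $\ext_{\mathbb{Z}_p[C_2]}(G,R^*[p^\infty])\to\Hom_{\mathbb{Z}_p[C_2]}(G[p],R^*/R^{*p})$ is surjective with uniform fibres, and the target here is $\Hom_{\mathbb{F}_p}(\mathbb{F}_p^j,(R^*/R^{*p})^-)$ since $G$ has $-\mathrm{Id}$-action), the linear-algebra averaging identity above finishes the proof, with all the group-dependence in $j$ washing out.
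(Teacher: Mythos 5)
Your overall strategy -- unwinding $\mseq$ via Proposition~\ref{induces surj map}, reducing to an average over $\delta\in\Hom_{\mathbb{Z}_p}(G[p],(R^*/R^{*p})^-)$, and writing $\#M(\theta)[p]=\#A[p]\cdot\#G[p]/\#\Im(\delta_p(\theta))$ -- matches the paper's, but two of your intermediate claims are false and they do not cancel. First, the ``standard linear-algebra count'' you invoke is wrong: writing $V=(R^*/R^{*p})^-$ and $j=\dim_{\mathbb{F}_p}G[p]$, the quantity
$\#G[p]\cdot\mathrm{avg}_{\phi\in\Hom(\mathbb{F}_p^j,V)}\bigl(1/\#\Im\phi\bigr)$
equals $(\#G[p]+\#V-1)/\#V$, which visibly depends on $j$. (Already for $p=2$, $j=1$, $V=\mathbb{F}_2$ it is $3/2$, not $1+\#V=3$; for $j=0$ it is $1/\#V$.) The identity you cite $\#V^j=\sum_W\#\mathrm{Sur}(\mathbb{F}_p^j,W)$ is correct, but it does not yield $j$-independence. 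Consequently, the per-$G$ inner expectation is genuinely $G$-dependent, namely $\#(R^*/R^{*p})^+(\#G[p]+\#V-1)$, and the outer sum over $G$ cannot collapse via $\sum_G\mcl(G)=1$ as you assert; what is needed is the Cohen--Lenstra moment identity $\sum_{G\in\mathcal{G}_p}\#G[p]\,\mcl(G)=2$, which is exactly the ingredient the paper uses and your argument omits.

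Second, you set $A=(R^*[p^\infty])^-$ and hence $\#A[p]=\#(R^*/R^{*p})^-$. The middle term of an extension in $\ext_{\mathbb{Z}_p[C_2]}(G,R^*[p^\infty])$ contains all of $R^*[p^\infty]$, not just its minus part (the plus part $R^*[p^\infty]^+$ splits off as a direct summand of $M$ since $G$ carries the $-\Id$-action); so the correct value is $\#A[p]=\#(R^*/R^{*p})=\#(R^*/R^{*p})^+\cdot\#(R^*/R^{*p})^-$. You diagnose this at the end, but the repair you sketch -- keeping your $j$-independence claim and simply restoring the $(+)$-factor -- would give $\#(R^*/R^{*p})^+\cdot\#V\cdot(1+\#V)$, which is off by a factor of $\#V$. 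The correct route is: $\#A[p]=\#(R^*/R^{*p})^+\cdot\#V$, so the inner average over $\delta$ (done via the character sum $1/\#\Im\delta=\#V^{-1}\sum_\chi\mathbf{1}_\chi(\delta)$) gives $\#(R^*/R^{*p})^+(\#G[p]+\#V-1)$, and then $\sum_G\mcl(G)(\#G[p]+\#V-1)=2+\#V-1=1+\#V$.
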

\begin{proof}
By the definition of $\mseq$ 
we obtain equality of the sum in our proposition with 
$$
\sum_{G \in \mathcal{G}_p} 
\frac
{\mcl(G)}
{\#\ext_{\mathbb{Z_p}[C_2]}(G,R^{*}[p^{\infty}])} 
{\sum_{\theta \in \ext_{\mathbb{Z}_p[C_2]}(G,R^{*}[p^{\infty}])}\#M(\theta)[p]}
.$$
Again by Proposition \ref{induces surj map} we know that the map $\theta \to \delta_p(\theta)$ is a surjective homomorphism 
$$ \ext_{\mathbb{Z}_p[C_2]}(G,R^{*}[p^{\infty}]) \to 
\Hom_{\mathbb{Z}_p}(G[p],(R^{*}/R^{*p})^{-})
$$
Thus we can rewrite the last sum as
\begin{equation}
\label{eq:sumdelta}
\sum_{G \in \mathcal{G}_p} 
\frac
{\mcl(G)}
{\#
\Hom_{\mathbb{Z}_p}(G[p],(R^{*}/R^{*p})^{-})
} 
\Osum_{\delta} 
\hspace{-0,1cm}
\#R^{*}[p] 
\frac{\#G[p]}{\#\Im(\delta)}
,\end{equation} where  the sum $\Osum$ is taken over 
$\delta$ in $\Hom_{\mathbb{Z}_p}(G[p],(R^{*}/R^{*p})^{-})$.
For each $\chi$ in the dual of  
$(R^{*}/R^{*p})^{-}$
denote by $\mathbf{1}_{\chi}$ 
the indicator function 
of those $\delta$
for which $\chi$ vanishes on the image of $\delta$. 
This allows us
to recast~\eqref{eq:sumdelta}
in the following manner, 
$$ \sum_{G \in \mathcal{G}_p} 
\frac
{\mcl(G)}
{\#\Hom_{\mathbb{Z}_p}(G[p],
(
R^{*}/R^{*p}
)^{-})}
{
\Osum_{\!\!\!\!\!\!\!\delta} 
\hspace{-0,1cm}
\#(R^{*}/R^{*p})^{+}
\#G[p] 
\sum_{\chi \in \widehat{
(
R^{*}/R^{*p}
)^{-}
} 
}
\mathbf{1}_{\chi}(\delta)}
,$$
where $\delta$ varies over all elements in $\Hom_{\mathbb{Z}_p}(G[p],(\frac{R^*}{R^{*p}})^{-})$.
Exchanging the order of summation yields
$$ \sum_{G \in \mathcal{G}_p}
\#(R^{*}/R^{*p})^{+}
\#G[p] \mcl(G)
\sum_{\chi \in \widehat{(\frac{R^{*}}{R^{*p}})^{-}}} 
\frac{\sum_{\delta \in \Hom_{\mathbb{Z}_p}(G[p],(\frac{R^{*}}{R^{*p}})^{-})}\mathbf{1}_{\chi}(\delta)}{\#\Hom_{\mathbb{Z}_p}(G[p],(\frac{R^{*}}{R^{*p}})^{-})}.
$$
The $\chi$-th summand in the last expression equals
 $1$ if $\chi$ is the trivial character
and equals
 $\frac{1}{\#G[p]}$ 
otherwise, thus obtaining
$$ \sum_{G \in \mathcal{G}_p}
\#(
R^{*}/R^{*p}
)^{+}
\#G[p] 
\Big(1+ \frac{\#(
R^{*}/R^{*p}
)^{-}-1}{\#G[p]}\Big) 
\mcl(G)
.$$
Recalling 
the classical equality 
$\sum_{G \in \mathcal{G}_p}\#G[p]
\mcl(G)=2$
provides us with 
$$ 
\#(
R^{*}/(R^{*p})
)^{+} 
\big(2+\#(R^{*}/R^{*p})^{-}-1\big)
=
\#{(
R^{*}/R^{*p}
)}^{+} 
\Big(1+\#{\Big(\frac{R^{*}}{R^{*p}}\Big)}^{-}\Big)
,$$
which concludes our proof. 
\end{proof}

Combining 
Proposition~\ref{prop:just} 
and
Heuristic Assumption~\ref{h:lebkov}
offers
the following.
\begin{conjecture}
\label{conj:generil}
The average value of $\#\cl(K,c)[p]$, as $K$
ranges
among
imaginary quadratic
number fields 
of type $R$ 
ordered by their discriminant, 
is given by  
$$\#\Big(\frac{R^{*}}{R^{*p}}\Big)^{+} \Big(1+\#\Big(\frac{R^{*}}{R^{*p}}\Big)^{-}\Big).$$
\end{conjecture}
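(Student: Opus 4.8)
The plan is to deduce Conjecture~\ref{conj:generil} from Proposition~\ref{prop:just} and Heuristic Assumption~\ref{h:lebkov} by choosing the right test function $f$. Specifically, I would set $f:\mathcal{S}_p(R) \to \mathbb{R}$ to be $f(S) := \#M(S)[p]$, i.e. the size of the $p$-torsion of the middle term of the ray class sequence corresponding to $S$. The first step is to observe that this $f$ is a `reasonable' function in the sense required by Heuristic Assumption~\ref{h:lebkov}: it is bounded on the relevant range once one localizes appropriately (the $p$-rank of the middle term is controlled by the $p$-rank of $\cl(K)$ plus $\rk_p(R^{*})$, and the Cohen--Lenstra measure has rapidly decaying tails), so the formal limit statement applies. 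I would note here that $\#M(S_p'(K))[p] = \#\cl(K,c)[p^{\infty}][p] = \#\cl(K,c)[p]$, since $M(S_p'(K))$ is by construction the isomorphism class of $\cl(K,c)[p^{\infty}]$ and its $p$-torsion coincides with $\cl(K,c)[p]$.

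Next I would apply Heuristic Assumption~\ref{h:lebkov} with this $f$, giving
$$\lim_{X \to \infty} \frac{1}{\#\{K \in \c{F}(R): |\disc(K)| \leq X\}} \sum_{\substack{K \in \c{F}(R) \\ |\disc(K)| \leq X}} \#\cl(K,c)[p] = \sum_{S \in \mathcal{S}_p(R)} \#M(S)[p]\, \mseq(S).$$
The left-hand side is precisely the average value of $\#\cl(K,c)[p]$ over imaginary quadratic fields of type $R$ ordered by discriminant, which is the quantity in the conjecture. The right-hand side is exactly the sum evaluated in Proposition~\ref{prop:just}, which equals $\#\big(\frac{R^{*}}{R^{*p}}\big)^{+}\big(1+\#\big(\frac{R^{*}}{R^{*p}}\big)^{-}\big)$. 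Stringing these two equalities together yields the statement.

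The only genuine subtlety — and the place I would be most careful — is justifying that $f(S) = \#M(S)[p]$ counts as `reasonable', since Heuristic Assumption~\ref{h:lebkov} is stated with that informal qualifier and the sum on the right of Proposition~\ref{prop:just} is an infinite sum over $\mathcal{G}_p$. The argument is that, although $\#M(S)[p]$ is unbounded on $\mathcal{S}_p(R)$, its value grows at most like $p^{\dim_{\mathbb{F}_p}(\cl(K)[p]) + \rk_p(R^{*})}$, while $\mcl$-mass decays super-exponentially in $\dim_{\mathbb{F}_p}(G[p])$; this is exactly the same kind of tail bound that makes the classical identity $\sum_{G \in \mathcal{G}_p} \#G[p]\,\mcl(G) = 2$ (used in the proof of Proposition~\ref{prop:just}) legitimate, so one is on firm footing provided one interprets Heuristic Assumption~\ref{h:lebkov} as applying to functions of at most polynomial growth in $\#G[p]$. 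I would remark that this is the analogue, in our setting, of the standard passage from moments to averages in the Cohen--Lenstra literature, and that for $p=3$ the resulting average is an unconditional theorem of Varma~\cite{arXiv:1609.02292} (compare \S\ref{s:var}), which both motivates and corroborates the heuristic input here.
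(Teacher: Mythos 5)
Your derivation is exactly the paper's route: Conjecture~\ref{conj:generil} is obtained by applying Heuristic Assumption~\ref{h:lebkov} with $f(S)=\#M(S)[p]$ and then invoking Proposition~\ref{prop:just} for the value of $\sum_{S}\#M(S)[p]\,\mseq(S)$. Your extra remarks on why this $f$ is `reasonable' (polynomial growth in $\#G[p]$ against the rapidly decaying Cohen--Lenstra tail) are a sensible gloss on a point the paper leaves implicit, but the argument is the same.
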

In particular we can now derive conjectural formulas for the average size of $\cl(K,c)[p]$ with $K$ varying in
larger
families.

We next 
consider here two cases: in \S\ref{s:unrami}
the case when all the primes dividing $c$ are required to be unramified in $K$, and in \S \ref{s:collall} the case where $K$ 
ranges through all discriminants. 
The letter $l$ 
will refer to a prime until the end of \S\ref{s:2}.

\subsubsection{Collecting unramified discriminants}
\label{s:unrami}
Observe that if $R$ correspond to a splitting type where all the primes dividing $c$ are unramified in $K$, and if $p^2$ does 
not divide $c$ (so there is no contribution to the $p$-part from $p$ itself in case it divides $c$) then we have that
$$
\#\Big(\frac{R^{*}}{R^{*p}}\Big)^{\!+} 
\Big(1+\#\Big(\frac{R^{*}}{R^{*p}}\Big)^{\!-}\Big)=
p^{\#\{l \text{ prime}: \  l\mid c, \ l \equiv  1  \md{p}
\}}
(1+p^{\omega_R(c)})
,$$
where $\omega_R(c)$ is defined by 
\[
\#\{
l \text{ prime}:
l|c,
(l \equiv 1 \ \md{p} \ \text{and } l \text{ is split in } R) \text{  or } 
(l \equiv -1  \md{p} \ \text{and } l \text{ is inert in } R)
\}
.\]
Therefore when we average over all $2^{\omega(c)}$ choices of $R$, using the binomial formula we get
$$
p^{\#\{l \text{ prime}: \ l|c, l \equiv  1 \md{p}\}} \Big(1+\Big(\frac{p+1}{2}\Big)^{\!\#\{l \text{ prime}: \ l|c, l  \equiv  1  \text{ or } -1 \md{p}\}}\Big)
$$
as average value of the size of $\cl(K,c)[p]$ 
when $K$ ranges over
imaginary quadratic
number fields
unramified at all primes dividing $c$, as long as $p^2\nmid c$. 
Instead, if $p^2\mid c$ 
there is an additional contribution from the principal units 
modulo $p^2$ to $\#{(\frac{R^{*}}{R^{*p}})}^{+}   (1+\#(\frac{R^{*}}{R^{*p}})^{-})$, which gives
$$p^{\#\{l \text{ prime}: \ l|c, l \equiv  1  \md{p}\}+1} 
\Big(1+p \Big(\frac{p+1}{2}\Big)^{\#\{l \text{ prime}: \ l|c, l  \equiv  1 \text{ or }  -1  \md{p}\}}\Big)
.$$
This leads to the Conjecture~\ref{conj:7}
that we stated in the introduction. 
The special case $p=3$ of Conjecture~\ref{conj:7}
was recently proved by Varma~\cite[Th.2.(b)]{arXiv:1609.02292}.  
\begin{theorem} [Varma] 
\label{thm:varvarvar}
The average value of $\#\cl(K,c)[3]$ as $K$ ranges over 
imaginary quadratic number fields with $\gcd(\disc(K),c)=1$ is:\\
(1) $$ 3^{\#\{l \emph{ prime}: \ l|c, l \equiv 1  \md{3}\}} (1+2^{\#\{l \emph{ prime}: \ l|c,\ l \neq 3 \}})$$ if $9$ does not divide $c$. \\
(2) 
$$ 3^{\#\{l \emph{ prime}: \ l|c, l \equiv 1  \md{3}\}+1} (1+3 \cdot 2^{\#\{l \emph{ prime}: \ l|c,\ l \neq 3 \}})$$ 
if $9$ divides $c$.
\end{theorem}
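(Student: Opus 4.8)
The final statement to prove is Theorem~\ref{thm:varvarvar} (Varma's theorem on the average $3$-torsion of ray class groups), and the excerpt has already done almost all the work: Conjecture~\ref{conj:generil} gives the predicted average for a fixed splitting type $R$, and \S\ref{s:unrami} carries out the binomial averaging over splitting types for general odd $p$, landing on Conjecture~\ref{conj:7}. So the plan is simply to specialize $p=3$ and match conventions. First I would note that for $p=3$ we have $\frac{p+1}{2}=2$, so the quantity $\left(\frac{p+1}{2}\right)^{\#\{l\text{ prime}:\ l\mid c,\ l\equiv 1\text{ or }-1\ \md{3}\}}$ in Conjecture~\ref{conj:7} becomes $2^{\#\{l\text{ prime}:\ l\mid c,\ l\neq 3\}}$, using that a prime $l\neq 3$ is automatically $\equiv 1$ or $-1\bmod 3$ while $l=3$ is excluded; this already rewrites part~(1) of Conjecture~\ref{conj:7} into part~(1) of Theorem~\ref{thm:varvarvar}, and likewise part~(2) into part~(2) (the extra factor of $p=3$ inside the parentheses and the $+1$ in the exponent carry over verbatim).

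The only genuine content beyond this bookkeeping is that Theorem~\ref{thm:varvarvar} is an \emph{unconditional} statement, whereas Conjecture~\ref{conj:7} is merely a consequence of Heuristic Assumption~\ref{h:lebkov}. So the actual proof I would give is: invoke Varma's theorem~\cite[Th.~2.(b)]{arXiv:1609.02292} directly, and then perform the elementary algebraic simplification above to confirm that her stated formula agrees, term for term, with the $p=3$ specialization of our conjectural formula. Concretely, I would (i) recall the statement of~\cite[Th.~2.(b)]{arXiv:1609.02292} in Varma's own normalization, (ii) observe that the families ``$K$ imaginary quadratic with $\gcd(\disc(K),c)=1$'' coincide on both sides, (iii) check the exponent $\#\{l\text{ prime}: l\mid c,\ l\equiv 1\ \md 3\}$ of the leading power of $3$ is identical, and (iv) verify the parenthetical factor $(1+2^{\#\{l: l\mid c,\ l\neq 3\}})$ resp. $(1+3\cdot 2^{\#\{l:l\mid c, l\neq 3\}})$ matches by the substitution $\frac{p+1}{2}\mapsto 2$ and the reindexing of the congruence condition.

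There is essentially no obstacle here: the hard analytic input is entirely in Varma's cited work (which uses the geometry-of-numbers techniques building on~\cite{MR3090184}), and the role of this theorem in the present paper is purely that of a consistency check — it shows that the general heuristic model of \S\ref{s:2}, when pushed through Proposition~\ref{prop:just} and the averaging of \S\ref{s:unrami}, reproduces a known unconditional theorem. If anything, the one point requiring a sentence of care is the case distinction at $p^2\mid c$, i.e.\ $9\mid c$: here one must make sure that the ``additional contribution from the principal units modulo $p^2$'' described in \S\ref{s:unrami} is exactly the factor of $3$ (and shift of the exponent by $1$) appearing in Varma's part~(2), which follows since $(1+3\ZZ)/(1+3\ZZ)^3$ inside $(\ZZ/9)^*$ contributes one extra factor of $3$ to $\#(R^*/R^{*3})^+$ when $9\mid c$ and nothing when $9\nmid c$. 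With that checked, the proof concludes by simply citing Varma and recording the agreement.

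\begin{proof}
This is~\cite[Th.~2.(b)]{arXiv:1609.02292}. It remains only to observe that the formula stated there coincides with the $p=3$ specialization of Conjecture~\ref{conj:7}: for $p=3$ one has $\frac{p+1}{2}=2$, and a prime $l\mid c$ satisfies $l\equiv 1$ or $-1\ \md 3$ precisely when $l\neq 3$, so
$$\Big(\tfrac{p+1}{2}\Big)^{\#\{l\text{ prime}:\ l|c,\ l\equiv 1\text{ or }-1\ \md{3}\}}
=2^{\#\{l\text{ prime}:\ l|c,\ l\neq 3\}},$$
which turns part~(1) (resp.\ part~(2)) of Conjecture~\ref{conj:7} into part~(1) (resp.\ part~(2)) of the present statement, the extra factor $p=3$ and the shift of the exponent by $1$ in the case $9\mid c$ being unchanged under this substitution.
\end{proof}
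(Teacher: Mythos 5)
Your proposal is correct and takes essentially the same route as the paper: the paper states Theorem~\ref{thm:varvarvar} with the attribution [Varma], offers no proof of its own, and relies on the derivation in \S\ref{s:unrami} (culminating in Conjecture~\ref{conj:7}) to show that the heuristic model reproduces Varma's formula upon setting $p=3$. Your algebraic verification that $\frac{p+1}{2}=2$ and that $l\equiv\pm1\ \md 3$ for $l\mid c$ is equivalent to $l\neq 3$ is exactly the (implicit) bookkeeping the paper performs, and your remark about the extra factor of $3$ from the principal units modulo $9$ matches the discussion preceding Conjecture~\ref{conj:7}.
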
 

\subsubsection{Collecting all discriminants}
\label{s:collall}

We now consider the case where $K$ is allowed to ramify at the primes dividing $c$. Now we have to evaluate
$$ \sum_{R}\#\Big(\frac{R^{*}}{R^{*p}}\Big)^{+}
\Big(1+\#\Big(\frac{R^{*}}{R^{*p}}\Big)^{-}\Big)
w(R)
,$$
where $R$ varies between all the possible types of ring at $c$, 
and
\[
w(R):=
\lim_{X\to+\infty}
\frac
{\#\{K \in \c{F}_c(R):|\disc(K)|\leq X\}}
{\#\{K \in \c{F}:|\disc(K)|\leq X\}}
.\]
First observe that if $p^2\nmid c$ then 
$$\#\Big(\frac{R^{*}}{R^{*p}}\Big)^{+}=p^{\#\{l \text{ prime}: \ l|c, l  \equiv  1  \md{p}\}}
,$$
while if $p^2|c$ then
$$\#\Big(\frac{R^{*}}{R^{*p}}\Big)^{+}=p^{\#\{l \text{ prime}: \ l|c, l  \equiv  1  \md{p}\}+1}.
$$
Therefore we are left with computing the average of 
$\#\Big(\frac{R^{*}}{R^{*p}}\Big)^{-}$,
over all $R$. But this, as a function of $c$, is multiplicative,
thus we only have to deal with prime powers, i.e. $c=l^n$ for some prime $l$ and some positive integer $n$.
Clearly, the value of this average is $1$ if $l$ is such that
$\gcd(p,l^3-l)=1$. Instead, if $p|l^2-1$ the value of the average is 
$$\frac{1}{l+1}+\frac{(\frac{p+1}{2})
 l}{l+1}=1+\Big(\frac{p-1}{2}\Big) \frac{l}{l+1}
,$$
where the first contribution comes from the $R$ ramified at $l$, and the second from the $R$ unramified at $l$. \footnote{$R$ is said unramified at $l$ if $R/lR$ does not contain non-zero nilpotents. Otherwise $R$ is said ramified at $l$.}
Meanwhile, the value of the average for $p=c$ is
$$\frac{p}{p+1}+\frac{p}{p+1},$$
where the first contribution comes from $R$ ramified at $p$ and the second from
$R$ unramified at $p$. 
Lastly, 
we consider the case  $p^2|c$. Remarkably enough, one observes that the case $p=3$ 
acquires a special status in the computation of this average: indeed $\frac{1}{8}$ of the imaginary quadratics locally at $3$ give the extension 
$\mathbb{Q}_3(\zeta_3)/\mathbb{Q}_3$, and the result for them will be different than for the $\frac{1}{8}$ totally ramified that locally at $3$ become $\mathbb{Q}_3(\sqrt{3})$. Clearly for all $p>3$ there is no $p$-th root of unity in a quadratic extension of $\mathbb{Q}_p$, so, as we will see, in that case the contribution from the two $R$ ramified at $p$ will be the same.

Assume $p=3$. The contribution from powers of $3$ starting from $9$ is
$$\frac{9}{8}+\frac{3}{8}+\frac{9}{4}=\frac{15}{4},$$ 
where the first contribution is from $\mathbb{Q}_3(\zeta_3)$, the second from $\mathbb{Q}_3(\sqrt{3})$ and the third from unramified $R$.
This gives a prediction 
that was previously verified by Varma~\cite[Th.1.(b)]{arXiv:1609.02292}.  
\begin{theorem}
[Varma] 
The average value of $\#\cl(K,c)[3]$ as $K$ ranges through imaginary quadratic number fields ordered by their discriminant 
is: \\
(1) 
$$
3^{\#\{l \emph{ prime}: \ l|c, l \equiv  1 \md{3}\}} 
\Big(
1+\prod_{l|c}\Big(1+ \frac{l}{l+1}\Big)
\Big)
$$ 
if $3$ does not divide $c$, \\
(2)
$$3^{\#\{l \emph{ prime}: \ l|c, l \equiv  1 \md{3}\}} 
\Big(1+\frac{6}{7}  \prod_{l|c}\Big(1+\frac{l}{l+1}\Big)\Big)
$$ 
if $3$ divides $c$ but $9$ does not divide $c$, \\
(3)
$$
3^{\#\{l \emph{ prime}: \ l|c, l \equiv  1 \md{3}\}+1} 
\Big(1+\frac{15}{7}  \prod_{l|c}\Big(1+\frac{l}{l+1}\Big)\Big) 
$$
if $9$ divides $c$.
\end{theorem}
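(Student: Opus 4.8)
The plan is to follow Varma's geometry-of-numbers approach: translate the average, prime by prime, into a count of cubic fields with prescribed local behaviour, and then evaluate that count.

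\textbf{Step 1: class field theory and the deterministic factor.} First I would make the shape of the answer precise. For any imaginary quadratic $K$ (other than $\QQ(\sqrt{-1})$ and $\QQ(\sqrt{-3})$), decompose the $\FF_3$-vector space $\cl(K,c)[3]$ under the action of $\Gal(K/\QQ)=\{1,\sigma\}$ into $\pm1$-eigenspaces; since $\sigma$ acts on $\cl(K)$ by $-\Id$, the long exact sequence gives $\#\cl(K,c)[3]=\#(R^{*}/R^{*3})^{+}\cdot\#\cl(K,c)[3]^{-}$, where $R$ is the local type $\mathcal{O}_K/c$ (allowing non-reduced $R$ when $K$ ramifies at primes dividing $c$). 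A direct prime-by-prime local computation, carried out exactly as in \S\ref{s:unrami}--\S\ref{s:collall}, shows that $\#(R^{*}/R^{*3})^{+}$ is the same integer for every $K$, namely $3^{\#\{l\text{ prime}:\,l\mid c,\ l\equiv1\md{3}\}}$, times an extra $3$ when $9\mid c$; this is precisely the power of $3$ in front of each displayed formula. The minus part is governed by cyclic cubic extensions $L/K$, unramified at $\infty$, with conductor $\mathfrak f_{L/K}\mid c$, on which $\sigma$ acts by inversion; such $L$ satisfy $\Gal(L/\QQ)\cong S_3$, hence are the Galois closures of non-Galois cubic fields $M$ with quadratic resolvent $K$, each isomorphism class of such $M$ corresponding to a line in $\cl(K,c)[3]^{-}$. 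Thus $\#\cl(K,c)[3]^{-}=1+2\,N_c(K)$, where $N_c(K)$ counts such $M$. By the conductor--discriminant formula $|\disc(M)|=|\disc(K)|\cdot N_{K/\QQ}(\mathfrak f_{L/K})$, the constraint $\mathfrak f_{L/K}\mid c$ is equivalent to prescribing $M\otimes_{\QQ}\QQ_l$ up to an explicit finite list for each $l\mid c$ and to a bounded-complexity (unramified-enough) condition for each $l\nmid c$.

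\textbf{Step 2: geometry of numbers.} It remains to evaluate $\sum_{|\disc(K)|\le X}N_c(K)$. Stratify by the local algebras $M\otimes\QQ_l$ at the finitely many $l\mid c$; each stratum fixes $N_{K/\QQ}(\mathfrak f_{L/K})$, so $|\disc(K)|\le X$ becomes $|\disc(M)|\le(\text{fixed constant})\cdot X$, and the sum becomes a count of non-Galois cubic fields $M$ with $\disc(M)<0$ subject to congruence conditions at the primes dividing $c$ (together with the standard conditions at the remaining primes). Here I would invoke the Delone--Faddeev/Davenport--Heilbronn parametrization of maximal cubic orders by $\mathrm{GL}_2(\ZZ)$-classes of irreducible, everywhere-maximal integral binary cubic forms, with the discriminant of the form equal to $\disc(M)$, and count forms of bounded negative discriminant in a fundamental domain for $\mathrm{GL}_2(\ZZ)$. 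The main term is produced by Bhargava's averaging method and equals the archimedean volume times a product of local densities; the decisive input is that the error term is $o(X)$, indeed a power saving, by the theorems of Bhargava--Shankar--Tsimerman~\cite{MR3090184}, which is what legitimises the sieve at the fixed modulus (a bounded power of $c$) and what controls the reducible forms, the non-maximal forms, and the cuspidal region of the fundamental domain.

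\textbf{Step 3: assembling the local densities.} Summing the local densities over the allowed cubic $\QQ_l$-algebras at each $l\mid c$, weighted by the power of $l$ that rescales $\disc(M)$ back to $\disc(K)$, and normalising by $\#\{K:|\disc(K)|\le X\}$, factors as a product over $l\mid c$. For $l\ne3$ this sum splits into the contribution of the algebras for which $l$ ramifies in the resolvent (giving $1$) and those for which it is unramified (giving $\tfrac{l}{l+1}$), reproducing the factor $1+\tfrac{l}{l+1}$ already verified in \S\ref{s:collall}. When $3\mid c$, the cyclotomic wild ramification forces one to treat $\QQ_3(\zeta_3)$, $\QQ_3(\sqrt3)$ and the unramified algebra separately, so the $l=3$ contribution is not $1+\tfrac34$ but $\tfrac32=\tfrac98+\tfrac38+\text{(unramified)}$-type bookkeeping giving $\tfrac32$ when $3\parallel c$ and $\tfrac{15}4$ when $9\mid c$ — recorded in the statement as the correction factors $\tfrac67$ and $\tfrac{15}7$ multiplying the formal product $\prod_{l\mid c}(1+\tfrac{l}{l+1})$ (and, when $9\mid c$, accompanied by the extra $3$ in $\#(R^{*}/R^{*3})^{+}$). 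Recombining with the ``$1+2N_c(K)$'' and with $\#(R^{*}/R^{*3})^{+}$ yields, in each of the three cases, the average $\#(R^{*}/R^{*3})^{+}\cdot\big(1+\text{(product over }l\mid c\text{)}\big)$, which is the claimed formula; the case $c=1$ degenerates to the Davenport--Heilbronn theorem that the average of $\#\cl(K)[3]$ equals $2$.

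\textbf{Main obstacle.} The heart of the difficulty is Step 2: obtaining genuinely $o(X)$ (in fact power-saving) error terms for the count of binary cubic forms in a fundamental domain after imposing both the infinite family of maximality-type conditions and the additional congruences at the primes dividing $c$, with uniform control of the reducible forms, the non-maximal forms, and --- most delicately --- the cusp; this is exactly what the Bhargava--Shankar--Tsimerman refinement of Davenport--Heilbronn supplies and is why $p=3$ is within reach of these methods. A secondary, purely local, difficulty is the careful density bookkeeping at the wildly ramified prime $3$, responsible for the case division $3\nmid c$, $3\parallel c$, $9\mid c$ and for the exotic correction factors $\tfrac67$ and $\tfrac{15}7$.
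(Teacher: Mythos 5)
The paper does not itself prove this statement; it is quoted as Varma's theorem (\cite[Th.\ 1.(b)]{arXiv:1609.02292}) precisely to corroborate the heuristic model. What the paper establishes unconditionally, via Proposition~\ref{prop:just} and the local-density bookkeeping of \S\ref{s:collall}, is that the quantity $\sum_R \#(R^{*}/R^{*3})^{+}\bigl(1+\#(R^{*}/R^{*3})^{-}\bigr)w(R)$ equals the displayed expression; that this is the true average is the content of Heuristic assumption~\ref{h:lebkov}, and the passage from prediction to theorem is Varma's. Your proposal instead reconstructs Varma's actual proof: the eigenspace splitting of $\cl(K,c)[3]$ and the class-field-theoretic translation of the minus part into a count of non-Galois cubic fields with controlled local behaviour at the primes dividing $c$ (your Step~1), a Davenport--Heilbronn count of binary cubic forms with the Bhargava--Shankar--Tsimerman~\cite{MR3090184} power-saving error to carry the congruence conditions at the bounded modulus (your Step~2), and then the local-density bookkeeping (your Step~3, which is the same prime-by-prime computation the paper performs in \S\ref{s:collall}, including the wild-prime case split at $3$ responsible for the factors $\tfrac{6}{7}$ and $\tfrac{15}{7}$). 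So Steps~1 and~3 of your argument overlap the paper's derivation of the prediction, while Step~2 supplies exactly the analytic input the paper cites but does not reprove. The outline is sound and is how Varma proceeds; you rightly flag that, as written, Step~2 is an appeal to the full Davenport--Heilbronn/BST machinery (control of the cusp, reducible and non-maximal forms, and the sieve to a fixed modulus) rather than a self-contained argument, so the proposal is a correct sketch of Varma's proof rather than a complete one.
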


Now assume that $p>3$. Then we get
$$\frac{p}{p+1}+\frac{p^2}{p+1},$$
where the first contribution is from the $R$ ramified at $p$ 
and the second from $R$ unramified at $p$.
Collecting everything together we get the following
prediction.
\begin{conjecture}
Suppose $p>3$. Then the average value of $\#\cl(K,c)[p]$ as $K$ ranges over
imaginary quadratic number fields ordered by their discriminant is: 
\\
(1) 
$$
p^{\#\{l \emph{ prime}: \ l|c, l \equiv  1 \md{p}\}} 
\Big(1+ \prod_{l|c,p|l^2-1}
\Big(1+\frac{p-1}{2}
\frac{l}{l+1}\Big)\Big)  
$$ 
if $p$ does not divide $c$, \\
(2)
$$
p^{\#\{l \emph{ prime}: \ l|c, l \equiv  1 \md{p}\}} 
\Big(
1+\Big(\frac{2p}{p+1}\Big) \prod_{l|c, p|l^2-1}\Big(1+\frac{p-1}{2} \frac{l}{l+1}\Big)\Big)
$$
if $p$ divides $c$ but $p^2$ does not divide $c$, \\
(3) 
$$
p^{\#\{l \emph{ prime}: \ l|c, l \equiv  1 \md{p}\}} 
\Big(1+\Big(\frac{p+p^2}{p+1}\Big) \prod_{l|c, p|l^2-1}\Big(1+ \frac{p-1}{2} \frac{l}{l+1}\Big)\Big)
$$
if $p^2$ divides $c$.
\end{conjecture}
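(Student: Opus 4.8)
The plan is to deduce this conjecture from Conjecture~\ref{conj:generil} by averaging its type-by-type prediction against the local densities $w(R)$ of \S\ref{s:collall}; this carries out in full the bookkeeping that is only sketched there. Since $\c{F}$ is the disjoint union of the subfamilies $\c{F}_c(R)$ as $R$ runs over the finitely many admissible types of ring at $c$, and $\sum_R w(R)=1$, Conjecture~\ref{conj:generil} gives that the average in question equals $\sum_R \#(R^{*}/R^{*p})^{+}\,\big(1+\#(R^{*}/R^{*p})^{-}\big)\,w(R)$. The first point is that $\#(R^{*}/R^{*p})^{+}$ does not depend on the type $R$. Writing $R=\prod_{l\mid c}R_l$, one checks prime by prime that for $l\neq p$ the fixed part of $R_l^{*}/R_l^{*p}$ has $\F_p$-dimension $1$ if $l\equiv 1\md{p}$ and $0$ otherwise, in all three splitting behaviours of $l$: for inert $l$ because complex conjugation acts trivially on the $p$-part of $\F_{l^{2}}^{*}$ exactly when $l\equiv 1\md{p}$; for split $l$ via the diagonal copy of $\F_l^{*}/\F_l^{*p}$; for ramified $l$ because inertia fixes the residue field and the tame principal units vanish modulo $p$-th powers. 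At $l=p$ this dimension equals $0$ when $p^{2}\nmid c$ and $1$ when $p^{2}\mid c$ (the Galois-fixed principal units modulo $p^{2}$), again for every splitting type. Hence $\#(R^{*}/R^{*p})^{+}=p^{\#\{l\text{ prime}:\,l\mid c,\ l\equiv 1\md{p}\}}$, multiplied by an extra factor of $p$ when $p^{2}\mid c$, and this factor comes out of the sum.

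It remains to evaluate $\sum_R\#(R^{*}/R^{*p})^{-}w(R)$. Both $w(R)=\prod_{l\mid c}w_l(R_l)$ and $\#(R^{*}/R^{*p})^{-}=\prod_{l\mid c}\#(R_l^{*}/R_l^{*p})^{-}$ are multiplicative in the local data, so this sum equals $\prod_{l\mid c}A_l$ where $A_l:=\sum_{R_l}\#(R_l^{*}/R_l^{*p})^{-}\,w_l(R_l)$. Here one takes the mass-formula densities for $w_l$: $l$ ramified with total probability $\tfrac{1}{l+1}$ (each of the two ramified étale $\Q_l$-algebras with probability $\tfrac{1}{2(l+1)}$), and $l$ split, respectively inert, each with probability $\tfrac{l}{2(l+1)}$. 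For $l\neq p$ the $l$-adic principal units die modulo $p$-th powers, so $\#(R_l^{*}/R_l^{*p})^{-}$ is $1$ for ramified $l$, and in the unramified cases equals $\#(\F_{l^{2}}^{*}/\F_{l^{2}}^{*p})^{-}$ (inert) or the order of the anti-diagonal in $(\F_l^{*}/\F_l^{*p})^{2}$ (split); running through the cases $l\equiv\pm1\md{p}$ one finds $A_l=1$ when $p\nmid l^{2}-1$ and $A_l=\tfrac{1}{l+1}+\tfrac{l}{l+1}\cdot\tfrac{p+1}{2}=1+\tfrac{p-1}{2}\cdot\tfrac{l}{l+1}$ when $p\mid l^{2}-1$, the $\tfrac{p+1}{2}$ being the mean of the values $1$ and $p$ over the two equiprobable unramified behaviours. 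At $l=p$ with $p\mid c$ but $p^{2}\nmid c$ one gets $\#(R_p^{*}/R_p^{*p})^{-}=1$ for the unramified algebras and $=p$ for each ramified one (the tame principal units modulo $\mathfrak p^{2}$, negated by conjugation), so $A_p=\tfrac{1}{p+1}\cdot p+\tfrac{p}{p+1}\cdot 1=\tfrac{2p}{p+1}$; at $l=p$ with $p^{2}\mid c$ and $p>3$ one gets $\#(R_p^{*}/R_p^{*p})^{-}=p$ for all four algebras, so $A_p=\tfrac{1}{p+1}\cdot p+\tfrac{p}{p+1}\cdot p=\tfrac{p+p^{2}}{p+1}$. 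Multiplying the $+$-factor found above by $1+\prod_{l\mid c}A_l$, and using that $A_l=1$ whenever $l\neq p$ and $p\nmid l^{2}-1$, yields the asserted averages in the three cases $p\nmid c$, $p\mid c$ with $p^{2}\nmid c$, and $p^{2}\mid c$.

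The step I expect to be the genuine obstacle is the local computation at $l=p$ when $p^{2}\mid c$: one must pin down the $\F_p$-dimension of $(R_p^{*}/R_p^{*p})^{-}$ for each of the étale $\Q_p$-algebras that occur, and this is exactly where the hypothesis $p>3$ enters. Since no quadratic extension of $\Q_p$ contains a primitive $p$-th root of unity once $p>3$, the structure of the local unit group --- equivalently the identity $\dim_{\F_p}F^{*}/(F^{*})^{p}=[F:\Q_p]+1+\dim_{\F_p}\mu_p(F)$ --- assigns the \emph{same} invariant to both ramified quadratic extensions of $\Q_p$, whereas for $p=3$ the extra dimension contributed by $\Q_3(\zeta_3)$ lands in the minus part and changes the local average; this is precisely the $p=3$ discrepancy recorded separately in \S\ref{s:collall}. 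A secondary, classical, ingredient is the explicit list of local densities $w_l$, which follows from the mass formula for quadratic extensions of $\Q_l$ (equivalently, from counting fundamental discriminants in prescribed $l$-adic classes). Granting these two inputs, the remaining steps are routine and require no analytic ingredient beyond Conjecture~\ref{conj:generil}.
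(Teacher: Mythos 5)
Your route is exactly the paper's own derivation in \S\ref{s:collall}: average the type-by-type prediction of Conjecture~\ref{conj:generil} against the densities $w(R)$, pull out the (type-independent) fixed part $\#(R^{*}/R^{*p})^{+}$, and compute the average of $\#(R^{*}/R^{*p})^{-}$ multiplicatively, prime by prime, with ramified density $\tfrac{1}{l+1}$ and split/inert each $\tfrac{l}{2(l+1)}$; your local values ($A_l=1+\tfrac{p-1}{2}\tfrac{l}{l+1}$ for $p\mid l^{2}-1$, $A_p=\tfrac{2p}{p+1}$ when $p\,\|\,c$, $A_p=\tfrac{p+p^{2}}{p+1}$ when $p^{2}\mid c$) and your explanation of where $p>3$ enters (no $\zeta_p$ in a quadratic extension of $\Q_p$, so the two ramified algebras give the same minus part, unlike $p=3$) coincide with the paper's intermediate numbers.

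There is, however, one point where your final claim of agreement is not right, and you should not gloss over it: case (3). By your own (correct) bookkeeping, when $p^{2}\mid c$ the fixed part acquires an extra factor $p$ for \emph{every} splitting type at $p$ (also in the ramified case, since the image of $1+p\Z_p$ in $(\mathcal{O}_E/\mathfrak{p}^{2m})^{*}$ is cyclic of order $p^{m-1}$ with $m\geq 2$), so the averaged prediction your argument produces is
$p^{\#\{l\ \mathrm{prime}:\ l\mid c,\ l\equiv 1\md{p}\}+1}\bigl(1+\tfrac{p+p^{2}}{p+1}\prod_{l\mid c,\ p\mid l^{2}-1}\bigl(1+\tfrac{p-1}{2}\tfrac{l}{l+1}\bigr)\bigr)$,
whereas the displayed statement of case (3) has the exponent without the $+1$. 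So your computation does not ``yield the asserted averages in the three cases'' as written: either you must explain where that factor of $p$ disappears (it does not), or you must flag that the displayed exponent appears to drop it --- note that both Conjecture~\ref{conj:7}(2) and the quoted $p=3$ theorem of Varma in case $9\mid c$ carry the extra power of $p$, which corroborates your bookkeeping rather than the printed formula. Cases (1) and (2) are unaffected, since there the fixed part at $p$ is trivial for all types, as you correctly observe.
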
 
It would be desirable
to extend Varma's arguments to
prove Conjecture~\ref{conj:generil} for $p=3$.
In particular, it would be informative to see how the proof distinguishes 
between 
the 
cases
$R/3^m=\mathcal{O}_{\mathbb{Q}_3(\zeta_3)}/3^m$ and $R/3^m=\mathcal{O}_{\mathbb{Q}_3(\sqrt{3})}/3^m$, for $m \geq 2$. 
\section{Heuristic and conjectures for $p=2$} \label{section:heuristic at 2}
Let $c$ be an odd positive integer. In this section 
we explain
a heuristic model for the $2$-part of ray class sequences of conductor $c$, in the case that no primes dividing $c$ ramify in the fields. The additional difficulty with respect to the case of $p$ odd, is that $\cl(K)[2^{\infty}]$ does not behave like a random $2$-group (in the sense of Cohen and Lenstra), but instead (as conjectured by Gerth \cite{MR887792}), $2\cl(K)[2^{\infty}]$ is believed to behave like a random $2$-group: the behavior of $\cl(K)[2]$ is governed instead by genus theory which trivially excludes any Cohen--Lenstra behavior for $\cl(K)[2^{\infty}]$, when $K$ varies among usual families of imaginary quadratic number fields.

Our approach will be as follows: we will see that for `most' discriminants of type $R$, $2\cl(K,c)$ is an extension of $2\cl(K)$ with a certain subgroup of $\frac{R^{*}}{\langle -1 \rangle}$, which we will call $W_{R}$. 
Nevertheless, one cannot completely ignore the presence of the class group, since it leaves an additional restriction on such extensions. Namely it forces them to belong to a certain subgroup of the $\ext$, that we will call $\widetilde{\ext}$. From there we will proceed in analogy with the previous section replacing $\ext$ with $\widetilde{\ext}$. Using this heuristic we will offer several predictions which are proved in the subsequent sections.

Since we will only
consider the case that no primes dividing $c$ ramify in the imaginary number fields $K$,
and since we assume that $c$ is odd, we do not lose generality in assuming that $c$ is also square-free: indeed, in our setting, the $2$-part of 
$(\mathcal{O}_K/c)^{*}
/
\langle -1 \rangle$ is no different from the one of 
$(\mathcal{O}_K/c')^{*}/
\langle -1 \rangle,$ 
where $c'$ is the square-free part of $c$. Therefore the choice of a ring type at $c$ amounts
 to the choice of a partition of the set $S_c:=\{l \ \text{prime}: \ l|c \}$ in the disjoint union of two sets $S_c(\text{inert})$ and $S_c(\text{split})$. Then one takes $R:=(\prod_{l \in S_c(\text{inert})} \mathbb{F}_{l^{2}}) \times (\prod_{l' \in S_c(\text{split})} (\mathbb{F}_{l'})^2)$. For such an $R$, the $C_2$-action is given by $l$-Frobenius on the non-split components, and by swapping on the split components. We will call such $R$, unramified at $c$. By a small abuse of notation, we denote by $\mathbb{Z}/c\mathbb{Z}$ the natural image of $\mathbb{Z}/c\mathbb{Z}$ in $R$.

For $R$ unramified at $c$, we define  
\beq{def:wr}
{
W_{R}:= \frac{(\mathbb{Z}/c\mathbb{Z})^{*}}{\langle -1 \rangle}  \l(\frac{R^{*}}{\langle -1 \rangle}\r)^{2} 
\subseteq \frac{R^{*}}{\langle -1 \rangle}
.}
Now fix
some $R$ unramified at $c$. For the remainder of this section we will assume, for simplicity, the imaginary quadratic number field $K$ to have an odd discriminant. 
We shall prove  
that 
one has an exact sequence
\begin{equation}
\label{eq:seq}
2S(K):= (0 \to W_{R} \to 2\cl(K,c) \to 2\cl(K) \to 0)
,\end{equation}
for all imaginary quadratic number fields of type $R$
with the exception of $O(x (\log x)^{-1/\phi(c)})$ 
discriminants up to $x$. 
Indeed, by the theory of ambiguous ideals, one has that 
$$\frac{(\mathcal{O}_{K}/c)^{*}}{\langle -1 \rangle } \cap 2\cl(K,c)=
\langle \{q \ \text{prime and } q|\disc(K)\}\rangle \Big(\frac{(\mathcal{O}_{K}/c)^{*}}{\langle -1 \rangle }\Big)^2.$$
Therefore
it is enough to show that 
the set of positive square-free
$D\leq x$ such that 
\[
\{q \md{c}: q \text{ prime and } q|D\}
\neq 
(\mathbb{Z}/c\mathbb{Z})^{*}
\]
is $O(x (\log x)^{-1/\phi(c)})$.
This cardinality 
is 
\[
\leq 
\sum_{a \in (\mathbb{Z}/c\mathbb{Z})^{*}}
\sum_{\substack{1\leq D \leq X \\p\mid D \Rightarrow p\neq a \md{c}}} 
\hspace{-0,5cm}
\mu(D)^2
\ll \frac{x}{(\log x)^{1/\phi(c)}}
,\]
where the last bound is easily derived
by using~\cite[Eq.(1.85)]{iwko}
with $f$ being the characteristic function of integers all of whose prime 
divisors are not $a\md{c}$.
Identifying $\mathcal{O}_K/c$ with $R$ via a ring isomorphism
gives an identification between $W_R$ and  
\[
\frac{(\mathbb{Z}/c\mathbb{Z})^{*}}{\langle -1 \rangle}
\Big(\frac{(\mathcal{O}_{K}/c)^{*}}{\langle -1 \rangle }\Big)^2
.\]
\begin{definition}
Among the imaginary quadratic number fields of type $R$, we call \emph{strongly} of type $R$, those satisfying 
$$ \frac{(\mathcal{O}_{K}/c)^{*}}{\langle -1 \rangle } \cap 2\cl(K,c)=
\frac{(\mathbb{Z}/c\mathbb{Z})^{*}}{\langle -1 \rangle}
\Big(\frac{(\mathcal{O}_{K}/c)^{*}}{\langle -1 \rangle }\Big)^2
.$$
\end{definition}
Let 
$E(x)$ denote the cardinality 
of negative
discriminants
$1\md{4}$ of absolute value at most 
$x$ and which are of type $R$ but not strongly of type $R$. 
The analysis above can be summarised by the bound 
\beq{eq:acacac}
{
E(x)\ll \frac{x}{(\log x)^{1/\phi(c)}}
.}
One could be tempted to think of the sequence $S_2(K):=2S(K)[2^{\infty}]$ as a `random' sequence, just as in the previous section. This would be incorrect, since the way the sequences $S_2(K)$ are produced naturally puts on them an additional restriction. Namely one has a commutative diagram of $\mathbb{Z}[C_2]$-modules:
$$ \begin{array}{ccc}  0 &\overset{}\to& \frac{(\mathcal{O}_K/c)^{*}}{\langle -1 \rangle} \to  \\  && \uparrow i_1\\ 0 &\underset{}\to& \frac{(\mathbb{Z}/c\mathbb{Z})^{*}}{\langle -1 \rangle}(\frac{(\mathcal{O}_{K}/c)^{*}}{\langle -1 \rangle })^2  \to  \end{array} \begin{array}{ccc}  \cl(K,c) &\overset{\pi}\to& \cl(K) \to 0 \\ \uparrow i_2 && \uparrow i_3\\ 2\cl(K,c) &\underset{}\to& 2\cl(K) \to 0 \end{array}
$$
where $i_1,i_2,i_3$ are the natural inclusion maps, so $i_2$ and $i_3$ consist of isomorphisms between the source groups and the double of the target groups. The top sequence has two obvious properties that are automatically satisfied: 
$$ \pi(\cl(K,c)[2^{\infty}]^{-})=\cl(K)[2^{\infty}] \ \text{and} \
\pi(\cl(K,c)[2^{\infty}]^{+})=\cl(K)[2]
.$$
The first property is equivalent 
to
the sequence remaining
exact after taking $(1+\tau)$-torsion, where $\tau$ is the generator of $C_2$. Indeed, this is equivalent to 
the natural map $$\cl(K)[2^{\infty}] \to \frac{\frac{R^{*}}{\langle -1 \rangle}}{ (\tau +1)\frac{R^{*}}{\langle -1 \rangle}}$$ 
being
the $0$-map, which holds since the norm of an integral ideal is always an integer.
The second property follows from the fact that we are looking at families of discriminants coprime to $c$. Therefore we are allowed to lift a prime ideal $\mathfrak{q}$ lying above a prime $q$ dividing $\disc(K)$, using the class of the ideal $\mathfrak{q}$ in $\cl(K,c)$: this class will still be a fixed point, since it is the class of a $\tau$-invariant \emph{ideal}. This motivates the following:
\begin{definition}
Let $G$ be a finite abelian $2$-group, viewed as a $C_2$ module with the $-\text{id}$-action. We say that an element $\theta$ of $\text{Ext}_{\mathbb{Z}_2[C_2]}(G,W_R[2^{\infty}])$:
$$ \theta: 1 \to W_R[2^{\infty}] \to B \to G \to 1
$$
is \emph{embeddable} if there is an exact sequence of $\mathbb{Z}_2[C_2]$-modules 
$$ 1 \to \frac{R^{*}}{\langle -1 \rangle}[2^{\infty}] \to \tilde{B} \to \tilde{G} \to 1 $$
and a commutative diagram of $\mathbb{Z}_2[C_2]$-modules
$$ \begin{array}{ccc}  0 &\overset{}\to& \frac{(R)^{*}}{\langle -1 \rangle}[2^{\infty}] \to  \\  && \uparrow i_1\\ 0 &\underset{}\to& W_R[2^{\infty}]  \to  \end{array} \begin{array}{ccc}  \tilde{B} &\overset{\pi}\to& \tilde{G} \to 0 \\ \uparrow i_2 && \uparrow i_3\\ B &\underset{}\to& G \to 0 \end{array}
$$
where: \\
$\bullet$ The map $\pi:\tilde{B} \to \tilde{G} \to 1 $ satisfies
$$ \pi(\tilde{B}^{-})=\tilde{G} \ \text{and} \ \pi(\tilde{B}^{+})=\tilde{G}[2].
$$ \\
$\bullet$ The maps $i_2$ and $i_3$ are isomorphisms between the source groups and the double of the target groups. The map $i_1$ is the natural inclusion. 
\end{definition}
We denote the set of embeddable extensions by $\widetilde{\ext}_{\mathbb{Z}_2[C_2]}(G,W_R[2^{\infty}])$. 
It will be clear by Proposition~\ref{delta maps for ext tilde}, that the two following
sets 
do not always coincide:
\[
\widetilde{\ext}_{\mathbb{Z}_2[C_2]}(G,W_R[2^{\infty}]), 
\ext_{\mathbb{Z}_2[C_2]}(G,W_R[2^{\infty}])
.\]
On the other hand, the set of embeddable extensions has the algebraic structure that allows us to proceed in perfect parallel with the previous section. 
\begin{proposition}
One has that $\widetilde{\ext}_{\mathbb{Z}_2[C_2]}(G,W_R[2^{\infty}])$ is a subgroup of  $\ext_{\mathbb{Z}_2[C_2]}(G,W_R[2^{\infty}])$ stable under the action of $\aut_{\emph{ring}}(R) \times \aut_{\emph{ab.gr.}}(G)$.
\begin{proof}
Let 
$$ \begin{array}{ccc}  0 &\overset{}\to& \frac{(R)^{*}}{\langle -1 \rangle}[2^{\infty}] \to  \\  && \uparrow i_1\\ 0 &\underset{}\to& W_R[2^{\infty}]  \to  \end{array} \begin{array}{ccc}  \tilde{B} &\overset{\pi}\to& \tilde{G} \to 0 \\ \uparrow i_2 && \uparrow i_3\\ B &\underset{f}\to& G \to 0 \end{array}
$$
and
$$ \begin{array}{ccc}  0 &\overset{}\to& \frac{(R)^{*}}{\langle -1 \rangle}[2^{\infty}] \to  \\  && \uparrow i_1\\ 0 &\underset{}\to& W_R[2^{\infty}]  \to  \end{array} \begin{array}{ccc}  \tilde{B'} &\overset{\pi'}\to& \tilde{G'} \to 0 \\ \uparrow i_2' && \uparrow i_3'\\ B' &\underset{f'}\to& G \to 0 \end{array}
$$
be
two embeddable extensions equipped with their respective diagrams. We now consider the following commutative diagram of $\mathbb{Z}_2[C_2]$-modules,
$$ \begin{array}{ccc}  0 &\overset{}\to& \frac{(R)^{*}}{\langle -1 \rangle}[2^{\infty}] \to  \\  && \uparrow i_1\\ 0 &\underset{}\to& W_R[2^{\infty}]  \to  \end{array} \begin{array}{ccc}  (\tilde{B} \times_{G}\tilde{B'})/Y' &\overset{\pi \times \pi'}\to& \tilde{G} \times_{G} \tilde{G'} \to 0 \\ \uparrow i_2 \times i_2' && \uparrow i_3 \times i_3'\\  (B \times_{G} B')/Y &\underset{f \times f'}\to& G \to 0 \end{array}
$$
where $\tilde{B} \times_{G} \tilde{B'}:=\{(b_1,b_2) \in \tilde{B} \times \tilde{B'}: 2\pi(b_1)=2\pi'(b_2)\}$, while $Y'$ denotes the antidiagonal embedding of $\frac{(R)^{*}}{\langle -1 \rangle}[2^{\infty}]$ in $\tilde{B} \times_{G} \tilde{B'}$. Similarly $ B \times_{G} B':=\{(b_1,b_2) \in B \times B': f(g_1)=f'(g_2)\}$, with $Y$ denoting the anti-diagonal embedding of $W_R[2^{\infty}]$, and 
$$ \tilde{G} \times_{G} \tilde{G'}:=\{(g_1,g_2) \in \tilde{G} \times \tilde{G'}: 2g_1=2g_2\}.$$ 
There is an obviously induced compatible $C_2$ action on each terms and one can deduce that 
$$ (\pi \times \pi')(((\tilde{B} \times_{G}\tilde{B'})/Y')^{-})=\tilde{G} \times_{G} \tilde{G'}   
\text{ and }
(\pi \times \pi')(((\tilde{B} \times_{G}\tilde{B'})/Y')^{+})=(\tilde{G} \times_{G} \tilde{G'})[2]
$$
using the fact that individually $\pi$ and $\pi'$ satisfy the respective property.

On the other hand, by construction one has that $i_2 \times i_2'$ and $i_3 \times i_3'$ are isomorphisms between the source groups and the double of the targets. This shows that $\widetilde{\ext}_{\mathbb{Z}_2[C_2]}(G,W_R[2^{\infty}])$ is closed under addition because the sequence
$0 \to W_R[2^{\infty}] \to (B \times_G B')/Y \to G \to 0$
represents the class of the Baer sum of the two embeddable sequences in $\ext_{\mathbb{Z}_2[C_2]}(G,W_R[2^{\infty}])$. Since $\ext_{\mathbb{Z}_2[C_2]}(G,W_R[2^{\infty}])$ is finite, in order to conclude that $\widetilde{\ext}_{\mathbb{Z}_2[C_2]}(G,W_R[2^{\infty}])$ is a subgroup, one is only left to show that $\widetilde{\ext}_{\mathbb{Z}_2[C_2]}(G,W_R[2^{\infty}])$ is non-empty. To this end we refer the reader to Proposition \ref{delta maps for ext tilde}, which in particular implies that $\widetilde{\ext}_{\mathbb{Z}_2[C_2]}(G,W_R[2^{\infty}])$ is non-empty (alternatively one could also directly prove that the split sequence is embeddable, which one can indeed show using the same steps of the proof of Proposition \ref{delta maps for ext tilde}).
Finally, given an embeddable sequence
$$\begin{array}{ccc}  0 &\overset{}\to& \frac{(R)^{*}}{\langle -1 \rangle}[2^{\infty}] \overset{g}\to  \\  && \uparrow i_1\\ 0 &\underset{}\to& W_R[2^{\infty}] \overset{h} \to  \end{array} \begin{array}{ccc}  \tilde{B} &\overset{\pi}\to& \tilde{G} \to 0 \\ \uparrow i_2 && \uparrow i_3\\ B &\underset{f}\to& G \to 0 \end{array}
$$
and a pair $(\phi_1,\phi_2) \in \aut_{\text{ring}}(R) \times \aut_{\text{ab.gr.}}(G)$, we can consider
$$\begin{array}{ccc}  0 &\overset{}\to& \frac{(R)^{*}}{\langle -1 \rangle}[2^{\infty}] \overset{g \phi_1}\to  \\  && \uparrow i_1\\ 0 &\underset{}\to& W_R[2^{\infty}] \overset{h\phi_1} \to  \end{array} \begin{array}{ccc}  \tilde{B} &\overset{\pi}\to& \tilde{G} \to 0 \\ \uparrow i_2 && \uparrow i_3\phi_2^{-1}\\ B &\underset{\phi_2f}\to& G \to 0 \end{array}
$$
which
gives an embeddability diagram for the sequence
$$(\phi_1,\phi_2)(0 \to W_R[2^{\infty}] \to B \to G \to 0)
$$
showing that $\widetilde{\ext}_{\mathbb{Z}_2[C_2]}(G,W_R[2^{\infty}])$ is stable under the action of $\aut_{\text{ring}}(R) \times \aut_{\text{ab.gr.}}(G)$.
\end{proof}
\end{proposition}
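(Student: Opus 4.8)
The plan is to establish the two assertions in turn, and for the subgroup part to exploit the finiteness of $\ext_{\mathbb{Z}_2[C_2]}(G,W_R[2^{\infty}])$: it then suffices to show that $\widetilde{\ext}_{\mathbb{Z}_2[C_2]}(G,W_R[2^{\infty}])$ is non-empty and closed under the Baer sum. Non-emptiness I would deduce from Proposition~\ref{delta maps for ext tilde}, whose proof already produces embeddable extensions; alternatively one checks directly that the split sequence $0\to W_R[2^{\infty}]\to W_R[2^{\infty}]\oplus G\to G\to 0$ sits under a split ambient sequence $0\to\frac{R^*}{\langle-1\rangle}[2^{\infty}]\to\frac{R^*}{\langle-1\rangle}[2^{\infty}]\oplus\widetilde{G}\to\widetilde{G}\to 0$ for any $\widetilde{G}$ with $2\widetilde{G}\cong G$, together with the evident diagonal maps, the two sign conditions on the split projection being immediate.

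For closure under the Baer sum I would take two embeddable extensions, equipped with their embedding data $(i_1,i_2,i_3)$ and $(i_1,i_2',i_3')$, realise the Baer sum via the standard pullback-then-pushout recipe as $0\to W_R[2^{\infty}]\to (B\times_G B')/Y\to G\to 0$, and then run exactly the same recipe on the ambient sequences: form the fibre product $\widetilde{B}\times_G\widetilde{B'}$ over the common quotient $G$ along $2\pi$ and $2\pi'$, divide by the antidiagonal copy $Y'$ of $\frac{R^*}{\langle-1\rangle}[2^{\infty}]$, and take $\widetilde{G}\times_G\widetilde{G'}:=\{(g_1,g_2):2g_1=2g_2\}$ as the new quotient, with vertical maps $i_1$, $i_2\times i_2'$, $i_3\times i_3'$. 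The step I expect to be the main obstacle is verifying that this candidate genuinely is an embedding diagram: every term carries a compatible $C_2$-action because the constructions are functorial, but one must still check by a diagram chase that $\pi\times\pi'$ satisfies $(\pi\times\pi')(((\widetilde{B}\times_G\widetilde{B'})/Y')^-)=\widetilde{G}\times_G\widetilde{G'}$ and $(\pi\times\pi')(((\widetilde{B}\times_G\widetilde{B'})/Y')^+)=(\widetilde{G}\times_G\widetilde{G'})[2]$, feeding on the corresponding facts for $\pi$ and $\pi'$ individually, and that $i_2\times i_2'$ and $i_3\times i_3'$ remain isomorphisms onto the doubles of their targets, which likewise reduces to the statements for $i_2,i_2',i_3,i_3'$ once one unwinds $2(\widetilde{G}\times_G\widetilde{G'})\cong G$.

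Stability under $\aut_{\text{ring}}(R)\times\aut_{\text{ab.gr.}}(G)$ I would handle by re-decorating the embedding diagram. Given an embeddable extension with inclusion $h$, ambient inclusion $g$ and maps $i_1,i_2,i_3$, and given $(\phi_1,\phi_2)\in\aut_{\text{ring}}(R)\times\aut_{\text{ab.gr.}}(G)$, the twisted extension $(\phi_1,\phi_2)\cdot(0\to W_R[2^{\infty}]\to B\to G\to 0)$ is fitted under the same ambient sequence by keeping $\widetilde{B},\widetilde{G},i_1,i_2$ and replacing $h$ by $h\phi_1$, $g$ by $g\phi_1$, and $i_3$ by $i_3\phi_2^{-1}$; commutativity of the new square is immediate and the two sign conditions are untouched since they only involve $\widetilde{B}$ and $\widetilde{G}$. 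The one non-formal point is that $\aut_{\text{ring}}(R)$ acts on $W_R[2^{\infty}]$ compatibly with its action on $\frac{R^*}{\langle-1\rangle}[2^{\infty}]$, which holds because, by its definition in~\eqref{def:wr}, $W_R$ is cut out intrinsically by the ring structure of $R$, so that $g\phi_1$ is again a legitimate inclusion of $\frac{R^*}{\langle-1\rangle}[2^{\infty}]$ into $\widetilde{B}$. Combining the non-emptiness, the Baer-sum closure and this re-decoration argument yields the Proposition.
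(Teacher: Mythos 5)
Your proposal follows the same route as the paper: establish non-emptiness (via Proposition~\ref{delta maps for ext tilde} or the split sequence), closure under the Baer sum by running the fibre-product-modulo-antidiagonal construction in parallel on the ambient sequences (fibring over $G$ via $2\pi, 2\pi'$ since $\tilde G$ and $\tilde G'$ need not coincide), verifying the two sign conditions from those of $\pi$ and $\pi'$ individually, and then conclude using finiteness of $\ext_{\mathbb{Z}_2[C_2]}(G,W_R[2^{\infty}])$; for stability you re-decorate the diagram by $h\mapsto h\phi_1$, $g\mapsto g\phi_1$, $i_3\mapsto i_3\phi_2^{-1}$, exactly as the paper does. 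The one extra thing you spell out, which the paper leaves implicit, is the observation that $\aut_{\text{ring}}(R)$ preserves $W_R$ (and hence $W_R[2^\infty]$) because \eqref{def:wr} defines $W_R$ intrinsically from the ring structure, so that post-composing the inclusions with $\phi_1$ is legitimate; this is a correct and worthwhile clarification but does not change the argument.
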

Denote by $\mathcal{G}_2$ a set of representatives of isomorphism classes of finite abelian $2$-groups, viewed as $C_2$-modules under the action of $-\Id$. For an imaginary quadratic number field $K$, denote by $G_2(K)$ the unique representative of $2\cl(K)[2^{\infty}]$ in $\mathcal{G}_2$. Suppose $K$ is strongly of type $R$. Then 
$(\mathcal{O}_K/c)^{*}/\langle -1 \rangle$ can be identified with $R^{*}/\langle -1 \rangle$ via any restriction of a ring isomorphism, that is via any element of 
$\isom_{\text{ring}}(\mathcal{O}_K/c,R)$. 
Furthemore, 
we can identify $2\cl(K)[2^{\infty}]$ and $G_2(K)$  via any element of 
$\isom_{\text{ab.gr.}}(\cl(K)[2^{\infty}],G)$.
Therefore applying $\isom_{\text{ring}}(\mathcal{O}_K/c,R) \times \isom_{\text{ab.gr.}}(2\cl(K)[2^{\infty}],G_2(K))$ to 
$S_2(K)$, we obtain a unique orbit 
$$ O_{c,2}(K) \in 
\widetilde{\ext}_{\mathbb{Z}_2[C_2]}(G_2(K),W_R[2^{\infty}])/(\aut_{\text{ring}}(R) \times \aut_{\text{ab.gr.}}(G)).$$
For $K$ strongly of type $R$ we use the notation
$$S'_2(K):=(G_2(K),O_{c,2}(K)).
$$
If $K$ is not strongly of type $R$, we set $S'_2(K)$ to be the symbol $\bullet$.
We now proceed by
offering a heuristic model for $S'_2(K)
$ as $K$ varies among imaginary quadratic number fields of type $R$. 
Let $R$ be an unramified ring at $c$ 
and denote by $\mathcal{G}_2$ a set of representatives of isomorphism classes of finite abelian $2$-groups, viewed as $C_2$-modules under the action of $-\Id$. Denote by $\mathcal{S}_2(R)$ the union of the singleton $\{\bullet\}$ and of the set of equivalence classes of pairs $(G,\theta)$, where $G \in \mathcal{G}_{2}$, $\theta \in \widetilde{\ext}_{\mathbb{Z}_2[C_2]}(G,W_R[2^{\infty}])$
and the equivalence is defined as follows:
two pairs $(G_1,\theta_1), (G_2,\theta_2)$ are identified if $G_1=G_2$ and $\theta_1, \theta_2$ are in the same $\aut_{\text{ring}}(R) \times \aut_{\text{ab.gr.}}(G)$-orbit. Denote by $\widetilde{\mathcal{S}_2}(R)$ the union of the singleton $\{\bullet\}$ and the set of pairs $(G,\theta)$, where $G \in \mathcal{G}_{2}$ and $\theta \in \widetilde{\ext}_{\mathbb{Z}_2[C_2]}(G,W_R[2^{\infty}])$,
thus bringing into play
the 
quotient map
$$\pi: \widetilde{\mathcal{S}}_2(R) \to \mathcal{S}_2(R).$$
Consider the sigma algebra generated by all subsets
on $\widetilde{\mathcal{S}}_2(R)$, as well as on $\mathcal{S}_2(R)$,
and equip $\widetilde{\mathcal{S}}_2(R)$ with the measure
$$\widetilde{\mu}_{\text{seq}}((G,\theta))
:=
\frac{\mcl(G)}{\#\widetilde{\ext}_{\mathbb{Z}_2[C_2]}(G,W_R[2^{\infty}])},
\widetilde{\mu}_{\text{seq}}(\{\bullet\})=0
,$$
where $\mcl$ denotes, as usual, the Cohen--Lenstra probability measure on $\mathcal{G}_{2}$ that
 gives to each abelian $2$-group $G$ weight inversely proportional to the size of the automorphism group of $G$. 
Push forward, via $\pi$, the measure $\widetilde{\mu}_{\text{seq}}$ to a measure $\mseq$ on $\mathcal{S}_2(R)$. It is clear by construction that $\widetilde{\mu}_{\text{seq}}$ and $\mseq$ are probability measures.  
\\
\\ 
The heuristic assumption that we propose for the $2$-part of ray class sequences of conductor $c$ of imaginary quadratic fields of type $R$ is as follows.
\begin{heuristic}
For any `reasonable' function $f: \mathcal{S}_2(R) \to \mathbb{R}$ one has that, as $K$ varies among imaginary quadratic number fields of type $R$, the following equality of averages takes place
\begin{align*}
\lim_{X \to \infty}\frac{\sum_{-\disc(K)\leq X}f(S'_2(K))}{\#\{-\disc(K)\leq X\}}
=&\sum_{S \in \mathcal{S}_2(R)}f(S)\mseq(S).
\end{align*}
\end{heuristic}
As a consistency check, observe that the above identity of average takes place if one chooses as $f$ the indicator function of $\{\bullet\}$: indeed, since the number of $K$ with $\disc(K)\leq X$ that are not strongly of type $R$
is at most 
$\ll_{c} X (\log X)^{-1/\phi(c)}$, we see that 
we obtain $0$
in the left side,
while in the right side we obtain $0$ by definition. 
Clearly one can readily formulate the analogues of Conjectures ~\ref{conj:1} and~\ref{con:eibaar}.
We shall instead opt to
devote the rest of the section to the analogues of Conjectures~\ref{conj:3}-\ref{c:wc}.

If $\alpha \in  R^{*}/\langle -1 \rangle $
then $\alpha^2 N(\alpha) \in W_R$, where 
$N(\cdot)$ is the norm-function with respect to the $C_2$-action prescribed to $R^{*}/\langle -1 \rangle$: indeed both $\alpha^2$ and $N(\alpha)$ are in $W_R$. We define the map 
$g_{R}: R^{*}/\langle -1 \rangle \to W_R$
given by
$\alpha \mapsto \alpha^2  N(\alpha)$. With a small abuse of notation, we use the same notation for the induced map $g_{R}:
\frac
{
R^{*}/\langle -1 \rangle
}
{
(R^{*}/\langle -1 \rangle)^2
} \to W_R/2W_R$
and we denote by $\Im(g_R)$ the image of $g_R$ in $W_R/2W_R$. 
\begin{proposition}\label{delta maps for ext tilde}
The image of the natural map
$$\widetilde{\ext}_{\mathbb{Z}_2[C_2]}(G,W_R) \to \Hom_{\mathbb{F}_2[C_2]}(G[2],W_R/2W_R)
$$
is 
$$ \Hom_{\mathbb{F}_2[C_2]}(G[2],\Im(g_R)) \ \ (=\Hom_{\mathbb{F}_2}(G[2],\Im(g_R))).$$
\end{proposition}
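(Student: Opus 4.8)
The plan is to prove the two inclusions ``$\subseteq$'' and ``$\supseteq$'' separately, after two formal reductions. By the preceding proposition $\widetilde{\ext}_{\mathbb{Z}_2[C_2]}(G,W_R)$ is a subgroup of $\ext_{\mathbb{Z}_2[C_2]}(G,W_R)$, and by Proposition~\ref{induces surj map} the assignment $\theta\mapsto\delta_2(\theta)$ is a group homomorphism; hence its restriction to $\widetilde{\ext}_{\mathbb{Z}_2[C_2]}(G,W_R)$ has image a subgroup of $\Hom_{\mathbb{F}_2[C_2]}(G[2],W_R/2W_R)$, and it is enough to identify that subgroup. Since $\tau$ acts on $G[2]$ as $-\Id=\Id$, every $\delta_2(\theta)$ is automatically a $C_2$-map and so lands in $(W_R/2W_R)^{C_2}$; and $\Im(g_R)\subseteq(W_R/2W_R)^{C_2}$, because $\tau(g_R(\alpha))-g_R(\alpha)=2(\tau(\alpha)-\alpha)=2N(\alpha)-4\alpha\in 2W_R$ (using $N(\alpha),2\alpha\in W_R$), which also gives the asserted identity $\Hom_{\mathbb{F}_2[C_2]}(G[2],\Im(g_R))=\Hom_{\mathbb{F}_2}(G[2],\Im(g_R))$ as $G[2]$ carries the trivial action. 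I write all modules additively and put $U:=(R^{*}/\langle -1\rangle)[2^{\infty}]$, so $W_R\subseteq U$ and $g_R(v)=2v+N(v)$.

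For ``$\subseteq$'', take an embeddable $\theta$ with embedding datum $\tilde\theta\colon 0\to U\to\tilde B\xrightarrow{\pi}\tilde G\to 0$ and maps $i_1,i_2,i_3$ as in the definition, and fix $x\in G[2]$. Set $\tilde x:=i_3(x)$, which lies in $2\tilde G\cap\tilde G[2]$, and write $\tilde x=2\tilde y$; using $\pi(\tilde B^-)=\tilde G$ pick $\tilde c\in\tilde B^-$ with $\pi(\tilde c)=\tilde y$, and using $\pi(\tilde B^+)=\tilde G[2]$ pick $\tilde d\in\tilde B^+$ with $\pi(\tilde d)=\tilde x$. Then $2\tilde c\in 2\tilde B=i_2(B)$ projects to $\tilde x$, so $b:=i_2^{-1}(2\tilde c)$ lifts $x$ and $\delta_2(\theta)(x)=[2b]$; as $i_2$ restricted to $W_R$ is the inclusion $W_R\hookrightarrow U\hookrightarrow\tilde B$, we get $2b=i_2(2b)=2i_2(b)=4\tilde c$. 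Moreover $v:=2\tilde c-\tilde d$ lies in $\ker\pi=U$ and $\tau(v)=-2\tilde c-\tilde d$, so $4\tilde c=v-\tau(v)=2v-N(v)\equiv 2v+N(v)=g_R(v)\pmod{2W_R}$, the congruence holding because $v\in U$ forces $N(v)\in W_R$. Hence $\delta_2(\theta)(x)=[g_R(v)]\in\Im(g_R)$. The same computation also yields the compatibility $i_1\circ\delta_2(\theta)=\delta_2(\tilde\theta)\circ i_3|_{G[2]}$.

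For ``$\supseteq$'' I would realize every $\phi\in\Hom_{\mathbb{F}_2}(G[2],\Im(g_R))$ by an embeddable extension, reversing the computation of the previous paragraph. Since $\delta_2|_{\widetilde{\ext}}$ is a homomorphism it suffices to treat $\phi$ supported on a single basis vector $x_0$ of $G[2]$ with $\phi(x_0)=g_R(\alpha_0)$, for $\alpha_0$ in a fixed generating family of $\Im(g_R)$. Writing $G=\bigoplus_j\mathbb{Z}/2^{a_j}e_j$ with $x_0=2^{a_0-1}e_0$, I would let $\tilde G$ be $G$ with each $\mathbb{Z}/2^{a_j}e_j$ replaced by $\mathbb{Z}/2^{a_j+1}\tilde e_j$ and a few auxiliary $\mathbb{Z}/2$-summands adjoined, so $2\tilde G\cong G$ via an isomorphism $i_3$ carrying $G[2]$ onto $(2\tilde G)[2]$, and then write down a $2$-cocycle defining $\tilde\theta\colon 0\to U\to\tilde B\to\tilde G\to 0$ in which the generator lifting the doubled $e_0$-summand is prescribed through a norm built from $\alpha_0$, the auxiliary summands are used to force $\Im(\delta_2(\tilde\theta))=W_R/2U$, and the whole class is adjusted so that $\pi(\tilde B^-)=\tilde G$ and $\pi(\tilde B^+)=\tilde G[2]$. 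Then $\theta:=(0\to U\cap 2\tilde B\to 2\tilde B\to 2\tilde G\to 0)$, with $i_2$ the inclusion $2\tilde B\hookrightarrow\tilde B$, is embeddable with $\delta_2(\theta)=\phi$ by the containment computation applied to this diagram.

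The step I expect to be the main obstacle is making the three requirements $\pi(\tilde B^-)=\tilde G$, $\pi(\tilde B^+)=\tilde G[2]$ and $U\cap 2\tilde B=W_R$ hold simultaneously. The first two are controlled by obstruction classes in the Tate cohomology $\widehat{H}^{1}(C_2,U)$ attached to $\tilde\theta$ (the failure of $\tau$-anti-invariant, respectively $\tau$-invariant, lifts of elements of $\tilde G$, respectively $\tilde G[2]$), and one must check these can be killed by modifying $\tilde\theta$ within the fibre of its connecting map. The third is the heart of the matter: one shows in general that $U\cap 2\tilde B$ is the preimage in $U$ of $\Im(\delta_2(\tilde\theta))\subseteq U/2U$, so that prescribing $\Im(\delta_2(\tilde\theta))=W_R/2U$ (together with $2U\subseteq W_R$) forces $U\cap 2\tilde B=W_R$ — and it is exactly there that the hypothesis $\Im(\phi)\subseteq\Im(g_R)$, i.e.\ the norm-twisted shape $g_R(\alpha)=2\alpha+N(\alpha)$ of the doubling map, is what makes the construction consistent.
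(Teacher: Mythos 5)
Your first inclusion is correct and is essentially the paper's own argument written additively: using $\pi(\tilde B^{+})=\tilde G[2]$ and $\pi(\tilde B^{-})=\tilde G$ you produce a fixed lift $\tilde d$ and an anti-fixed $\tilde c$ with $4\tilde c$ representing $\delta_2(\theta)(x)$, and the identity $4\tilde c=v-\tau(v)\equiv g_R(v)\pmod{2W_R}$ for $v=2\tilde c-\tilde d\in U$ is the same manipulation the paper performs multiplicatively (there via $\mathfrak b\alpha^{-1}=x^2$ and $N(x)^2\in 2W_R$). The preliminary reductions (restriction of $\theta\mapsto\delta_2(\theta)$ to the subgroup $\widetilde{\ext}$ is a homomorphism; $\tau$ acts trivially on $\Im(g_R)$, whence $\Hom_{\mathbb{F}_2[C_2]}=\Hom_{\mathbb{F}_2}$; reduction of surjectivity to maps supported on one basis vector with value a generator) are all fine.

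The surjectivity half, however, is a plan rather than a proof, and the part you yourself flag as ``the main obstacle'' --- arranging $\pi(\tilde B^{-})=\tilde G$, $\pi(\tilde B^{+})=\tilde G[2]$ and $U\cap 2\tilde B=W_R[2^\infty]$ simultaneously, with $\delta_2$ of the bottom row equal to the prescribed $\phi$ --- is exactly the content of the proposition and is left unexecuted. You assert that the two lifting conditions are governed by classes in $\widehat H^{1}(C_2,U)$ which ``can be killed by modifying $\tilde\theta$ within the fibre of its connecting map'', but you neither compute these obstructions (the anti-invariant-lift obstruction in fact lives in $\widehat H^{0}(C_2,U)=U^{\tau}/(1+\tau)U$, not $\widehat H^{1}$) nor show where the hypothesis $\Im(\phi)\subseteq\Im(g_R)$ enters to make them vanish; since $\widetilde{\ext}$ is in general a proper subgroup of $\ext$, such obstructions cannot always be removed, so this is precisely where the work lies. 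Likewise the final claim that $\delta_2(\theta)=\phi$ ``by the containment computation'' is circular: that computation presupposes the embeddability conditions you have not yet secured, and in any case it only shows membership in $\Im(g_R)$, not the prescribed value. The paper sidesteps the cohomological adjustment entirely: it writes $g_R(\alpha)\equiv \alpha-\tau(\alpha)\pmod{2W_R}$, builds $\tilde B$ first as a plain $\mathbb{Z}_2$-module extension with prescribed $\delta$-data on doubled generators $\tilde e_i$ and on auxiliary $\mathbb{Z}/2$-summands $d_s$ (the latter forcing the kernel of $2\tilde B\to 2\tilde G$ to be $W_R[2^\infty]$, which matches your correct observation that $U\cap 2\tilde B$ is the preimage of $\Im(\delta_2(\tilde\theta))$), and only then defines the $C_2$-action on $\tilde B$ by hand, declaring chosen lifts $\tilde e_i'$ anti-fixed and $d_s'$ fixed, and verifying $\pi(\tilde B^{-})=\tilde G$ and $\pi(\tilde B^{+})=\tilde G[2]$ directly through the $(1\pm\tau)$ computations. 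To complete your argument you would have to either carry out such an explicit construction or actually prove your obstruction-killing claim; as written, the ``$\supseteq$'' direction is a genuine gap.
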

\begin{proof}
Consider $\theta$ an embeddable sequence 
$$\begin{array}{ccc}  0 &\overset{}\to& \frac{(R)^{*}}{\langle -1 \rangle}[2^{\infty}] \to  \\  && \uparrow i_1\\ 0 &\underset{}\to& W_R[2^{\infty}]  \to  \end{array} \begin{array}{ccc}  \tilde{B} &\overset{\pi}\to& \tilde{G} \to 0 \\ \uparrow i_2 && \uparrow i_3\\ B &\underset{f}\to& G \to 0 \end{array}
$$
and pick $b \in G[2]$. By definition of embeddability there exist $\mathfrak{b}$ in $\tilde{B}^{+}$ such that $\pi(\mathfrak{b})=i_3(b)$. On the other hand we can find $x \in \tilde{B}$ such that $\pi(2x)=i_3(b)$. Therefore there 
exists an element
$\alpha \in  \frac{(R)^{*}}{\langle -1 \rangle}[2^{\infty}]$ such that $\mathfrak{b} 
{\alpha}^{-1}=x^2$,
which implies that
$\mathfrak{b}^2 N(\alpha)^{-1}=N(x)^2$. Furthermore,
$2x$ is in $B$,
hence
we have that $\delta_2(\theta)(b)=\mathfrak{b}^2
\alpha^{-2}$ as an element of $W_R/2W_R$. However note that
 $N(x)^2 \in 2W_R$: indeed, by definition of embeddability, we can always write $x=x^{-}\beta$ with $x^{-}$ an anti-fixed point and $\beta \in \frac{R^{*}}{\langle -1 \rangle}$, so that $N(x)^2=N(\beta)^2 \in W_R$. Therefore we find that $\delta_2(\theta)(b)=N(\alpha)\alpha^2$, i.e. $\delta_2(\theta)(b) \in \Im(g_R)$.

Conversely, we prove that given a $C_2$-map $\delta_0: G[2] \to \Im(g_R)$, there exists a $\theta \in \widetilde{\ext}_{\mathbb{Z}_2[C_2]}$ such that $\delta_2(\theta)=\delta_0$.
Firstly observe that 
$ \Hom_{\mathbb{F}_2[C_2]}(G[2],\Im(g_R))=\Hom_{\mathbb{F}_2}(G[2],\Im(g_R)))$, since $\tau$ clearly fixes $N(\alpha)$ for any $\alpha$ in $R$
and $\alpha^2\tau(\alpha^2)=N(\alpha)^2 \in 2W_R$, therefore $\tau$ acts trivially on $\Im(g_R)$ (see Lemma ~\ref{trivial action} for a more general fact). 
Thus pick $\delta_0 \in \Hom_{\mathbb{F}_2}(G[2],\Im(g_R)))$. We divide the construction of $\theta$ and its embedding in four steps: \\
\emph{Step 1:} Observe that $\alpha^2N(\alpha)=\frac{\alpha^2}{N(\alpha)}N(\alpha)^2=\frac{\alpha}{\tau(\alpha)}N(\alpha)^2$. Since $N(\alpha)^2 \in 2W_R[2^{\infty}]$, we conclude that any element of $\Im(g_R)$ can be represented as $\frac{\alpha}{\tau(\alpha)}$ for some $\alpha \in \frac{R^{*}}{\langle -1 \rangle}[2^{\infty}]$. \\
\emph{Step 2:} Write $G=\langle e_1 \rangle \oplus \ldots \oplus \langle e_j \rangle$, with the order of $e_i$ being $2^{m_i}$ for a positive integer $m_i$, for each $i \in \{1,\ldots,j\}$. Therefore $G[2]=\langle 2^{m_1-1}e_1 \rangle \oplus \ldots \oplus \langle 2^{m_j-1}e_j\rangle$
and now,
use Step 1
for each $i \in \{1,\ldots,j\}$
to construct $\alpha_i \in \frac{R^{*}}{\langle -1 \rangle}[2^{\infty}]$
such that $\delta_0(2^{m_i-1}e_i)=\frac{\alpha_i}{\tau(\alpha_i)}$. \\
\emph{Step 3:} Embed $G$ in a group $\tilde{G}=\langle \tilde{e}_1 \rangle \oplus \ldots \oplus \langle\tilde{ e}_{j} \rangle \oplus \langle d_1 \rangle \oplus \ldots \oplus \langle d_h \rangle$, with the rules   $2\tilde{e}_i=e_i$ for every $i$ in $\{1,\ldots,j\}$, $2d_s=0$ for every $s \in \{1, \ldots ,h\}$ and $h \geq \rk_{2}((\mathbb{Z}/c\mathbb{Z}^{*}))-1$. Take an extension $\theta \in \ext_{\mathbb{Z}_2}(\tilde{G},\frac{R^{*}}{\langle -1 \rangle})$ such that for every $i \in \{1, \ldots ,j\}$ one has that $\delta_{2^{m_i}}(\theta)(\tilde{e}_i)=\frac{\alpha_i}{\tau(\alpha_i)}$ and such that $\langle \{\delta_2(\theta)(d_1), \ldots ,\delta_2(\theta)(d_{h})\} \rangle=\Im((\mathbb{Z}/c\mathbb{Z})^{*} \to W_R/2W_R)$. Call $\tilde{B}$ the middle term of this extension. Pick 
$\tilde{e}'_{1},\ldots,\tilde{e}'_{j}$ liftings of $e_{1},\ldots,e_{j}$ with the property that 
$2^{m_i}\tilde{e}'_{i}=\frac{\alpha_i}{\tau(\alpha_i)}$ for all $i$ in $\{1,\ldots ,j\}$. 
Choose also ${d}'_1,\ldots,{d}'_h$ liftings
of $d_1,\ldots,d_{h}$ in $\tilde{B}$ and put $2\tilde{B}=B$. 
Observe that by construction the kernel of $B \to G$ is $W_R[2^{\infty}]$. This gives a commutative diagram of $\mathbb{Z}_2[C_2]$-modules,
$$\begin{array}{ccc}  0 &\overset{}\to& \frac{(R)^{*}}{\langle -1 \rangle}[2^{\infty}] \to  \\  && \uparrow i_1\\ 0 &\underset{}\to& W_R[2^{\infty}]  \to  \end{array} \begin{array}{ccc}  \tilde{B} &\overset{\pi}\to& \tilde{G} \to 0 \\ \uparrow i_2 && \uparrow i_3\\ B &\underset{f}\to& G \to 0. \end{array}
$$
\emph{Step 4:} Define $A_1:=\langle \{\tilde{e}'_1,\ldots,\tilde{e}'_j \rangle$,
$A_2:=\langle\{{d}'_1,\ldots,{d}'_h\} \rangle$
and
$A:=\langle A_1,A_2 \rangle$. 
Consider $A_1$ as a $C_2$-module with the
 $-\text{Id}$-action and $A_2$ with the $\text{Id}$-action. Observe that, by construction, the $C_2$-action on $A_1$ and $A_2$ restrict to the same $C_2$-action on $A_1 \cap A_2$. Therefore the $C_2$-action extend to an action on $A$.
Observe that, by construction, the $C_2$-action on $A$ and $\frac{R^{*}}{\langle -1 \rangle}[2^{\infty}]$ restricts to the same $C_2$-action on  $A \cap \frac{R^{*}}{\langle -1 \rangle}[2^{\infty}]$. It is also clear that $\langle A,\frac{R^{*}}{\langle -1 \rangle}[2^{\infty}]\rangle=\tilde{B}$. Therefore one can put on $\tilde{B}$ a $C_2$-action which restricted to $A$ is $-\text{Id}$ and restricted to $\frac{R^{*}}{\langle -1 \rangle}$ is the usual action.  This turns the above diagram into a diagram of $C_2$-modules, and we want to prove that the top sequence remains exact when we take $(1+\tau)$-torsion and when we take $(1-\tau)$-torsion. 
But by construction
\begin{align*}
(1+\tau)(\tilde{B})
&=(1+\tau)\Big(\langle A_1,A_2, R^{*}/ \langle -1 \rangle\rangle\Big)
=
(1+\tau)\Big(\langle A_2, R^{*}/ \langle -1 \rangle \rangle\Big)
\\&=
\langle
2A_2,(1+\tau)(R^{*}/ \langle -1 \rangle) \rangle 
\subseteq 
\langle (1+\tau)(
R^{*}/ \langle -1 \rangle
) \rangle
\end{align*}
and 
\begin{align*}
(1-\tau)(\pi^{-1}(\tilde{G}[2])
&=(1-\tau)(\langle A_1 \cap \ker(2\pi),R^{*}/ \langle -1 \rangle\rangle)
\\&=
\langle 2(A_1 \cap \ker(2\pi)), (1-\tau)(R^{*}/ \langle -1 \rangle) \rangle 
\\&\subseteq 
 (1-\tau)(R^{*}/ \langle -1 \rangle) 
,\end{align*}
where the last two inclusions follow from Step 3.
This shows that the
diagram above is an embedding, 
concluding the proof that $\delta_0$ can be realized as $\delta_2(\theta)$ for some $\theta$ in $\widetilde{\ext}_{\mathbb{Z}_2[C_2]}(G,W_R[2^{\infty}])$ (i.e. $0 \to W_R[2^{\infty}] \to B \overset{f} \to G \to 0 $). 
\end{proof}
If $K$ is strongly of type $R$, we denote by 
$\delta_2(K)$
the map $\delta_2(S_2(K))$. By choosing any ring identification in $\text{Isom}_{\text{ring}}(\mathcal{O}_K/c,R)$ and any identification in $\text{Isom}_{\text{ab.gr.}}(2\cl(K),G_2(K))$ we obtain an $\aut_{\text{ring}}(R)$-orbit of subspaces of $W_R/2W_R$. On the other hand this orbit is composed of a single element due to the following fact:
\begin{lemma}
\label{trivial action}
The action of $\aut_{\emph{ring}}(R)$ on $\Im(g_R)$ is trivial.
\end{lemma}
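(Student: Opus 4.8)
The plan is to prove that for every $\sigma\in\aut_{\text{ring}}(R)$ and every $\alpha\in R^{*}$ the elements $\sigma(g_R(\alpha))$ and $g_R(\alpha)$ differ by a square in $W_R$; since $g_R$ is a group homomorphism (it is $\alpha\mapsto\alpha^{2}N(\alpha)$ with $N(\alpha)=\alpha\,\tau(\alpha)$, a product of multiplicative maps) and $\Im(g_R)$ is, by definition, the set of images of the elements $\alpha\in R^{*}$ inside $W_R/2W_R$, this is exactly what is needed. I would first note that $N(\alpha)$ lies in the image of $\mathbb{Z}/c$ in $R$: on an inert factor it is the field norm $\mathbb{F}_{l^{2}}\to\mathbb{F}_{l}$, and on a split factor $\mathbb{F}_l\times\mathbb{F}_l$ it is $(a,b)\mapsto(ab,ab)$, which lands on the diagonal $\iota_l((\mathbb{Z}/l)^{*})$. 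Because any ring homomorphism restricts to the identity on the image of $\mathbb{Z}$, we obtain $\sigma(N(\alpha))=N(\alpha)$, and hence
\[
\sigma\bigl(g_R(\alpha)\bigr)\,g_R(\alpha)^{-1}=\bigl(\sigma(\alpha)\,\alpha^{-1}\bigr)^{2}=:\beta^{2},\qquad \beta:=\sigma(\alpha)\,\alpha^{-1}\in R^{*}/\langle-1\rangle .
\]
So everything reduces to the claim $\beta\in W_R$, for then $\beta^{2}\in W_R^{2}=2W_R$.

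To establish $\beta\in W_R$ I would argue component by component, using the structure of $\aut_{\text{ring}}(R)$: since the fields $\mathbb{F}_{l^{2}}$ ($l\mid c$ inert) have pairwise distinct cardinalities and each split prime $l$ contributes two copies of $\mathbb{F}_l$, every $\sigma$ preserves the block $R_l$ over each $l\mid c$ and acts there by a ring automorphism of $R_l$ — trivially, or as the Frobenius $\tau_l$ on an inert $R_l=\mathbb{F}_{l^{2}}$, or as the coordinate swap on a split $R_l=\mathbb{F}_l\times\mathbb{F}_l$. If $\sigma_l$ is trivial, $\beta_l=1$. On an inert block with $\sigma_l=\tau_l$ one has $\beta_l=\tau_l(\alpha_l)\alpha_l^{-1}=\alpha_l^{\,l-1}=\bigl(\alpha_l^{(l-1)/2}\bigr)^{2}$, a square in $\mathbb{F}_{l^{2}}^{*}$; this uses that $c$, hence $l$, is odd. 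On a split block with $\alpha_l=(a,b)$ and $\sigma_l$ the swap, $\beta_l=(b/a,\,a/b)=\iota_l(a/b)\cdot(b/a,1)^{2}$, again an element of $\iota_l((\mathbb{Z}/l)^{*})\cdot(R_l^{*})^{2}$. Since $c$ is squarefree, $R^{*}=\prod_{l\mid c}R_l^{*}$ and $(\mathbb{Z}/c)^{*}=\prod_{l\mid c}(\mathbb{Z}/l)^{*}$, so assembling these local factors through the Chinese remainder theorem gives $\beta\in(\mathbb{Z}/c)^{*}\cdot(R^{*})^{2}$, i.e.\ $\beta\in W_R$.

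Combining the two steps yields $\sigma(g_R(\alpha))\equiv g_R(\alpha)\pmod{2W_R}$ for all $\sigma\in\aut_{\text{ring}}(R)$ and all $\alpha\in R^{*}$, which is the assertion of the lemma. I expect the only genuinely delicate point to be the reduction to the membership $\beta\in W_R$ together with the split-factor computation: there neither coordinate of $\beta_l=(b/a,a/b)$ need be a square, but their ratio $(b/a)^{2}$ is, and it is precisely this that forces $\beta_l\in\iota_l((\mathbb{Z}/l)^{*})\cdot(R_l^{*})^{2}\subseteq W_R$. This is also the place where the running assumption that no prime dividing $c$ ramifies in $K$ — which pins down the displayed shape of $R$ — is actually used.
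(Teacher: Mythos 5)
Your proof is correct and follows essentially the same route as the paper's: both arguments rest on the observations that $N(\alpha)$ lies in the image of $\mathbb{Z}/c\mathbb{Z}$ (so is fixed by any ring automorphism) and that the leftover factor $\sigma(\alpha)^2/\alpha^2$ lands in $2W_R$. The paper packages the second step more tersely via the single identity $\alpha^2\tau(\alpha)^2 = N(\alpha)^2 \in 2W_R$ after reducing to a prime block, whereas you extract the square root $\beta = \sigma(\alpha)/\alpha$ and verify $\beta \in W_R$ block by block (with a more explicit treatment of the split case); the content is the same.
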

\begin{proof}
Consider the ring decomposition $R=\prod_{l|c}R/lR$.
It is clear that 
the following holds,
$\aut_{\text{ring}}(R)=\prod_{l|c}\aut_{\text{ring}}(R/lR)$. 
On the other hand, this decomposition is compatible with $g_R$, i.e. $g_R=\prod_{l|c}g_{R/lR}$, where $\prod$ of maps 
is to be thought of as
the map obtained by
applying the maps coordinatewise. 
This reduces the claim to $c=l$ a prime number. In that case one has that $\alpha^2 
 \tau(\alpha)^2=N(\alpha)^2
 $, but $N(\alpha)^2$ is in $2W_R$, therefore, modulo $2W_R$, one has that $\alpha^2N(\alpha)$ is fixed by $\tau$.
\end{proof}
Hence we see that
$\Im(\delta_2(K))$ can be identified with a well-defined subgroup of $\Im(g_R)$. 
We will keep denoting this subgroup as $\Im(\delta_2(K))$.
Moreover, thanks to Proposition~\ref{delta maps for ext tilde} and the fact that the pushforward, via an epimorphism, of the counting probability measure induces the counting probability measure on the target group, we readily obtain the prediction of the distribution of the pair $(\#(2\cl(K))[2],\Im(\delta_2(K)))$.

Fix a subspace $Y \subseteq \Im(g_R)$
and a non-negative integer $j$.
\begin{prediction}
\label{pred:croisant}
As $K$ varies among imaginary quadratic number fields of type $R$, we have the following equality
\begin{align*}
&\lim_{X \to \infty}\frac{\#\{K: -\disc(K)\leq X, \#(2\cl(K))[2]=2^j \ \text{and} \ \Im(\delta_2(K))=Y\}}
{\#\{K: -\disc(K)\leq X \}}
\\
&=\mu_{\emph{CL}}(G \in \mathcal{G}_2:\#G[2]=2^j)
\frac{\#\epi_{\mathbb{F}_2}(\mathbb{F}_2^j,Y)}{\#\Hom_{\mathbb{F}_2}(\mathbb{F}_2^j,\Im(g_R))}.
\end{align*}
\end{prediction}
This will be proved in 
Theorem~\ref{theorem:distribution of delta maps}, but see also 
Theorem~\ref{theorem: joint distribution of 4-ranks, explicit version} for a more explicit statement. \\
A crucial step is to deduce it from a statement about \emph{mixed moments}. Indeed, observe that to know the pair
$$ (\#G[2],\Im(\delta: G[2] \to \Im(g_R)))
$$
is equivalent to knowing for each $\chi$ in the dual group $\widehat{\Im(g_R)}$, the value of 
$$m_{\chi}(\delta):=\#\text{ker}(\chi(\delta)).$$ 
For each $\chi \in \widehat{\Im(g_R)}$, fix a non-negative integer $k_{\chi}$. 
\begin{notation} 
For any  
function 
$\widehat{\Im(g_R)} \to \mathbb{Z}_{\geq 0}$,
$\chi \mapsto k_{\chi}$, 
we will use the notation 
$$ 
\bk
:=\sum_{\chi \in \widehat{\Im(g_R)} }k_{\chi}.$$ 
\end{notation}
Pick a random subset of $\widehat{\Im(g_R)}$ by choosing each character $\chi$ independently at random with the rule that $\chi$ is 
not in the
set
with probability $\frac{1}{2^{k_{\chi}}}$ and 
that
$\chi$ is in the set
with probability $\frac{2^{k_{\chi}}-1}{2^{k_{\chi}}}$.
For a subspace $Y \subseteq \widehat{\Im(g_R)}$
denote by $\mathbb{P}_{(k_{\chi})}(Y)$ the probability that such a random subset generates $Y$. 
Observe that if $\dim(Y)> 
\bk
$ then  $\mathbb{P}_{(k_{\chi})}(Y)=0$: indeed,
in that case we select with probability $1$ less characters than $\dim_{\mathbb{F}_2}(Y)$, so they
they generate $Y$
with 
zero probability. 
Denote by $\mathcal{N}_2(j)$ the number of vector subspaces of $\mathbb{F}_2^j$. If $j<0$, 
we shall
make sense of the expression $0 \cdot \mathcal{N}_2(j)$ by setting it equal to
$0$.

The following proposition reveals
the value predicted by the heuristic model for the $(k_{\chi})_{\chi \in \widehat{\Im(g_R)}}$-mixed moment. 
In what follows
we use the convention
$m_{\chi}(\delta_S)=0$
if we have $S=\bullet \in \mathcal{S}_2(R)$.

\begin{proposition} \label{heur.pr.m.m.}
One has that 
$$\sum_{S \in \mathcal{S}_2(R)}
\mseq(S)
\prod_{\chi \in \widehat{\Im(g_R)}}m_{\chi}(\delta_S)^{k_{\chi}}
=\sum_{Y \subseteq \widehat{\Im(g_R)}} \mathbb{P}_{(k_{\chi})}(Y) \mathcal{N}_{2}(\bk-\dim(Y))
.$$
\end{proposition}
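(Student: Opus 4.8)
The plan is to compute the left-hand side by unfolding the definition of $\mseq$, reduce the sum over $\mathcal{S}_2(R)$ to a sum over $\mathcal{G}_2$ and over $\delta$-maps using Proposition~\ref{delta maps for ext tilde}, and then recognize the resulting expression as the right-hand side by a probabilistic interpretation of the character sums. The key inputs are: the $\aut_{\text{ring}}(R)$-triviality on $\Im(g_R)$ (Lemma~\ref{trivial action}), which means the orbit data in $\mathcal{S}_2(R)$ does not obstruct passing to $\widetilde{\mathcal{S}}_2(R)$; Proposition~\ref{delta maps for ext tilde}, which identifies the image of $\widetilde{\ext}_{\mathbb{Z}_2[C_2]}(G,W_R) \to \Hom(G[2],W_R/2W_R)$ with $\Hom_{\mathbb{F}_2}(G[2],\Im(g_R))$ and in particular shows $\theta \mapsto \delta_2(\theta)$ is surjective onto this group; and the classical Cohen--Lenstra identity $\sum_{G}\mcl(G)\,\#\Hom_{\mathbb{F}_2}(\mathbb{F}_2^j,V)^{?}$-type moment formulas, or more precisely the fact that under $\mcl$ the group $G[2] \cong \mathbb{F}_2^j$ occurs with the standard probability $\mu_{\text{CL}}(G\in\mathcal{G}_2:\#G[2]=2^j)$.

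\textbf{Step 1: Unfold $\mseq$.} Since $\widetilde{\mu}_{\text{seq}}(\{\bullet\})=0$ and $m_\chi(\delta_\bullet)=0$, the point $\bullet$ contributes nothing (unless all $k_\chi=0$, a case one checks separately as both sides equal $1$). For the remaining terms, push the sum back along $\pi$: for each $G\in\mathcal{G}_2$ the fiber over a class $[(G,\theta)]$ has size $\#(\aut_{\text{ring}}(R)\times\aut_{\text{ab.gr.}}(G))$ divided by the stabilizer, but since $m_\chi(\delta_\theta)$ depends only on $\Im(\delta_2(\theta))$ and the latter is $\aut_{\text{ring}}(R)$-invariant by Lemma~\ref{trivial action}, the averaged quantity $\sum_{S}\mseq(S)\prod_\chi m_\chi(\delta_S)^{k_\chi}$ equals $\sum_{G\in\mathcal{G}_2}\mcl(G)\cdot\#\widetilde{\ext}_{\mathbb{Z}_2[C_2]}(G,W_R)^{-1}\sum_{\theta}\prod_\chi m_\chi(\delta_2(\theta))^{k_\chi}$. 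Now apply Proposition~\ref{delta maps for ext tilde}: the map $\theta\mapsto\delta_2(\theta)$ is a surjective homomorphism onto $\Hom_{\mathbb{F}_2}(G[2],\Im(g_R))$, with all fibers of equal size, so the inner average becomes $\#\Hom_{\mathbb{F}_2}(G[2],\Im(g_R))^{-1}\sum_{\delta\in\Hom_{\mathbb{F}_2}(G[2],\Im(g_R))}\prod_\chi m_\chi(\delta)^{k_\chi}$. Writing $j:=\dim_{\mathbb{F}_2}G[2]$, the expression depends only on $j$, and summing over $G$ collapses to $\sum_{j\geq 0}\mu_{\text{CL}}(G:\#G[2]=2^j)\cdot\#\Hom_{\mathbb{F}_2}(\mathbb{F}_2^j,\Im(g_R))^{-1}\sum_{\delta:\mathbb{F}_2^j\to\Im(g_R)}\prod_\chi m_\chi(\delta)^{k_\chi}$.

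\textbf{Step 2: Expand the mixed moment via characters.} For each $\chi$, write $m_\chi(\delta)=\#\ker(\chi\circ\delta)=\sum_{v\in\mathbb{F}_2^j}\mathbf{1}[\chi(\delta(v))=0]$, and then $\mathbf{1}[\chi(\delta(v))=0]=\#G[2]^{-1}\cdot$(nothing useful) — better: expand $m_\chi(\delta)^{k_\chi}=\sum_{v_1,\dots,v_{k_\chi}}\prod_i\mathbf{1}[\chi(\delta(v_i))=0]$. Interchanging sums, $\sum_\delta\prod_\chi m_\chi(\delta)^{k_\chi}$ becomes a sum over all tuples $(v_{\chi,i})_{\chi,i}$ (there are $\bk$ vectors total) of $\#\{\delta:\chi(\delta(v_{\chi,i}))=0\text{ for all }\chi,i\}$. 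The linear conditions on $\delta$ are governed by the subspace $V\subseteq\mathbb{F}_2^j$ spanned by the $v_{\chi,i}$ together with the subspace $Y\subseteq\widehat{\Im(g_R)}$ spanned by the $\chi$ that actually appear (i.e.\ those with $k_\chi>0$) — more precisely one organizes the count by which characters are forced to vanish on which directions. This is exactly the combinatorial content encoded by $\mathbb{P}_{(k_\chi)}(Y)$: selecting each $\chi$ to lie in a random subset with probability $(2^{k_\chi}-1)/2^{k_\chi}$ corresponds to the chance that among $k_\chi$ i.i.d.\ uniform linear functionals conditions one of them is nonzero on a given line, and $\mathbb{P}_{(k_\chi)}(Y)$ is the probability the selected characters generate $Y$. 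Carrying out the bookkeeping, one finds $\#\Hom_{\mathbb{F}_2}(\mathbb{F}_2^j,\Im(g_R))^{-1}\sum_{\delta}\prod_\chi m_\chi(\delta)^{k_\chi}=\sum_{Y\subseteq\widehat{\Im(g_R)}}\mathbb{P}_{(k_\chi)}(Y)\cdot\#\{W\subseteq\mathbb{F}_2^j:\dim W\leq\bk-\dim Y\text{ appropriately}\}$, and after summing against $\mu_{\text{CL}}(G:\#G[2]=2^j)$ one uses the identity $\sum_{j}\mu_{\text{CL}}(G:\#G[2]=2^j)\cdot\#\{\text{subspaces of }\mathbb{F}_2^j\text{ of codim}\ldots\}=\mathcal{N}_2(\bk-\dim Y)$ — this last step is where the precise normalization of $\mcl$ pays off and produces $\mathcal{N}_2$ on the nose.

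\textbf{The hard part} will be Step 2: making the character-expansion bookkeeping exact, i.e.\ checking that the number of $\delta:\mathbb{F}_2^j\to\Im(g_R)$ annihilating a prescribed family of conditions, once summed over all choices of the $v_{\chi,i}$ and weighted by $\mcl$, reassembles precisely into $\sum_Y\mathbb{P}_{(k_\chi)}(Y)\mathcal{N}_2(\bk-\dim Y)$ rather than some off-by-a-power-of-two variant. The cleanest route is probably to avoid explicit subspace counting: interpret $\delta$ as a uniformly random $\mathbb{F}_2$-linear map $\mathbb{F}_2^j\to\Im(g_R)$, note $\mathbb{E}\big[\prod_\chi m_\chi(\delta)^{k_\chi}\big]=\mathbb{E}\big[\prod_\chi\#\ker(\chi\circ\delta)^{k_\chi}\big]$, and realize the right-hand side's random-subset construction as computing the same expectation by first choosing, for each $\chi$, a $k_\chi$-tuple of i.i.d.\ conditions and asking which are trivially satisfied — the subset of $\chi$ for which \emph{some} condition is nontrivial is exactly the randomly-selected subset, and $\mathcal{N}_2(\bk-\dim Y)$ counts the remaining freedom for $\delta$ modulo the forced vanishing. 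Matching these two computations of the same expectation gives the identity; the consistency check when all $k_\chi=0$ (both sides $=1$, using $\mathbb{P}_{(0)}(\{0\})=1$ and $\mathcal{N}_2(0)=1$) and the degenerate convention $0\cdot\mathcal{N}_2(j)=0$ for $j<0$ close the remaining corner cases.
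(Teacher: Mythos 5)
Your Step~1 is correct and matches the paper's reduction: $\mseq$ is by construction the pushforward of $\widetilde{\mu}_{\text{seq}}$, $\mseq(\{\bullet\})=0$, and $\prod_\chi m_\chi(\delta_S)^{k_\chi}$ descends to $\mathcal{S}_2(R)$ because $\aut_{\text{ab.gr.}}(G)$ preserves kernel sizes and $\aut_{\text{ring}}(R)$ acts trivially on $\Im(g_R)$ (Lemma~\ref{trivial action}); combined with the surjective homomorphism of Proposition~\ref{delta maps for ext tilde}, which has fibers of equal cardinality, you correctly arrive at
$\sum_{j\geq 0}\mcl(G:\#G[2]=2^j)\,\#\Hom(\mathbb{F}_2^j,\Im(g_R))^{-1}\sum_{\delta}\prod_\chi m_\chi(\delta)^{k_\chi}$.

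Step~2, however, has a genuine gap, one you yourself flag. The expansion $m_\chi(\delta)^{k_\chi}=\sum_{v_1,\dots,v_{k_\chi}}\prod_i\mathbf{1}[\chi(\delta(v_i))=0]$ followed by a sum over $\bk$-tuples of vectors introduces a much harder bookkeeping problem than necessary, and you do not carry it out; the phrases ``organizes the count by which characters are forced to vanish on which directions'' and ``$\dim W\leq\bk-\dim Y$ appropriately'' are placeholders, not an argument. The missing observation, which the paper's proof of the sibling Proposition~\ref{heuristic comp. of multimoments} exploits immediately, is that for a fixed $\delta:\mathbb{F}_2^j\to\Im(g_R)$ one has $m_\chi(\delta)=2^j$ if $\chi\in\Im(\delta)^\perp$ and $m_\chi(\delta)=2^{j-1}$ otherwise, so that
$\prod_\chi m_\chi(\delta)^{k_\chi}=2^{j\bk}\cdot 2^{-\sum_{\chi\notin\Im(\delta)^\perp}k_\chi}$
depends on $\delta$ only through $\Im(\delta)$. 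Partitioning the $\delta$-sum by $V:=\Im(\delta)^\perp$ (with multiplicity $\#\epi(\mathbb{F}_2^j,V^\perp)$), expanding $\#\epi$ by M\"obius inversion on the lattice of subspaces, exchanging the order of summation, and then using the two identities $\sum_{V\subset W}\mu(V,W)\,2^{-\sum_{\chi\notin V}k_\chi}=\mathbb{P}_{(k_\chi)}(W)$ (M\"obius inversion of the tautology $2^{-\sum_{\chi\notin W}k_\chi}=\sum_{V\subset W}\mathbb{P}_{(k_\chi)}(V)$) and $\sum_G\#G[2]^{m}\mcl(G)=\mathcal{N}_2(m)$ for $m\geq 0$ closes the proof. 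Your final ``cleanest route'' paragraph gestures at a two-ways-of-computing-the-same-expectation argument, but neither the M\"obius step identifying $\mathbb{P}_{(k_\chi)}(W)$ nor the appearance of $\mathcal{N}_2(\bk-\dim W)$ via the Cohen--Lenstra moment identity is actually derived, so the proof is not complete as written.
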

We do not spell out the proof of Proposition~\ref{heur.pr.m.m.}
because it is identical to the proof of
Proposition~\ref{heuristic comp. of multimoments}
which we will provide in \S\ref{section:special divisors}.

Proposition~\ref{heur.pr.m.m.} leads to the following prediction.
\begin{prediction}
\label{pred:croisant2}
As $K$ varies among imaginary quadratic number fields of type $R$, the following equality of averages takes place
$$\lim_{X \to \infty} \frac{\sum_{-\disc(K)\leq X}\prod_{}m_{\chi}(\delta_2(K))^{k_{\chi}}}
{\#\{K:-\disc(K)\leq X\}}
=
\sum_{V \subseteq \widehat{Im(g_R)}}\mathbb{P}_{(k_{\chi})}(V)\mathcal{N}_2(\bk-\dim(V))
.$$
\end{prediction}
A stronger statement will be proved in Theorem~\ref{theorem:multi-moments of ray class seq}.

As a cruder result, one derives a prediction for the joint-distribution of the $4$-ranks of the class group and the ray class group. Let $j_1,j_2$ be two non-negative integers. Then we have the following prediction.
\begin{prediction}
\label{pred:croisant3}  
As $K$ varies among imaginary quadratic number fields of type $R$, we have the following equality
\begin{align*}
&\lim_{X \to \infty} 
\frac{\#\{K: -\disc(K)\leq X, \rk_4(\cl(K))=j_1, \rk_4(\cl(K,c))=j_2\}}{\#\{K:-\disc(K)\leq X \}}
\\
&=
\mu_{\emph{CL}}(G \in \mathcal{G}_2:\dim_{\mathbb{F}_2}(G[2])=j_1)
\frac{\#\{\phi \in \Hom_{\mathbb{F}_2}(\mathbb{F}_2^{j_1},\Im(g_R)): \rk(\phi)=\rk_{2}(W_R)-(j_2-j_1)\}}
{\#\Hom_{\mathbb{F}_2}(\mathbb{F}_2^{j_1},\Im(g_R))}
.
\end{align*}
\end{prediction}
This will be proved in Theorem ~\ref{theorem: joint distribution of 4-ranks}, but see also Theorem ~\ref{theorem: joint distribution of 4-ranks, explicit version} for a more explicit law.
Similarly, the heuristic of the present section can be used to conjecturally predict the distribution of the pair
$
(\rk_{2^m}(\cl(K)),\rk_{2^m}(\cl(K,c)))
$
among imaginary quadratic number fields $K$
with $\gcd(\disc(K),c)=1$. 
For reasons of space we do not 
explicitly state
such a conjecture 
but it is implicitly given in the present section;
such a conjecture might be within reach
given the recent work of Smith~\cite{arXiv:1702.02325v2}.

 \section{Special divisors and $4$-rank} 
\label{section:special divisors}

Let $D$ be a square-free odd positive integer. 
In this section we introduce the notion of 
\textit{special divisors} of $D$,
which will be instrumental in our proof of 
Theorems~\ref{theorem:multi-moments of ray class seq},~\ref{theorem:distribution of delta maps},~\ref{theorem: joint distribution of 4-ranks},
and~\ref{theorem: joint distribution of 4-ranks, explicit version}.  
We call a positive divisor $d$ of $D$ \textit{special} 
if $d$ is a square modulo $D/d$ and $D/d$ is a square modulo $d$. We denote by $S(D)$ the set of special divisors of $D$, and by $T(D)$ the set of all divisors of $D$. 
The set $T(D)$ has naturally the structure of a vector space over 
$\F_2$
under the operation 
\[d_1 
\odot d_2:=\frac{d_1d_2}{\gcd(d_1,d_2)^2}.\]
\begin{lemma}\label{positive divisors are F_2 vect.space}
The set $S(D)$ is a subspace of $T(D)$ over $\F_2$. 
\begin{proof}
We need to show that if $d_1,d_2$ are special then $d_1 \odot  d_2$ is special as well. 
This amount to showing firstly 
that 
if a prime $q$ divides $D$ but $q\nmid d_1 \odot d_2$ then $d_1 \odot  d_2$ is a square  $\md{q}$
and secondly that if a prime $q$ divides $d_1 \odot d_2$ then 
$D/d_1 \odot d_2$ is a square $\md{q}$.

For the proof of the first claim,
suppose that $q|D$ but $q\nmid d_1 \odot d_2$. Then either $\gcd(d_1d_2,q)=1$ or $q|\gcd(d_1,d_2)$. 
In the first case we know that, since both $d_1$ and $d_2$ are special, $d_1$ and $d_2$ are both squares $\md{q}$,
thus showing that  
$d_1 \odot d_2$
is a square $\md{q}$. 
In the second case we know that, since both $d_1$ and $d_2$ are special, $D/d_1$
and $D/d_2$ are both squares $\md{q}$. This shows that
$$\frac{D}{d_1} \frac{D}{d_2}= (d_1 \odot d_2) 
\Bigg(\frac{D}{\frac{d_1d_2}{\gcd(d_1,d_2)}}\Bigg)^2 $$ 
is 
a square $\md{q}$, hence $d_1 \odot d_2$ is a square $\md{q}$. 

Next, suppose that $q \mid d_1 \odot d_2$. 
Then, either $q \mid d_1$ and $q \nmid d_2$, or $q \mid d_2$ and $q \nmid d_1$: by symmetry 
we are allowed to focus on the former case.
Then, since both $d_1$ and $d_2$ are special, we have that both 
$D/d_1$ and $d_2$ are
squares $\md{q}$. 
Therefore $$\frac{D}{d_1}   \frac{1}{d_2}  \gcd(d_1,d_2)^2=\frac{D}{(d_1 \odot d_2)}$$ is a square $\md{q}$, 
thus concluding our proof. 
\end{proof}
\end{lemma}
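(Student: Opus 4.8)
The plan is to use the fact that $T(D)$ is a finite $\F_2$-vector space: identifying a divisor $d\mid D$ with the set of primes dividing it turns $\odot$ into symmetric difference, so $T(D)$ is isomorphic to the power set of the set of prime divisors of $D$ under symmetric difference. In such a space a nonempty subset closed under $\odot$ is automatically an $\F_2$-subspace, since closure under the operation already yields a subgroup and the only scalars $0,1$ act by $0$ and the identity. Nonemptiness of $S(D)$ is clear because $d=1$ is special: $1$ is a square modulo $D$ and $D$ is a square modulo $1$. So the whole content is to show that $S(D)$ is closed under $\odot$.

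First I would localise the notion of specialness. Since $D$ is odd and square-free, $\gcd(d,D/d)=1$ for every $d\mid D$, and by the Chinese Remainder Theorem together with the fact that membership in the squares modulo an odd prime $q$ is detected by the Legendre symbol $\bigl(\tfrac{\cdot}{q}\bigr)$, a divisor $d$ of $D$ is special if and only if $\bigl(\tfrac{d}{q}\bigr)=1$ for every prime $q\mid D$ with $q\nmid d$, and $\bigl(\tfrac{D/d}{q}\bigr)=1$ for every prime $q\mid d$. Now fix special divisors $d_1,d_2$ and a prime $q\mid D$; I would verify the local condition at $q$ for $e:=d_1\odot d_2$, splitting into cases according to the pair $(v_q(d_1),v_q(d_2))\in\{0,1\}^2$ and recalling that $v_q(e)\equiv v_q(d_1)+v_q(d_2)\ \md{2}$.

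If $q$ divides neither $d_1$ nor $d_2$, then $q\nmid e$ while $q\mid D/d_1$ and $q\mid D/d_2$, so $\bigl(\tfrac{d_1}{q}\bigr)=\bigl(\tfrac{d_2}{q}\bigr)=1$; since $e$ differs from $d_1d_2$ by the square $\gcd(d_1,d_2)^2$, this forces $\bigl(\tfrac{e}{q}\bigr)=1$. If $q$ divides both $d_1$ and $d_2$, then again $q\nmid e$, and I would use the integer identity $\tfrac{D}{d_1}\cdot\tfrac{D}{d_2}=e\cdot\bigl(\tfrac{D}{\mathrm{lcm}(d_1,d_2)}\bigr)^2$, whose factors $D/d_1$, $D/d_2$, $D/\mathrm{lcm}(d_1,d_2)$ are all prime to $q$: combining $\bigl(\tfrac{D/d_1}{q}\bigr)=\bigl(\tfrac{D/d_2}{q}\bigr)=1$ with multiplicativity of the Legendre symbol gives $\bigl(\tfrac{e}{q}\bigr)=1$. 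Finally, if $q$ divides exactly one of them, say $q\mid d_1$ and $q\nmid d_2$, then $q\mid e$, and I would use the integer identity $\tfrac{D}{e}=\tfrac{D}{d_1}\cdot\gcd(d_1,d_2)^2\cdot\tfrac{1}{d_2}$, again with all occurring factors prime to $q$, together with $\bigl(\tfrac{D/d_1}{q}\bigr)=1$ (valid because $q\mid d_1$) and $\bigl(\tfrac{d_2}{q}\bigr)=1$ (valid because $q\mid D/d_2$), to obtain $\bigl(\tfrac{D/e}{q}\bigr)=1$. In each case the local condition at $q$ holds for $e$, so $e=d_1\odot d_2$ is special, and $S(D)$ is a subspace.

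The main obstacle, such as it is, is purely bookkeeping: in the last two cases one must pick the correct multiplicative identity expressing $e$ (respectively $D/e$) in terms of the quantities $D/d_i$ and $d_i$ whose quadratic-residue class at $q$ is already known, up to an explicit square that is prime to $q$. Once those identities are on the table the conclusion is immediate from multiplicativity of the Legendre symbol; the reduction to local conditions via CRT and the step ``nonempty plus $\odot$-closed implies subspace'' are both routine.
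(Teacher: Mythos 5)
Your proposal is correct and follows essentially the same route as the paper: the same case split according to whether $q$ divides neither, both, or exactly one of $d_1,d_2$, and the same two multiplicative identities (your $\mathrm{lcm}(d_1,d_2)$ identity is literally the paper's $\tfrac{D}{d_1}\tfrac{D}{d_2}=(d_1\odot d_2)\bigl(\tfrac{D}{d_1d_2/\gcd(d_1,d_2)}\bigr)^2$, and your third identity is the paper's $\tfrac{D}{d_1}\cdot\tfrac{1}{d_2}\cdot\gcd(d_1,d_2)^2=\tfrac{D}{d_1\odot d_2}$). The only difference is presentational: you make explicit the CRT/Legendre-symbol localization and the routine step that a nonempty $\odot$-closed subset is automatically an $\F_2$-subspace, which the paper leaves implicit.
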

Let $n$ be another square-free odd positive integer with $\gcd(n,D)=1$
and
consider the group $G_n:=(\mathbb{Z}/n\mathbb{Z})^*/(\mathbb{Z}/n\mathbb{Z})^{*2}$. One has a natural map 
$\phi_{n,D}:S(D)\to G_n$ by reducing  $\md{n}$ 
and then modulo squares. 
\begin{lemma}\label{phi is a hom}
The map $\phi_{n,D}$ is a homomorphism of $\mathbb{F}_2$-vector spaces. 
\begin{proof}
By definition we have $d_1 \odot d_2=\frac{d_1d_2}{\gcd(d_1,d_2)^2}$ and reducing this equality
$\md{n}$ and then modulo squares, the right side yields $d_1 d_2$. 
Thus $\phi_{n,D}(d_1 \odot d_2)=\phi_{n,D}(d_1)  \phi_{n,D}(d_2)$.
\end{proof}
\end{lemma}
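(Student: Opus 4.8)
The plan is to reduce the claim to a single multiplicativity identity. Since $G_n=(\Z/n\Z)^*/(\Z/n\Z)^{*2}$ is an elementary abelian $2$-group and $S(D)\subseteq T(D)$ is an $\F_2$-vector space under $\odot$ by Lemma~\ref{positive divisors are F_2 vect.space}, any homomorphism of abelian groups between them is automatically $\F_2$-linear (the only scalars are $0$ and $1$, and both cases are vacuous). Hence it suffices to check that $\phi_{n,D}(d_1\odot d_2)=\phi_{n,D}(d_1)\,\phi_{n,D}(d_2)$ for all $d_1,d_2\in S(D)$; note that $d_1\odot d_2$ again lies in $S(D)$ by the same lemma, so the left-hand side is well defined.

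To prove the identity I would unwind the definition $d_1\odot d_2=\dfrac{d_1 d_2}{\gcd(d_1,d_2)^2}$ and pass to $G_n$. The decisive observation is that $\gcd(d_1,d_2)$ divides $D$, hence is coprime to $n$ by the standing hypothesis $\gcd(n,D)=1$; therefore $\gcd(d_1,d_2)^2$ is the square of a unit modulo $n$ and represents the trivial class in $G_n$. Consequently the class of $d_1\odot d_2$ in $G_n$ coincides with the class of $d_1 d_2$, which by definition of $\phi_{n,D}$ equals $\phi_{n,D}(d_1)\,\phi_{n,D}(d_2)$.

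There is no real obstacle here; the proof is essentially bookkeeping. The only points requiring a word of care are that all of $d_1$, $d_2$, $\gcd(d_1,d_2)$ and $d_1\odot d_2$ are coprime to $n$ so that their images in $G_n$ make sense — which is immediate since each of them divides $D$ and $\gcd(n,D)=1$ — and that $d_1\odot d_2$ genuinely belongs to $S(D)$, which is exactly the content of the preceding lemma and so may be invoked directly.
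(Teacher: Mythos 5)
Your proof is correct and follows essentially the same route as the paper's: reduce to the multiplicativity identity and observe that $\gcd(d_1,d_2)^2$ is a square of a unit mod $n$, hence trivial in $G_n$. The extra remarks on $\F_2$-linearity being automatic and on coprimality with $n$ are welcome bookkeeping but do not change the argument.
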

 Observe that $S(D)$ always contains the subgroup $\{1,D\}$. It is then a consequence of the work of Fouvry and Kl\"{u}ners~\cite{MR2276261} that $S(D)/\{1,D\}$ behaves like the $2$-torsion of a random abelian $2$-group, in the sense of Cohen and Lenstra. 
In other words,
for every positive integer $j$ we have 
\[
\lim_{X \to \infty} \frac{\#\{1\leq D\leq X, D \text{ square-free}:
S(D)/\{1,D\} \cong \mathbb{F}_2^{j}\}}{\#\{1\leq D\leq X, D \text{ square-free}\}}
=
\mcl(A \in \mathcal{G}_2: A[2] \cong \mathbb{F}_2^{j})
,\]
where
$\mathcal{G}_2$ is 
a set of representatives of isomorphism classes of finite abelian $2$-groups.
The present section in addition to Theorems~\ref{m.t. on multi-moments}-\ref{m.t. on distribution}, 
\S \ref{section: special divisors thm} and \S\ref{from m.m. to distr.} are devoted to the determination of the distribution of the pair 
\[(\#S(D), \Im(\phi_{n,D})).\]

The general heuristic constructed in \S\ref{section:heuristic at 2} specializes 
to a heuristic model for this
pair, thanks to the commutative diagram
after Lemma~\ref{recognizing img-r}. However, we choose to
give here a direct presentation of this heuristic
avoiding ray
class groups.
Therefore the present section, 
Theorems~\ref{m.t. on multi-moments}-\ref{m.t. on distribution},
\S \ref{section: special divisors thm} and \S\ref{from m.m. to distr.}
are completely self-contained.

Before proceeding
we introduce a modification of $\phi_{n,D}$
which will be required in the ray class group applications in \S\ref{section:theorems on 2-parts of ray sequences}. 
Denote by $L_n$ the subgroup of $G_n$ generated by an integer which is a quadratic
non-residue modulo every prime dividing $n$
and write $\widetilde{G}_n:=G_n/L_n$. Now let $n_1,n_2$ be two integers
such that $2Dn_1n_2$ is square-free
and assume that $D$ is a square modulo $n_1$ and generates $L_{n_2} \md{n_2}$.  
Denote by $\phi_{n_1,n_2,D}$ the natural map 
$$\phi_{n_1,n_2,D}:S(D)/\{1,D\} \to G_{n_1} \times \widetilde{G}_{n_2}.
$$

Our goal is to understand the statistical
behavior of the pair 
$$(\#S(D), \Im(\phi_{n_1,n_2,D})),$$
as $D$ varies through positive square-free
integers coprime to $n_1n_2$, 
which are squares $\md{n_1}$ and non-squares modulo every prime dividing $n_2$.
There is an obvious guess: namely that, once  $\dim_{\mathbb{F}_2}(S(D)/\{1,D\})=j$ is fixed, 
then $\Im(\phi_{n_1,n_2,D})$ should distribute as the image of a random map $\phi:\mathbb{F}_2^j \to G_{n_1} \times \widetilde{G}_{n_2}$.
We formalize this guess in a more general heuristic principle. 
\begin{definition}
\label{def:jellyfish}
Consider the set $\mathcal{M}_{n_1,n_2}$ consisting of equivalence classes of pairs $(A,V)$, where $A$ is a vector space over $\F_2$  and $V$ is a vector subspace of $G_{n_1} \times \widetilde{G}_{n_2}$: declare $(A_1,V_1), (A_2,V_2)$ identified, if $A_1$ and $A_2$ have the same $\mathbb{F}_2$-dimension and $V_1=V_2$. Denote this equivalence relation by $\sim$.
Each representative pair $(\mathbb{F}_2^j,V)$
is equipped with the following 
mass,
\[ \mu((\mathbb{F}_2^j,V)):=
\mcl
(A \in \mathcal{G}_2:A[2] \cong \mathbb{F}_2^{j}) 
\frac{\#\epi_{\mathbb{F}_2}(\mathbb{F}_2^j,V)}
{\#\Hom_{\mathbb{F}_2}(\mathbb{F}_2^j,G_{n_1} \times \widetilde{G}_{n_2})}
.\]
By construction, this is a probability measure on $\mathcal{M}_{n_1,n_2}$. 
\end{definition}
Now we formulate the following.
\begin{heuristic}\label{heur for special divisor}
For any `reasonable' function $f: \mathcal{M}_{n_1,n_2} \to \mathbb{R}$ one has
$$\lim_{X \to \infty} 
\frac{
\sum_{D\leq X}
f((S(D)/\{1,D\},\Im(\phi_{n_1,n_2,D})))}{\sum_{D\leq X}1}
=\sum_{T \in \mathcal{M}_{n_1,n_2}}f(T)\mu(T)
,$$
where in both sums $D$ varies among square-free positive integers which are squares 
$\md{n_1}$ and non-squares modulo any prime divisor of $n_2$.
Furthermore, for any positive integers  $a,r$ with  $\gcd(r,an_1n_2)=1$
the same holds if we have the additional restriction $D\equiv a \md{r}$. 
\end{heuristic}
The simple case where 
$f$ is
the indicator function of an element $(\mathbb{F}_2^j,V) \in \mathcal{M}_{n_1,n_2}$
yields
the following prediction.
\begin{prediction} \label{prediction:distr. sp.div.}
We have 
$$ \lim_{X \to \infty} \frac{\#\{D\leq X, (S(D) /\{1,D\},\phi_{n_1,n_2,D}) \sim T\}}{\#\{D\leq X\}}=\mu(T),$$
where
$D$ varies among square-free positive integers which are squares $\md{n_1}$ and non-squares modulo every prime divisor of $n_2$.
\end{prediction}
This prediction will be confirmed in Theorem \ref{m.t. on distribution}.

Despite the fact that the `random variable'
$(S(D),\Im(\phi_{n_1,n_2,D}))$
does not consist of two \emph{numbers}, we achieve its distribution by means of the moment-method. 
For this we shall 
replace the pair 
$(S(D),\Im(\phi_{n_1,n_2,D}))$
by a
\emph{higher-dimensional numerical} `random variable',
which we proceed to define.
For each character 
$\chi$ in the dual of $G_{n_1} \times \widetilde{G}_{n_2}$
define 
\beq{def:twist}{ m_{\chi}(D):=\#\{d \in S(D): \chi(\phi_{n_1,n_2,D}(d))=1\}}
and recall that 
$\Im(\phi_{n_1,n_2,D})^{\perp}$ is the set of all character $\chi$ with $\chi \circ \phi_{n_1,n_2,D}$ being  trivial.
Clearly for each  
$\chi \in \Im(\phi_{n_1,n_2,D})^{\perp}$
we have
$m_{\chi}(D)=m_1(D)=\#S(D)$,
while for the remaining characters 
we have
$m_{\chi}(D)=\#S(D)/2$.
Therefore the knowledge of the pair 
$$(\#S(D),\Im(\phi_{n_1,n_2,D}))
$$
is equivalent to the knowledge of 
$$(m_{\chi}(D))_{\chi \in \widehat{G}_{n_1} \times \widehat{\widetilde{G}}_{n_2}}.$$
It will transpire  
that this shift in focus will be advantageous 
since it will allow us to
study the asymptotic behaviour of the latter vector
by the method of moments.

We conclude this section by providing 
a prediction regarding 
the
mixed
moments of $(m_\chi(D))$.
This will be later used in the proof of
Theorem \ref{m.t. on multi-moments}.

\begin{nnotation}
\label{def:sumk}
For any  
function 
$\widehat{G}_{n_1} \times \widehat{\widetilde{G}}_{n_2} \to \mathbb{Z}_{\geq 0}$,
$\chi \mapsto k_{\chi}$, 
we will use the notation 
$$ 
\b{k}:=(k_\chi)_{\chi \in \widehat{G}_{n_1} \times \widehat{\widetilde{G}}_{n_2}}
\
\
\text{ and   }
\
\
|\b{k}|_1
:=\sum_{\chi \in \widehat{G}_{n_1} \times \widehat{\widetilde{G}}_{n_2}}k_{\chi}.$$ 
\end{nnotation}
\begin{definition}
\label{def:quiteimportant} 
For any subspace $Y \subseteq \widehat{G}_{n_1} \times \widehat{\widetilde{G}}_{n_2}$, denote by $\mathbb{P}_{(k_{\chi})}(Y)$ the probability that a random subset of $\widehat{G}_{n_1} \times \widehat{\widetilde{G}}_{n_2}$ generates $Y$, where the
characters $\chi$ are chosen independently 
and
with probability $1-2^{-k_\chi}$.
\end{definition}
For any pair $(\mathbb{F}_2^j,Y)$ in $\mathcal{M}_{n_1,n_2}$, define $m_{\chi}((\mathbb{F}_2^j,Y))$ to be $2^j$ if $\chi(Y)=1$, and $2^{j-1}$ otherwise. 
Observe that if $\dim(Y)> 
\bk
$ then  $\mathbb{P}_{(k_{\chi})}(Y)=0$. Denote by $\mathcal{N}_2(j)$ the number of vector subspaces of $\mathbb{F}_2^j$. 
If $j<0$ we define $\mathcal{N}_2(j):=1$.
It is important to note that every time $\mathcal{N}_2(j)$ appears for some negative $j$
then it will always appear multiplied by zero. 
\begin{proposition}\label{heuristic comp. of multimoments}
One has that 
$$\sum_{T \in \mathcal{M}_{n_1,n_2}}
\hspace{-0,2cm}
\Big(\prod_{\chi \in \widehat{G}_{n_1} \times \widehat{\widetilde{G}}_{n_2}}m_{\chi}(T)^{k_{\chi}}\Big)
\mu(T)
=
\hspace{-0,2cm}
\sum_{W \subseteq 
\widehat{G}_{n_1} \times \widehat{\widetilde{G}}_{n_2}
}\mathbb{P}_{(k_{\chi})}(W)\mathcal{N}_2(
\bk
-\dim(W))
.$$\end{proposition}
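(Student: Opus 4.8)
The plan is to expand the left-hand side by definition of the measure $\mu$ and then reorganize the resulting double sum (over pairs $(\mathbb{F}_2^j,Y)$ and over characters) in a way that mirrors the right-hand side. First I would write
$$\sum_{T \in \mathcal{M}_{n_1,n_2}}\Big(\prod_{\chi}m_{\chi}(T)^{k_{\chi}}\Big)\mu(T)
=\sum_{j\geq 0}\mcl(A\in\mathcal{G}_2:A[2]\cong\mathbb{F}_2^j)\sum_{Y\subseteq \widehat{G}_{n_1}\times\widehat{\widetilde{G}}_{n_2}}\frac{\#\epi_{\mathbb{F}_2}(\mathbb{F}_2^j,Y)}{\#\Hom_{\mathbb{F}_2}(\mathbb{F}_2^j,G_{n_1}\times\widetilde{G}_{n_2})}\prod_{\chi}m_{\chi}((\mathbb{F}_2^j,Y))^{k_{\chi}},$$
using that $m_{\chi}((\mathbb{F}_2^j,Y))$ equals $2^j$ or $2^{j-1}$ according to whether $\chi\in Y^{\perp}$ or not. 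Since $\#\Hom_{\mathbb{F}_2}(\mathbb{F}_2^j,G_{n_1}\times\widetilde{G}_{n_2})=\#\Hom_{\mathbb{F}_2}(\mathbb{F}_2^j,\widehat{G}_{n_1}\times\widehat{\widetilde{G}}_{n_2})$ and epimorphisms onto $Y$ correspond (dually) to injections of $Y^\perp$-quotients, it is cleaner to pass to the dual side: replace the inner sum over subspaces $Y$ of the dual group by a sum over their annihilators, or equivalently rewrite $\#\epi_{\mathbb{F}_2}(\mathbb{F}_2^j,Y)$ in terms of counting surjections and absorb the factor $\prod_\chi m_\chi$ into a purely combinatorial weight on the dual group.

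The key computational step is to recognize the generating-function identity that produces $\mathbb{P}_{(k_\chi)}(W)$. For a fixed homomorphism $\phi:\mathbb{F}_2^j\to G_{n_1}\times\widetilde{G}_{n_2}$, one has $\prod_{\chi}m_{\chi}(\mathbb{F}_2^j,\Im\phi)^{k_{\chi}}=\prod_{\chi}\big(2^{j}\cdot 2^{-[\chi\notin(\Im\phi)^\perp]}\big)^{k_{\chi}}=2^{j\bk}\prod_{\chi\notin(\Im\phi)^\perp}2^{-k_\chi}$, so averaging over all $\phi$ (equivalently, over random $\phi$ chosen uniformly) gives $2^{j\bk}$ times the expected value of $\prod_{\chi\in\Im(\phi)}2^{-k_\chi}$-type quantity. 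Choosing $\phi$ uniformly at random is the same as choosing each $\phi(e_i)$ uniformly, and the events "$\chi\circ\phi=0$" are governed exactly by whether $\chi$ lies in the span of a random subset — this is where the probabilities $1-2^{-k_\chi}$ enter: a character $\chi$ fails to annihilate $\Im\phi$ precisely with the weighting that, after summing over $j$ against $\mcl$ and using the classical identity $\sum_j \mcl(A:A[2]\cong\mathbb{F}_2^j)2^{j\ell}\cdot(\text{something}) = \mathcal{N}_2(\ell)\cdot(\cdots)$, collapses to $\mathcal{N}_2(\bk-\dim W)$. Concretely, I would isolate, for each subspace $W\subseteq\widehat{G}_{n_1}\times\widehat{\widetilde{G}}_{n_2}$, the contribution of all $\phi$ with $\Im(\phi)^\perp$ equal to a fixed annihilator; after the dust settles the $j$-dependence is a sum $\sum_j\mcl(\cdots)\,\mathcal{N}_2(j-\dim W)$ which, by the standard Cohen--Lenstra moment identity $\sum_{j\geq 0}\mcl(A:A[2]\cong\mathbb{F}_2^j)\,\mathcal{N}_2(j-r)=1$ for every $r\geq 0$ (and is interpreted via the $\mathcal{N}_2(j<0)=1$ convention, all such terms being multiplied by zero), reduces to $\mathcal{N}_2(\bk-\dim W)$.

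The main obstacle, I expect, is the bookkeeping that turns the sum over pairs $(\mathbb{F}_2^j,Y)$ weighted by $\#\epi_{\mathbb{F}_2}(\mathbb{F}_2^j,Y)/\#\Hom$ into a clean average over \emph{all} homomorphisms $\phi:\mathbb{F}_2^j\to G_{n_1}\times\widetilde{G}_{n_2}$ indexed by their image, and then matching the character-weight $\prod_{\chi}2^{-k_\chi[\chi\text{ active}]}$ against the precise definition of $\mathbb{P}_{(k_\chi)}(W)$ as the probability that a random subset of the dual group — each $\chi$ included with probability $1-2^{-k_\chi}$ — generates $W$. The cleanest way through is to fix a basis $e_1,\dots,e_j$ of $\mathbb{F}_2^j$, observe that choosing $\phi$ is choosing $(\phi(e_1),\dots,\phi(e_j))\in(G_{n_1}\times\widetilde{G}_{n_2})^j$ uniformly, dualize so that "$\chi$ kills $\Im\phi$" becomes "$\chi$ orthogonal to all $\phi(e_i)$", and then note that as $j\to\infty$ the subset $\{\chi:\chi\text{ does not kill }\Im\phi\}$ has exactly the prescribed product distribution with inclusion probabilities $1-2^{-k_\chi}$ — the factor $2^{-k_\chi}$ being the probability that $k_\chi$ independent uniform elements are all orthogonal to $\chi$. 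Once this dictionary is in place the identity is a finite reorganization of sums, and since everything in sight is a finite sum over finite-dimensional $\mathbb{F}_2$-vector spaces there are no convergence issues to worry about; this is precisely why the proof is "identical" to that of Proposition~\ref{heuristic comp. of multimoments} referenced for Proposition~\ref{heur.pr.m.m.}.
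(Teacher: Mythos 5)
Your overall strategy matches the paper's: expand the left side via the definition of $\mu$, write the sum over pairs $(\mathbb{F}_2^j,Y)$ as a sum over homomorphisms $\delta$, group terms by the annihilator $V=\Im(\delta)^\perp$ (so that $\prod_\chi m_\chi^{k_\chi}=2^{j\bk}2^{-\sum_{\chi\notin V}k_\chi}$), and then invoke a Cohen--Lenstra-type moment identity after reorganizing. This is essentially the paper's argument. However, two things in your writeup are wrong or missing as stated.

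First, the identity you quote is false. You write that the $j$-dependence collapses to $\sum_j\mcl(A:A[2]\cong\mathbb{F}_2^j)\,\mathcal{N}_2(j-r)=1$; this is not a valid identity (already at $r=0$, the left side is the expected number of subspaces of $G[2]$, which strictly exceeds $1$), and even if it were, plugging it in would yield $\sum_W\mathbb{P}_{(k_\chi)}(W)\cdot 1=1$ rather than the claimed right-hand side. The correct identity, which is the one the paper uses, is $\sum_{G\in\mathcal{G}_2}\#G[2]^m\mcl(G)=\mathcal{N}_2(m)$ for $m\geq 0$, applied with $m=\bk-\dim(W)$; the factor $\mathcal{N}_2$ appears only after the $G$-sum is performed, not inside it. Second, your account glosses over the step that actually produces $\mathbb{P}_{(k_\chi)}(W)$. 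In the paper this comes from writing $\#\epi(\mathbb{F}_2^j,V^\perp)$ via M\"obius inversion on the poset of subspaces of $\widehat{G}_{n_1}\times\widehat{\widetilde{G}}_{n_2}$, interchanging the order of summation to bring out a sum over $W$, and recognizing the inner sum $\sum_{V\subset W}\mu(V,W)\,2^{-\sum_{\chi\notin V}k_\chi}$ as the M\"obius inversion of the relation $2^{-\sum_{\chi\notin W}k_\chi}=\sum_{V\subset W}\mathbb{P}_{(k_\chi)}(V)$; your probabilistic description points in the right direction but never converts the ``random subset generates $W$'' picture into the algebraic identity needed, which is precisely this inversion step. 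Fixing these two points would bring the proposal in line with the paper's proof.
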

\begin{proof}
We want to compute  
$$\sum_{(\mathbb{F}_2^j,\delta)}
\hspace{-0,2cm}
\Big(
\prod_{\chi \in 
\widehat{G}_{n_1} \times \widehat{\widetilde{G}}_{n_2} }m_{\chi}((\mathbb{F}_2^j,\delta))^{k_{\chi}})
\Big)
\mu((\mathbb{F}_2^j,\delta))
,$$
where $j$ ranges
over non-negative integers,
$\delta$ ranges over 
$\Hom(\mathbb{F}_2^j,G_{n_1} \times \widetilde{G}_{n_2})$
and 
$$\mu((\mathbb{F}_2^j,\delta))=
\frac{\mcl(A \in \mathcal{G}_2:\#A[2]=2^j)}
{\#\Hom(\mathbb{F}_2^j,G_{n_1} \times \widetilde{G}_{n_2})}.$$ 
Therefore the sum becomes
$$\sum_{V \subseteq \widehat{G}_{n_1} \times \widehat{\widetilde{G}}_{n_2} }
\sum_{j \geq 0}\frac{2^{j \bk}}{2^{\sum_{\chi \notin V}k_{\chi}}} 
\frac{
\#
\epi(\mathbb{F}_2^j,V^{\perp})}{
\#
\Hom(\mathbb{F}_2^j,G_{n_1} \times \widetilde{G}_{n_2})} 
\mcl(A \in \mathcal{G}_2:
\#A[2]=2^j)
.$$
We assume familiarity of the reader with M\"{o}bius inversion in posets,
see~\cite[Chapter 3]{MR2868112}, for example.  
Writing $\epi(\mathbb{F}_2^j,V^{\perp})$ via inclusion-exclusion on the poset of vector subspaces of $G_{n_1} \times \widetilde{G}_{n_2}$
and exchanging the
order of summation we obtain
$$\sum_{W \subset \widehat{G}_{n_1} \times \widehat{\widetilde{G}}_{n_2}} 
\Big(\sum_{V \subset W} \frac{\mu(V,W)}{2^{\sum_{\chi \not \in V}k_{\chi}}}\Big) 
\Big(\sum_{G \in \mathcal{G}_2}\#G[2]^{\bk-\dim(W)}\mcl(G)\Big)
.$$
By applying M\"{o}bius inversion with respect to the poset of vector subspaces, to the obvious relation
$$2^{-\sum_{\chi \not \in W}k_{\chi}}=\mathbb{P}_{(k_{\chi})}(V \subseteq W)=\sum_{V\subset W}\mathbb{P}_{(k_{\chi})}(V)
$$
we obtain
$$ \mathbb{P}_{(k_{\chi})}(W)=\sum_{V \subset W} \frac{\mu(V,W)}{2^{\sum_{\chi \not \in V}k_{\chi}}}.$$
On the other hand, one has that whenever $\bk-\dim(W) \geq 0$, then 
$$\sum_{G \in \mathcal{G}_2}\#G[2]^{\bk-\dim(W)}\mcl(G)=\mathcal{N}_2(\bk-\dim(W)).$$
Instead, when 
$\bk-\dim(W)<0$, we have that 
$\mathbb{P}_{(k_{\chi})}(W)=0$. In conclusion we get that the total sum equals
\begin{equation*}
\sum_{W \subseteq \widehat{G}_{n_1} \times \widehat{\widetilde{G}}_{n_2}}\mathbb{P}_{(k_{\chi})}(W)\mathcal{N}_2(\bk-\dim(W)).
\qedhere
\end{equation*}
\end{proof} 
Choosing 
$f(T)=\prod_{\chi \in \widehat{G}_{n_1} \times \widehat{\widetilde{G}}_{n_2}}m_{\chi}(T)^{k_{\chi}}$
in Heuristic assumption~\ref{heur for special divisor}
suggests the following prediction by means of
Proposition~\ref{heuristic comp. of multimoments}.
\begin{prediction}\label{prediction:m.m. special divisors}
We have 
\[
\lim_{X \to \infty} 
\frac{\sum_{D\leq X}\prod_{\chi \in \widehat{G}_{n_1} \times \widehat{\widetilde{G}}_{n_2} }m_{\chi}(D)^{k_{\chi}}}
{
\sum_{D\leq X} 1
}
=
2^{\bk}
\hspace{-0,5cm}
\sum_{W \subseteq \widehat{G}_{n_1} \times \widehat{\widetilde{G}}_{n_2}}
\hspace{-0,2cm}
\mathbb{P}_{(k_{\chi})}(W)\mathcal{N}_2(\bk-\dim(W))
,\]
where in both sums
$D$ varies among square-free positive integers which are squares   $\md{n_1}$ and non-squares modulo every prime divisors of $n_2$.
\end{prediction}
A version of Prediction~\ref{prediction:m.m. special divisors}
with an explicit error term 
is
proved in Theorem~\ref{m.t. on multi-moments}.
This prediction has a noteworthy
feature: it realizes the $(k_{\chi})$-mixed moments of $(m_{\chi}(D))$ as an average over all subspaces of $\widehat{G}_{n_1} \times \widehat{\widetilde{G}}_{n_2}$ of ordinary moments of $\#S(D)$
and in doing so,
it suggests the first step of the proof of Theorem~\ref{m.t. on multi-moments}, see~\eqref{eq:cough1234}.
\section{Main theorems on the $2$-part of ray class sequences} 
\label{section:theorems on 2-parts of ray sequences}
Throughout the section we keep the notation used in \S\ref{section:heuristic at 2}. 
We begin by stating 
Theorems~\ref{theorem:multi-moments of ray class seq},\ref{theorem:distribution of delta maps} and \ref{theorem: joint distribution of 4-ranks}
that corroborate
Predictions~\ref{pred:croisant},\ref{pred:croisant2} and~\ref{pred:croisant3}
when 
$\disc(K) \equiv  1  \md{4}$.
We restrict our attention to the cases with 
$\disc(K) \equiv  1  \md{4}$
only for the sake of brevity,
the remaining case being amenable to a 
similar analysis.
Our main task in this section will then be to 
reduce 
Theorems~\ref{theorem:multi-moments of ray class seq}, \ref{theorem:distribution of delta maps}, \ref{theorem: joint distribution of 4-ranks}
and~\ref{theorem: joint distribution of 4-ranks, explicit version}
that are
about ray class groups
to Theorems~\ref{m.t. on multi-moments}
and~\ref{m.t. on distribution}
which 
regard only
special divisors.
\begin{theorem} 
\label{theorem:multi-moments of ray class seq}
For any 
$\beta\in \R$ satisfying 
$0<\beta<\min\{2^{-\bk},\phi(c)^{-1}\}$
we have 
$$ 
\frac{
\sum_{-\disc(K)\leq X}\prod_{\chi \in \widehat{G}_{n_1} \times \widehat{\widetilde{G}}_{n_2} }m_{\chi}(\delta_2(K))^{k_{\chi}}}
{\sum_{-\disc(K)\leq X}1}
=
\hspace{-0,3cm}
\sum_{V \subseteq \widehat{Im(g_R)}}
\hspace{-0,3cm}
\mathbb{P}_{(k_{\chi})}(V)\mathcal{N}_2(\bk-\dim(V))
+O((\log X)^{-\beta})
,$$ 
where in both sums $K$ varies among imaginary quadratic number fields of type $R$, having $\disc(K) \equiv  1  \md{4}$
and the implied constant
depends at most on $c$ 
and $(k_\chi)_\chi$.
\end{theorem}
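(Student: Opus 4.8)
**The plan is to reduce Theorem~\ref{theorem:multi-moments of ray class seq} to Theorem~\ref{m.t. on multi-moments} by translating the arithmetic of ray class groups into the language of special divisors.** The first step is to recall, from the algebraic analysis of \S\ref{section:heuristic at 2}, that for $K$ of type $R$ with $\disc(K)\equiv 1\md 4$ the map $\delta_2(K)\colon 2\cl(K)[2]\to W_R/2W_R$ lands in $\Im(g_R)$, and (by genus theory) that $2\cl(K)[2]$ together with $\Im(\delta_2(K))$ is encoded precisely by the special-divisor data attached to $D=-\disc(K)$. Concretely, writing $D$ for the squarefree integer with $\disc(K)=-D$, one has a canonical isomorphism $2\cl(K)[2]\cong S(D)/\{1,D\}$, compatible with the Artin-map description of $\delta_2(K)$, so that $\Im(\delta_2(K))$ corresponds to $\Im(\phi_{n_1,n_2,D})$ for an appropriate choice of auxiliary moduli $n_1,n_2$ recording the splitting type $R$ at the primes dividing $c$ (the condition that $D$ be a square modulo $n_1$ and generate $L_{n_2}\md{n_2}$ being exactly the inertness/splitting constraints defining $\c{F}(R)$, up to the congruence $D\equiv a\md r$ for some $a,r$ depending on $c$). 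This identification is spelled out in the commutative diagram after Lemma~\ref{recognizing img-r}. A key point is that the exceptional $K$ that are not \emph{strongly} of type $R$ number only $O(X(\log X)^{-1/\phi(c)})$ by \eqref{eq:acacac}, and since each $m_\chi(\delta_2(K))^{k_\chi}\ll 1$ trivially (it is at most $\#S(D)\le 2^{\omega(D)}$, which is $X^{o(1)}$ on average but here we only need the bounded-rank cases to contribute), these fields contribute an error absorbed into $O((\log X)^{-\beta})$ once $\beta<\phi(c)^{-1}$.

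**Having made this dictionary, the second step is to invoke Theorem~\ref{m.t. on multi-moments}.** Under the identification above, $\prod_\chi m_\chi(\delta_2(K))^{k_\chi}$ becomes $\prod_\chi m_\chi(D)^{k_\chi}$ (using $m_\chi(D)=\#\{d\in S(D):\chi(\phi_{n_1,n_2,D}(d))=1\}$ from \eqref{def:twist}, noting the factor $\{1,D\}$ is handled uniformly since $\phi$ is defined on $S(D)/\{1,D\}$), so
\[
\sum_{-\disc(K)\le X}\prod_\chi m_\chi(\delta_2(K))^{k_\chi}
=\sum_{\substack{D\le X\\ D\equiv a\md r,\ D\in\text{(splitting type }R)}}\prod_\chi m_\chi(D)^{k_\chi}+O\!\Big(\tfrac{X}{(\log X)^{1/\phi(c)}}\Big),
\]
where $D$ runs over squarefree positive integers that are squares $\md{n_1}$ and non-squares modulo every prime dividing $n_2$. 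Theorem~\ref{m.t. on multi-moments} evaluates exactly this restricted sum, with main term $2^{\bk}\sum_{W\subseteq\widehat{G}_{n_1}\times\widehat{\widetilde G}_{n_2}}\mathbb P_{(k_\chi)}(W)\mathcal N_2(\bk-\dim W)$ times the density of the $D$'s, and a power-saving-in-$\log$ error term of size $O((\log X)^{-\beta})$ for $\beta<\min\{2^{-\bk},\phi(c)^{-1}\}$. Dividing by $\#\{-\disc(K)\le X\}$ (which is $\sim cX$ for an explicit constant, and matches the density factor) and identifying $\widehat{G}_{n_1}\times\widehat{\widetilde G}_{n_2}$ with $\widehat{\Im(g_R)}$ via Proposition~\ref{delta maps for ext tilde} yields precisely the claimed asymptotic.

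**The main obstacle is the first step: establishing that the ray-class-theoretic quantity $(\#(2\cl(K))[2],\Im(\delta_2(K)))$ is genuinely captured by $(\#S(D),\Im(\phi_{n_1,n_2,D}))$, with the correct auxiliary moduli.** This requires care on several fronts: (i) one must use ring — not merely group — identifications of $\mathcal O_K/c$ with $R$ to get the image of $\delta_2(K)$ as a well-defined subspace (Lemma~\ref{trivial action} guarantees $\aut_{\text{ring}}(R)$ acts trivially on $\Im(g_R)$, making the orbit a single point); (ii) one must translate the definition of $\delta_2(K)$ — take $x\in\cl(K)[2]$, a representative ideal $\mathcal I$ coprime to $c$, a generator of $\mathcal I^2$, reduce mod $c$ and mod squares — into the statement that the special divisor $d\mid D$ corresponding to $x$ maps to $d\md{n_1}$, $d\md{n_2}$ under $\phi$, which is exactly the genus-theory computation; (iii) one must check the auxiliary $n_1,n_2$ and the residue class $a\md r$ can be chosen so that the family of $D$ appearing is precisely the one Theorem~\ref{m.t. on multi-moments} handles, including the constraint $2Dn_1n_2$ squarefree. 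Once this translation is in place, the analytic heavy lifting is entirely contained in Theorem~\ref{m.t. on multi-moments}, and the rest is bookkeeping with densities and the negligible non-strongly-of-type-$R$ exceptional set.
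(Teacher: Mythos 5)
Your overall reduction is the right one and matches the paper's: identify $\delta_2(\QQ(\sqrt{-D}))$ with $\phi_{n_1(R),n_2(R),D}$ via the commutative diagram after Lemma~\ref{recognizing img-r}, replace the sum over $K$ by a sum over $D$, and invoke Theorem~\ref{m.t. on multi-moments}. The algebraic dictionary you describe in your first and third paragraphs is exactly the content of \S\ref{realizing delta maps}.

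However, there is a genuine gap in how you dispose of the exceptional set of $K$ that are of type $R$ but not \emph{strongly} of type $R$. You write that ``each $m_\chi(\delta_2(K))^{k_\chi}\ll 1$ trivially,'' but this is false: for $D=-\disc(K)\in E(X)$ the quantity $m_\chi(D)$ can be as large as $\#S(D)=2^{1+\rank_4\cl(K)}$, which is unbounded. Since the convention sets $m_\chi(\delta_2(K))=0$ on the exceptional set, the discrepancy between $\sum_K\prod_\chi m_\chi(\delta_2(K))^{k_\chi}$ and $\sum_D\prod_\chi m_\chi(D)^{k_\chi}$ is exactly $-\sum_{D\in E(X)}\prod_\chi m_\chi(D)^{k_\chi}$, and bounding this requires more than the cardinality estimate $E(X)\ll X(\log X)^{-1/\phi(c)}$ — one cannot simply multiply the density of the exceptional set by a bounded constant. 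The paper handles this by choosing H\"older exponents $p',q'$ with $1/p'+1/q'=1$ and $\gamma\phi(c)<1-1/p'<1$, estimating
\[
\sum_{D\in E(X)}\prod_\chi m_\chi(D)^{k_\chi}
\leq E(X)^{1/q'}\Big(\sum_{D\leq X}\prod_\chi m_\chi(D)^{p'k_\chi}\Big)^{1/p'},
\]
and controlling the second factor by the crude bound $m_\chi(D)\leq\#S(D)$ together with Fouvry--Kl\"uners' moment estimate $\sum_{D\leq X}\#S(D)^{p'\bk}\ll X$ (their Theorem~9). This H\"older step is not optional bookkeeping; without it your claimed error term $O((\log X)^{-\beta})$ for $\beta<\phi(c)^{-1}$ does not follow from what you have written.
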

\begin{theorem} \label{theorem:distribution of delta maps}
We have 
\begin{align*}
&\lim_{X \to \infty}\frac{\#\{K: -\disc(K)\leq X,\#(2\cl(K))[2]=2^j \ \text{and} \ \Im(\delta_2(K))=Y\}}{\#\{K: -\disc(K)\leq X \}}
\\=&
\mu_{\emph{CL}}(G \in \mathcal{G}_2:\#G[2]=2^j) 
\frac{\#\epi_{\mathbb{F}_2}(\mathbb{F}_2^j,Y)}{\#\Hom_{\mathbb{F}_2}(\mathbb{F}_2^j,\Im(g_R))},
\end{align*}
where 
$K$ varies among imaginary quadratic number fields 
with $\disc(K) \equiv 1  \md{4}$
and 
of type $R$.
\end{theorem}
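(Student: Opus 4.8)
The plan is to deduce the theorem from the distributional result on special divisors, Theorem~\ref{m.t. on distribution}, via an algebraic dictionary between the ray class data of $K$ and the special divisor data of $D:=-\disc(K)$. For $K\in\mathcal F(R)$ with $\disc(K)\equiv 1\md{4}$ one has $D\equiv 3\md{4}$ squarefree, and $\gcd(D,c)=1$ since $R$ is unramified at $c$. The first ingredient is the classical genus-theoretic description of the $4$-rank used by Fouvry and Kl\"uners: sending a divisor $d\mid D$ to the ideal class $[\prod_{q\mid d}\mathfrak q]$, where $\mathfrak q$ is the ramified prime of $K$ above $q$, induces an isomorphism of $\mathbb F_2$-vector spaces $S(D)/\{1,D\}\xrightarrow{\ \sim\ }(2\cl(K))[2]$; in particular $\#S(D)/\{1,D\}=\#(2\cl(K))[2]$, and $S(D)/\{1,D\}$ is the $2$-torsion of the Cohen--Lenstra-distributed group $G_2(K)=2\cl(K)[2^\infty]$.

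The crucial point is to transport $\delta_2(K)$ through this isomorphism; this is the commutative diagram following Lemma~\ref{recognizing img-r}. By the definition of $\delta_2(K)$ — represent a class by an ideal $\mathcal I$ coprime to $c$, take a generator of $\mathcal I^2$, reduce it mod $c$ and then modulo squares — the composite $S(D)/\{1,D\}\xrightarrow{\ \sim\ }(2\cl(K))[2]\xrightarrow{\ \delta_2(K)\ }W_R/2W_R$ carries a special divisor $d$ to the class of the \emph{integer} $d$ in $W_R/2W_R$. By Proposition~\ref{delta maps for ext tilde} the relevant target is $\Im(g_R)$, and by Lemma~\ref{trivial action} the group $\aut_{\text{ring}}(R)$ acts trivially on it, so the \emph{image} of $\delta_2(K)$ — not merely its dimension — is a well-defined subspace of $\Im(g_R)$; this is exactly where working with ring identifications rather than group identifications pays off. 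Computing $g_R(\alpha)=\alpha^2 N(\alpha)$ locally at each $l\mid c$, as in the proof of Lemma~\ref{trivial action}, shows that the $l$-component of $\Im(g_R)$ is nontrivial only for a subset of the primes $l\mid c$ determined by the residue of $l$ modulo $4$ and by the splitting of $l$ in $K$, and that on it the class of an integer $d$ is recorded by a Legendre-type symbol of $d$. Assembling these local data produces moduli $n_1,n_2$ with $n_1n_2\mid c$ and $2Dn_1n_2$ squarefree, together with a canonical isomorphism $\Im(g_R)\cong G_{n_1}\times\widetilde G_{n_2}$ under which $\delta_2(K)$ becomes the map $\phi_{n_1,n_2,D}$ of \S\ref{section:special divisors}: $n_1$ collects the primes contributing an unobstructed Legendre symbol and $n_2$ those that survive only in the quotient $\widetilde G_{n_2}=G_{n_2}/L_{n_2}$. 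I expect pinning down this dictionary — in particular which primes of $c$ go into $n_1$ versus $n_2$, and why $L_{n_2}$ enters — to be the main obstacle; the rest is formal.

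Dually, the defining conditions of $\mathcal F(R)$ with $\disc(K)\equiv 1\md{4}$ translate, under $D=-\disc(K)$, into exactly the arithmetic conditions governing Theorem~\ref{m.t. on distribution} and Prediction~\ref{prediction:distr. sp.div.}: $D$ squarefree with $D\equiv 3\md{4}$, $D$ a square modulo $n_1$, and $D$ a nonsquare modulo every prime dividing $n_2$. The $K$ that are of type $R$ but not \emph{strongly} of type $R$ number $O(X(\log X)^{-1/\varphi(c)})$ by \eqref{eq:acacac}, and $\mathbb Q(\sqrt{-3})$ is a single exception, so both contribute $0$ in the limit; moreover ordering by $|\disc(K)|$ coincides with ordering by $D$. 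Applying Theorem~\ref{m.t. on distribution} to the pair $(S(D)/\{1,D\},\Im(\phi_{n_1,n_2,D}))$ and transporting the mass $\mu((\mathbb F_2^j,Y))=\mcl(A\in\mathcal G_2:A[2]\cong\mathbb F_2^j)\cdot\#\epi_{\mathbb F_2}(\mathbb F_2^j,Y)/\#\Hom_{\mathbb F_2}(\mathbb F_2^j,G_{n_1}\times\widetilde G_{n_2})$ through the isomorphism $G_{n_1}\times\widetilde G_{n_2}\cong\Im(g_R)$ yields precisely the limiting proportion claimed. The genuine analytic content, namely recovering the joint distribution from the mixed-moment asymptotics of Theorem~\ref{m.t. on multi-moments} by Heath-Brown's moment method, is contained in the proof of Theorem~\ref{m.t. on distribution} carried out in \S\ref{from m.m. to distr.}, and is not revisited here.
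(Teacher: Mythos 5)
Your proposal is correct and follows essentially the same route as the paper: realize $\delta_2(\mathbb{Q}(\sqrt{-D}))$ as $\phi_{n_1(R),n_2(R),D}$ via the commutative diagram after Lemma~\ref{recognizing img-r}, discard the zero-density set of $K$ not strongly of type $R$ using~\eqref{eq:acacac}, and invoke Theorem~\ref{m.t. on distribution}. The one point you leave as an acknowledged hedge — exactly which primes of $c$ land in $n_1$ versus $n_2$ — is precisely what the paper fixes explicitly just before Theorem~\ref{theorem: joint distribution of 4-ranks, explicit version}, namely $n_1(R)$ is the product of primes $l\mid c$ that are $3\bmod 4$ and inert or $1\bmod 4$ and split, with $n_2(R)=c/n_1(R)$; the underlying local computation (Lemma~\ref{recognizing img-r}) is also declared straightforward and omitted in the paper, so this omission is in line with the paper's own level of detail.
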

Recall
the definition of $W_R$ in~\eqref{def:wr}
and the definition of the map $g_R$
before the statement of Proposition~\ref{delta maps for ext tilde}.
\begin{theorem} \label{theorem: joint distribution of 4-ranks}
We have 
\begin{align*}
&\lim_{X \to \infty} \frac{\#\{K: -\disc(K)\leq X, \rk_4(\cl(K))=j_1, \rk_4(\cl(K,c))=j_2\}}{\#\{K:-\disc(K)\leq X \}}
\\=&
\mu_{\emph{CL}}(G \in \mathcal{G}_2:\dim_{\mathbb{F}_2}(G[2])=j_1)
\frac{\#\{\phi \in \Hom_{\mathbb{F}_2}(\mathbb{F}_2^{j_1},\Im(g_R)): \rk(\phi)=\rk_{2}(W_R)-(j_2-j_1)\}}{\#\Hom_{\mathbb{F}_2}(\mathbb{F}_2^{j_1},\Im(g_R))}
,\end{align*}
where 
$K$ varies among imaginary quadratic number fields 
with $\disc(K) \equiv 1  \md{4}$
and 
of type $R$.
\end{theorem}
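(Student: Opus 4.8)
The plan is to deduce Theorem~\ref{theorem: joint distribution of 4-ranks} from Theorem~\ref{theorem:distribution of delta maps} by translating the two $4$-ranks into the pair $(\#(2\cl(K))[2],\Im(\delta_2(K)))$ and then carrying out an elementary sum over subspaces of $\Im(g_R)$.

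First I would record the elementary identity $\rk_4(A)=\dim_{\mathbb{F}_2}(2A)[2]$, valid for every finite abelian group $A$ (since $2A/4A\cong(2A)[2]$). Thus $\rk_4(\cl(K))=j_1$ is the same as $\#(2\cl(K))[2]=2^{j_1}$, and $\rk_4(\cl(K,c))=\dim_{\mathbb{F}_2}(2\cl(K,c))[2]$. Next, for $K$ strongly of type $R$ --- and by the bound~\eqref{eq:acacac} the fields of type $R$ that are not strongly of type $R$ form a set of density zero, hence may be discarded without altering any limiting proportion --- I would take the $2$-torsion of the exact sequence~\eqref{eq:seq}, obtaining the exact sequence
\[
0\to W_R[2]\to (2\cl(K,c))[2]\to (2\cl(K))[2]\xrightarrow{\delta_2(K)} W_R/2W_R,
\]
whose connecting map $\delta_2(K)$ has image inside $\Im(g_R)$ by Proposition~\ref{delta maps for ext tilde}. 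Comparing dimensions gives
\[
\rk_4(\cl(K,c))=\dim_{\mathbb{F}_2}W_R[2]+j_1-\dim_{\mathbb{F}_2}\Im(\delta_2(K))=\rk_2(W_R)+j_1-\rk(\delta_2(K)),
\]
so on the complement of the exceptional set the condition $\{\rk_4(\cl(K))=j_1,\ \rk_4(\cl(K,c))=j_2\}$ is equivalent to $\{\#(2\cl(K))[2]=2^{j_1}\}$ together with $\dim_{\mathbb{F}_2}\Im(\delta_2(K))=r$, where $r:=\rk_2(W_R)-(j_2-j_1)$.

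Now I would sum the conclusion of Theorem~\ref{theorem:distribution of delta maps}, specialised to $j=j_1$, over all subspaces $Y\subseteq\Im(g_R)$ with $\dim_{\mathbb{F}_2}Y=r$; this gives that the proportion in question equals
\[
\mcl(G\in\mathcal{G}_2:\#G[2]=2^{j_1})\sum_{\substack{Y\subseteq\Im(g_R)\\ \dim_{\mathbb{F}_2}Y=r}}\frac{\#\epi_{\mathbb{F}_2}(\mathbb{F}_2^{j_1},Y)}{\#\Hom_{\mathbb{F}_2}(\mathbb{F}_2^{j_1},\Im(g_R))}.
\]
The last step is the bijection $\phi\mapsto(\Im(\phi),\phi)$ between homomorphisms $\mathbb{F}_2^{j_1}\to\Im(g_R)$ of rank $r$ and pairs $(Y,\psi)$ with $Y\subseteq\Im(g_R)$ of dimension $r$ and $\psi\in\epi_{\mathbb{F}_2}(\mathbb{F}_2^{j_1},Y)$. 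It rewrites the inner sum as $\#\{\phi\in\Hom_{\mathbb{F}_2}(\mathbb{F}_2^{j_1},\Im(g_R)):\rk(\phi)=r\}$, which is exactly the formula in the statement, with the standing convention that this count --- and hence the proportion --- is $0$ whenever $r<0$ or $r>\dim_{\mathbb{F}_2}\Im(g_R)$, in which case indeed no such $K$ exists.

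Within this derivation the only delicate point is the reduction to fields strongly of type $R$: one must verify that the error in~\eqref{eq:acacac} is uniform enough to be absorbed into the $o(1)$ term and that for strongly-of-type-$R$ fields $\delta_2(K)$ really is the connecting map of~\eqref{eq:seq}, with image in $\Im(g_R)$ and the rank bookkeeping above --- both of which follow from the discussion preceding Proposition~\ref{delta maps for ext tilde} and from Lemma~\ref{trivial action}. The substantive difficulty of the theorem lies entirely upstream, in Theorem~\ref{theorem:distribution of delta maps} (and, behind it, the mixed-moment estimate Theorem~\ref{theorem:multi-moments of ray class seq}); should one prefer not to quote Theorem~\ref{theorem:distribution of delta maps}, one may route the argument directly through Theorem~\ref{theorem:multi-moments of ray class seq}, since the $4$-rank distribution is a fixed linear functional of the joint law of $(m_\chi(\delta_2(K)))_{\chi}$, which is reconstructed from its mixed moments by the Heath-Brown-type argument of~\S\ref{from m.m. to distr.}.
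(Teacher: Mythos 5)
Your proof is correct and follows essentially the same route as the paper, which deduces Theorems~\ref{theorem:distribution of delta maps}, \ref{theorem: joint distribution of 4-ranks} and~\ref{theorem: joint distribution of 4-ranks, explicit version} all from Theorem~\ref{m.t. on distribution} together with the identification of $\delta_2(K)$ with $\phi_{n_1(R),n_2(R),D}$ and the density-zero estimate~\eqref{eq:acacac} for fields not strongly of type $R$. You simply factor this through Theorem~\ref{theorem:distribution of delta maps} and then spell out the bookkeeping that the paper leaves implicit in the phrase ``follow immediately'': the identity $\rk_4(A)=\dim_{\mathbb{F}_2}(2A)[2]$, the dimension count $\rk_4(\cl(K,c))=\rk_2(W_R)+j_1-\dim\Im(\delta_2(K))$ coming from the $2$-torsion of~\eqref{eq:seq}, and the bijection $\phi\mapsto(\Im(\phi),\phi)$ that converts the sum over subspaces $Y$ of fixed dimension into the count of $\phi$ of fixed rank. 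All of these are correct, including the observation that both sides vanish when $r:=\rk_2(W_R)-(j_2-j_1)$ falls outside $[0,\dim_{\mathbb{F}_2}\Im(g_R)]$.
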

We will prove a stronger version of
Theorems \ref{theorem:multi-moments of ray class seq},~\ref{theorem:distribution of delta maps}
and~\ref{theorem: joint distribution of 4-ranks}.
Namely,
the fact that we deal with progressions $a\md{q}$ in Theorems~\ref{m.t. on multi-moments} and~\ref{m.t. on distribution}
yields results 
analogous
to the ones in Theorems~\ref{theorem:multi-moments of ray class seq}, \ref{theorem:distribution of delta maps}
and~\ref{theorem: joint distribution of 4-ranks}
when one imposes 
finitely many unramified local conditions
at primes independent of $c$
on the discriminants $D(K)$.
This supports the point of view in Wood's recent
work~\cite{mwmw}
that local conditions on the quadratic field do not affect the distribution of
class groups,
with the obvious modification that for ray class groups 
such conditions must be taken independently of the primes dividing $c$.

We proceed to 
restate Theorem~\ref{theorem: joint distribution of 4-ranks}
in a more explicit way.
Recalling that $c$ is square-free
we let $n_1(R)$ be the 
product of the prime divisors of
$c$ which are either $3\md{4}$ and inert in $R$, 
or $1\md{4}$ and split.  
Furthermore, let $n_2(R):=c/n_1(R),$ 
this is the product of the prime divisors of $c$
that are $3\md{4}$ and split in $R$.
Recall that 
\[
\frac{\eta_{\infty}(2)}{\eta_{j_1}(2)^2 2^{j_1^2}}
=
\mu_{\emph{CL}}(G \in \mathcal{G}_2:\dim_{\mathbb{F}_2}(G[2])=j_1)
.\]
\begin{theorem} 
\label{theorem: joint distribution of 4-ranks, explicit version}
We have 
\begin{align*}
&\lim_{X \to \infty} \frac{\#\{K: -\disc(K)\leq X, \rk_4(\cl(K))=j_1, \rk_4(\cl(K,c))=j_2\}}{\#\{K:-\disc(K)\leq X \}}
\\=&
\frac{\eta_{\infty}(2)}{\eta_{j_1}(2)^2   2^{j_1^2}}
\frac{\#\{\phi \in \Hom_{\mathbb{F}_2}(\mathbb{F}_2^{j_1},G_{n_1(R)} \times \widetilde{G}_{n_2(R)}): \rk(\phi)=\rk_{2}(W_R)-(j_2-j_1)\}}{\#\Hom_{\mathbb{F}_2}(\mathbb{F}_2^{j_1},G_{n_1(R)} \times G_{n_2(R)})}
,\end{align*}
where 
$K$ varies among imaginary quadratic number fields 
with $\disc(K) \equiv 1  \md{4}$
and 
of type $R$.
\end{theorem}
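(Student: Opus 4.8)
\emph{The plan} is to deduce Theorem~\ref{theorem: joint distribution of 4-ranks, explicit version} from the (earlier) Theorem~\ref{theorem: joint distribution of 4-ranks} by making explicit the two quantities on its right-hand side. The first is the Cohen--Lenstra mass of the $2$-rank locus,
\[
\mcl(G \in \mathcal{G}_2 : \dim_{\mathbb{F}_2}(G[2]) = j_1) = \frac{\eta_{\infty}(2)}{\eta_{j_1}(2)^2\, 2^{j_1^2}},
\]
which is classical and is in fact already recorded in the statement. The second, and the only point requiring an argument, is an isomorphism of $\mathbb{F}_2$-vector spaces
\[
\Im(g_R) \;\cong\; G_{n_1(R)} \times \widetilde{G}_{n_2(R)}.
\]
Since the right-hand side of Theorem~\ref{theorem: joint distribution of 4-ranks} depends on the space $\Im(g_R)$ only through its $\mathbb{F}_2$-dimension --- both $\#\Hom_{\mathbb{F}_2}(\mathbb{F}_2^{j_1},\Im(g_R))$ and the number of $\phi$ of prescribed rank are functions of $\dim_{\mathbb{F}_2}\Im(g_R)$ and $j_1$ only, while $\rk_2(W_R)$ is untouched --- substituting this isomorphism into Theorem~\ref{theorem: joint distribution of 4-ranks} immediately yields the displayed formula.

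To produce the isomorphism I would work one prime at a time. The ring decomposition $R = \prod_{l \mid c} R/lR$ is compatible with the norm and with $g_R$, giving $g_R = \prod_{l \mid c} g_{R/lR}$ --- the same observation used in the proof of Lemma~\ref{trivial action} --- so $\Im(g_R) = \prod_{l \mid c} \Im(g_{R/lR})$ and it is enough to identify each local factor. Writing $g_R(\alpha) = \alpha^2 N(\alpha) = \tfrac{\alpha}{\tau(\alpha)} N(\alpha)^2$ and using $N(\alpha)^2 \in 2W_R$ reduces the computation of $\Im(g_{R/lR})$ modulo $2W_{R/lR}$ to the image of the map $\alpha \mapsto \alpha/\tau(\alpha)$. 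A short case analysis over the four unramified local types of $l \mid c$, using that $-1$ is a square modulo $l$ exactly when $l \equiv 1 \pmod 4$, then gives: $\Im(g_{R/lR})$ is trivial when $l \equiv 1 \pmod 4$ is inert in $R$; it is one-dimensional over $\mathbb{F}_2$ when $l \equiv 3 \pmod 4$ is inert or $l \equiv 1 \pmod 4$ is split, and the product of these factors is canonically $G_{n_1(R)}$; and it is one-dimensional when $l \equiv 3 \pmod 4$ is split, the product of those factors being $G_{n_2(R)}$. The passage from $G_{n_2(R)}$ to $\widetilde{G}_{n_2(R)} = G_{n_2(R)}/L_{n_2(R)}$ comes from the single diagonal (norm) relation linking the $l \equiv 3 \pmod 4$ split local factors, whose image is precisely the line $L_{n_2(R)}$; and the commutative diagram following Lemma~\ref{recognizing img-r} shows that under the hypotheses $K$ of type $R$ and $\disc(K) \equiv 1 \pmod 4$ --- equivalently, $D := -\disc(K)$ is a square modulo $n_1(R)$ and generates $L_{n_2(R)}$ modulo $n_2(R)$ --- the map $\delta_2(K)$ corresponds to $\phi_{n_1(R),n_2(R),D}$, so this identification is the one under which Theorem~\ref{theorem: joint distribution of 4-ranks} was established.

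\emph{The hard part} is none of the above: the analytic heart --- the mixed-moment asymptotics and the reconstruction of the distribution --- has already been packaged into Theorem~\ref{theorem: joint distribution of 4-ranks} through Theorems~\ref{m.t. on multi-moments} and~\ref{m.t. on distribution}. What remains genuinely laborious here is only the bookkeeping: carrying out the four-case local evaluation of $\Im(g_{R/lR})$ carefully (in particular tracking the global quotient by $\langle -1 \rangle$ when splitting $W_R$ into local pieces) and checking that the diagonal relation among the $l \equiv 3 \pmod 4$ split local factors is exactly $L_{n_2(R)}$. Once this is in place the explicit formula follows by pure substitution.
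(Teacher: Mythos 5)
Your proposal is correct and follows essentially the same route as the paper. The paper deduces Theorem~\ref{theorem: joint distribution of 4-ranks, explicit version} simultaneously with Theorems~\ref{theorem:distribution of delta maps} and~\ref{theorem: joint distribution of 4-ranks}, directly from Theorem~\ref{m.t. on distribution}, via the commutative diagram at the end of \S\ref{realizing delta maps} which identifies $\delta_2(\mathbb{Q}(\sqrt{-D}))$ with $\phi_{n_1(R),n_2(R),D}$; you instead deduce the explicit version from the abstract Theorem~\ref{theorem: joint distribution of 4-ranks}, but the content is the same, since the substitution you make is exactly the identification $\Im(g_R)\cong G_{n_1(R)}\times \widetilde{G}_{n_2(R)}$ supplied by Lemma~\ref{recognizing img-r} and that diagram. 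The one thing you add is a sketch of the proof of Lemma~\ref{recognizing img-r}, which the paper declares ``straightforward and therefore omitted''; your prime-by-prime computation is the right idea, though note that, as you yourself flag, $\Im(g_R)$ is not literally $\prod_{l\mid c}\Im(g_{R/lR})$ once the global quotient by $\langle -1\rangle$ (equivalently the diagonal norm relation across the $l\equiv 3\pmod 4$ split factors, giving the passage $G_{n_2(R)}\to\widetilde{G}_{n_2(R)}$) is taken into account --- so the displayed ``$\Im(g_R)=\prod_l\Im(g_{R/lR})$'' should be read as holding only before this relation is imposed.
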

The congruence conditions  $\md{4}$ related to the definition of $n_1(R)$ and $n_2(R)$ in
Theorem~\ref{theorem: joint distribution of 4-ranks, explicit version}
are analogous to the
congruences $\md{3}$ for the primes $l$ appearing in the first part of Varma's Theorem~\ref{thm:varvarvar}.

Our next goal is to realise the 
$\delta_2$-map 
$$\delta_2(\mathbb{Q}(\sqrt{-D}))
: (2\cl(\mathbb{Q}(\sqrt{-D})))[2] \to \Im(g_R)
$$
with the map on special divisors introduced in \S\ref{section:special divisors},
$$\phi_{n_1(R),n_2(R),D}:\frac{S(D)}{\{1,D\}} \to G_{n_1(R)} \times \widetilde{G}_{n_2(R)}.
$$  
\subsection{Realizing $\delta_2(\mathbb{Q}(\sqrt{-D}))$ as $\phi_{n_1(R),n_2(R),D}$} 
\label{realizing delta maps}
Let $D$ be a square-free positive integer with $D\equiv 3\md{4}$.
and 
denote its 
its prime factorization
by
$D=p_1\cdots p_j$. 
Let $\mathfrak{p}_1,\ldots,\mathfrak{p}_j$ be
the corresponding prime ideals in $\mathbb{Q}(\sqrt{-D})$, i.e. $\mathfrak{p}_i^2=(p_i)$). 
Recall that $\cl(\mathbb{Q}(\sqrt{-D}))[2]$ is generated by $\mathfrak{p}_1,\ldots,\mathfrak{p}_j$ subject only
to the
relation 
$\mathfrak{p}_1 \cdots \mathfrak{p}_j=(\sqrt{-D})$. 
For any $b$ positive divisor of $D$, denote by $\mathfrak{b}$ the ideal of $\mathbb{Q}(\sqrt{-D})$ with $\mathfrak{b}^2=(b)$. 
Let us now
recall from~\cite[Lem.16]{MR2276261} that given
a positive divisor $b$ of
$D$, 
we have 
$\mathfrak{b} \in 2\cl(\mathbb{Q}(\sqrt{-D})$ if and only if $b \in S(D)$. 
The assignment $\mathfrak{b} \mapsto b$ 
gives an
isomorphism
$$(2\cl(\mathbb{Q}(\sqrt{-D})))[2] \cong 
S(D)/\{1,D\}
.$$
Indeed, from the proof of~\cite[Lem.16]{MR2276261}, 
we know that 
$b\in S(D)$
if and only if there exists
a primitive element (i.e. not divisible by any $m \in \mathbb{Z}_{\geq 2}$)
$\alpha \in \mathcal{O}_{\mathbb{Q}(\sqrt{-D})}$  
and $w \in \mathbb{Z}_{\neq 0}$ such that 
\beq{eq:conicst}
{
bw^2=\n_{\mathbb{Q}(\sqrt{-D})/\mathbb{Q}}(\alpha)
.}
In that case the factorization of
$(\alpha)$ 
gives an integral ideal $h(\mathfrak{b})$ such that 
$(\alpha)=h(\mathfrak{b})^2\mathfrak{b}$.
We rewrite this as 
$\mathfrak{b}  (\alpha/b)=h(\mathfrak{b})^2$
and
observe that this shows in particular
that 
$\mathfrak{b}\in 2\cl(\mathbb{Q}(\sqrt{-D}))$.

By weak approximation for conics, one has that such an $\alpha$ can be found with $(\alpha,c)=1$,
i.e. a primitive point on~\eqref{eq:conicst} such that $\gcd(w,c)=1$.
Therefore both $(\alpha),h(\mathfrak{b})$ are coprime to $(c)$. 
Therefore the fractional ideal $\mathfrak{b} (\frac{\alpha}{b})$ can be employed as a lifting of $\mathfrak{b}$ to $2\cl(\mathbb{Q}(\sqrt{-D}),c)$. Therefore the definition of the 
$\delta_2$-map gives us that 
$$ \delta_2(\mathbb{Q}(\sqrt{-D}))(\mathfrak{b})=b\frac{\alpha^2}{b^2}.$$
However squares of integers
in $W_R/2W_R$ give rise to the trivial element, 
therefore by~\eqref{eq:conicst}
we obtain that 
$\delta(\mathfrak{b})=g_R(\alpha)$
Recalling that 
$N(\cdot)$ is the norm-function with respect to the $C_2$-action prescribed to $R^{*}/\langle -1 \rangle$
we see that $g_R(\alpha)=\alpha^2  N(\alpha)$.
Next, we provide a more concrete description of $\Im(g_R)$.  
The proof of the following result is straightforward
and therefore omitted.
\begin{lemma} \label{recognizing img-r}
There is an isomorphism 
$\phi_R: \Im(g_R) \to G_{n_1(R)} \times G_{n_2(R)}$
such that $$\phi_R(g_R(x))=N(x)$$ for every $x \in \frac{R^{*}}{\langle -1 \rangle}[2^{\infty}]$.
\end{lemma}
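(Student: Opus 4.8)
The plan is to unwind the definitions of $g_R$, $N$, and $\Im(g_R)$ componentwise over the primes dividing $c$, reducing to the case of a single prime $l$. Recall $R$ is unramified at $c$, so $R=\prod_{l|c} R/lR$ with each $R/lR$ either $\mathbb{F}_{l^2}$ (if $l$ is inert) or $\mathbb{F}_l\times\mathbb{F}_l$ (if $l$ is split), and the $C_2$-action is $l$-Frobenius in the inert case and coordinate-swap in the split case. Since $g_R=\prod_{l|c} g_{R/lR}$ and $N=\prod_{l|c} N_{R/lR}$ (both compatible with the product decomposition, exactly as in the proof of Lemma~\ref{trivial action}), it suffices to produce, for each $l$, an isomorphism $\phi_{R/lR}\colon \Im(g_{R/lR})\to (\text{the }l\text{-component of }G_{n_1(R)}\times G_{n_2(R)})$ satisfying $\phi_{R/lR}(g_{R/lR}(x))=N_{R/lR}(x)$, and then take the product.

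For a single prime $l$, first I would compute $\Im(g_{R/lR})$ inside $W_{R/lR}/2W_{R/lR}$. Using the identity $\alpha^2 N(\alpha)=\frac{\alpha}{\tau(\alpha)}N(\alpha)^2$ from Step~1 of the proof of Proposition~\ref{delta maps for ext tilde}, together with $N(\alpha)^2\in 2W_{R/lR}$, one sees that $\Im(g_{R/lR})$ is exactly the image of $\frac{\alpha}{\tau(\alpha)}$, i.e. the ``$(1-\tau)$-part'' of $(R/lR)^{*}/\langle-1\rangle$ modulo squares. In the inert case $\frac{\alpha}{\tau(\alpha)}=\alpha^{1-l}$ ranges over the norm-one subgroup of $\mathbb{F}_{l^2}^{*}$ modulo squares, which is nontrivial precisely when $l\equiv 3\md 4$; in the split case $\frac{(\alpha_1,\alpha_2)}{(\alpha_2,\alpha_1)}=(\alpha_1/\alpha_2,\alpha_2/\alpha_1)$, whose image modulo squares is again nontrivial precisely when $l\equiv 3\md 4$. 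Comparing with the definitions $n_1(R)=\prod\{l: l\equiv 3\md 4\text{ inert, or }l\equiv 1\md 4\text{ split}\}$ and $n_2(R)=\prod\{l:l\equiv 3\md 4\text{ split}\}$, I would check this matches the claimed target group $G_{n_1(R)}\times G_{n_2(R)}$ component by component; note that the $l\equiv 1\md 4$ split primes contribute trivially to $\Im(g_R)$ but the statement allows for this since the map is still well-defined (and the relevant components of $G_{n_1(R)}$ are handled by weak approximation as in \S\ref{realizing delta maps}).

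The map $\phi_R$ itself is then defined to send the class of $\alpha^2 N(\alpha)$ to the class of $N(\alpha)$ in $G_{n_1(R)}\times G_{n_2(R)}$; the content is that this is well-defined and bijective. Well-definedness amounts to: if $\alpha^2 N(\alpha)\equiv 1\md{2W_R}$ then $N(\alpha)$ is a square modulo $c$ in the appropriate sense — which follows because $\tau$ acts trivially on $\Im(g_R)$ (Lemma~\ref{trivial action}), so $\alpha^2 N(\alpha)\equiv \tau(\alpha)^2 N(\alpha)\md{2W_R}$, forcing $N(\alpha)^2\equiv(\text{square})$ and unwinding the identifications. Injectivity and surjectivity are then a dimension count on $\mathbb{F}_2$-vector spaces using the explicit descriptions above. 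I expect the only mildly delicate point — and the reason the paper calls the proof ``straightforward and therefore omitted'' — is bookkeeping of the various $\langle -1\rangle$ quotients and the $\md 4$ congruences that determine whether each local factor is trivial or of order two; there is no real obstacle, just careful matching of the local computations with the definitions of $n_1(R)$ and $n_2(R)$.
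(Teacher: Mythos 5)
Your proposal has two genuine gaps. First, the componentwise computation at the split primes is wrong: for a split prime $l\equiv 1\ \md{4}$, the image of $\alpha/\tau(\alpha)=(\alpha_1/\alpha_2,\alpha_2/\alpha_1)$ in $W_{R/lR}/2W_{R/lR}$ is \emph{not} trivial. Take $l=5$ and write $\mathbb{F}_5^*\cong\mathbb{Z}/4$: the local $W$ is $\{(x,y):x\equiv y\ \md{2}\}$, which modulo $\langle(-1,-1)\rangle$ becomes $(\mathbb{Z}/2)^2$ with $2W=0$, and the antidiagonal $\{(u,u^{-1})\}$ maps onto a line of $W/2W$. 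These are exactly the split primes dividing $n_1(R)$; if they were trivial, the isomorphism you are trying to prove could not hold on dimension grounds, and weak approximation cannot manufacture missing dimensions.

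Second, and more structurally, $\Im(g_R)$ does not factor as a product of local pieces, because $\langle-1\rangle$ is \emph{diagonal}: $\frac{R^*}{\langle-1\rangle}=\bigl(\prod_{l\mid c}(R/lR)^*\bigr)/\langle(-1,\dots,-1)\rangle$ is not $\prod_{l\mid c}(R/lR)^*/\langle-1\rangle$, and $W_R$ and $2W_R$ are formed inside this global quotient. The identity $g_R=\prod_l g_{R/lR}$ you cite from the proof of Lemma~\ref{trivial action} is a statement at the level of $R^*$, before the diagonal quotient. This global coupling is already visible in the target group: comparing the lemma statement with the commutative diagram displayed immediately after it, the target should read $G_{n_1(R)}\times\widetilde G_{n_2(R)}$ (the printed $G_{n_2(R)}$ appears to be a typo), and $\widetilde G_{n_2}=G_{n_2}/L_{n_2}$ drops one $\mathbb{F}_2$-dimension relative to $G_{n_2}$ whenever $n_2>1$ --- a $-1$ that no product of independent local contributions can produce. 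You can test this on $c=21$ with $3$ split and $7$ inert: there $W_R/2W_R$ is one-dimensional, so $\Im(g_R)$ cannot be isomorphic to $G_7\times G_3$. Finally, the proposed well-definedness argument (deducing from $\alpha^2N(\alpha)\in 2W_R$ and $\tau$-invariance that $N(\alpha)^2$ is a square) gives no information, since $N(\alpha)^2\in 2W_R$ holds automatically.
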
 
Since $N(\alpha)=bw^2$ and $w^2$ is trivial in $W_R/2W_R$, we get a commutative diagram
$$
\begin{array}{ccc}  (2\cl(\mathbb{Q}(\sqrt{-D}))[2] &\overset{\delta}\to& \Im(g_R)  
\\ \downarrow && \downarrow \phi_R\\ \frac{S(D)}{\{1,D\}} &\underset{\phi_{n_1,n_2,D}}\to& G_{n_1(R)} 
\times \widetilde{G}_{n_2(R)}  
\end{array}
$$
where the vertical rows are isomorphisms. This gives us
precisely the realization of the $\delta_2$-map in terms of special divisors that we were looking for.
\subsection{Reduction to special divisors}
Our next 
result holds for integers 
$a,q,n_1,n_2$
satisfying
\beq{eq:donaldtrump}
{4n_1n_2\text{ divides } q,
a\equiv 3 \md{4},
\gcd(a,q)=1
,}
\beq{eq:donaldtrump2}
{ 
a \text{ is a square} \md{n_1}
}
and
\beq{eq:donaldtrump3}
{p \text{ prime}, p\mid n_2
\Rightarrow
a \text{ is a non-square} \md{p}.}
\begin{theorem}\label{m.t. on multi-moments}
Let $a,q,n_1,n_2$ 
be
positive integers
satisfying~\eqref{eq:donaldtrump},
~\eqref{eq:donaldtrump2}
and
~\eqref{eq:donaldtrump3}. 
Then for every $\delta \in (0,2^{-\bk})$ we have 
$$\frac{
\sum_{D\leq X}\prod_{\chi \in \widehat{G}_{n_1} \times \widehat{\widetilde{G}}_{n_2} }m_{\chi}(D)^{k_{\chi}}
}{\sum_{D\leq X}1}
-2^{\bk 
}
\Big(
\hspace{-0,2cm}
\sum_{W \subseteq \widehat{G}_{n_1} \times \widehat{\widetilde{G}}_{n_2}}
\hspace{-0,3cm}
\mathbb{P}_{(k_{\chi})}(W)\mathcal{N}_2(\bk-\dim(W))\Big)
\ll
(\log X)^{-\delta}
,$$
where in both sums
$D$
varies among
square-free positive integers which are congruent to $a\md{q}$
and the implied constant
depends at most 
on $a,q,n_1,n_2,\delta$  and $(k_\chi)_\chi$.
\end{theorem}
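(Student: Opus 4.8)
The plan is to prove Theorem~\ref{m.t. on multi-moments} by reducing the mixed moments of $(m_\chi(D))$ to ordinary moments of $\#S(D)$ counted with a congruence condition, following the Fouvry--Kl\"uners method from~\cite{MR2276261}. The first step is to expand the product $\prod_\chi m_\chi(D)^{k_\chi}$. By definition~\eqref{def:twist}, $m_\chi(D)=\#\{d\in S(D):\chi(\phi_{n_1,n_2,D}(d))=1\}$, so
\[
\prod_{\chi}m_\chi(D)^{k_\chi}
=\sum_{(d_{\chi,1},\dots,d_{\chi,k_\chi})_\chi}\prod_\chi\prod_{i=1}^{k_\chi}\mathbf{1}[d_{\chi,i}\in S(D),\ \chi(\phi_{n_1,n_2,D}(d_{\chi,i}))=1].
\]
The key algebraic observation (Prediction~\ref{prediction:m.m. special divisors} and its proof) is that, after collecting terms according to the subspace $W\subseteq\widehat{G}_{n_1}\times\widehat{\widetilde{G}}_{n_2}$ generated by the characters $\chi$ that are ``active'' in a way forcing a constraint, each inner sum becomes $2^{|\mathbf{k}|_1}$ times a weighted count of tuples of special divisors whose images under $\phi_{n_1,n_2,D}$ lie in a prescribed coset structure; thus the whole expression reorganizes as a sum over $W$ of moments of the shape $\sum_{D\le X,\,D\equiv a(q)}\#\{(d_1,\dots,d_r)\in S(D)^r:\text{linear conditions mod }n_1,n_2\}$. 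This is the step signposted by~\eqref{eq:cough1234}.

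\textbf{Second}, for each such inner moment I would apply the characterization of special divisors via the conic~\eqref{eq:conicst}: $b\in S(D)$ iff $bw^2=\n(\alpha)$ has a primitive solution, which by Gauss genus theory translates $\#S(D)$ and the image conditions into a sum over factorizations $D=d_1\cdots d_r\cdot (\text{rest})$ weighted by Jacobi symbols $\left(\frac{d_i}{d_j}\right)$. Concretely, a $t$-th moment of $\#S(D)$ over $D\le X$, $D\equiv a\bmod q$, unfolds (as in~\cite[\S4--5]{MR2276261}) into
\[
\sum_{\substack{D\le X\\D\equiv a(q)}}\ \sum_{\substack{D=D_1\cdots D_s}}\ \prod_{i<j}\left(\frac{D_i}{D_j}\right)^{\epsilon_{ij}},
\]
with the extra congruence/quadratic-residue conditions mod $n_1n_2$ absorbed into a fixed character twist (harmless since $\gcd(a,q)=1$, $4n_1n_2\mid q$). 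One then sorts these into a ``main term'' coming from tuples where all the $\epsilon_{ij}$ vanish (the diagonal-type contribution, which is handled by a squarefree-sieve / Mellin argument and yields exactly $\mathbb{P}_{(k_\chi)}(W)\mathcal{N}_2(|\mathbf{k}|_1-\dim W)$ after summing over $W$, by Proposition~\ref{heuristic comp. of multimoments}), and an ``error term'' coming from tuples with a genuinely nontrivial character sum.

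\textbf{Third}, the error term is bounded by the large-sieve/character-sum estimates of Fouvry--Kl\"uners. For each offending tuple, one extracts a nontrivial Jacobi symbol $\left(\frac{m}{\cdot}\right)$ with $m>1$ and applies~\cite[Lem.~...]{MR2276261} (the bilinear and ``small $m$ vs.\ large $m$'' dichotomy, together with P\'olya--Vinogradov and the large sieve for quadratic characters) to gain a saving of $(\log X)^{-\delta}$ for any $\delta<2^{-|\mathbf{k}|_1}$; the exponent $2^{-|\mathbf{k}|_1}$ is precisely the Fouvry--Kl\"uners loss when one has $\le|\mathbf{k}|_1$ nested symbols. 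The congruence $D\equiv a\bmod q$ only multiplies the modulus by $q=O(1)$ and introduces a principal-character main term plus $O(q)$ non-principal characters, each contributing to the error, so the whole argument goes through with the implied constant depending on $a,q,n_1,n_2,\delta,(k_\chi)_\chi$ as claimed. The benefit of hindsight emphasized in~\S\ref{pre-indexing} is exactly that Proposition~\ref{heuristic comp. of multimoments} tells us in advance how to group the factorization tuples so that the main term matches, leaving only the cross terms to be killed.

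\textbf{The main obstacle} I anticipate is bookkeeping the image-data: unlike the classical case, here one must track not merely $\#S(D)$ but the joint behaviour of $\phi_{n_1,n_2,D}$ evaluated at several special divisors simultaneously, while keeping the congruence $D\equiv a\bmod q$ compatible with the residue conditions~\eqref{eq:donaldtrump2}--\eqref{eq:donaldtrump3}. The delicate point is verifying that every sub-sum obtained after the indexing really does fall into one of the two Fouvry--Kl\"uners regimes (a true diagonal contribution, or a sum amenable to the quadratic large sieve) with no leftover ``intermediate'' pieces; making the dichotomy exhaustive, and checking that the twisting characters from the conditions mod $n_1n_2$ never conspire to cancel the saving, is where the real work lies. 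Once that combinatorial reduction is in place, the analytic input is essentially quotation from~\cite{MR2276261}.
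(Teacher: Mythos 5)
Your proposal takes the same overall route as the paper: expand the mixed moments via a divisor-decomposition (the pre-indexing and indexing tricks), run the Fouvry--Kl\"uners dyadic decomposition and large-sieve machinery on the error terms, and match the main term against Proposition~\ref{heuristic comp. of multimoments}. The error-term analysis you outline is essentially correct, since the character twists coming from $\Psi_\chi$ are bounded by $1$ in absolute value and so do not disturb the estimates from~\cite{MR2276261} (the paper verifies this explicitly in \S\ref{ss:ff}).

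However, there is a genuine gap in the main-term step. You describe the main term as coming from ``tuples where all the $\epsilon_{ij}$ vanish (the diagonal-type contribution)''. In the Fouvry--Kl\"uners framework the Jacobi-symbol exponents $\Phi_{\bk}(\bu,\bv)$ are fixed by the combinatorics of the indexing; they do not vanish on the main-term tuples. What actually singles out the main term is the notion of a \emph{maximal unlinked set} $\c{U}$ (indices on which $\Phi_{\bk}(\bu,\bv)+\Phi_{\bk}(\bv,\bu)=0$): after the dyadic reduction, only those $\b{A}$ with all large variables supported on such a $\c{U}$ survive, and on $\c{U}$ the symbols pair up via quadratic reciprocity into signs $(-1)^{\Phi_\bk(\bu,\bv)\frac{h_\bu-1}{2}\frac{h_\bv-1}{2}}$ that must then be summed over residue classes $h_\bu \bmod 4$. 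The essential new combinatorial input — which your plan does not supply — is how the linear forms $\Psi_\chi$ interact with this picture: one must show (i) the character sum over $g_\bu \bmod q$ attached to a fixed $\c{U}$ vanishes unless $\c{U}$ is \emph{stable} (i.e.\ each $\Psi_\chi$ is constant on $\c{U}$), and (ii) counting stable maximal unlinked $\c{U}$ via the bijection with subspaces of $\mathbb{F}_2^{\bk}$ on which the $\Psi_\chi$ vanish yields precisely the factor $\mathcal{N}_2(\bk-\#T)$. Without this stability/combinatorics argument there is no way to produce the $\mathcal{N}_2(\bk-\dim W)$ term, and ``squarefree-sieve plus citing Proposition~\ref{heuristic comp. of multimoments}'' does not close the gap: Proposition~\ref{heuristic comp. of multimoments} tells you what the answer ought to be, but the identification of $\sum_{\c{U}\text{ stable}}\gamma(\c{U})$ with $2^{2^{\bk}+\bk-1}\mathcal{N}_2(\bk-\#T)$ is a separate, nontrivial calculation.
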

This proves Prediction \ref{prediction:m.m. special divisors}
with an explicit error term.

Recall Definition~\ref{def:jellyfish}.
We shall use Theorem~\ref{m.t. on multi-moments} in \S\ref{from m.m. to distr.} to deduce the following.
\begin{theorem}\label{m.t. on distribution}
Let $a,q,n_1,n_2$ 
be
positive integers
satisfying~\eqref{eq:donaldtrump},
~\eqref{eq:donaldtrump2}
and
~\eqref{eq:donaldtrump3}. 
Then 
$$\lim_{X \to \infty} \frac{\#\{D\leq X, (S(D) /\{1,D\},\phi_{n_1,n_2,D}) \sim T\}}{\#\{D\leq X\}}=\mu(T),$$
where 
$D$ varies among
positive
square-free
integers satisfying $D\equiv a\md{q}$. 
\end{theorem}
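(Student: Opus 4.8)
The plan is to pass from the mixed moments supplied by Theorem~\ref{m.t. on multi-moments} to the joint distribution, running the method of moments in the form introduced by Heath-Brown~\cite{MR1292115} and used by Fouvry--Kl\"uners~\cite{MR2276261}. The first observation is that the statistic $(S(D)/\{1,D\},\phi_{n_1,n_2,D})$ is equivalent to a \emph{numerical} datum. Indeed, for squarefree $D>1$ one has $m_\chi(D)=2\,m_\chi\big((S(D)/\{1,D\},\Im(\phi_{n_1,n_2,D}))\big)$ for every $\chi\in\widehat{G}_{n_1}\times\widehat{\widetilde{G}}_{n_2}$; reading $\dim_{\F_2}(S(D)/\{1,D\})$ off the trivial character via $m_1(D)=\#S(D)$ and then recovering $\Im(\phi_{n_1,n_2,D})^{\perp}=\{\chi:m_\chi(D)=m_1(D)\}$, one sees that the assignment $D\mapsto(m_\chi(D))_\chi$ determines, and is determined by, the class $T_D:=(S(D)/\{1,D\},\Im(\phi_{n_1,n_2,D}))$ in $\mathcal{M}_{n_1,n_2}$. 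Hence it suffices to prove that, for each $T\in\mathcal{M}_{n_1,n_2}$, the density of $\{D\le X:(m_\chi(D))_\chi=(m_\chi(T))_\chi\}$ among the $D$ of Theorem~\ref{m.t. on multi-moments} tends to $\mu(T)$.

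Combining Theorem~\ref{m.t. on multi-moments} with Proposition~\ref{heuristic comp. of multimoments} (the factor $2^{\bk}$ in the former being exactly the one produced by the identity $m_\chi(D)=2\,m_\chi(T_D)$), one obtains, for every finitely supported $\b k$,
\[
\lim_{X\to\infty}\frac{1}{\#\{D\le X\}}\sum_{D\le X}\prod_{\chi}m_\chi(T_D)^{k_\chi}\;=\;\sum_{T\in\mathcal{M}_{n_1,n_2}}\Big(\prod_{\chi}m_\chi(T)^{k_\chi}\Big)\mu(T),
\]
with $D$ ranging over squarefree integers $\equiv a\md{q}$ as in Theorem~\ref{m.t. on multi-moments}; that is, all mixed moments of the $\mathcal{M}_{n_1,n_2}$-valued ``random element'' $D\mapsto T_D$ converge to those of the $\mu$-distributed one. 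Specialising $\b k$ to the trivial character gives, moreover, the a priori bound $\sup_X(\#\{D\le X\})^{-1}\sum_{D\le X}(\#S(D))^{2l}\ll_l1$, whose limit equals $\sum_{j\ge0}2^{2l(j+1)}\mcl(G\in\mathcal{G}_2:\dim_{\F_2}G[2]=j)=2^{l^2+O(l)}$. It is the super-exponential decay $\mcl(\dim_{\F_2}G[2]=j)\asymp2^{-j^2}$ that makes $\mu$ determined by its moments and the family of empirical measures tight; with moment convergence in hand this yields weak convergence of the empirical measures to $\mu$, and since $T\mapsto(m_\chi(T))_\chi$ embeds $\mathcal{M}_{n_1,n_2}$ as a discrete subset of a Euclidean space with no finite accumulation point, weak convergence forces convergence of each point mass.

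Concretely, the point mass $\mu(T)$ for $T=(\F_2^{j_0},V_0)$ is reconstructed level by level: the indicator of $\{\dim_{\F_2}(S(D)/\{1,D\})=j_0,\ \Im(\phi_{n_1,n_2,D})=V_0\}$ is written as a convergent alternating series of $\Z$-linear combinations of the functions $D\mapsto\prod_\chi m_\chi(D)^{k_\chi}$, by inverting over the subspace lattice of $\widehat{G}_{n_1}\times\widehat{\widetilde{G}}_{n_2}$ to pin down $\Im(\phi_{n_1,n_2,D})$ (the very M\"obius inversion already used in the proof of Proposition~\ref{heuristic comp. of multimoments}, cf.~\cite[Ch.~3]{MR2868112}) and by applying the Cohen--Lenstra rank-recovery identity of~\cite{MR1292115,MR2276261} to pin down $\dim_{\F_2}(S(D)/\{1,D\})$. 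Truncating the series at a level $L$ involves only the finitely many moments with $\bk\le L$; for each fixed $L$ one first sends $X\to\infty$, where the error term $(\log X)^{-\delta}$ of Theorem~\ref{m.t. on multi-moments} is harmless, obtaining the level-$L$ truncation of $\mu(T)$, and then sends $L\to\infty$, bounding the tail uniformly in $X$ by means of the a priori estimate $\sup_X(\#\{D\le X\})^{-1}\sum_{D\le X}(\#S(D))^{2L}\ll_L1$ together with the rapid decay of the Cohen--Lenstra weights. The congruence restriction $D\equiv a\md{q}$ needs no separate treatment, being already built into Theorem~\ref{m.t. on multi-moments}.

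The step I expect to be the main obstacle is precisely the interchange of the limits $X\to\infty$ and $L\to\infty$: the admissible range $\delta<2^{-\bk}$ in Theorem~\ref{m.t. on multi-moments} degrades as $\bk$ grows, so one cannot let $\bk\to\infty$ and $X\to\infty$ simultaneously, and the passage $L\to\infty$ must instead be controlled purely by the growth $2^{O(L^2)}$ of the limiting moments weighed against the decay of the reconstruction coefficients, exactly the estimate underpinning Heath-Brown's scalar argument. The one genuinely new feature is that the ``random variable'' is now a vector indexed by $\widehat{G}_{n_1}\times\widehat{\widetilde{G}}_{n_2}$ rather than a single integer, which alters only the combinatorics of the inversion (it ranges over $\mathcal{M}_{n_1,n_2}$ in place of $\Z_{\ge0}$) and not its convergence, so that Heath-Brown's reconstruction applies essentially verbatim.
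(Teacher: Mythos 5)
Your proposal is correct and takes essentially the same route as the paper: convert $(S(D)/\{1,D\},\Im\phi_{n_1,n_2,D})$ into the vector $(m_\chi(D))_\chi$, feed Theorem~\ref{m.t. on multi-moments} into the Heath-Brown moment method, and use the super-exponential decay of the Cohen--Lenstra weights to see that the limiting measure is determined by its $2^{\b{v}\cdot\b{k}}$-moments. The only organizational difference is in the final extraction step: where you propose truncating an explicit inversion series and justifying the interchange of $L\to\infty$ with $X\to\infty$, the paper sidesteps that interchange entirely by a compactness/diagonal-subsequence argument — every subsequential limit $(d'_{\b{w}})$ is shown (via dominated convergence, using the a priori bound $d_{\b{r}}(Y_n)\ll_{\b{h}}2^{-\b{r}\cdot\b{h}}$) to satisfy the same moment equations, and uniqueness is then obtained from the triangular system produced by $\widetilde F(\b{z})=\prod_\chi F(z_\chi)$ with $F(t)=\prod_{n\ge0}(1-t2^{-n})$; this avoids the tail-uniformity issue you correctly flag as the potential obstacle in your version.
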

This confirms the Prediction~\ref{prediction:distr. sp.div.}.

We are finally in place to explain why
Theorems~\ref{m.t. on multi-moments}
and~\ref{m.t. on distribution}
imply
Theorems~\ref{theorem:multi-moments of ray class seq}, \ref{theorem:distribution of delta maps}, \ref{theorem: joint distribution of 4-ranks}
and~\ref{theorem: joint distribution of 4-ranks, explicit version}.
Owing to the final diagram of the previous subsection, we have the following implications. 
Theorems~\ref{theorem:distribution of delta maps}, \ref{theorem: joint distribution of 4-ranks}
and~\ref{theorem: joint distribution of 4-ranks, explicit version}
follow immediately from Theorem~\ref{m.t. on distribution}
because the family of fields $K$
that 
are strongly of type $R$
has zero proportion.

To
deduce
Theorem~\ref{theorem:multi-moments of ray class seq}
from
Theorem~\ref{m.t. on multi-moments}
recall the definition of $E(X)$ given prior to~\eqref{eq:acacac}
and that 
$m_\chi(\delta_2(K))$ coincides with $m_\chi(-\disc(K))$ if $\disc(K) \notin E(X)$
and that it vanishes otherwise.
We thus obtain
\beq{eq:xixi}
{\sum_{-\disc(K)\leq X}\prod_{\chi \in \widehat{G}_{n_1} \times \widehat{\widetilde{G}}_{n_2} }m_{\chi}(\delta_2(K))^{k_{\chi}}
-
\sum_{D\leq X}\prod_{\chi \in \widehat{G}_{n_1} \times \widehat{\widetilde{G}}_{n_2} }m_{\chi}(D)^{k_{\chi}}
=-
\sum_{D\in E(X)}
\prod_{\chi \in \widehat{G}_{n_1} \times \widehat{\widetilde{G}}_{n_2} }m_{\chi}(D)^{k_{\chi}}
.}
Fixing any $\gamma \in (0,1/\phi(c))$  
we can pick a positive integer $p'$ which satisfies
$\gamma \phi(c)<1-1/p'<1$
and define $q'$ via
$1/p'+1/q'=1$.
Using 
H\"{o}lder's inequality
we see that the quantity in~\eqref{eq:xixi}
has modulus 
\begin{align*}
\sum_{D\in E(X)}
\prod_{\chi \in \widehat{G}_{n_1} \times \widehat{\widetilde{G}}_{n_2} }m_{\chi}(D)^{k_{\chi}}
&=
\sum_{D\leq X}\mathbf{1}_{E(X)}(D)
\Big(\prod_{\chi \in \widehat{G}_{n_1} \times \widehat{\widetilde{G}}_{n_2} }m_{\chi}(D)^{k_{\chi}}\Big)
\\&\leq 
\Big(\sum_{D\leq X}\mathbf{1}_{E(X)}(D)^{q'}\Big)^{1/q'}
\Big(\sum_{D\leq X}\prod_{\chi \in \widehat{G}_{n_1} \times \widehat{\widetilde{G}}_{n_2}}m_{\chi}(D)^{p'k_{\chi}}\Big)^{1/p'}
\\&=
E(X)^{1/q'}
\Big(\sum_{D\leq X}\prod_{\chi \in \widehat{G}_{n_1} \times \widehat{\widetilde{G}}_{n_2}}m_{\chi}(D)^{p'k_{\chi}}\Big)^{1/p'}
.
\end{align*}
Observe that 
the obvious
bound
$m_{\chi}(D) \leq \#S(D)$ 
shows that the second sum is 
\[\leq 
\sum_{D\leq X}
\#S(D)^{p'\bk}
\]
hence by~\cite[Th.9]{MR2276261}
it is $O_{p',\b{k}}(X)$.
Using~\eqref{eq:acacac}
we conclude that 
the quantity in~\eqref{eq:xixi}
is
\[\ll
\Big(\frac{X}{(\log X)^{1/\phi(c)}}\Big)^{1/q'} X^{1/p'} 
=\frac{X}{(\log X)^{1/(q'\phi(c))}}
\ll
\frac{X}{(\log X)^{\gamma}}
,\]
This concludes our argument that shows that  
Theorem~\ref{m.t. on multi-moments} 
implies
Theorem~\ref{theorem:multi-moments of ray class seq}.

\section{Main theorems on special divisors} \label{section: special divisors thm}
This section is devoted to the proof of Theorem~\ref{m.t. on multi-moments}. 
\subsection{Pre-indexing trick} \label{pre-indexing}
In the present subsection we reduce Theorem~\ref{m.t. on multi-moments}
into a statement that can be proved with the method of Fouvry and Kl\"{u}ners.
Recall the definition of the set of special divisors $S(D)$
given in the beginning of \S\ref{section:special divisors}.
For a  
character $\chi \in \widehat{G}_{n_1} \times \widehat{\widetilde{G}}_{n_2}$ we bring into play the sum
\begin{equation}
\label{def:art1080}
A_{\chi}(D):=\sum_{a'b'=D}
\chi(a')
\Big(
\sum_{c' \mid b'}
\Big(\frac{a'}{c'}\Big)
\Big)
\Big(
\sum_{d' \mid a'}
\Big(\frac{b'}{d'}\Big)
\Big)
\end{equation}
and let $A(D):=A_1(D)$. 
By definition~\eqref{def:twist}
we see that 
$m_{\chi}(D)$
is the cardinality of elements 
$a' \in S(D)$
such that $\chi(a')=1$.
Detecting the latter condition via  
$(1+\chi(a'))/2$
we obtain
$$m_{\chi}(D)=
2^{-\omega(D)}
\frac{(A(D)+A_\chi(D))}
{2}
.$$
Recalling Notation~\ref{def:sumk}  
we obtain 
\beq{eq:cough1234}{
\prod_{\chi \in \widehat{G}_{n_1} \times \widehat{\widetilde{G}}_{n_2} }
\hspace{-0,3cm}
m_{\chi}(D)^{k_{\chi}}
=
2^{-\bk  \omega(D)} 
\frac
{\prod_{\chi \in \widehat{G}_{n_1} \times \widehat{\widetilde{G}}_{n_2} }(A(D)+A_{\chi}(D))^{k_{\chi}}}
{2^{\bk}}
.}
Letting 
$|(i_\chi)|_1$ be the 
$\ell^1$-norm of the vector $(i_\chi)_\chi$
 we see that the 
right side equals
\[2^{-\bk\omega(D)}
\sum_{\substack{(i_\chi)_{\chi}\\0\leq i_\chi \leq k_\chi}}
\frac{\lambda_{(i_\chi)}}{2^{\bk}}
A(D)^{\bk-|(i_\chi)|_1}   \prod_{\chi \in \widehat{G}_{n_1} \times \widehat{\widetilde{G}}_{n_2}}
A_{\chi}(D)^{i_{\chi}}
\]
for some integers
$\lambda_{(i_\chi)}$.
To each vector $(i_\chi)$ we attach the space $$
Y_{(i_\chi)}
:=\langle \{\chi:i_\chi\neq 0\}\rangle\subseteq
\widehat{G}_{n_1} \times \widehat{\widetilde{G}}_{n_2}
$$
and 
recalling 
Definition~\ref{def:quiteimportant}
we see that for a fixed subspace $Y\subseteq \widehat{G}_{n_1} \times \widehat{\widetilde{G}}_{n_2}$
we have
\[
\sum_{\substack{(i_\chi):Y_{(i_\chi)}=Y \\ 0\leq i_\chi \leq k_\chi}}
\frac{\lambda_{(i_\chi)}}{2^{\bk}}
=\mathbb{P}_{(k_{\chi})}(Y)
.\]
Hence
Theorem~\ref{m.t. on multi-moments}
would follow from proving
that for any $\epsilon>0$,
any integers
$a,q,n_1,n_2$ 
satisfying~\eqref{eq:donaldtrump},
~\eqref{eq:donaldtrump2}
and
~\eqref{eq:donaldtrump3},
any $B \subseteq \widehat{G}_{n_1} \times \widehat{\widetilde{G}}_{n_2}-\{1\}$
and any choice of a function $i:B \to \mathbb{Z}_{>0}$ with
$i_{\chi} \leq k_{\chi}$, one has that
\begin{equation}
\begin{aligned}
\label{eq:propo7}
&\sum_{D\leq X} 2^{-\bk\omega(D)}A(D)^{\bk-\sum_{\chi \in B}i_{\chi}}  \prod_{\chi \in B}A_{\chi}(D)^{i_{\chi}} \\
=&2^{\bk}
\mathcal{N}_2(\bk-\dim(Y_{(i_\chi)}))
\Big(
\sum_{D\leq X}1
\Big)
+O(X(\log X)^{\epsilon-2^{-\bk}})
,\end{aligned}
\end{equation} 
where in both sums
$D$
varies among
positive
square-free  integers which are congruent to $a\md{q}$.
Here
$\mathcal{N}_2(h)$ denotes as usual the number of vector subspaces of $\mathbb{F}_2^{h}$.
To prove~\eqref{eq:propo7} 
we will use 
the approach in the proof of~\cite[Th.6]{MR2276261}.
In the present notation their result corresponds to the case 
$B=\emptyset$
in~\eqref{eq:propo7}.

\subsection{Indexing trick}\label{indexing}
We begin by performing the following change of variables in~\eqref{def:art1080}, 
\[
a'=D_{10}D_{11}, b'=D_{00}D_{01},c'=D_{00}, d'=D_{11}.
\]
Letting
$\Phi_1(\b{u},\b{v}):=(\b{u}_1+\b{v}_1)(\b{u}_1+\b{v}_2)$
and
$\Psi(\b{u}):=\b{u}_1$
we can thus conclude that 
$$A_{\chi}(D)=
\sum_{D=D_{10}D_{11}D_{00}D_{01}}
\prod_{(\b{u},\b{v}) \in (\mathbb{F}_2^2)^2}
\Big(
\frac{D_\b{u}}{D_\b{v}}
\Big)^{\Phi_1(\b{u},\b{v})}
\prod_{\b{u} \in \mathbb{F}_2^2}
\chi(D_\b{u})^{\Psi(\b{u})}
.$$
Next, if $\langle B \rangle$ is not the zero
subspace 
we
choose 
a basis $T\subset B$
of $\langle B \rangle$.
Now suppose
we choose in each factor of
$$A(D)^
{
\bk
-\sum_{\chi \in B}i_{\chi}}  
 \prod_{\chi \in B}A_{\chi}(D)^{i_{\chi}}
$$ 
a decomposition of $D$ as follows,
$$D=\prod_{\b{u}^{(1)} \in \mathbb{F}_2^2}D_{\b{u}^{(1)}}^{(1)}=\ldots=
\prod_{\b{u}^{(\bk)} \in \mathbb{F}_2^2}D_{\b{u}^{\bk}}^{(\bk)}
.$$
We change variables and write $D_{\b{u}^{(1)},\ldots,\b{u}^{(\bk)}}:=
\gcd(D_{\b{u}^{(1)}}^{(1)},\ldots,D_{\b{u}^{(\bk)}}^{(\bk)})$, where 
one can reconstruct the old variables with
the help of  
$$D_{\b{u}^{(\ell)}}^{(\ell)}
=
\prod_{\substack{1\leq n \leq \bk\\n\neq \ell}}
\prod_{\bu^{(n)} \in \F_2^2}
D_{\b{u}^{(1)},\ldots,\b{u}^{(\ell)},\ldots,\b{u}^{(\bk)}} 
$$
as in 
~\cite[Eq.(23)]{MR2276261}.
Thus we can write
$$A(D)^{\bk-\sum_{\chi \in B}i_{\chi}}\prod_{\chi \in B}
A_{\chi}(D)^{i_{\chi}}
=
\sum_{\prod_{\b{u} \in \mathbb{F}_2^{2\bk}}D_\b{u}=D} 
\Big(
\prod_{\b{u},\b{v} \in \mathbb{F}_2^{2\bk}}\Big(\frac{D_\b{u}}{D_\b{v}}\Big)^{\Phi_{\bk}(\b{u},\b{v})}
\Big)
\Big(\prod_{\b{u} \in \mathbb{F}_2^{2\bk}}\prod_{\chi \in T}\chi(D_\b{u})^{\Psi_{\chi}(\b{u})}
\Big)
,$$
where $$\Phi_{\bk}(\b{u},\b{v}):=\sum_{j=1}^{\bk}\Phi_1(\b{u}^{(j)},\b{v}^{(j)})$$ and 
$\Psi_{\chi}$ are linear maps from $\mathbb{F}_2^{2\bk}$ to $\mathbb{F}_2$, which we next describe. 
Decompose
$$\mathbb{F}_2^{2\bk}=\mathbb{F}_2^{2\bk-2\sum_{\chi \in B}i_{\chi}}\times \prod_{\chi \in B} \mathbb{F}_2^{2i_{\chi}}$$
and 
we denote
a vector in this space as $\b{u}:=(\b{u}_0,(\b{u}^{(\chi)})_{\chi \in B})$, where 
$
\b{u}^{(\chi)}
:=(\b{u}^{(\chi)}_1,\ldots,\b{u}^{(\chi)}_{i_{\chi}})$
and 
for every $j$ we have 
$\b{u}^{(\chi)}_j \in \mathbb{F}_2^2$.
Next, write
$$\Psi_{\chi}^{'}(\b{u})=\sum_{j=1}^{i_\chi}\Psi(\b{u}^{(\chi)}_j)
$$
and note that 
we have 
\beq{eq:milkespresso}{
\Psi_{\chi}(\b{u})=\sum_{\chi' \in B_{\chi}}\Psi_{\chi'}'(\b{u}),
}
where $B_{\chi}$ denotes the set of characters $\chi' \in B$, such that $\chi$ is used in writing $\chi'$ in the basis $T$.
In particular, this implies that  $\chi \in B_{\chi}$. 
The construction of 
$\Psi_{\chi}$ 
depends on $T$ and $(i_\chi)$, but we suppress this dependency to simplify the notation.

Let us observe that 
there are 
$\#T=
\dim(\langle B \rangle)$
many
linear
maps
$\Psi_{\chi}$ 
and that they are 
independent.
Indeed, given $\chi \in T$, all maps
$\Psi_{\chi'}$ with $\chi' \in T-\{\chi\}$ vanish on the vectors $\b{u}$ with $\b{u}^{({\widetilde{\chi}})}=\b{0}$ 
for each $\widetilde{\chi} \neq \chi$, while $\Psi_{\chi}$ evaluated in such $\b{u}$ equals
$\Psi_{\chi}'(\b{u}^{(\chi)})$,
which does not vanish identically. 

We can 
therefore 
rewrite the first sum over $D$ in~\eqref{eq:propo7} as 
\begin{equation}
\begin{aligned}
\label{eq:propoftft}
&\sum_{D\leq X} 2^{-\bk\omega(D)}A(D)^{\bk-\sum_{\chi \in B}i_{\chi}}  \prod_{\chi \in B}A_{\chi}(D)^{i_{\chi}} \\
= 
&\sum_{(D_\b{u})}
\Big(
\prod_{\b{u} \in \mathbb{F}_2^{2\bk}}
2^{-\bk\omega(D_\b{u})}
\Big)
\Big(
\prod_{\b{u},\b{v} \in \mathbb{F}_2^{2\bk}}
\Big(\frac{D_\b{u}}{D_\b{v}}\Big)^{\Phi_{\bk}(\b{u},\b{v})}
\Big)
\Big(
\prod_{\b{u} \in \mathbb{F}_2^{2\bk}}
\prod_{\chi \in T}\chi(D_\b{u})^{\Psi_{\chi}(\b{u})}
\Big)
,\end{aligned}
\end{equation}
where the second sum is over positive integers
$D_{\b{u}}$
such that 
$\prod_{\b{u} \in \mathbb{F}_2^{2\bk}}D_\b{u}$
varies among
positive
square-free integers which are congruent to $a\md{q}$
and at most $X$.

Our goal in \S\S\ref{ss:ff}-\ref{s:simplefcn} is to prove 
an asymptotic for the sum over $D_\b{u}$ in~\eqref{eq:propoftft} 
under the assumptions on the integers 
$a,q,n_1,n_2$
in Theorem~\ref{m.t. on multi-moments}. 
For
a real number $X>1$
we bring into play the 
following subset of $\N^{4^{\bk}}$,
\beq{def:DX}{
\hspace{-0,2cm}
\c{D}(X,{\bk};q,a)
\hspace{-0,1cm}
:=
\hspace{-0,1cm}
\left\{(D_\bu)_\bu\in \N^{4^{\bk}}
\hspace{-0,1cm},
\bu
\hspace{-0,1cm}=
\hspace{-0,1cm}
(\bu^{(1)},\ldots,\bu^{({\bk})})
\in (\F_2^2)^{\bk}
\hspace{-0,1cm}
:
\begin{array}{l}
\prod_{\bu}D_\bu \ \text{is square-free}, \\
\text{bounded by } X  \ \text{and} \\
\text{congruent to} \ a\md{q}
\end{array}
\right\}.
}
We are interested in asymptotically evaluating the succeeding average,
\[
S_{\boldsymbol{\chi}}(X,\bk;q,a)
\hspace{-0,1cm}
:=
\hspace{-0,5cm}
\sum_{\substack{(D_\bu) \in \c{D}(X,\bk;q,a)
}} 
\hspace{-0,5cm}
2^{-\bk\omega(D)}
\Bigg(\prod_{\b{u},\b{v} \in (\mathbb{F}_2^2)^{\bk}}\Big(\frac{D_\bu}{D_\bv}\Big)^{\Phi_{\bk}(\bu,\bv)}\Bigg)
\hspace{-0,1cm}
\Bigg(\prod_{\bu \in (\mathbb{F}_2^2)^{\bk}}\prod_{\chi \in T}\chi(D_\bu)^{\Psi_{\chi}(\bu)}\Bigg)
\]  
and in doing so
we shall not keep track of the dependence of the implied constants
on
$T,(i_\chi),\b{k},\boldsymbol{\chi},a,q,n_1,n_2$. 
The sum $S_{\boldsymbol{\chi}}$ also depends on $(i_\chi)$ and the choice of $T$ but we suppress this in the notation. 
The function $S_{\boldsymbol{\chi}}$ should be compared with~\cite[Eq.(26)]{MR2276261};
we will verify 
in \S\ref{ss:ff}
that 
the presence of the characters ${\boldsymbol{\chi}}$ 
does not 
affect the 
analysis of 
Fouvry--Kl\"uners~\cite{MR2276261}
in the error term
and we shall 
see in
\S\S\ref{s:themte}-\ref{s:simplefcn}
how their presence influences the main term. 
\subsection{The four families of sums of Fouvry and Kl\"{u}ners}
\label{ss:ff}
We begin by restricting the summation in $S_{\boldsymbol{\chi}}(X,\bk;q,a)$ to variables having a suitably small number of prime factors as in~\cite[\!\!\S 5.3]{MR2276261}.  
Letting
$\Omega:=2^{\bk+1}{\bk}^{-1}\log \log X$
we shall study the contribution, say $\Sigma_1$, 
towards $S_{\boldsymbol{\chi}}(X,\bk;q,a)$ of elements not fulfilling
\beq{def:ome}{
\omega(D_\bu)\leq \Omega, \text{ for all } \bu \in \F_2^{2\bk}
.}
Writing $m=\prod_\bu D_\bu$ and bounding each character by $1$
provides us with
\[\Sigma_1
\ll
\sum_{m\leq X}\frac{\mu(m)^2}{\tau(m)^{\bk}}
\sum_{\substack{m_1\cdots m_{4^{\bk}}=m\\\omega(m_1)>\Omega}}1
\leq
4^{-{\bk}\Omega}
\sum_{m\leq X}
\frac{\mu(m)^2}{\tau(m)^{\bk}}
\sum_{\substack{m_1\cdots m_{4^{\bk}}=m}}
4^{{\bk}\omega(m_1)}
.\]
Invoking~\cite[Eq.(1.82)]{iwko} to bound the sum over $m$ 
makes the following estimate available,
\beq{eq:sqrf}
{\Sigma_1
\ll
X
(\log X)^{-1-2^{{\bk}+1}\log(4/\mathrm{e})-2^{{\bk}}}
.}
We continue in the footsteps laid out in~\cite[\S 5.4]{MR2276261},
where four families of elements in $\N^{4^{\bk}}$ are shown to make a negligible 
contribution
towards 
a quantity that resembles $S_{\boldsymbol{\chi}}(X,\bk;q,a)$.
Using the trivial bound 
\beq{eq:trivbnd78}
{
\Bigg|\prod_{\bu \in (\mathbb{F}_2^2)^{\bk}}\prod_{\chi \in T}\chi(D_\bu)^{\Psi_{\chi}(\bu)}\Bigg|
\leq 1
}
allows us 
to 
adopt
in a straightforward manner
the arguments leading to~\cite[Eq.(34),(39)]{MR2276261}
and we proceed to briefly explain how. 
Let 
\beq{def:delta}
{
\Delta:=1+(\log X)^{-2^{\bk}}
}
and let $A_\bu$ denote numbers of the form $\Delta^m$ where $m\in \Z_{\geq 0}$.
For $\b{A}=(A_\bu)_{\bu \in (\F_2^2)^{\bk}}$ we let 
\[
S_{\boldsymbol{\chi}}(X,\bk;q,a;\b{A})
:=\hspace{-0,5cm}
\sum_{\substack{(D_\bu) \in \c{D}(X,\bk;q,a)\\
\forall \bu  (A_\bu \leq D_\bu < \Delta A_\bu)\\
\forall \bu  (\omega(D_\bu)\leq \Omega)
}} 
\hspace{-0,3cm}
2^{-{\bk}\omega(D)}
\hspace{-0,1cm}
\l(\prod_{\bu,\bv \in (\mathbb{F}_2^2)^{\bk}}
\l(\frac{D_\bu}{D_\bv}\r)^{\!\Phi_{\bk}(\bu,\bv)}
\hspace{-0,1cm}
\r)
\hspace{-0,1cm}
\prod_{\bu \in (\mathbb{F}_2^2)^{\bk}}
\prod_{\chi \in T}\chi(D_\bu)^{\Psi_{\chi}(\bu)}
\]
and note that, in light of~\eqref{eq:sqrf},
we can deduce 
as in~\cite[Eq.(32)]{MR2276261}
that 
\beq{eq:32}
{
S_{\boldsymbol{\chi}}(X,\bk;q,a)
=
\sum_{\substack{\b{A}:\prod_\bu A_\bu \leq X}}
\hspace{-0,3cm}
S_{\boldsymbol{\chi}}(X,\bk;q,a;\b{A})
+
O(X(\log X)^{-1})
.}
The contribution 
towards~\eqref{eq:32}
of the first family, defined through
\beq{def:33}{
\prod_{\bu}A_\bu\geq \Delta^{-4^{\bk}} X
,}
can be proved
to be 
$\ll X(\log X)^{-1}$
with a similar argument
as the one leading to~\cite[Eq.(34)]{MR2276261}.
We now
let 
\[ 
X^\ddag:=
\min\big\{\Delta^\ell\geq  \exp\big((\log X)^{\epsilon 2^{-{\bk}}}\big)
\big\}
.\]
The contribution 
towards~\eqref{eq:32}
of those $\b{A}$ fulfilling that
\beq{def:37}{
\text{at most } 2^{\bk}-1 \text{ of the } A_\bu \text{ are larger than } X^\ddag
}
can be shown to be 
$
\ll X(\log X)^{\epsilon-2^{-{\bk}}}
$
as in~\cite[Eq.(39)]{MR2276261}. 

We next pass to arguments related to 
cancellation due to oscillation of characters,
in this case~\eqref{eq:trivbnd78} is not enough.
The exponents $\Phi_k(\bu,\bv)$
will now play a r\^{o}le. Following Fouvry and Kl\"{u}ners 
we call two indices $\bu,\bv$ \textit{linked}
if $\Phi_{\bk}(\bu,\bv)+\Phi_{\bk}(\bv,\bu)=1$.
We next 
define \[
X^\dag:=(\log X)^{3[1+4^{\bk}(1+2^{\bk})]}
\]
and consider the contribution of $\b{A}$ with
\beq{def:37}{
\prod_\bu A_\bu <\Delta^{-4^{\bk}}X
\text{ and for two linked } \bu \text{ and } \bv \text{ we have }  \min\{A_\bu,A_\bv\} \geq X^\dag
.}
Fouvry and Kl\"{u}ners treat this case
by drawing upon the important
work of Heath-Brown~\cite{MR1347489}
in the
form stated in~\cite[Lem.12]{MR2276261}. 
Specifically for $\b{A}$ as in~\eqref{def:37}
we have 
\[|S_{\boldsymbol{\chi}}(X,\bk;q,a;\b{A})|
\leq\sum_{(D_\bw)_{\bw\notin \{\bu,\bv\}}}
\Big(
\prod_{\bw\notin \{\bu,\bv\}}
2^{-{\bk}\omega(D_\bw)}
\Big)
\hspace{-0,9cm}
\sum_{\substack{
a_1,a_2 \in (\Z\cap(0,q])^2\\
\\
a_1a_2
\prod_{\bw \notin \{\bu,\bv\}} 
D_\bw
\equiv a\md{q}
}}
\hspace{-0,5cm}
\Big|
M((D_\bw))
\Big|
,\]
where
\[
M((D_\bw)):=\sum_{D_\bu,D_\bv}
\l(\frac{D_\bu}{D_\bv}\r)
g(D_\bu,(D_\bw)_{\bw\notin \{\bu,\bv\}})
g(D_\bv,(D_\bw)_{\bw\notin \{\bu,\bv\}})
,\]
\[
g(D_\bu,(D_\bw)_{\bw\notin \{\bu,\bv\}})
:=
\frac{\b{1}_{a_1,q}(D_\bu)}{2^{{\bk}\omega(D_\bu)}}
\hspace{-0,3cm}
\prod_{\bw\notin \{\bu,\bv\}}
\hspace{-0,2cm}
\l(\frac{D_\bu}{D_\bw}\r)^{\!\Phi_{\bk}(\bu,\bw)}
\hspace{-0,3cm}
\prod_{\bw\notin \{\bu,\bv\}}
\hspace{-0,2cm}
\l(\frac{D_\bw}{D_\bu}\r)^{\!\Phi_{\bk}(\bw,\bu)}
\hspace{-0,2cm}
\prod_{\chi \in T}\chi(D_\bu)^{\Psi_{\chi}(\bu)}
,\]
$\b{1}_{\alpha,\beta}$ denotes the indicator function of the set $\{m\in \Z:m\equiv \alpha \md{\beta}\}$
and similarly for 
$g(D_\bv,(D_\bw)_{\bw\notin \{\bu,\bv\}})$.
Since $|g(D_\bu,(D_\bw)_{\bw\notin \{\bu,\bv\}})
|,|g(D_\bv,(D_\bw)_{\bw\notin \{\bu,\bv\}})
|\leq 1$
the argument in~\cite[p.476]{MR2276261}
that validates~\cite[Eq.(42)]{MR2276261}
can be 
adopted
in the obvious way 
to yield
\[
\sum_{\b{A} \text{ fulfils }\eqref{def:37}}
|S_{\boldsymbol{\chi}}(X,\bk;q,a;\b{A})|
\ll
X(\log X)^{-1}
.\]
Note that we have used~\cite[Lem.15]{MR2276261}
for sequences satisfying $|a_m|,|b_n|\leq 1$ rather than
$|a_m|,|b_n|< 1$, however using~\cite[Lem.15]{MR2276261}
for $a_m/2,b_n/2$ in place of $a_m,b_n$ proves a
version of~\cite[Lem.15]{MR2276261}
under the more general 
assumption  $|a_m|,|b_n|<2$
and with the same conclusion.

The fourth family consists of $\b{A}$ fulfilling
$\prod_\bu A_\bu <\Delta^{-4^{\bk}}X$,
any linked $\bu,\bv$
satisfy the inequality
$\min\{A_\bu,A_\bv\} < X^\dag$
and there exist linked $\bu,\bv$ 
with $2\leq A_\bv \text { and } A_\bu\geq X^\ddag$.
Their contribution towards
$S_{\boldsymbol{\chi}}(X,\bk;q,a;\b{A})$
 is
\beq{def:4case}
{
\ll \max_{\substack{\sigma\md{q}\\ \gcd(\sigma,q)=1}}
\sum_{\substack{(D_\bw)_{\bw \notin \{\bu,\bv\}}\\ A_\bw\leq D_\bw< \Delta A_\bw}}
\sum_{\substack{D_\bv\\ A_\bv\leq D_\bv< \Delta A_\bv}}
|M_\sigma|
,}
where $M_\sigma$ is defined through
\[
\sum_{\substack{
D_\bu \equiv \sigma \md{q}
\\
A_\bu\leq D_\bu< \Delta A_\bu
}}
2^{-{\bk}\omega(D_\bu)}
\l(\frac{D_\bu}{D_\bv}\r) 
\prod_{\chi \in T}\chi(D_\bu)^{\Psi_{\chi}(\bu)}
\hspace{-0,1cm}
=
\hspace{-0,1cm}
\l(
\prod_{\chi \in T}\chi(\sigma)^{\Psi_{\chi}(\bu)}
\r)
\hspace{-0,2cm}
\sum_{\substack{
D_\bu \equiv \sigma \md{q}\\
A_\bu\leq D_\bu< \Delta A_\bu}}
2^{-{\bk} \omega(D_\bu)}
\l(\frac{D_\bu}{D_\bv}\r) 
.\]
Letting 
$P^+(m)$ denote the largest prime factor of a positive integer $m>1$
and setting $P^+(1):=1, 
m:=D_\bu/P^+(D_\bu)$
we obtain
\[
M_\sigma
\ll
\sum_{\substack{m P^+(m)<\Delta A_\bu\\(m,q)=1
}}
\frac{\mu(m)^2}{2^{{\bk} \omega(m)}}
\Big|
\sum_{\substack{
m p \equiv \sigma \md{q}
}}
\hspace{-0,5cm}
\mu\big(p m \prod_{\bw\neq \bu}D_\bw\big)^2
\l(\frac{p}{D_\bv}\r) 
\Big| 
,\]
where 
the inner sum is over primes $p$ 
with
$\max\{A_\bu/m,P^+(m)\}\leq p< \Delta A_\bu/m$.
We may now use Dirichlet characters to modulus $q$
to detect the congruence condition on $p$.
We will
subsequently
be faced with $\phi(q)$ new sums over $p$,
each one of which can be bounded via~\cite[Lem.13]{MR2276261}.
This furnishes
\[
\sum_{\substack{
m p \equiv \sigma \md{q}
}}
\hspace{-0,5cm}
\mu\big(p m \prod_{\bw\neq \bu}D_\bw\big)^2
\l(\frac{p}{D_\bv}\r) 
\ll 
\frac{A_\bv^{1/2}A_\bu}{m}
(\log X)^{-N \epsilon 2^{-{\bk}+1}}+\Omega,\]
valid for each large enough positive $N$ that is independent of $\b{A}$ and $m$.
The term $\Omega$ accounts for the presence of the $\mu^2$-terms.
Indeed, by~\eqref{eq:sqrf}
the number of distinct 
prime divisors of $m$ and each $D_\bw$ is at most $\Omega$. 
A moment's thought now
reveals that 
once the last bound is
injected
into~\eqref{def:4case}
and $N$ is suitably increased in comparison to $\bk$,
the contribution of $\b{A}$ in the fourth case is 
$
\ll
X (\log X)^{-1} 
$,
as in~\cite[Eq.(47)]{MR2276261}.

Let us now
introduce the conditions
\begin{equation} \lab{eq:48}
 \left\{  \begin{array}{ll}
                  &\prod_{\bu \in (\F_2^2)^k} A_\bu <\Delta^{-4^{\bk}}X,  \\
&\text{at least } 2^{\bk} \text{ indices satisfy } A_\bu>X^\ddag,\\
&\text{two indices } \bu \text{ and } \bv \text{ with } A_\bu,A_\bv>X^\dag \text{ are always linked,}\\
&\text{if } A_\bu  \text{ and } A_\bv   \text{ with } A_\bv\leq A_\bu \text{ are linked, then either } \\
&A_\bv=1  \text{ or } (2\leq A_\bv<X^\dag \text{ and } A_\bv\leq A_\bu < X^\ddag).
                \end{array}\right.\end{equation}  
Increasing the value of $A$ in comparison to ${\bk}$
and
assorting all estimates so far yields
\beq{prop:2}
{
S_{\boldsymbol{\chi}}(X,{\bk};q,a)
=
\sum_{\substack{\b{A} \text{ satisfies }\eqref{eq:48}}}
\hspace{-0,3cm}
S_{\boldsymbol{\chi}}(X,\bk;q,a;\b{A})
+
O(X(\log X)^{\epsilon-2^{-{\bk}}})
,}
which is in analogy with~\cite[Prop.2]{MR2276261}.

\subsection{The main term}
\label{s:themte}
We can now obtain the following as in~\cite[Prop.3]{MR2276261},
\beq{prop:2}
{
S_{\boldsymbol{\chi}}(X,{\bk};q,a)
=
\sum_{\substack{\b{A} \text{ satisfies }\eqref{eq:50}}}
\hspace{-0,3cm}
S_{\boldsymbol{\chi}}(X,\bk;q,a;\b{A})
+
O(X(\log X)^{\epsilon-2^{-{\bk}}})
,}
where
\begin{equation} \lab{eq:50}
 \left\{  \begin{array}{ll}
&\c{U}:=\{\bu:A_\bu>X^\ddag\}\text{ is a maximal subset of unlinked indices,} \\ 
&\prod_{\bu \in (\F_2^2)^{\bk}} A_\bu \leq \Delta^{-4^{\bk}}X \text{ and } A_\bu=1  \text{ for } \bu \notin \c{U}.
                \end{array}\right.\end{equation}  
Similarly to~\cite[Eq.(50)]{MR2276261}
we will say that 
$\b{A}$ is
\textit{admissible} for $\c{U}$ if 
$A_\bu>X^\ddag \Leftrightarrow \bu \in \c{U}$,
$A_\bu=1 \Leftrightarrow \bu \notin \c{U}$
and $\prod_{\bu \in (\F_2^2)^{\bk}} A_\bu \leq \Delta^{-4^{\bk}}X$.
Assume that $\b{A}$ is admissible for $\c{U}$ 
and note that $\#\c{U}=2^{\bk}$.
By quadratic reciprocity we obtain that 
$S_{\boldsymbol{\chi}}(X,\bk;q,a;\b{A})$ equals 
\begin{alignat*}{3} 
&\sum_{\substack{(h_\bu) \in (\Z/4\Z)^{2^{\bk}},  \prod_{\bu \in \c{U}}h_\bu\equiv 3 \md{4}}}       
   &&\l(\prod_{\bu,\bv \in \c{U}}(-1)^{\Phi_{\bk}(\bu,\bv)\frac{h_\bu-1}{2}\frac{h_\bv-1}{2}}\r) &\times  \\
& \sum_{\substack{(g_\bu) \in (\Z/q\Z)^{2^{\bk}}, \prod_{\bu \in \c{U}}g_\bu\equiv a \md{q}\\\forall \bu \in \c{U}(h_\bu \equiv g_\bu \md{4})}}
  &&\l(\prod_{\bu \in \c{U}  }\prod_{\chi \in T}\chi(g_\bu)^{\Psi_{\chi}(\bu)}\r)        &\times \\
&\!
  \sum_{\substack{(D_\bu) \in \N^{2^{\bk}}, \forall \bu \ (\omega(D_\bu)\leq \Omega)\\ 
\forall \bu \ (D_\bu \equiv g_\bu \md{q},A_\bu \leq D_\bu < \Delta A_\bu)}} 
  &&\l(\prod_{\bu \in \c{U}} 2^{-{\bk}\omega(D_\bu)}\r)\mu^2\!\l(\prod_{\bu \in \c{U}} D_\bu\r).        & 
\end{alignat*} 
We can evaluate the sum over $D_\bu$ via the estimate,
\begin{equation}
\label{eq:bavohaa}
\sum_{\substack{m\in \N \cap [y,Y]\\ \omega(m)=\ell \\ m\equiv g \md{q}}} \mu(n_0 m)^2
= \frac{1}{\phi(q)}
\sum_{\substack{m\in \N \cap [y,Y]\\ \omega(m)=\ell \\ \gcd(m,q)=1}} \mu(n_0 m)^2
+O_A\l( \frac{(\ell+1)^A}{Y^{-1}(\log 2Y)^{A}}+ \frac{\omega(n_0)}{Y^{-1+\frac{1}{\ell}}}\r),
\end{equation}
valid for each
square-free integer $n_0$
that is coprime to $q$,
$A>0,Y\geq y \geq 1, \ell \in \Z_{\geq 0}$,
where the implied constant depends at most on $A$. 
This can be proved 
in a similar way as~\cite[Lem.19]{MR2276261}
by replacing the congruence condition 
to modulus $4$
on $p_\ell$ in~\cite[Eq.(53)]{MR2276261}
by one to modulus $q$.
Applying~\eqref{eq:bavohaa} repeatedly
as in~\cite[p.g.481-482]{MR2276261}
to estimate
the sums over $D_\bu$ 
leads us to 
\begin{align*}
&\sum_{\substack{(D_\bu) \in \N^{2^{\bk}}, \forall \bu  (\omega(D_\bu)\leq \Omega)\\ 
\forall \bu  (D_\bu \equiv g_\bu \md{q},A_\bu \leq D_\bu < \Delta A_\bu)}} 
\hspace{-0,8cm}
\l(\prod_{\bu \in \c{U}} 2^{-{\bk}\omega(D_\bu)}\r)\mu^2\!\l(\prod_{\bu \in \c{U}} D_\bu\r)
\\=
\phi(q)^{-2^{\bk}}
\hspace{-0,6cm}
&\sum_{\substack{(D_\bu) \in \N^{2^{\bk}}, \forall \bu  (\omega(D_\bu)\leq \Omega)\\ 
\forall \bu  (A_\bu \leq D_\bu < \Delta A_\bu)}} 
\hspace{-0,0cm}
\l(\prod_{\bu \in \c{U}} 2^{-{\bk}\omega(D_\bu)}\r)\mu^2\!\l(q\prod_{\bu \in \c{U}} D_\bu\r)
+O(X(\log X)^{-1-4^{{\bk}}(1+2^{\bk})})
.\end{align*}  
Using this we obtain
as in~\cite[Eq.(55)]{MR2276261}
that for any
fixed admissible $\c{U}$ we have 
\begin{align*}
\sum_{\substack{\b{A} \text{ admissible for }\c{U}}}
\hspace{-0,3cm}
&S_{\boldsymbol{\chi}}(X,{\bk};q,a;\b{A})
=2^{-{\bk}}\phi(q)^{-2^{\bk}}
\hspace{-0,7cm}
\sum_{\substack{(h_\bu) \in (\Z/4\Z)^{2^{\bk}}\\ \prod_{\bu \in \c{U}} h_\bu\equiv 3 \md{4}}}   
\hspace{-0,4cm}
\l(\prod_{\bu,\bv \in \c{U}}(-1)^{\Phi_{\bk}(\bu,\bv)\frac{h_\bu-1}{2}\frac{h_\bv-1}{2}}\r)  \times \\
&
\hspace{-0,5cm}
\sum_{\substack{(g_\bu) \in (\Z/q\Z)^{2^{\bk}}, \prod_{\bu \in \c{U}}g_\bu\equiv a \md{q}\\\forall \bu \in \c{U}(h_\bu \equiv g_\bu \md{4})}}
   \l(\prod_{\bu \in \c{U}  }\prod_{\chi \in T}\chi(g_\bu)^{\Psi_{\chi}(\bu)}\r)              \times \\
&
\hspace{-0,5cm}
\ \ \ \ \
\sum_{\substack{(D_\bu) \in \N^{2^{\bk}}, \forall \bu \ (\omega(D_\bu)\leq \Omega)\\ 
\forall \bu \ (A_\bu \leq D_\bu < \Delta A_\bu) 
}} 
\hspace{-0,2cm}
\l(\prod_{\bu \in \c{U}} 2^{-{\bk}\omega(D_\bu)}\r)\mu^2\!\l(\mathrm{rad}(q)\prod_{\bu \in \c{U}} D_\bu\r)
+O\l(\frac{X}{\log X}\r),
\end{align*}
where the radical 
$\mathrm{rad}(m)$ stands for the product of the distinct prime divisors of an integer $m>1$.
We can now see that the condition $\omega(D_\bu)\leq \Omega$ can be ignored 
at the cost of an error term of size $\ll X(\log X)^{-1}$ as 
in the beginning of \S\ref{ss:ff}.
We can furthermore show 
as in~\cite[p.g.482]{MR2276261}
that 
\[\sum_{\substack{(D_\bu) \in \N^{2^{\bk}}\\ 
\forall \bu (A_\bu \leq D_\bu < \Delta A_\bu)}} 
\hspace{-0,4cm}
\l(\prod_{\bu \in \c{U}} 2^{-{\bk}\omega(D_\bu)}\r)
\hspace{-0,1cm}
\mu^2\!\l(\mathrm{rad}(q)\prod_{\bu \in \c{U}} D_\bu\r)
\hspace{-0,1cm}
=
\hspace{-0,1cm}
\sum_{m\leq X} \mu(\mathrm{rad}(q)m)^2+O
\hspace{-0,1cm}
\l(X (\log X)^{\epsilon-2^{-{\bk}}}\r)
.\]
It is easily proved via M\"{o}bius inversion
that for fixed $a,q>0$ with 
$\gcd(a,q)=1$
we have
\[
\sum_{m\leq X} \mu(\mathrm{rad}(q)m)^2
=
\frac{\phi(q)}{q} 
\Big(\prod_{p\nmid q}(1-p^{-2})\Big)
X
+O\l(\sqrt{X}\r)
\] 
and
\[ 
\sum_{\substack{m\leq X\\m\equiv a \md{q}}} \mu(m)^2
=
\frac{1}{q} 
\Big(\prod_{p\nmid q}(1-p^{-2})\Big)
X
+O\l(\sqrt{X}\r)
.\] 
Combining these yields
\[
\sum_{m\leq X} \mu(\mathrm{rad}(q)m)^2
=
\phi(q)
\sum_{\substack{m\leq X\\m\equiv a \md{q}}} \mu(m)^2
+O\l(\sqrt{X}\r)
.\] 
We thus obtain the following
for every maximal unlinked subset $\c{U}$, 
\[
\sum_{\substack{\b{A} \text{ admissible for }\c{U}}}
\hspace{-0,4cm}
S_{\boldsymbol{\chi}}(X,\bk;q,a;\b{A})
=
\frac{\gamma_{\boldsymbol{\psi}}(\c{U})}{2^{\bk}\phi(q)^{2^{\bk}-1}}
\Bigg(
\hspace{-0,1cm}
\sum_{\substack{m\leq X\\m\equiv a \md{q}}} \mu(m)^2
\Bigg)
+
O\hspace{-0,1cm}
\l(X (\log X)^{\epsilon-2^{-{\bk}}}\r)
,\]
where 
\[
\gamma_{\boldsymbol{\psi}}(\c{U})
:=
\hspace{-0,7cm}
\sum_{\substack{(h_\bu) \in (\Z/4\Z)^{2^{\bk}}  \\ \prod_{\bu \in \c{U}}h_\bu\equiv 3 \md{4}}}
\hspace{-0,4cm}
\l(\prod_{\bu,\bv \in \c{U}}(-1)^{\Phi_{\bk}(\bu,\bv)\frac{h_\bu-1}{2}\frac{h_\bv-1}{2}}\r)
\hspace{-0,4cm}
\sum_{\substack{(g_\bu) \in (\Z/q\Z)^{2^{\bk}}   \\ \prod_{\bu \in \c{U}}g_\bu\equiv a \md{q}
\\
\forall \bu \in \c{U}(h_\bu \equiv g_\bu \md{4})
}}
\hspace{-0,4cm}
\l(
\prod_{\bu\in \c{U}}
\prod_{\chi \in T}\chi(g_\bu)^{\Psi_{\chi}(\bu)}\r)
.\]
We can now infer via~\eqref{prop:2} 
that the last equation
proves 
\begin{equation}
\label{eq:there}
\frac{S_{\boldsymbol{\chi}}(X,\bk;q,a)}
{\#\big\{m \in [1,X]: q\mid m-a, \mu(m)^2=1\big\}}
= 
\l(
\sum_{\c{U}}\gamma_{\boldsymbol{\psi}}(\c{U})
\r)
\frac{\phi(q)^{1-2^{\bk}}}{2^{\bk}}
+
O((\log X)^{\epsilon-2^{-{\bk}}})
,\end{equation}
where $\c{U}$ ranges over
maximal
unlinked 
subsets of $\mathbb{F}_2^{2\bk}$.

\subsection{Simplifying $\b{\gamma_\psi(\c{U})}$}
\label{s:simplefcn}
Introduce the following
Dirichlet character $\md{n_1n_2}$,
\[\rho_\bu
:=
\prod_{\chi \in T}
\chi^{\Psi_{\chi}(\bu)}
.\] 

We will call a maximal set of unlinked indices $\c{U}$ 
\textit{stable}
if 
\[
\forall \chi \in T,
\forall \bu \in \c{U}
({\Psi_{\chi}(\bu)}=0)
\text{ or }
\forall \chi \in T,
\forall \bu \in \c{U}
({\Psi_{\chi}(\bu)}=1)
.\]
Let us now prove that  
$$ \sum_{\substack{(g_\bu) \in (\Z/q\Z)^{2^{\bk}}   \\ \prod_{\bu \in \c{U}}g_\bu\equiv a \md{q}
\\
\forall \bu \in \c{U}(h_\bu \equiv g_\bu \md{4})
}}
\hspace{-0,4cm}
\prod_{\bu\in \c{U}} \rho_\bu(g_\bu)
=
\mathbf{1}_{\mathcal{U}\text{ stable}}(\c{U})
\l(\frac{\phi(q)}{2}\r)^{2^{\bk}-1} 
.$$
Write $q=2^b n_0m$,
where
$b:=\nu_2(q)$,
$\gcd(n_0,n_1n_2)=1$
and $n_0$ has radical equal to $n_1n_2$. 
Define 
\[
U_1(n_0):=\{u \in \mathbb{Z}/n_0\mathbb{Z}:  u \equiv 1 \md{n_1n_2}\}
\text{ and }
U_1(2^{b}):=\{u \in \mathbb{Z}/2^{b}\mathbb{Z}:u \equiv 1 \md{4}\}.\]
Recalling the identification of groups
$(\mathbb{Z}/q\mathbb{Z})^{*}=U_1(2^b) \times (\mathbb{Z}/4\mathbb{Z})^{*} \times U_1(n_0) \times (\mathbb{Z}/n_1n_2\mathbb{Z})^{*}$,
we see that 
\[
\sum_{\substack{(g_\bu) \in (\Z/q\Z)^{2^{\bk}}   \\ \prod_{\bu \in \c{U}}g_\bu\equiv a \md{q}
\\
\forall \bu \in \c{U}(h_\bu \equiv g_\bu \md{4})
}}
\prod_{\bu\in \c{U}}\rho_\bu(g_\bu)
=
(\#U_1(2^b)\#U_1(n_0)\phi(m))^{2^{\bk}-1}
\hspace{-0,5cm}
\sum_{\substack{(m_\bu) \in (\Z/{n_1n_2}\Z)^{2^{\bk}}   \\ \prod_{\bu \in \c{U}}m_\bu\equiv a \md{n_1n_2}
}}
\prod_{\bu\in \c{U}}\rho_\bu(m_\bu)
.\] 
Note that we have $\prod_{\b{u} \in \c{U}}\rho_{\b{u}_{0}}(m_\b{u})=
\rho_{\bu_0}(a)=1$ owing to~\eqref{eq:donaldtrump}-\eqref{eq:donaldtrump3}. 
Therefore, fixing $\b{u}_0 \in \c{U}$, 
we have the following equality for any choice of $m_{\b{u}}$ in the above sum 
$$\prod_{\bu \in \c{U}}\rho_\bu(m_\bu)=\rho_{\b{u_0}}(\b{u}_0)\prod_{\bu \in \c{U}-\{\b{u}_0\}}\rho_{\b{u}}(\b{u})= \prod_{\bu \in \c{U}-\{\b{u}_0\}}
\Big(\frac{\rho_{\b{u}}(m_{\bu})}{\rho_{\b{u}_0}(m_{\b{u}})}\Big).
$$
Therefore 
$$\sum_{\substack{(m_\bu) \in (\Z/{n_1n_2}\Z)^{2^{\bk}}   \\ \prod_{\bu \in \c{U}}m_\bu\equiv a \md{n_1n_2}
}}
\prod_{\bu\in \c{U}}\rho_\bu(m_\bu)=\sum_{\substack{(m_\bu) \in ((\mathbb{Z}/{n_1n_2}\mathbb{Z})^{*})^{2^{\bk}-1}  
}}
\prod_{\bu\in \c{U}-\{\b{u}_0\}}\frac{\rho_{\b{u}}(m_{\bu})}{\rho_{\b{u}_0}(m_{\b{u}})}.
$$
But the last clearly splits as
$$\prod_{\bu\in \c{U}-\{\b{u}_0\}}\Big(\sum_{\substack{(m_\bu) \in (\mathbb{Z}/{n_1n_2}\mathbb{Z})^{*}
}}\frac{\rho_{\b{u}}(m_{\bu})}{\rho_{\b{u}_0}(m_{\b{u}})} \Big)=\prod_{\bu\in \c{U}-\{\b{u}_0\}}\Big(\sum_{\substack{(m_\bu) \in (\mathbb{Z}/{n_1n_2}\mathbb{Z})^{*}
}}\prod_{\chi \in T}\chi^{\psi_{\chi}(\b{u})-\psi_{\chi}(\b{u}_{0})}(m_{\bu}) \Big).
$$
Using that the set of $\chi$ in $T$ consists of a set of linearly independent characters, we obtain that each factor of the last product
vanishes if and only if $\psi_{\chi}$ is not constant on $\c{U}$, i.e. if and only if $\c{U}$ is not stable. In the stable case its value is $\phi(n_1n_2)^{2^{|\b{k}|_{1}-1}}$. Therefore we have proved that
\begin{align*}
\sum_{\substack{
(g_\bu) \in (\Z/q\Z)^{2^{\bk}}   
\\
\prod_{\bu \in \c{U}}g_\bu\equiv a \md{q}
\\
\forall \bu \in \c{U}(h_\bu \equiv g_\bu \md{4})
}}
\hspace{-0,3cm}
\prod_{\bu\in \c{U}}\rho_\bu(g_\bu)
&=
(\#U_1(2^b)\#U_1(n_0)\phi(m)\phi(n_1n_2))^{2^{\bk}-1}
\mathbf{1}_{\mathcal{U}\text{ stable}}(\c{U})\\
&=
\Big
(\frac{\phi(q)}{2}\Big)^{2^{\bk}-1}
\mathbf{1}_{\mathcal{U}\text{ stable}}(\c{U}),
\end{align*}
from which we deduce that  
\[
\sum_{\c{U}}
\gamma_{\boldsymbol{\psi}}(\c{U})
=
\l(\frac{\phi(q)}{2}\r)^{2^{\bk}-1}
\sum_{\substack
{
\c{U} \text{stable}
}}
\sum_{\substack{(h_\bu)_{\bu\in \c{U}} \in (\Z/4\Z)^{2^{\bk}}  \\ 
\prod_{\bu \in \c{U}}h_\bu\equiv 3 \md{4}
}}
\l(\prod_{\bu,\bv \in \c{U}}(-1)^{\Phi_{\bk}(\bu,\bv)\frac{h_\bu-1}{2}\frac{h_\bv-1}{2}}\r)
,\] 
where the pairs $\bu,\bv$ are unordered.
The inner sum is identical to the one appearing in the work of 
Fouvry and Kl\"{u}ners,
however the outer sum does not appear in their work. Define 
$$\gamma(\mathcal{U}):=\sum_{\substack{(h_\bu)_{\bu\in \c{U}} \in (\Z/4\Z)^{2^{\bk}}  \\ 
\prod_{\bu \in \c{U}}h_\bu\equiv 3 \md{4}
}}
\l(\prod_{\bu,\bv \in \c{U}}(-1)^{\Phi_{\bk}(\bu,\bv)\frac{h_\bu-1}{2}\frac{h_\bv-1}{2}}\r).
$$
We are left with proving
\beq{eq:leftwith}
{\sum_{\mathcal{U} \ \text{stable}}\gamma(\mathcal{U})=2^{2^{\bk}+\bk-1} \mathcal{N}_2(\bk-\#T)
}
and this will be our aim in \S\ref{s:combina}.
\subsection{Combinatorics} 
\label{s:combina} 
From ~\cite[Lem.18]{MR2276261} we know that the
maximal unlinked sets of indices $\mathcal{U}$ 
consist precisely of cosets of $\bk$-dimensional subspaces of $\mathbb{F}_2^{2\bk}$. 
Therefore stable $\mathcal{U}$ are 
cosets of $\bk$-dimensional subspace of $\mathbb{F}_2^{2\bk}$, where all the $\Psi_{\chi}$ vanish. 

Next, introduce the bilinear form on $\mathbb{F}_2^{2\bk}$ via
$$L(\b{u},\b{v}):=\sum_{j=0}^{\bk}\bu_{2j+1}(\bv_{2j+1}+\bv_{2j+2})
.$$
Using the the terminology from~\cite{MR2276261}, we say that a $\bk$-dimensional subspace, $\mathcal{U}_{0}$, of $\mathbb{F}_2^{2\bk}$ is \emph{good} if
$$L_{|_{\mathcal{U}_{0} \times \mathcal{U}_{0}}} \equiv \ 0.
$$
Recall that
the upshot of~\cite[Lem.22-25]{MR2276261}
is that 
$\gamma$ vanishes on all cosets of non-good subspaces, meanwhile the 
total
contribution from the set of cosets of a fixed
good subspace is $2^{2^{\bk}+\bk-1}$. This provides us with
$$\sum_{\mathcal{U} \ \text{stable}} \gamma(\mathcal{U})=2^{2^{\bk}+\bk-1}
\#\{\mathcal{U}_{0} \ \text{good}:\Psi_{\chi}(\mathcal{U}_0)=0 \ \text{for each} \ \chi \in T \}
.$$
Now, following the proof of~\cite[Lem.26]{MR2276261}, if $\{e_1,\cdots,e_{2\bk}\}$ denotes the standard basis of $\mathbb{F}_2^{2\bk}$, choose a new basis via
$$\{b_1,\cdots,b_{2\bk}\}=\{e_1+e_2,e_2,\cdots,e_{2j-1}+e_{2j},e_{2j},\cdots,e_{2\bk-1}+e_{2\bk},e_{2\bk}\}.$$
Then, with respect to the new basis, $L$ assumes the form
$$L(\b{x},\b{y})=\sum_{j=0}^{j-1}\b{x}_{2j+1}\b{y}_{2j+2}
.$$
In the proof of part (i) of ~\cite[Lem.25]{MR2276261} it is verified that, if $X$ consists of the subspace generated by 
$\{b_i:i \text{ odd}\}$ 
and $Y$ 
consists of the subspace generated by
$\{b_i:i \text{ even}\}$, 
the map sending 
$\mathcal{U}_{0} \mapsto \pi_{X}(\mathcal{U}_{0})$
where $\pi_X$ is the projection map 
$\mathbb{F}_2^{2\bk}=X \oplus Y \to X$
gives a bijection between good subspaces of $\mathbb{F}_2^{2\bk}$ and vector subspaces of $\mathbb{F}_2^{\bk}$. On the other hand,
 we are counting only good subspaces where
$\Psi_{\chi}$ vanishes for each $\chi \in T$. Observe that 
owing to~\eqref{eq:milkespresso} we have that 
 $\Psi_{\chi}$ are all constantly $0$ on $Y$,
hence they define $\#T$ linearly independent linear functions from $X$ to $\mathbb{F}_2$
which we will denote by the same letters. Therefore $\mathcal{U}_0 \to \pi_{X}(\mathcal{U}_0)$
provides
a bijection between good subspaces where all $\Psi_{\chi}$ vanish
and subspaces of $X$ where all  
$\Psi_{\chi}$ vanish. 
Given that
$\Psi_{\chi}:X\to \F_2$ are independent
we find that 
the
cardinality of such subspaces
is precisely $\mathcal{N}_2(\bk-\#T)$. 
This substantiates~\eqref{eq:leftwith},
which concludes the proof of Theorem~\ref{m.t. on multi-moments}.
\section{From the mixed moments to the distribution}\label{from m.m. to distr.}
This section is devoted to deduce Theorem~\ref{m.t. on distribution} from Theorem~\ref{m.t. on multi-moments}. We will follow an adaptation of a method used by Heath-Brown in \cite{MR1292115}. \\
As explained in \S\ref{section:special divisors}, 
Theorem ~\ref{m.t. on distribution} can be equivalently rephrased as a theorem about the distribution of the vector
$$D \mapsto (m_{\chi}(D))_{
\widehat{G}_{n_1} \times \widetilde{\widehat{G}}_{n_2}}.
$$
Namely consider for any positive integer $j$ and subspace $Y \subseteq \widehat{G}_{n_1} \times \widetilde{\widehat{G}}_{n_2}$, the vector 
$$\b{v}^{(j,Y)} \in \mathbb{Z}_{\geq 0}^{\widehat{G}_{n_1} \times \widetilde{\widehat{G}}_{n_2}}
,$$
defined as $\b{v}^{(j,Y)}_{\chi}=j$ if $\chi \in Y$ and $\b{v}^{(j,Y)}_{\chi}=j-1$ if $\chi \not \in Y$. Assign to $\b{v}^{(j,Y)}$ \emph{mass} 
$$\mu(\b{v}^{(j,Y)})=\mcl(A \in \mathcal{G}_2:\#A[2]=2^{j-1}) 
\frac{\# \epi(\mathbb{F}_2^{j-1},Y)}{\# \Hom(\mathbb{F}_2^{j-1},\widehat{G}_{n_1} \times \widetilde{\widehat{G}}_{n_2})}.
$$
On the other hand, assign to all other vectors $\b{v} \in \mathbb{Z}_{\geq 0}^{\widehat{G}_{n_1} \times \widetilde{\widehat{G}}_{n_2}}
$ mass equal to $0$. In 
Proposition~\ref{heuristic comp. of multimoments}
it
is shown that this equips
$ \mathbb{Z}_{\geq 0}^{\widehat{G}_{n_1} \times \widetilde{\widehat{G}}_{n_2}}$ 
with a probability
measure
satisfying the following \emph{moment equations}:
$$ \sum_{\b{v} \in \mathbb{Z}_{\geq 0}^{\widehat{G}_{n_1} \times \widetilde{\widehat{G}}_{n_2}} }2^{\b{v} \cdot \b{k}}\mu(\b{v})=C_{\b{k}},
$$
where 
for any $\b{k} \in  \mathbb{Z}_{\geq 0}^{\widehat{G}_{n_1} \times \widetilde{\widehat{G}}_{n_2}}$ 
we define 
$$C_{\b{k}}:=2^{\bk} 
\sum_{Y \subseteq \widehat{G}_{n_1} \times \widetilde{\widehat{G}}_{n_2} }\mathbb{P}_{(\b{k})}(Y)\mathcal{N}_{2}(\bk-\dim(Y))
$$
and where 
$\b{v} \cdot \b{k}$ denotes the inner product.

We begin the proof of
Theorem~\ref{m.t. on distribution}
by showing that 
the distribution $\mu$ is characterized by the moment equations
given above.
Indeed we show more, namely assume $x$ is a map $ \mathbb{Z}_{\geq 0}^{\widehat{G}_{n_1} \times \widetilde{\widehat{G}}_{n_2}} \to [0,1]$
satisfying for any $\b{k} \in \mathbb{Z}_{\geq 0}^{\widehat{G}_{n_1} \times \widetilde{\widehat{G}}_{n_2}}$ the moment relations
\beq{eq:letsfinishthis}
{\sum_{\b{v} \in \mathbb{Z}_{\geq 0}^{\widehat{G}_{n_1} \times \widetilde{\widehat{G}}_{n_2}}} 2^{\b{v} \cdot \b{k}}x(\b{v})=C_{\b{k}}.}
Observe that one has the trivial bound
$C_{\b{k}} \ll 2^{\bk}
\mathcal{N}_2(\bk)$,
which leads to
$C_{\b{k}} \ll 2^{\frac{\bk^2+4\bk}{4}}$.
Letting
$F(t):=\prod_{n=0}^{\infty}(1-t2^{-n})$,
we therefore see that 
for any 
$\b{k}  \in \mathbb{Z}_{\geq 0}^{\widehat{G}_{n_1} \times \widetilde{\widehat{G}}_{n_2}}$, 
the following
series
is absolutely convergent, 
\beq{eq:letsfinishthis2}
{
\sum_{\b{n} \in \mathbb{Z}_{\geq 0}^{\widehat{G}_{n_1} \times \widetilde{\widehat{G}}_{n_2}}} a_{\b{n}}
C_{\b{n}} 
2^{-\b{n}\cdot \b{k}}
,}
where $a_{\b{n}}$ is the $\b{n}$-coefficient of the Taylor expansion of 
$$\widetilde{F}(\b{z}):=\prod_{\chi \in \widehat{G}_{n_1} \times \widetilde{\widehat{G}}_{n_2}}F(z_{\chi})
.$$
Injecting~\eqref{eq:letsfinishthis} into~\eqref{eq:letsfinishthis2},
expanding in terms of $x$ 
and exchanging the order of summation, we obtain
$$\sum_{\b{n} \in \mathbb{Z}_{\geq 0}^{\widehat{G}_{n_1} \times \widetilde{\widehat{G}}_{n_2}}} a_{\b{n}}
C_{\b{n}} 
2^{-\b{n}\cdot \b{k}}=\sum_{\b{m} \in \mathbb{Z}_{\geq 0}^{\widehat{G}_{n_1} \times \widetilde{\widehat{G}}_{n_2}} }
\widetilde{F}((2^{{\b{m}_\chi}-{\b{k}_{\chi}}}))
x(\b{m}).
$$
If 
for all $\chi$ we have 
$\b{m}_\chi < \b{k}_\chi$ 
then $\widetilde{F}((2^{{\b{m}_\chi}-{\b{k}_{\chi}}}))\neq 0$,
otherwise we 
have 
$\widetilde{F}((2^{{\b{m}_\chi}-{\b{k}_{\chi}}}))=0$.
Therefore,
the right side is a finite sum supported in the region $ \b{m}_{\chi} < \b{k}_{\chi}$ for every 
$\chi$. 
Hence,
using the triangular system of relations above 
one can successively reconstruct the function
$x(\b{m})$ from the moments $C_{\b{k}}$. Therefore,
we necessarily have
$x(\b{m})=\mu(\b{m})$ 
described above.

Let 
$a,q$ be integers
as in
Theorem~\ref{m.t. on distribution}
and 
for any $\b{j} \in \mathbb{Z}_{\geq 0}^{\widehat{G}_{n_1} \times \widetilde{\widehat{G}}_{n_2}}$ and $X \in \mathbb{R}_{\geq 1}$, 
define the quantity
$d_{\b{j}}(X)$ as the proportion of  
all
positive
square-free
integers 
$D\leq X$
satisfying $D\equiv a\md{q}$
and
$\b{m}_\chi(D)=2^{\b{j}_\chi}$ for all $\chi$.
Therefore, 
Theorem~\ref{m.t. on multi-moments}
shows that 
for any $\b{k} \in \mathbb{Z}_{\geq 0}^{\widehat{G}_{n_1} \times \widetilde{\widehat{G}}_{n_2}}$ we have
$ \sum_{\b{r}}d_{\b{r}}(X)2^{\b{r}\cdot \b{k}}=C_{\b{k}}
+o(1),
\text{ as } X\to+\infty
$,
where the sum is taken
over $\b{r} \in \mathbb{Z}_{\geq 0}^{\widehat{G}_{n_1} \times \widetilde{\widehat{G}}_{n_2}}$. 
The argument concludes as follows:  fix any vector $\b{v} \in \mathbb{Z}_{\geq 0}^{\widehat{G}_{n_1} \times \widetilde{\widehat{G}}_{n_2}}$; by compactness of the interval $[0,1]$ and a standard diagonal argument, 
one can choose a sequence $\{Y_n\}_{n \in \N}$ tending to infinity, such that $d_{\b{v}}(Y_n)$ converges to any of the limit points of 
$\{d_{\b{v}}(X):X\in \mathbb{R}_{\geq 1}\}$, 
call it $d'_{\b{v}}$, while 
for every other $\b{w}$ the sequence
$d_{\b{w}}(Y_n)$ is also converging to some limit point $d'_{\b{w}}$. 
Next, we fix $\b{h} \in \mathbb{Z}_{\geq 0}^{\widehat{G}_{n_1} \times \widetilde{\widehat{G}}_{n_2}}$, and we use the
previous
moment relation 
for $\b{k}=2\b{h}$, trivially bounding each terms with the total sum,
providing
$ d_{\b{r}}(Y_n) \ll_{\b{h}} 2^{-\b{r} \cdot \b{h}}$.
This enables us to
apply the dominated convergence theorem to
exchange the sum and the limit
in the expression of the $\b{h}$-th moment, from which
we deduce that $d'_{\b{w}}$ satisfies the following
moment equations as well:
$$\sum_{\b{w} \in \mathbb{Z}_{\geq 0}^{\widehat{G}_{n_1} \times \widetilde{\widehat{G}}_{n_2}} }2^{\b{w} \cdot \b{h}}d'_{\b{w}}=C_{\b{h}}.
$$  
We must therefore have 
$d'_{\b{w}}=\mu(\b{w})$
for all $\b{w} \in \mathbb{Z}_{\geq 0}^{\widehat{G}_{n_1} \times \widetilde{\widehat{G}}_{n_2}}$. 
Note that
$d'_{\b{v}}$ was an arbitrary limit point of $d_{\b{v}}(X)$, hence
we deduce that
$$ \lim_{X \to \infty} d_{\b{v}}(X)=\mu(\b{v}).
$$
Since $\b{v}$ was chosen arbitrarily in $\mathbb{Z}_{\geq 0}^{\widehat{G}_{n_1} \times \widetilde{\widehat{G}}_{n_2}}$ we have 
thus
shown that 
Theorem ~\ref{m.t. on distribution} holds, thereby
concluding the proof of Theorem~\ref{m.t. on distribution}.


\begin{thebibliography}{99}

\bibitem{arXiv:1611.05595}
B. Alberts and J. Klys,
The distribution of {$H_8$}-extensions of quadratic fields.
\texttt{arXiv:1611.05595}, (2017).

\bibitem{MR3090184}
M. Bhargava, A. Shankar and J. Tsimerman,
On the {D}avenport-{H}eilbronn theorems and second order
              terms.
{\em Invent. Math.} {\bf 193} (2013), 439--499.

\bibitem{MR3369305}
M. Bhargava and I. Varma,
On the mean number of 2-torsion elements in the class groups,
              narrow class groups, and ideal groups of cubic orders and
              fields.
{\em Duke Math. J.} {\bf 164} (2015), 1911--1933.

\bibitem{MR756082}
H. Cohen and H. W. Lenstra,
Heuristics on class groups of number fields. 
{\em  Number theory, {N}oordwijkerhout 1983}, 
Lecture Notes in Math.,
Springer, Berlin, (1984),
33--62.
 

\bibitem{arXiv:1702.00092}  
D. Dummit and J. Voight (with appendix of R. Foote), 
The {$2$}-Selmer group of a number field and heuristics for narrow class groups and signature ranks of units.
\texttt{arXiv:1702.00092}, (2017).

\bibitem{MR3488737}
J. Ellenberg, A. Venkatesh and C. Westerland,
Homological stability for {H}urwitz spaces and the
              {C}ohen-{L}enstra conjecture over function fields.
{\em Ann. of Math.} {\bf 183} (2016), 729--786.


\bibitem{MR2282914}
\'E. Fouvry and J. Kl\"uners, 
Cohen-{L}enstra heuristics of quadratic number fields.
{\em  Algorithmic number theory}, 
Lecture Notes in Comput. Sci.,
Springer, Berlin, (2006),
40--55.

\bibitem{MR2276261}
\bysame, 
On the 4-rank of class groups of quadratic number fields.
{\em Invent. Math.} {\bf 167} (2007), 455--513.
 
\bibitem{MR887792}
F. Gerth, 
Extension of conjectures of {C}ohen and {L}enstra. 
{\em Exposition. Math.} {\bf 5} (1987), 181--184.

\bibitem{MR1292115}
R. Heath-Brown, 
The size of {S}elmer groups for the congruent number problem.
              {II}. 
{\em Invent. Math.} {\bf 118} (1994), 331--370.

\bibitem{MR1347489}
\bysame, A mean value estimate for real character sums. 
{\em Acta Arith.} {\bf 72} (1995), 235--275.

\bibitem{iwko}
H. Iwaniec and E.  Kowalski, {\em Analytic number theory}.
American Math.\ Soc.\, Providence, RI, (2004).

\bibitem{arXiv:1703.00108}  
B. W. Jordan, Z. Klagsbrun, B. Poonen, C. Skinner and Y. Zaytman, 
Statistics of {$K$}-groups modulo {$p$} for the ring of integers of a varying quadratic number field.
\texttt{arXiv:1703.00108}, (2017).


\bibitem{arXiv:1610.00226}  
J. Klys,
The distribution of {$p$}-torsion in degree {$p$} cyclic fields.
\texttt{arXiv:1610.00226}, (2016).

 
\bibitem{neuki}
J. Neukirch, 
{\em Algebraic number theory}.
Springer-Verlag, Berlin, (1999).
 
\bibitem{arXiv:1702.02325v2}  
A. Smith, 
{$2^\infty$}-Selmer groups, {$2^\infty$}-class groups, and Goldfeld's conjecture.
\texttt{arXiv:1702.02325v2}, (2017).

\bibitem{MR2868112}
R. P. Stanley, {\em Enumerative combinatorics. {V}olume 1}.
Cambridge University Press, Cambridge, (2012)


\bibitem{arXiv:1609.02292}  
I. Varma, 
The mean number of {$3$}-torsion elements in ray class groups of quadratic fields.
\texttt{arXiv:1609.02292}, (2016).

\bibitem{weibel}
C. A. Weibel, 
{\em An introduction to homological algebra}.
Cambridge University Press, Cambridge, (1994).





\bibitem{mwmw}  
M. M. Wood,
Cohen--Lenstra and local conditions.
\textit{Preprint}, (2017).

\bibitem{arXiv:1702.04644}  
\bysame,
Non-abelian Cohen--Lenstra moments.
\texttt{arXiv:1702.04644}, (2017).



\bibitem{arxiv.org/abs/1504.04391}  
\bysame,
Random integral matrices and the Cohen--Lenstra Heuristics.
\texttt{arxiv.org/abs/1504.04391}, (2015).

\end{thebibliography}
\end{document}